\def\blfootnote{\gdef\@thefnmark{}\@footnotetext}
\numberwithin{equation}{section}
\theoremstyle{plain}
\newtheorem{thm}[equation]{Theorem}
\newtheorem{lemma}[equation]{Lemma}
\newtheorem{cor}[equation]{Corollary}
\newtheorem{prop}[equation]{Proposition}
\newtheorem{claim}{Claim}
\theoremstyle{definition}
\newtheorem{defn}[equation]{Definition}
\newtheorem{rem}[equation]{Remark}
\newtheorem{prob}[equation]{Problem}
\newcommand\nc\newcommand
\renewcommand
\rnc\implies{\ \Rightarrow\ }
\rnc\iff{\ \Leftrightarrow\ }
\nc\bit{\begin{nitemize}}
\nc\eit{\end{nitemize}}
\nc\GP{G^\supPG}
\nc\GI{G^{\sfE}}
\nc\set[2]{\{#1\colon#2\}}
\nc\bigset[2]{\big\{#1\colon#2\big\}}
\nc\sgap{\;\!\;\!}
\nc\AND{\qquad\text{and}\qquad}
\nc\ANd{\quad\text{and}\quad}
\nc\COMMA{,\qquad}
\nc\COMMa{,\quad}
\newcommand{\R}{\mathscr{R}}
\renewcommand{\L}{\mathscr{L}}
\renewcommand{\H}{\mathscr{H}}
\newcommand{\J}{\mathscr{J}}
\newcommand{\D}{\mathscr{D}}
\newcommand{\F}{\mathscr{F}}
\renewcommand{\P}{\mathcal{P}}
\newcommand{\T}{\mathcal{T}}
\newcommand{\B}{\mathcal{B}}
\newcommand\TL{\mathcal T\!\mathcal L}
\renewcommand{\S}{\mathcal{S}}
\newcommand{\Ptw}{\P^\Phi}
\newcommand{\rels}{\mathcal{R}}
\newcommand{\Z}{\mathbb{Z}}
\DeclareMathOperator{\PG}{\textup{\textsf{PG}}}
\DeclareMathOperator{\IG}{\textup{\textsf{IG}}}
\DeclareMathOperator{\RIG}{\textup{\textsf{RIG}}}
\DeclareMathOperator{\sfE}{\textup{\textsf{E}}}
\DeclareMathOperator{\sfP}{\textup{\textsf{P}}}
\DeclareMathOperator{\sfI}{\textup{\textsf{I}}}
\DeclareMathOperator{\sfR}{\textup{\textsf{R}}}
\DeclareMathOperator{\sfG}{\textup{\textsf{G}}}
\DeclareMathOperator{\sfA}{\textup{\textsf{A}}}
\rnc\SS{\operatorname{\textup{\textsf{S}}}}
\newcommand{\Eq}{\textup{\textsf{Eq}}}
\DeclareMathOperator{\GH}{\textup{\textsf{GH}}}
\DeclareMathOperator{\rank}{\textup{\textsf{rank}}}
\let\ker\relax
\DeclareMathOperator{\ker}{\textup{\textsf{ker}}}
\DeclareMathOperator{\coker}{\textup{\textsf{coker}}}
\DeclareMathOperator{\dom}{\textup{\textsf{dom}}}
\DeclareMathOperator{\codom}{\textup{\textsf{codom}}}
\DeclareMathOperator{\KER}{\textup{\textsf{KER}}}
\DeclareMathOperator{\id}{\textup{\textsf{id}}}
\DeclareMathOperator{\NT}{\textup{\textsf{NT}}}
\DeclareMathOperator{\NTu}{\textup{\textsf{U}}}
\DeclareMathOperator{\NTd}{\textup{\textsf{L}}}
\newcommand{\mapj}{\textup{\textsf{j}}}
\newcommand{\mapq}{\textup{\textsf{q}}}
\newcommand{\sfp}{\textup{\textsf{p}}}
\newcommand{\mapC}{\textup{\textsf{C}}}
\newcommand{\mapV}{\textup{\textsf{V}}}
\newcommand{\subSq}{\textsf{sq}}
\newcommand{\subinv}{\textsf{inv}}
\newcommand{\supPG}{\textsf{P}}
\newcommand{\supRIG}{\textsf{R}}
\DeclareMathOperator{\presn}{\Pi}
\newcommand{\lam}{\lambda}
\newcommand{\lamp}{\lambda'}
\newcommand{\lex}{\textup{lex}}
\newcommand{\fd}{\textup{fd}}
\newcommand{\fc}{\textup{fc}}
\newcommand{\Lk}{\mathfrak{L}}
\newcommand{\Trs}{\mathfrak{T}}
\newcommand{\Bp}{\mathfrak{B}}
\newcommand{\Sq}{\mathfrak{S}}
\newcommand{\Esub}{F}
\newcommand{\Tlex}{T_\lex}
\newcommand{\Tfd}{T_\fd}
\newcommand{\Tfc}{T_\fc}
\newcommand{\labU}{O}
\newcommand{\ED}{D}
\newcommand{\ER}{R}
\newcommand{\Ghat}{\widehat{G}}
\newcommand{\fg}[1]{\llbracket #1\rrbracket}
\newcounter{ncols}
\newcounter{incols}
\newenvironment{partn}[1]{
  \setcounter{ncols}{#1} \setcounter{incols}{\thencols - 1}\setlength{\arraycolsep}{1pt}
  \Bigl( \hspace{-1.5truemm}\scriptsize 
    \begin{array}{@{\hskip 3pt} c *{\theincols}{|c} @{\hskip 3pt}  }
}{
     \end{array}
     \normalsize \hspace{-1.5truemm}\Bigr)\setlength{\arraycolsep}{6pt}
}
\newcommand{\smat}[4]{\bigl(\begin{smallmatrix} #1 & #2 \\ #3 & #4 \end{smallmatrix}\bigr)}
\newcommand{\lmat}[4]{\begin{pmatrix} #1 & #2 \\ #3 & #4 \end{pmatrix}}
\newcommand{\trans}[2]{\left(\begin{smallmatrix} #1 \\ #2 \end{smallmatrix}\right)}
\nc\uv[1]{\fill (#1,1.5)circle(.15);}
\nc\uvs[1]{{\foreach \x in {#1}{\uv{\x}}}}
\nc\lv[1]{\fill (#1,0)circle(.15);}
\nc\lvs[1]{{\foreach \x in {#1}{\lv{\x}}}}
\nc\darcx[3]{\draw(#1,0)arc(180:90:#3) (#1+#3,#3)--(#2-#3,#3) (#2-#3,#3) arc(90:0:#3);}
\nc\uarcx[3]{\draw(#1,1.5)arc(180:270:#3) (#1+#3,1.5-#3)--(#2-#3,1.5-#3) (#2-#3,1.5-#3) arc(270:360:#3);}
\nc\darc[2]{\darcx{#1}{#2}{.4}}
\nc\uarc[2]{\uarcx{#1}{#2}{.4}}
\nc\udline[2]{\draw(#1,1.5)--(#2,0);}
\nc\uuline[2]{\draw(#1,1.5)--(#2,1.5);}
\nc\ddline[2]{\draw(#1,0)--(#2,0);}
\newcommand{\olduv}[1]{\fill (#1,2)circle(.17);}
\newcommand{\oldlv}[1]{\fill (#1,0)circle(.17);}
\newcommand{\olduvs}[1]{{\foreach \x in {#1} { \olduv{\x}}}}
\newcommand{\oldlvs}[1]{{\foreach \x in {#1} { \oldlv{\x}}}}
\newcommand{\olddarcx}[3]{\draw(#1,0)arc(180:90:#3) (#1+#3,#3)--(#2-#3,#3) (#2-#3,#3) arc(90:0:#3);}
\newcommand{\olddarc}[2]{\olddarcx{#1}{#2}{.4}}
\newcommand{\olduarcx}[3]{\draw(#1,2)arc(180:270:#3) (#1+#3,2-#3)--(#2-#3,2-#3) (#2-#3,2-#3) arc(270:360:#3);}
\newcommand{\olduarc}[2]{\olduarcx{#1}{#2}{.4}}
\newcommand{\oldstline}[2]{\draw(#1,2)--(#2,0);}
\nc\ediagram[3]{
\begin{scope}[shift = {(0,4)}]
\node[above left] () at (1,1.5) {$#1$};
\uvs{1,...,#2}
\lvs{1,...,#2}
#3
\node (eL) at (1,1.5/2) {\phantom{$E$}};
\node (eR) at (#2,1.5/2) {\phantom{$E$}};
\node (eU) at (.5+#2/2,1.5) {\phantom{\Large$E$}};
\node (eD) at (.5+#2/2,0) {\phantom{\Large$E$}};
\end{scope}
}
\nc\fdiagram[3]{
\begin{scope}[shift = {(#2+2,4)}]
\node[above left] () at (1,1.5) {$#1$};
\uvs{1,...,#2}
\lvs{1,...,#2}
#3
\node (fL) at (1,1.5/2) {\phantom{$E$}};
\node (fR) at (#2,1.5/2) {\phantom{$E$}};
\node (fU) at (.5+#2/2,1.5) {\phantom{\Large$E$}};
\node (fD) at (.5+#2/2,0) {\phantom{\Large$E$}};
\end{scope}
}
\nc\gdiagram[3]{
\begin{scope}[shift = {(0,0)}]
\node[above left] () at (1,1.5) {$#1$};
\uvs{1,...,#2}
\lvs{1,...,#2}
#3
\node (gL) at (1,1.5/2) {\phantom{$E$}};
\node (gR) at (#2,1.5/2) {\phantom{$E$}};
\node (gU) at (.5+#2/2,1.5) {\phantom{\Large$E$}};
\node (gD) at (.5+#2/2,0) {\phantom{\Large$E$}};
\end{scope}
}
\nc\hdiagram[3]{
\begin{scope}[shift = {(#2+2,0)}]
\node[above left] () at (1,1.5) {$#1$};
\uvs{1,...,#2}
\lvs{1,...,#2}
#3
\node (hL) at (1,1.5/2) {\phantom{$E$}};
\node (hR) at (#2,1.5/2) {\phantom{$E$}};
\node (hU) at (.5+#2/2,1.5) {\phantom{\Large$E$}};
\node (hD) at (.5+#2/2,0) {\phantom{\Large$E$}};
\end{scope}
}
\nc\udiagram[3]{
\begin{scope}[shift = {(#2/2+1,8)}]
\node[above left] () at (1,1.5) {$#1$};
\uvs{1,...,#2}
\lvs{1,...,#2}
#3
\end{scope}
}
\nc\diagramshading[1]{
\fill[lightgray!50](-.5,-1.5)--(2*#1+3.5,-1.5)--(2*#1+3.5,7)--(-.5,7)--(-.5,-1.5);
}
\nc\LRarrows{
\draw[->] (eL) edge [loop left] ();
\draw[->] (gL) edge [loop left] ();
\draw[->] (eR)--(fL);
\draw[->] (gR)--(hL);
}
\nc\RLarrows{
\draw[->] (fR) edge [loop right] ();
\draw[->] (hR) edge [loop right] ();
\draw[<-] (eR)--(fL);
\draw[<-] (gR)--(hL);
}
\nc\UDarrows{
\draw[->] (eU) edge [loop above] ();
\draw[->] (fU) edge [loop above] ();
\draw[->] (eD)--(gU);
\draw[->] (fD)--(hU);
}
\nc\DUarrows{
\draw[->] (gD) edge [loop below] ();
\draw[->] (hD) edge [loop below] ();
\draw[<-] (eD)--(gU);
\draw[<-] (fD)--(hU);
}
\nc\partitionedges[5]{
\foreach \x/\y in {#1} {\udline\x\y}
\foreach \x/\y in {#2} {\uuline\x\y}
\foreach \x/\y in {#3} {\uarc\x\y}
\foreach \x/\y in {#4} {\ddline\x\y}
\foreach \x/\y in {#5} {\darc\x\y}
}
\nc\bluetrans[6]{
\fill[blue!20](#1,1.5)--(#2,1.5)--(#4,0)--(#3,0);
\draw[|-|](#1,1.75)--(#2,1.75); \node () at (#1/2+#2/2,2) {\footnotesize$#5$};
\draw[|-|](#3,-.25)--(#4,-.25); \node () at (#3/2+#4/2,-.5) {\footnotesize$#6$};
}
\nc\brokenbluetrans[6]{
\blueupper{#1}{#2}{#5}
\bluelower{#3}{#4}{#6}
}
\nc\blueupper[3]{
\fill[blue!20](#1,1.25)--(#2,1.25)--(#2,1.5)--(#1,1.5);
\draw[|-|](#1,1.75)--(#2,1.75); \node () at (#1/2+#2/2,2) {\footnotesize$#3$};
}
\nc\bluelower[3]{
\fill[blue!20](#1,.25)--(#2,.25)--(#2,0)--(#1,0);
\draw[|-|](#1,-.25)--(#2,-.25); \node () at (#1/2+#2/2,-.5) {\footnotesize$#3$};
}
\nc\bluepartlabel[1]{\node () at (0.4,2.1) {\large$#1$};}
\nc\Ebluetrans[4]{\fill[blue!20](#1,1.5)--(#2,1.5)--(#4,0)--(#3,0);}
\nc\Eblueupper[2]{\fill[blue!20](#1,1.25)--(#2,1.25)--(#2,1.5)--(#1,1.5);}
\nc\Ebluelower[2]{\fill[blue!20](#1,.25)--(#2,.25)--(#2,0)--(#1,0);}
\nc\Ebluepartlabel[1]{\node[left] () at (1,1.9) {\large$#1$};}
\nc\Eredtrans[4]{\fill[red!20](#1,1.5)--(#2,1.5)--(#4,0)--(#3,0);}
\nc\Eredupper[2]{\fill[red!20](#1,1.25)--(#2,1.25)--(#2,1.5)--(#1,1.5);}
\nc\Eredlower[2]{\fill[red!20](#1,.25)--(#2,.25)--(#2,0)--(#1,0);}
\newenvironment{thmenumerate}{\begin{enumerate}[label=\textup{(\roman*)},leftmargin=10mm]}{\end{enumerate}}
\newenvironment{nitemize}{\begin{itemize}[label=\textbullet, leftmargin=5mm]}{\end{itemize}}
\begin{document}

\title{Maximal subgroups of free projection- and idempotent-generated semigroups with applications to partition monoids}

\date{}
\author{}

\maketitle

\vspace{-15mm}

\begin{center}
{\large 
James East,%
\hspace{-.25em}\footnote{\label{fn:JE}Centre for Research in Mathematics and Data Science, Western Sydney University, Locked Bag 1797, Penrith NSW 2751, Australia. {\it Emails:} {\tt J.East@westernsydney.edu.au}, {\tt A.ParayilAjmal@westernsydney.edu.au}.}
Robert D.~Gray,%
\hspace{-.25em}\footnote{School of Engineering, Mathematics and Physics, University of East Anglia, Norwich NR4 7TJ, England, UK. {\it Email:} {\tt Robert.D.Gray@uea.ac.uk}.}
P.A.~Azeef Muhammed,%
\hspace{-.25em}\textsuperscript{\ref{fn:JE}}
Nik Ru\v{s}kuc%
\footnote{Mathematical Institute, School of Mathematics and Statistics, University of St Andrews, St Andrews, Fife KY16 9SS, UK. {\it Email:} {\tt Nik.Ruskuc@st-andrews.ac.uk}}
}
\end{center}

\maketitle

\begin{abstract}
This paper investigates the maximal subgroups of a free projection-generated regular $*$-semigroup $\PG(P)$ over a projection algebra $P$, and their relationship to the maximal subgroups of the free idempotent-generated semigroup $\IG(E)$ over the corresponding biordered set $E = \sfE(P)$.  
In the first part of the paper we obtain a number of general presentations by generators and defining relations, in each case reflecting salient combinatorial/topological properties of the groups.  In the second part we apply these to explicitly compute the groups when $P = \sfP(\P_n)$ and $E = \sfE(\P_n)$ arise from the partition monoid $\P_n$.
Specifically, we show that the maximal subgroup of $\PG(\sfP(\P_n))$ corresponding to a projection of rank $r\leq n-2$ is (isomorphic to) the symmetric group $\S_r$.  In $\IG(\sfE(\P_n))$, the corresponding subgroup is the direct product~$\Z\times\S_r$.
The appearance of the infinite cyclic group $\Z$ is explained by a connection to a certain twisted partition monoid $\Ptw_n$, which has the same biordered set as~$\P_n$.

\medskip

\emph{Keywords}: Regular $*$-semigroup, projection algebra, free projection-generated regular $*$-semigroup, biordered set, free idempotent-generated semigroup, singular square, maximal subgroup, presentation, partition monoid, twisted partition monoid.
\medskip
%

MSC: 
20M05, 
20M10,  
20M17,  
20M20.  

\end{abstract}

\tableofcontents

\section{Introduction}
\label{sec:intro}

Among the properties of elements vis-\`{a}-vis algebraic operations, idempotency is certainly one of the most ubiquitous ones.
As it refers to a single operation, semigroup theory is a natural context in which to study this property in full generality.
The most general theoretical concept for capturing the idempotent structure of a semigroup is that of a biordered set.
To every biordered set $E$ one can associate a free semigroup object $\IG(E)$, which is the free-est semigroup with the biordered set $E$ -- intuitively, $\IG(E)$ possesses only the properties that are necessarily implied by $E$.
Since it is well known that idempotents in any semigroup are in one-one correspondence with maximal subgroups, the problem of studying maximal subgroups of $\IG(E)$ naturally comes to the fore. 

Regular $*$-semigroups are a class of semigroups with involution, which prominently include various diagram monoids.
In such a semigroup one can identify certain special idempotents, called projections, which typically have simpler structure than generic idempotents, but are still sufficient to determine the entire biordered set.
The framework for describing the structure formed by these elements is that of a projection algebra. To every such algebra $P$ one can again associate a free object $\PG(P)$, the free-est regular $*$-semigroup with projections $P$.
And again, analogous to the case of $\IG(E)$, the question of their maximal subgroups arises naturally.

The purpose of this paper is to study maximal subgroups of $\PG(P)$ and compare them with those of $\IG(E)$.
This will first be done on a general level, by establishing a number of presentations (by generators and defining relations) for the maximal subgroups of $\PG(P)$, and comparing them to the known presentations for the maximal subgroups of $\IG(E)$. In particular, presentations will be obtained that reflect the fact that the former are homomorphic images of the latter. Then we will apply this theory to compute the maximal subgroups in the case where the biordered set $E$ and projection algebra $P$ arise from the partition monoid.
The determination of the maximal subgroups of $\IG(E)$ in this case has been an important open problem for some time.

There are many examples of interactions between semigroup theory and other areas of mathematics revolving around idempotents.
Some examples worth mentioning are: the work on the idempotents of  Stone--\v{C}ech compactifications \cite{Ze14,Ze17}, idempotent-generated Banach/C*-algebras \cite{Ha69,Ha50,Sh23,BS10,KR96,KS00,Sp94},
idempotent tropical matrices \cite{JK14}, the fundamental role played by idempotents in Putcha--Renner theory of reductive algebraic monoids
\cite{Pu88a,Pu88,Pu06,Re05}, 
 as well as the intrinsic importance of idempotents in the study of rings and representation theory.

Of course, the idempotents of a particular semigroup $S$ need not form a subsemigroup, and, in many cases, may generate the entire $S$.
These idempotent-generated semigroups are of particular importance in our context, and possess
many features that confirm their significance.
For example, they have the universal property that every semigroup embeds into an idempotent generated semigroup \cite{Ho66}, and if the semigroup is countable, it can be embedded in a semigroup generated by three idempotents \cite{By84}. Also, many naturally occurring semigroups have the property that they are idempotent-generated. Important examples include semigroups of transformations \cite{HM1990,Howie1978,Ho66,Aizenstat1962}, matrix semigroups \cite{Er67,La83}, endomorphism monoids of independence algebras \cite{FL92,Go95}, diagram semigroups \cite{EG17,EF2012,Ea11,MM2007,DEG2017,BDP2002},  and certain reductive linear algebraic monoids
\cite{Pu88a,Pu06}.

The concept of the free idempotent-generated semigroup $\IG(E)$ over a biordered set $E$ goes
back to Clifford \cite{Cl75} and the seminal work of Nambooripad on the structure theory of regular semigroups \cite{Na79}; see also \cite{Pa80}.  
Biordered sets are somewhat complicated algebraic/order-theoretic structures whose axioms were given in \cite{Na79}.  
It was later proved in \cite{Easdown1985} that every (abstract) biordered set is in fact the biordered set of a semigroup~$S$, i.e.~the set
\[
E=\sfE(S):=\set{e\in S}{e^2=e}
\]
of idempotents of $S$, with a partial multiplication that is defined by restricting the multiplication from $S$ to the set of so-called basic pairs:
\[
\Bp=\Bp(S):=\bigl\{ (e,f)\in E\times E\colon \{ef,fe\}\cap \{e,f\}\neq \es\bigr\}.
\]
Adopting this viewpoint, the free idempotent-generated semigroup $\IG(E)$ is defined by the presentation
\[
\bigl\langle X_E\mid x_ex_f=x_{ef}\ ((e,f)\in \Bp)\bigr\rangle,
\]
where $X_E:=\bigl\{ x_e\colon e\in E\}$ is an alphabet in one-one correspondence with $E$.  If we identify $e\in E$ with the equivalence class of $x_e$ modulo the relations, then $\IG(E)$ is the free-est semigroup generated by $E$, and having $E$ as its biordered set \cite{Easdown1985}. In particular, there is a natural homomorphism $\IG(E)\rightarrow S$, which restricts to the identity on $E$, and whose image is the subsemigroup of $S$ generated by its idempotents.

Following their discovery, it turned out that the free idempotent-generated semigroups are a rich source of interesting questions in combinatorial semigroup theory, as well as an intriguing interface to many other areas, notably combinatorial/geometric group theory and algebraic topology.
The above-mentioned link between subgroups and idempotents immediately points to the study of maximal subgroups of $\IG(E)$, which has turned out to be a key direction.
On the basis of early results \cite{El02,NP80,Pa78}, it seemed plausible for a while that the maximal subgroups may turn out to always be free. This was refuted in \cite{BM09}, where a concrete example was exhibited having the free abelian group of rank $2$ among its maximal subgroups.  The methods of \cite{BM09} were topological in nature, and involved realising subgroups of $\IG(E)$ as fundamental groups of natural $2$-complexes built from $E$.
Using Reidemeister--Schreier rewriting techniques, a general presentation for the maximal subgroups of $\IG(E)$ was established in \cite{GR12IJM}, and was deployed in the same paper to show that in fact \emph{any} group is the maximal subgroup of some $\IG(E)$. A number of variations and refinements followed, e.g.~\cite{DR13,DG17,DD19,Do21,Do22,GY14}.

In the course of this development it became clear that actually identifying the maximal subgroups of $\IG(E)$ for a specific given $E$ can be a tricky business. 
In \cite{GR12} it was shown that when $E=\sfE(\T_n)$ is the biordered set of the full transformation semigroup $\T_n$, the maximal subgroup of $\IG(E)$ containing an idempotent of rank $r\leq n-2$ is isomorphic to the symmetric group~$\S_r$, coinciding exactly with the corresponding maximal subgroup of $\T_n$ itself.
Analogous results were proved for the partial transformation monoid \cite{Do13} and for endomorphism monoids of free $G$-acts ($G$ a group) \cite{GY14}.
Furthermore, it was shown in \cite{DG14} that if one starts with the general linear monoid of dimension $n$ over a division ring~$Q$, the resulting maximal subgroup of $\IG(E)$ is the general linear group of dimension $r$ over~$Q$,
but only for $r<n/3$.
The nature of the  maximal subgroups for higher ranks is an intriguing open problem.

One important class of semigroups that have completely eluded computations of this kind are the so-called \emph{diagram monoids}, such as the partition monoid $\P_n$, Brauer monoid $\B_n$ and Temperley--Lieb monoid~$\TL_n$.  
Diagram monoids play an important role in many fields, including algebra, topology, category theory, physics and biology \cite{HR2005,Brauer1937,TL1971,Martin1994,Jones1994_2,Kauffman1987,FJ2022}.  Beyond their intrinsic interest, diagram monoids have important structural parallels with transformation and linear monoids.  Of particular relevance to the topic of the current paper is the fact that~$\P_n$,~$\B_n$ and~$\TL_n$ are `almost idempotent-generated' in the sense that every non-unit is a product of idempotents \cite{Ea11,MM2007,EG17}.  Thus, it is natural to ask whether maximal subgroups of these monoids 
are isomorphic to those of the associated free idempotent-generated semigroups~$\IG(E)$, as is the case for (linear) transformation monoids.  In fact, it follows from one of our main results, Theorem \ref{thm:mainIGPn}, that this is \emph{not} the case for $\P_n$.  Specifically, maximal subgroups corresponding to an idempotent of rank $r\leq n-2$ are (isomorphic to) symmetric groups $\S_r$ in $\P_n$, but are direct products $\Z\times\S_r$ in $\IG(E)$.

However, diagram monoids come equipped with some additional structure, in the form of an involution, making them into regular $*$-semigroups.  These are defined as semigroups with a unary operation $a\mapsto a^*$ that satisfies the following axioms:
\[
(a^*)^* = a = aa^*a \ANd  (ab)^* = b^*a^*.  
\]
They were introduced in \cite{NS78}, and investigated further over time, e.g.~in \cite{NP1985,Adair1982,Im83,Pe85,Jo12,Po01,Yamada1982},
coming into sharper focus in recent years; see \cite{EM24,EGMR}.

By way of motivation, let us dwell a bit more on the findings of \cite{EGMR}. 
Given a regular $*$-semigroup $S$, an idempotent $p\in \sfE(S)$ satisfying $p=p^*$ is called a \emph{projection}.
The set $P=\sfP(S)$ of all projections can be given the (unary) structure of a \emph{projection algebra}; these were introduced first in
 \cite{Im83}, and subsequently by several different authors in a variety of equivalent forms (under varying names),
a development that is explained in detail in \cite[Section 4]{EM24}. 
So every regular $*$-semigroup gives rise to its projection algebra $\sfP(S)$. One of the main findings of \cite{Im83} is that, conversely, for every projection algebra $P$ there exists a regular $*$-semigroup~$S$ such that $\sfP(S)=P$.
An alternative proof of this is given in \cite{EGMR}, where it was shown that every projection algebra $P$ gives rise to a free projection-generated regular $*$-semigroup
$\PG(P)$. This can be viewed as the analogue of $\IG(E)$ for the category of regular $*$-semigroups, as reflected in the following presentation for~$\PG(P)$, established in \cite[Theorem 7.2]{EGMR}:
\[
\bigl\langle X_P \mid 
x_p^2=x_p,\ (x_px_q)^2=x_px_q,\ x_px_qx_p=x_{pqp}\ (p,q\in P)\bigr\rangle,
\]
where $X_P=\{ x_p\colon p\in P\}$.  
This semigroup has projection algebra $P$, and for any other regular $*$-semigroup with $P=\sfP(S)$, the identity map on $P$ extends to a homomorphism ${\PG(P)\to S}$.

So, paralleling the development for $\IG(E)$, the following natural general question arises: 
\bit
\item 
\emph{Given a (projection-generated) regular $*$-semigroup $S$, with $P=\sfP(S)$, how is $\PG(P)$ related to~$S$?}
\eit
Certainly, $S$ is a natural homomorphic image of $\PG(P)$ by construction.
Furthermore, since the projection algebras of~$S$ and~$\PG(P)$ coincide, it follows from the basic theory of regular $*$-semigroups that there are natural bijections between their $\R$-, $\L$-, $\D$- and $\H$-classes (see Subsections \ref{ss:IGE}--\ref{ss:PGP} below for more detail).
Informally speaking, this means that the only potential difference between these two semigroups is in their maximal subgroups. 
The paper \cite{EGMR} gives examples of situations where these are different from each other, and some where they are the same. A striking example of the latter situation is provided by the Temperley--Lieb monoids $\TL_n$, which turn out to be isomorphic to their own free semigroups $\PG(\sfP(\TL_n))$; see \cite[Theorem~9.1]{EGMR}.

There is a strong connection between the projection structure of a regular $*$-semigroup and its idempotent structure, which we briefly outline;
for details and references see \cite[Subsection~3.1]{EM24} and \cite[Section 6]{EGMR}.
Let $S$ be a regular $*$-semigroup, let $P:=\sfP(S)$ be its projection algebra, and let $E:=\sfE(S)$ be
its biordered set of idempotents.
Every $e\in E$ is a product of two projections.
More precisely, letting
\[
\F:= \big\{ (p,q)\in P\times P\colon pqp=p,\ qpq=q\big\}
\]
be the \emph{friendliness relation}, every idempotent $e\in E$ is \emph{uniquely} expressed as a product $e=pq$, with $(p,q)\in \F$.
In \cite[Section 6]{EGMR} it is proved that the projection algebra $P$ uniquely determines the biordered set $E$, in the sense that for every regular $*$-semigroup  with projection algebra isomorphic to $P$ its biordered set is isomorphic to $E$.
Even more strongly, it is proved that there is a categorical equivalence between projection algebras and so-called regular $*$-biordered sets, as defined in \cite{NP1985}.
One consequence of all this is  that the semigroup $\PG(P)$ is a natural homomorphic image of~$\IG(E)$. 

So, in this paper we develop the theory of maximal subgroups of free projection-generated regular $*$-semigroups $\PG(P)$, and apply it to compare these with the corresponding subgroups of free idempotent-generated semigroups $\IG(E)$.  We do this on both a general level, and also concretely for the free semigroups arising from the partition monoid $\P_n$.
The paper thus has two distinctive parts. In the first, we obtain a number of general presentations (by generators and defining relations) for the maximal subgroups of free projection-generated regular $*$-semigroups.  See Theorem \ref{th:linkedpres} and Corollaries~\ref{co:lkid} and~\ref{co:lkdp1}, and also Theorem~\ref{thm:pgmaxq} and Corollary~\ref{cor:pgmaxq}; the latter two results exhibit maximal subgroups of~$\PG(P)$ as explicit quotients of corresponding maximal subgroups of $\IG(E)$.
Presentations such as these are important, because they provide concrete descriptions of the maximal subgroups, which can then be analysed with the tools of combinatorial group theory; they also provide a link to the topological viewpoint of \cite[Section~10]{EGMR}.
Then, in the second part of the paper, we apply our theory to determine the maximal subgroups of the free projection- and idempotent-generated semigroups arising from the partition monoid~$\P_n$, our main results being the following:

\begin{thm}
\label{thm:mainPGPn}
The maximal subgroup of $\PG(\sfP(\P_n))$ corresponding to any idempotent of rank ${0\leq r\leq n-2}$ is isomorphic to the symmetric group $\S_r$.
\end{thm}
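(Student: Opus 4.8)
The plan is to compute the maximal subgroup of $\PG(\sfP(\P_n))$ at a projection of rank $r$ directly, using the general presentation machinery developed in the first part of the paper (Theorem~\ref{th:linkedpres} and its corollaries), combined with a careful analysis of the projection algebra $P = \sfP(\P_n)$ and, in particular, the singular squares available in the $\D$-class of rank $r$. The overall strategy mirrors the classical computation for $\T_n$ in \cite{GR12}: one fixes a convenient projection $p$ of rank $r$, identifies the $\H$-class structure of the corresponding $\D$-class in terms of projection-graph data, writes down the presentation for the maximal subgroup with generators indexed by (a spanning tree's worth of) $\R$-classes paired with $\L$-classes and relations coming from singular squares, and then shows this presentation collapses to that of $\S_r$.

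First I would set up the combinatorial model: a projection of rank $r$ in $\P_n$ is (conjugate to) the partition whose blocks pair up $r$ designated points on the top with the corresponding $r$ points on the bottom, leaving the remaining $n-r$ points in singleton blocks top and bottom. The $\D$-class $D_r$ of rank $r$ consists of all partitions of rank $r$, and its $\R$- and $\L$-classes are indexed by the domain/codomain data (an $(n-r)$-element "kernel-type" coarsening plus choice of transversal blocks), with the group $\H$-class being $\S_r$ in $\P_n$ itself. The key point is that, because the projection algebra $P$ determines the biordered set $E$ and hence all of this $\D$-class combinatorics, the indexing sets for the presentation of the maximal subgroup of $\PG(P)$ are exactly the same as for $\IG(E)$ and for $\P_n$. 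So the generators of the presentation are group elements $f$ indexed by pairs of an $\R$-class and an $\L$-class within $D_r$ lying over a common idempotent, subject to: (a) spanning-tree relations killing generators along a chosen tree in the associated bipartite graph, and (b) "singular square" relations of the form $f_{ij} f_{kl}^{-1} f_{il} f_{kj}^{-1} = 1$ for each singular square $\{i,k\}\times\{j,l\}$ in $D_r$.

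The heart of the argument — and the main obstacle — is therefore a careful inventory of the singular squares in $D_r$ and a proof that the resulting relations are exactly strong enough to force the group down to $\S_r$ but no further. Concretely I would: (i) show that $\P_n$ has "enough" singular squares of the relevant type in ranks $r \le n-2$ (this is where the hypothesis $r\le n-2$ enters: one needs at least two extra points to move around in order to produce the singular squares that generate all the needed relations, exactly as in the $\T_n$ case), so that the presentation for the maximal subgroup of $\PG(P)$ surjects onto $\S_r$ via the natural map $\PG(P)\to\P_n$; (ii) show conversely that the singular-square relations already imply all the defining relations of $\S_r$ among the generators, so the group is no bigger than $\S_r$. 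For step (ii) one exhibits generators of the presentation mapping to the Coxeter generators (adjacent transpositions) of $\S_r$ and checks that the braid relations and involution relations $s_i^2=1$, $(s_is_{i+1})^3=1$, $s_is_j=s_js_i$ for $|i-j|\ge 2$ all follow from chains of singular-square relations — each such Coxeter relation being realised inside a sub-$\D$-class configuration that, crucially, is already present in $\P_{r+2}\subseteq\P_n$, which is why $r\le n-2$ suffices.

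Two further remarks on execution. Because Theorem~\ref{th:linkedpres} and Corollary~\ref{co:lkdp1} give the maximal subgroup of $\PG(P)$ as an explicit quotient of that of $\IG(E)$, an efficient route is: take as input the (already somewhat understood) structure of the $\IG(\sfE(\P_n))$ maximal subgroup computation, then identify precisely the extra relations that the projection-algebra presentation imposes, and show these extra relations are exactly what kills the "$\Z$" factor that survives in $\IG(E)$ — leaving $\S_r$. This cleanly separates the combinatorial bookkeeping (common to both computations) from the single conceptual point that distinguishes $\PG(P)$ from $\IG(E)$. The case $r\le 1$ is degenerate ($\S_0$ and $\S_1$ trivial) and should be dispatched separately; the interesting range is $2\le r\le n-2$. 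I expect the genuinely laborious part to be a faithful description of which pairs of $\R$-classes and $\L$-classes in $D_r$ form singular squares and the verification that the associated relations close up into the Coxeter presentation — this is the analogue of the technical heart of \cite{GR12}, now carried out with partition diagrams and with the projection-algebra singularity condition in place of the biordered-set one.
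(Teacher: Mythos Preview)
Your high-level outline --- write down the singular-square presentation, show it surjects onto $\S_r$ via the natural map to $\P_n$, then show it is no bigger than $\S_r$ --- is correct in spirit, and is exactly the sandwich argument the paper uses. However, the concrete mechanism you propose for the upper bound is not the one that works, and your ``efficient route'' through $\IG(E)$ is logically backwards.

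The paper does \emph{not} verify Coxeter relations directly among chosen generators. The $\D$-class $D(n,r)$ in $\P_n$ contains vastly more idempotents than the corresponding $\D$-class in $\T_n$, so the main difficulty is not checking braid relations but eliminating the overwhelming majority of the generators $a_e$. The paper's key device here is the notion of an \emph{NT-reducing} singular square (Definition~\ref{defn:NT}): Lemmas~\ref{la:ehrsq}--\ref{la:geneli} show that every idempotent outside the small set $F(n,r) = P_0 \cup P_1 \cup F_1 \cup F_2$ (full-domain, full-codomain, or projections with at most one non-transversal) is the base of such a square, and hence its generator can be rewritten in terms of generators with strictly smaller $\NT$-parameter. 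This reduction to $A_F$ is the real combinatorial content, and it is not addressed in your proposal.

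The second ingredient you are missing is what makes $\PG(P)$ genuinely easier than $\IG(E)$: the extra relation $a_e = a_{e^*}^{-1}$ from Corollary~\ref{cor:pgmaxq}. This single relation lets you pass from full-codomain idempotents ($F_2$) to full-domain ones ($F_1$), and a further singular-square argument (Lemma~\ref{la:fdtoTn}) reduces every full-domain generator to one coming from an idempotent of $\T_n$. At that point the presentation contains the $\T_n$ presentation from \cite{GR12} as a sub-presentation, so the group is a quotient of $\S_r$ and the sandwich closes. Your suggestion to ``take as input the $\IG(\sfE(\P_n))$ computation and kill the $\Z$ factor'' has the dependency reversed: in the paper the $\PG$ result (Theorem~\ref{thm:maxPGPn}) is proved first and is much shorter, while the $\IG$ computation (Section~\ref{sec:maxIGPn}) is the hard part precisely because the involution relation is unavailable.

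Finally, the case $r=0$ is not degenerate: it is handled separately (Proposition~\ref{p:PnD0}) using the linked-diamond presentation of Theorem~\ref{th:linkedpres}, since there are no transformations of rank $0$ to reduce to.
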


\begin{thm}
\label{thm:mainIGPn}
The maximal subgroup of $\IG(\sfE(\P_n))$ corresponding to any idempotent of rank ${0\leq r\leq n-2}$ is isomorphic to $\Z\times\S_r$.
\end{thm}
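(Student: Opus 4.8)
The plan is to deduce Theorem~\ref{thm:mainIGPn} from Theorem~\ref{thm:mainPGPn} together with the general presentation machinery developed in the first part of the paper, most directly Theorem~\ref{thm:pgmaxq} (and Corollary~\ref{cor:pgmaxq}), which exhibit the maximal subgroup of $\PG(P)$ as an explicit quotient of the corresponding maximal subgroup of $\IG(E)$. Write $H$ for the maximal subgroup of $\IG(\sfE(\P_n))$ at an idempotent $e$ of rank $r\le n-2$, and $\overline H$ for the maximal subgroup of $\PG(\sfP(\P_n))$ at the corresponding projection. By Theorem~\ref{thm:mainPGPn} we already know $\overline H\cong\S_r$, so by the quotient result there is a short exact sequence $1\to N\to H\to \S_r\to 1$ where $N$ is the kernel identified in Theorem~\ref{thm:pgmaxq}. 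The strategy has two halves: (a) show $N\cong\Z$, and (b) show the extension splits as a direct product.

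First I would compute the kernel $N$. The presentation of $\PG(P)$ as a quotient of $\IG(E)$ differs only by the extra relations coming from singular squares of $E$ that become "collapsible" in the regular $*$-setting but are not already singular in $E$ --- concretely, the relations that the friendliness/projection structure forces but that the biordered-set axioms alone do not. So $N$ is generated (as a normal subgroup, hence after abelianising the relevant part, as a subgroup) by the images of these additional relator words. I would use the Reidemeister--Schreier presentation of $H$ from \cite{GR12IJM} (in the form given earlier in the paper) with a Schreier transversal adapted to the egg-box structure of the $\D$-class of $e$ in $\P_n$, read off the generators $f_{ij}$ indexed by the singular squares, and track precisely which generators survive in $\overline H$ and which are killed. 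The expectation, guided by the abstract discussion in the introduction (the $\Z$ "is explained by a connection to a certain twisted partition monoid $\Ptw_n$"), is that exactly one extra independent generator persists in $H$ beyond those needed to build $\S_r$, and that it is central of infinite order; identifying it with the twisting cocycle of $\Ptw_n$ is what pins down $N\cong\Z$ and its centrality.

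For the splitting, I would exhibit an explicit copy of $\S_r$ inside $H$. The natural candidate is the subgroup generated by the Reidemeister--Schreier generators corresponding to the "permutational" singular squares within one $\H$-class row/column of the $\D$-class --- precisely those squares that already realise the symmetric group in $\P_n$ itself --- and check, using the relations, that these generate a complement to $N$ that maps isomorphically onto $\S_r=\overline H$. Centrality of $N$ then upgrades the semidirect product to a direct product, giving $H\cong\Z\times\S_r$. Combined with the rank bookkeeping (the hypothesis $r\le n-2$ is exactly what guarantees enough room in $\P_n$ for all the singular squares used in both \cite{GR12} and Theorem~\ref{thm:mainPGPn}), this completes the proof.

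The main obstacle I anticipate is part (a): showing the extra generator is of \emph{infinite} order rather than merely nontrivial. Proving a Reidemeister--Schreier generator is nontrivial in $H$ is already subtle (this is the phenomenon behind the whole $\IG$ literature), but bounding its order requires a faithful "measuring" homomorphism out of $H$. Here the twisted partition monoid $\Ptw_n$ should supply exactly that: since $\Ptw_n$ has the same biordered set as $\P_n$, there is a homomorphism $\IG(\sfE(\P_n))\to\Ptw_n$, its restriction to $H$ lands in a maximal subgroup of $\Ptw_n$, and the $\Z$-grading/twisting of $\Ptw_n$ detects the offending generator with unbounded multiplicity. Making this detection precise --- i.e.\ computing the maximal subgroups of $\Ptw_n$ or at least the relevant $\Z$-valued invariant --- is the technical heart of the argument, and is presumably where the connection to $\Ptw_n$ advertised in the introduction does its real work.
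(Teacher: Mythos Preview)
Your high-level shape is right --- there is a surjection $H\twoheadrightarrow\overline H\cong\S_r$, and the twisted monoid $\Ptw_n$ supplies a surjection $H\twoheadrightarrow\Z\times\S_r$ that detects infinite order --- but the proposal has a genuine gap at the step you flag as ``(a)'', and the paper's route is rather different from the one you sketch.

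The difficulty is this. The kernel $N=\ker(H\to\overline H)$ is the normal closure in $H$ of the elements $a_{ee^*}^{-1}a_e\,a_{e^*e}^{-1}a_{e^*}$ as $e$ ranges over \emph{all} idempotents in the $\D$-class (these are the extra relators in Theorem~\ref{thm:pgmaxq}). To conclude $N\cong\Z$ you must show that all of these normal generators coincide (or generate the same cyclic subgroup) and are central. Your plan to ``track which generators survive in $\overline H$'' does not address this: knowing that each such element dies in $\overline H$ tells you nothing about whether they are equal in $H$. Establishing equality and centrality is exactly where the work lies, and it cannot be done by abstract bookkeeping --- it requires producing, for each pair of such elements, an explicit chain of singular squares in $\P_n$ witnessing the equality. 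This is the content of Sections~\ref{ss:eP1}--\ref{ss:comm} of the paper, which occupy many pages and are not shortcut by knowing Theorem~\ref{thm:mainPGPn} in advance. Your claim of centrality is likewise asserted without a mechanism.

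The paper does \emph{not} argue via the quotient $H\to\overline H$; Theorem~\ref{thm:mainPGPn} is not invoked in the proof of Theorem~\ref{thm:mainIGPn}. Instead it works directly with the spanning-tree presentation $\presn(T(s))$ for $H$ (Theorem~\ref{thm:spantreepres}), and through a long sequence of explicit singular-square manipulations shows that $H$ is generated by $A_{E^\T(n,r)}\cup\{a_t\}$ for a single fixed $t\in P_1(n,r)$, with $a_t$ commuting with every $a_e$ for $e\in E^\T(n,r)$ (Lemmas~\ref{la:P1nr}, \ref{la:P0nr1}, \ref{la:lab12}, \ref{la:peep}). Combined with the $\T_n$ result from \cite{GR12}, this exhibits $H$ as a quotient of $\Z\times\S_r$. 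Your use of $\Ptw_n$ for the other direction matches the paper (Proposition~\ref{prop:preimage}). The paper then concludes by Hopficity of $\Z\times\S_r$ rather than by constructing an explicit splitting as you propose; this is cleaner, since no direct section $\S_r\hookrightarrow H$ needs to be verified.
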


The extra direct factor of $\Z$  in Theorem \ref{thm:mainIGPn} may appear mysterious at this stage, but we will see that it arises from a tight connection with a certain \emph{twisted} monoid $\P_n^\Phi$, which has the same biordered set as $\P_n$.  
The subgroups corresponding to idempotents with $r\in \{n-1,n\}$ in both theorems are actually easy to determine:
when $r=n-1$ they are free with ranks 
that can be explicitly computed (see Remark \ref{rem:noss} and Corollary \ref{cor:PGPnn-1}), and when $r=n$ they are trivial. Thus, we know precisely all maximal subgroups in both 
$\PG(\sfP(\P_n))$ and $\IG(\sfE(\P_n))$.

The knowledge of the maximal subgroups of $\PG(\sfP(\P_n))$ translates into a complete description of the semigroup $\PG(\sfP(\P_n))$.
It is `almost equal' to $\P_n$ itself: the only differences are that the $\D$-class consisting of partitions of rank $n-1$ is `inflated' so that the maximal subgroups are copies of the appropriate free group, and that non-identity bijections are absent.
This may be interesting to compare to the case of the free projection-generated semigroup arising from the Temperley--Lieb monoid $\TL_n$, which is proved to be \emph{exactly} $\TL_n$ itself in \cite[Theorem 9.1]{EGMR}.

The situation is different for $\IG(\sfE(\P_n))$: even though we know all its maximal subgroups, and hence its regular $\D$-classes, this monoid contains many non-regular elements (see Subsection~\ref{ss:rships}), and hence has a more complicated structure. 
However, for every free idempotent-generated semigroup $\IG(E)$, there exists a distinguished (maximal) regular homomorphic image $\RIG(E)$, whose structure is again completely determined by its maximal subgroups.
Thus, Theorem \ref{thm:mainIGPn} and the above discussion do entail a complete structural description of $\RIG(\sfE(\P_n))$. However, this semigroup is no longer `almost equal' to $\P_n$ but to its twisted counterpart $\P_n^\Phi$.
The authors find this quite remarkable: starting from the well-known semigroup $\P_n$, and applying the well-known construction $\IG$ to it, leads to another well-known semigroup $\P_n^\Phi$, which originally arose via a very different, combinatorial-geometric, route.

Theorems \ref{thm:mainPGPn} and \ref{thm:mainIGPn} can be viewed as analogues of the main result from \cite{GR12IJM}, which asserts that every  maximal subgroup of $\IG(\sfE(\T_n))$ corresponding to an idempotent of rank ${1\leq r\leq n-2}$ is the symmetric group $\S_r$.
In fact, the key idea behind the proofs of both our theorems is to establish links with maximal subgroups of 
$\IG(\sfE(\T_n))$.
However, several conceptual and technical difficulties present themselves in this reduction, especially in the case of $\IG(\sfE(\P_n))$.
The most notable is the fact that the `target group' is no longer $\S_r$, but $\Z\times \S_r$. Another is that while in~$\T_n$ every $2\times2$ rectangular band is a singular square, this is not true in $\P_n$; see Subsection~\ref{ss:wow}.

\bigskip

The paper is organised as follows.
Section \ref{sec:prelim} covers the general preliminaries concerning semigroups, including regular $*$-semigroups, presentations, projection algebras, biordered sets, and partition monoids.
In Section~\ref{sec:rs} we set up our main tool in establishing our general presentations, a Reidemeister--Schreier type rewriting process for maximal subgroups originally developed in \cite{Ru99}.
In Section \ref{sec:maxIG} we review the application of this process to the maximal subgroups of free idempotent-generated semigroups~$\IG(E)$ from~\cite{GR12IJM}, and then introduce a small modification to  the final presentation to better suit our needs.
In Section \ref{sec:IGETn} we briefly review the set-up and notation from the computation  of maximal subgroups of the free idempotent-generated semigroups $\IG(\sfE(\T_n))$ arising from the full transformation monoid $\T_n$ \cite{GR12},
which, as mentioned above, are a key ingredient in our overall proof strategy for Theorems \ref{thm:mainPGPn} and \ref{thm:mainIGPn}.

Section \ref{sec:pres-linked} contains our first two general presentation results -- Theorem \ref{th:linkedpres} and Corollary~\ref{co:lkdp1} -- and some related presentations including Corollary \ref{co:lkid}.
It also contains a number of applications, among them
 the proof of Theorem \ref{thm:mainPGPn} in the case $r=0$.
Section \ref{sec:pres-quot} gives the remaining general presentations, Theorem \ref{thm:pgmaxq} and Corollary~\ref{cor:pgmaxq}; these reflect the fact that the maximal subgroups of $\PG(P)$ are natural homomorphic images of the maximal subgroups of $\IG(E)$.  After proving these results, we compare the presentations of Sections \ref{sec:pres-linked} and \ref{sec:pres-quot} to each other, which involves analysing the relationships between two kinds of squares of idempotents, singular squares and linked squares.

This concludes the general, theory-building part of the paper, and the rest is devoted to
an analysis of the maximal subgroups of~$\PG(\sfP(\P_n))$ and~$\IG(\sfE(\P_n))$.
Before the arguments for $\PG(\sfP(\P_n))$ and~$\IG(\sfE(\P_n))$ diverge,
in Section \ref{sec:common} we introduce some further preliminaries
and results that are shared by the two.
After these preparations we are already able to complete the proof of Theorem \ref{thm:mainPGPn} for $r\geq1$ in Section \ref{sec:maxPGPn}.
  The  proof of Theorem \ref{thm:mainIGPn} for $r\geq 1$ is  the longest and most intricate argument of the paper.
  It is given in Section \ref{sec:maxIGPn}, which contains at the beginning an outline and summary of the main ideas.
The proofs of our main results are rounded off in Section \ref{sec:r0}, where we prove Theorem \ref{thm:mainIGPn} for $r=0$.

The paper concludes in Section~\ref{sec:conc} with some remarks and open questions for future research.

\subsection*{Acknowledgements}

This work was supported by the following grants:
Future Fellowship FT190100632 of the Australian Research Council;
EP/V032003/1, EP/S020616/1 and EP/V003224/1 of the Engineering and Physical Sciences Research Council.
The first author thanks the Heilbronn Institute for partially funding his visit to St Andrews in 2025.
The second author thanks the Sydney Mathematical Research Institute, the University of Sydney
and Western Sydney University for partially funding his visit to Sydney in 2023.

During the early stages of this research we were greatly assisted by computations with the Semigroups package for GAP \cite{Semigroups,GAP}.

We thank the referees for their valuable feedback.

\section{Preliminaries}
\label{sec:prelim}

\subsection{General notation}

We will use $\Z$ to denote both the set of integers and the corresponding additive group.  We will also use standard notation to denote intervals of integers, e.g.~${[m,n]:=\{ m,m+1,\ldots,n\}}$ and ${(m,n] := [m+1,n]}$. For an integer $n\geq0$ we will write $[n]$ to denote the interval~$[1,n]$.

\subsection{Presentations}

We will use two kinds of presentations in this paper: semigroup and group.
Let $A$ be an alphabet, and denote by $A^+$ the \emph{free semigroup} over $A$, consisting of all non-empty (finite) words over~$A$, under concatenation.
Denote by~$1$ the empty word, and
 let $A^\ast:=A^+\cup\{1\}$ denote the \emph{free monoid} over $A$.

 A \emph{semigroup presentation} is a pair $\langle A\mid \rels\rangle$, where $\rels\subseteq A^+\times A^+$.
 The elements of $A$ are called  \emph{generators} and
 the elements of $\rels$ (\emph{defining}) \emph{relations}; a relation $(u,v)\in\rels$ is typically displayed as an equality,~${u=v}$.
 The presentation $\langle A\mid \rels\rangle$ defines the semigroup $S:=A^+/\rho$, where $\rho$ is the congruence generated by $\rels$.
 The mapping $\pi\colon A^+\rightarrow S$, $w\mapsto w/\rho$ is a surmorphism.
 The semigroup defined by $\langle A\mid \rels\rangle$ is the free-est semigroup generated by an image of $A$, and satisfying the relations in $\rels$.
 More formally, if $T$ is a semigroup, and $\phi\colon A^+\rightarrow T$ a homomorphism such that $u\phi=v\phi$ for all $(u,v)\in \rels$, then there exists a unique homomorphism $\psi\colon S\rightarrow T$ such that $\phi=\pi\psi$.  
 
We will typically identify a word $u\in A^+$ and the element $u/\rho$ of $S=A^+/\rho$ it represents.
Thus we will often write $u=v$ to mean that $u/\rho=v/\rho$ in $S$.
When there is a danger of confusion, we will emphasise that the equality holds in $S$, or we will say that $u=v$ is a consequence of~$\rels$.

A \emph{group presentation} is a pair $\langle A\mid \rels\rangle$, where $\rels\subseteq (A\cup A^{-1})^\ast \times (A\cup A^{-1})^\ast$, with ${A^{-1}:=\{ a^{-1}\colon a\in A\}}$ a new alphabet consisting of formal inverses of letters from $A$.
It defines the group $(A\cup A^{-1})^*/\rho$ where $\rho$ is the congruence generated by $\rels\cup\big\{ (aa^{-1}=a^{-1}a=1)\colon a\in A\big\}$.
It is the free-est group generated by an image of $A$ that satisfies the defining relations from $\rels$.

\subsection{Green's relations, idempotents and regularity}
\label{ss:Green}

Green's relations (pre-orders and equivalences) are the standard tool for working with the ideal structure of a semigroup.
Here we give the basic definitions and facts, and refer the reader to any textbook on semigroup theory, such as
\cite{Ho95}, for a more systematic exposition.

Let $S$ be a semigroup, and denote by $S^1$ the monoid obtained by adjoining an identity element to $S$ if necessary.
For $a,b\in S$, define
\[
a \leq_\L b \iff S^1a\subseteq S^1b  ,\quad  a \leq_\R b \iff aS^1\subseteq bS^1 \ANd a \leq_\J b \iff S^1aS^1 \subseteq S^1bS^1.
\]
If $\leq_\mathscr{K}$ is any of these three pre-orders, it induces an equivalence relation $\mathscr{K}$ via
\[
a\mr\mathscr{K} b \iff a\leq_\mathscr{K} b \text{ and } b\leq_{\mathscr{K}} a.
\]
There are two further Green's equivalences:
\[
\H := \L\cap\R \ANd \D :=\L\vee\R,
\]
where $\vee$ denotes the join of equivalence relations.
It is well known that $\D=\L\circ\R=\R\circ\L$, where $\circ$ denotes relational composition.
For $a\in S$, denote by $L_a$, $R_a$, $J_a$, $H_a$ and $D_a$ its $\L$-, $\R$-, $\J$-, $\H$- and $\D$-classes, respectively.
If $\mathscr{K}$ is any of $\L$, $\R$ or~$\J$,  the set $S/\mathscr{K} = \{ K_a\colon a\in S\}$ of all $\mathscr{K}$-classes of $S$ has an induced partial order $\leq$, given by
\[
K_a \leq K_b \iff a\leq_\mathscr{K} b \qquad\text{for all $a,b\in S$.}
\]
One of the most important basic results concerning Green's relations is the following; for a proof, see for example \cite[Lemma 2.2.1]{Ho95}.

\begin{lemma}[Green's Lemma]\label{lem:GL}
Let $a$ and $b$ be $\R$-related elements of a semigroup $S$, so that $b=as$ and $a=bt$ for some $s,t\in S^1$.  Then the maps
\[
L_a\to L_b,\ u\mapsto us\quad\text{and}\quad L_b\to L_a,\ v\mapsto vt
\]
are mutually inverse bijections.  Moreover, these maps restrict to mutually inverse bijections $H_a\to H_b$ and  $H_b\to H_a$.
\end{lemma}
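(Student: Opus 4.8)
The plan is to prove everything by direct manipulation of the divisibility witnesses coming from the hypothesis, so I first record what Green's equivalences give us: $u\mr\L a$ means $u\in S^1a$ and $a\in S^1u$, and dually $u\mr\R a$ means $u\in aS^1$ and $a\in uS^1$. I would begin by checking that the two maps are well defined, i.e.\ that $us\in L_b$ whenever $u\in L_a$, and symmetrically. If $u\in L_a$, write $u=xa$ and $a=yu$ with $x,y\in S^1$. Then $us=xas=xb\in S^1b$, so $us\leq_\L b$; and $b=as=yus\in S^1us$, so $b\leq_\L us$. Hence $us\mr\L b$, that is, $us\in L_b$. The claim that $v\mapsto vt$ maps $L_b$ into $L_a$ follows by the identical argument with the roles of $a,b$ and $s,t$ interchanged, since the hypothesis is symmetric in these.

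Next I would show the two maps are mutually inverse. From $a=bt=(as)t$ we get the collapsing identity $ast=a$, and symmetrically $bts=b$. For $u\in L_a$, again writing $u=xa$, we have $(us)t=x(ast)=xa=u$; and for $v\in L_b$, writing $v=yb$, we have $(vt)s=y(bts)=yb=v$. Thus the composites $L_a\to L_b\to L_a$ and $L_b\to L_a\to L_b$ are the respective identity maps, so in particular both maps are bijections.

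Finally, for the statement about $\H$-classes, observe that $R_a=R_b$ since $a\mr\R b$, so it suffices to show $us\in R_a$ whenever $u\in H_a=L_a\cap R_a$ (the membership $us\in L_b$ being already established). Clearly $us\leq_\R u$ as $s\in S^1$. For the reverse divisibility I would use the $\L$-relation rather than the $\R$-relation: writing $u=xa$ with $x\in S^1$ and invoking $ast=a$ gives $(us)t=x(ast)=xa=u$, so $u\in usS^1$ and hence $u\leq_\R us$. Therefore $us\mr\R u\mr\R b$, i.e.\ $us\in R_b$, and so $us\in H_b$. The symmetric argument shows $v\mapsto vt$ sends $H_b$ into $H_a$, and since the full maps are already known to be mutually inverse on $L_a,L_b$, their restrictions are mutually inverse bijections $H_a\to H_b$ and $H_b\to H_a$. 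This lemma is classical and essentially routine, so I do not expect a genuine obstacle; the one point needing care is the $\H$-part, where the naive attempt to track the $\R$-class using the $\R$-divisibility witnesses fails because of non-commutativity, and one must instead combine the $\L$-relation with the collapsing identity $ast=a$.
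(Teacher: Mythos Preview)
Your proof is correct and is the standard textbook argument. The paper does not actually supply its own proof of this lemma; it merely cites Howie's textbook, so there is nothing substantive to compare against beyond noting that your direct manipulation of divisibility witnesses is precisely the classical route.
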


There is a left-right dual of the previous result, whose statement we omit, but throughout the article we will frequently refer to both formulations as Green's Lemma.  We note that Green's Lemma, and many of the other forthcoming results and arguments concerning Green's relations, can be conveniently visualised through the use of so-called \emph{egg-box diagrams}.  In such a diagram we place the elements of a $\D$-class in a rectangular array, with $\R$-related elements in the same row, $\L$-related elements in the same column, and $\H$-related elements in the same cell (intersection of a row and column).  For more details, see for example \cite[Chapter~2]{Ho95}.

An \emph{idempotent} of a semigroup $S$ is an element $e\in S$ such that $e=e^2$.  The set of all idempotents of $S$ is denoted
\[
\sfE(S) := \set{e\in S}{e^2=e}.
\]
If $e,f\in \sfE(S)$, then it is easy to check that
\begin{equation}\label{eq:eleqf}
e\leq_\L f \iff e=ef \AND e\leq_\R f \iff e=fe.
\end{equation}

An element $a$ of a semigroup $S$ is \emph{regular} if $a=axa$ for some $x\in S$.  Idempotents are obviously regular.  If $a=axa$, then $a \mr\R ax$ and $a\mr\L xa$, with the elements $ax$ and $xa$ being idempotents.  Moreover, the element $y:=xax$ satisfies $a=aya$ and $y=yay$; the elements~$a$ and~$y$ are said to be \emph{\emph{(}semigroup\emph{)} inverses} of each other.
We say that $S$  is a regular semigroup if every element of $S$ is regular.  Some basic facts include the following:
\bit
\item When~$a$ is regular, every element of the $\D$-class $D_a$ is regular, and this is equivalent to the $\D$-class containing an idempotent, and in fact to every $\R$- and $\L$-class in $D_a$ containing an idempotent.
\item If $e$ is an idempotent, then the $\H$-class $H_e$ is a group with identity $e$, and is a maximal subgroup of $S$.  
\item If $e$ and $f$ are two $\D$-related idempotents, then the groups $H_e$ and $H_f$ are isomorphic.
\eit
The following well-known result is typically attributed to FitzGerald \cite{Fi72} or Hall \cite{Ha73}, although this exact formulation is not found in either paper.  We give a simple (standard) proof for completeness and convenience, and since the result is of such fundamental importance to us.

\begin{prop}[{cf.~\cite[Lemma 1]{Fi72} or \cite[Lemma 1]{Ha73}}]\label{prop:DF}
If $a$ is a regular element of a semigroup~$S$, and if $a\in\la\sfE(S)\ra$, then there exist idempotents $e_1,\ldots,e_k\in\sfE(S)$ such that
\[
a = e_1\cdots e_k \AND a \mr\R e_1 \mr\L e_2 \mr\R e_3 \mr\L e_4 \mr\R \cdots \mr\L e_{k-1} \mr\R e_k \mr\L a.
\]
\end{prop}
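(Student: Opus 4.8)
The plan is to induct on the length of a factorization of $a$ into idempotents, using Green's Lemma to straighten out the $\R$/$\L$-pattern. Write $a=f_1\cdots f_m$ with each $f_i\in\sfE(S)$. The idea is that such a product always $\L$-descends and $\R$-descends along partial products, and regularity of $a$ forces these descents to be equalities, i.e.\ each $f_i$ lies in $D_a$. More precisely, set $g_j:=f_1\cdots f_j$; then $g_j\leq_\R f_1$ and $g_j\leq_\L f_j$, and in particular $a=g_m\leq_\R f_1$ while also $a\leq_\R a x a = \cdots$ — the point being that since $a$ is regular we can pick an inverse and read off $\R$- and $\L$-relations. I would first reduce to the case where $a$ itself is an idempotent-product with $a\mr\R f_1$ and $a\mr\L f_m$: indeed $a\leq_\R f_1$ always, and if $a<_\R f_1$ we could shorten; this needs a small argument but is standard.

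The cleanest route, which I would actually follow, is to argue directly on the egg-box of $D_a$. Pick any factorization $a=f_1\cdots f_m$ of minimal length with $f_i\in\sfE(S)$. First I claim every $f_i$ is $\D$-related to $a$. We have $a=f_1\cdots f_m\leq_\R f_1\cdots f_{m-1}\leq_\R\cdots\leq_\R f_1$, and dually $a\leq_\L f_m$. Choose $x$ with $a=axa$; then $f_1\cdots f_m = a = a x f_1\cdots f_m$, so $f_1\mr\R f_1\cdots f_m\cdot(\text{something})$... here I would instead invoke that $e:=ax$ and $e':=xa$ are idempotents with $e\mr\R a\mr\L e'$ and $a=ea=ae'$, hence $a=ef_1\cdots f_m e'$; minimality of $m$ then forces (after a short argument identifying $ef_1$ with $f_1$ up to $\mr\R$, etc.) that the whole product sits in $D_a$. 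Once all $f_i\in D_a$, consecutive ones need not yet alternate $\R,\L,\R,\ldots$, so the second step is to \emph{insert} idempotents: between $f_i$ and $f_{i+1}$, if they are not already $\R$- or $\L$-related in the required pattern, use that $f_if_{i+1}\in D_a$ (by the location/Clifford–Miller argument: a product of two $\D$-related idempotents stays in the $\D$-class iff certain $\H$-classes contain idempotents, which they do in a regular $\D$-class) and that within a regular $\D$-class every $\R$-class and every $\L$-class contains an idempotent, to splice in an idempotent $\R$-related to $f_i$ and $\L$-related to $f_{i+1}$ (or vice versa) without changing the product. Iterating produces the alternating chain $a\mr\R e_1\mr\L e_2\mr\R\cdots\mr\L a$.

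Concretely, here is the step I would write out. Having $a=f_1\cdots f_m$ with all $f_i\in D_a$: consider $f_1f_2$. Since $f_1,f_2\in D_a$ and $D_a$ is regular, $f_1f_2\in D_a$ as well, and in the egg-box $f_1f_2$ lies in $R_{f_1}\cap L_{f_2}$; moreover $R_{f_1}\cap L_{f_2}$ is a group $\H$-class (it contains the idempotent $h$ obtained from Green's Lemma, since $f_1,f_2$ are idempotents in a regular $\D$-class). Let $e_1:=f_1$. Then $f_1f_2\mr\R f_1 = e_1$ and $f_1f_2\mr\L f_2$, and $f_1f_2$ is $\H$-related to an idempotent $e_2\in R_{f_2}\cap L_{f_1}$? — I need to be a little careful with which idempotent to pick; the right statement is that $R_{e_1}\cap L_{f_2}$ contains an idempotent $e_2$, and then $e_1 e_2 = f_1 f_2$ by Green's Lemma (uniqueness of the bijection forces the product of the two idempotents in the respective $\R$/$\L$-classes to be the idempotent of the corner $\H$-class) — actually the clean fact is: if $e\mr\R g$, $g\mr\L f$ with $e,f,g$ idempotents then $ef=g$. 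Using this repeatedly I rewrite $f_1\cdots f_m$ so that consecutive factors are alternately $\R$- and $\L$-related, and the endpoints are $\R$-related, resp.\ $\L$-related, to $a$ (which follows since $a\mr\R f_1\cdots$ collapses once everything is in $D_a$: a product of idempotents all in one $\R$-class equals the first, and symmetrically).

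The main obstacle I anticipate is the bookkeeping in the alternation/insertion step: making rigorous the claim that one can always splice in a single idempotent between $f_i$ and $f_{i+1}$ preserving the product, and that finitely many such splices terminate with the exact alternating pattern $\mr\R,\mr\L,\mr\R,\ldots$ including the boundary conditions $a\mr\R e_1$ and $e_k\mr\L a$. This rests on two classical facts that I would cite rather than reprove: (1) in a regular $\D$-class, every $\R$-class and every $\L$-class contains an idempotent, and the intersection of any $\R$-class with any $\L$-class that is a group $\H$-class has a (unique) idempotent; and (2) for idempotents, $e\mr\R f$ implies $ef=f$ and $fe=e$, while $e\mr\L f$ implies $ef=e$ and $fe=f$ — so the combinatorial identity $ef=g$ whenever $e\mr\R g\mr\L f$ holds. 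Granting these, the argument is a finite induction and the conclusion follows; I expect the whole proof to be under a page.
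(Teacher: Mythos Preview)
Your proposal contains the right germ --- sandwiching by an inverse of $a$ --- but the execution has genuine errors, not just bookkeeping.

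The ``clean fact'' you rely on, that $e\mr\R g\mr\L f$ with $e,f,g$ idempotent forces $ef=g$, is false. In $\T_3$ take $e=\trans{1&2&3}{1&2&2}$, $g=\trans{1&2&3}{1&3&3}$, $f=\trans{1&2&3}{1&1&3}$. All three are idempotents, $e\mr\R g$ (common kernel $\{1\mid 2,3\}$) and $g\mr\L f$ (common image $\{1,3\}$), yet $ef$ is the constant map to $1$, of rank $1\neq 2=\rank(g)$. The extra hypothesis you are missing is that $L_e\cap R_f$ be a group $\H$-class, and nothing in your set-up provides it. The same gap undermines your claim that once all $f_i$ lie in $D_a$ the partial products $f_if_{i+1}$ stay there: Clifford--Miller gives $f_if_{i+1}\in R_{f_i}\cap L_{f_{i+1}}$ only when $L_{f_i}\cap R_{f_{i+1}}$ contains an idempotent, and regularity of $D_a$ does not force every such $\H$-class to be a group.

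Separately, the step ``minimality of $m$ forces every $f_i\in D_a$'' is asserted but not argued. Your sandwich $a=ef_1\cdots f_me'$ is two letters \emph{longer}, so it cannot contradict minimality, and $ef_1$ is generally not idempotent, so you cannot simply absorb $e$ into $f_1$.

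The paper's proof avoids all of this by never attempting to push the original $f_i$ into $D_a$. Instead, with $x$ an inverse of $a$, it defines brand-new elements
\[
g_i:=f_i\cdots f_nxf_1\cdots f_i \AND h_i:=f_i\cdots f_nxf_1\cdots f_{i-1},
\]
checks directly (using $xf_1\cdots f_nx=x$) that each is idempotent with $h_i\mr\R g_i\mr\L h_{i+1}$, and computes $g_1\cdots g_n=a$. The sandwiching by $x$ is indeed the key idea, but it must be threaded through \emph{every} factor, not applied only at the two ends.
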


\begin{proof}
By assumption we have $a = f_1\cdots f_n$ for some $f_1,\ldots,f_n\in\sfE(S)$, and also $a=axa$ and $x=xax$ for some $x\in S$.  We then define the elements
\[
g_i = f_i\cdots f_n x f_1\cdots f_i \text{ for $1\leq i\leq n$} \AND h_i = f_i\cdots f_n x f_1\cdots f_{i-1} \text{ for $1\leq i\leq n+1$.}
\]
Using $xf_1\cdots f_nx = xax = x$, one can easily check that each $g_i,h_i\in\sfE(S)$, and that 
\[
g_ih_i = h_i \COMMA h_ig_i = g_i \COMMA g_ih_{i+1} = g_i \AND h_{i+1}g_i = h_{i+1} \qquad\text{for all $1\leq i\leq n$,}
\]
which says that $h_i \mr\R g_i \mr\L h_{i+1}$ for all such $i$.  We also have
\[
a\mr\R ax = h_1 \mr\R g_1 \AND a\mr\L xa = h_{n+1} \mr\L g_n.
\]
A simple induction shows that $g_1\cdots g_i = (ax)^if_1\cdots f_i$ for each $1\leq i\leq n$, so that in particular 
\[
g_1\cdots g_n = (ax)^nf_1\cdots f_n = (ax)^na = a.
\]
Combining this with $g_i = h_ig_i$, it follows that
\[
a = g_1\cdots g_n = g_1h_2g_2h_3g_3\cdots h_ng_n,
\]
which is a factorisation of the desired form.
\end{proof}

\subsection{Graham--Houghton graphs}\label{ss:GHG}

The idempotents of a regular $\D$-class of a semigroup lead to the following important combinatorial structure:

\begin{defn}[{\textbf{Graham--Houghton graph}}]\label{defn:GHD}
Let $D$ be a $\D$-class in a semigroup $S$, and let~$R_i$~(${i\in I}$) and $L_j$ ($j\in J$) be the $\R$-
and $\L$-classes of $S$ contained in $D$, respectively.
The \emph{Graham--Houghton graph} $\GH(D)$ of $D$ is
the bipartite graph with vertices $I\sqcup J$, the disjoint union of $I$ and $J$, and with
 an edge connecting $i\in I$ and $j\in J$ if and only if the $\H$-class $H_{i,j} := R_i \cap L_j$ is a group.
We identify such an edge~$i-j$ with the unique idempotent $e_{i,j}$ contained in $H_{i,j}$, which is the identity of this group.
\end{defn}

The following observation will play a crucial role in our study.  It goes back at least to \cite{Graham1968} in the finite case; the argument of \cite{Graham1968} works essentially unchanged in the case that the principal factor of the $\D$-class is completely 0-simple; see also \cite[Theorem 2.1]{Ru94} for a slightly different formulation.

\begin{prop}
\label{pr:GHconn}
If $S$ is idempotent-generated, then $\GH(D)$ is connected for every regular $\D$-class $D$ of $S$.
\end{prop}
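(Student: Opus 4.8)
The plan is to prove the contrapositive in spirit: I want to show that any two vertices of $\GH(D)$ can be joined by a path, using the fact that every element of $D$ (being regular, since $D$ contains idempotents) can be written as a product of idempotents, and then invoking Proposition~\ref{prop:DF} to get a product of idempotents that ``walks'' through $D$ in the required zig-zag pattern. First I would fix an arbitrary regular $\D$-class $D$ and two idempotents $e, f \in D$ — equivalently, two edges of $\GH(D)$; since the graph is connected iff any two edges lie in a common connected component, and every vertex of $\GH(D)$ is incident to at least one edge (each $\R$- and $\L$-class in a regular $\D$-class contains an idempotent, by the basic facts recalled in Subsection~\ref{ss:Green}), it suffices to connect $e$ and $f$ by a path of edges.

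Next I would pick an element $a \in D$ with $a \mr\R e$ and $a \mr\L f$ — such an element exists because $D = \R \circ \L$, so $R_e \cap L_f \neq \es$. Since $S$ is idempotent-generated, $a \in \la \sfE(S)\ra$, and $a$ is regular, so Proposition~\ref{prop:DF} applies: there are idempotents $g_1, \ldots, g_k \in \sfE(S)$ with
\[
a = g_1 \cdots g_k \AND a \mr\R g_1 \mr\L g_2 \mr\R g_3 \mr\L \cdots \mr\R g_k \mr\L a.
\]
Now the key point is that all of $g_1, \ldots, g_k$ lie in $D$: each $g_i$ is either $\R$-related or $\L$-related to $a$ through a chain of elements all of which are $\mr\R$- or $\mr\L$-related to $a$, hence $\D$-related to $a$, hence in $D$. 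Reading off the incidences: $g_1 \mr\R e$ means $g_1$ and $e$ lie in the same $\R$-class $R_e$, i.e.~they share the vertex $i \in I$ with $R_i = R_e$; then $g_1 \mr\L g_2$ means they share an $\L$-class, i.e.~a vertex in $J$; and so on alternately. This produces a walk in $\GH(D)$ from the edge $e$ to the edge $g_1$ (they share a vertex), then to $g_2$, \ldots, then to $g_k$, and finally $g_k \mr\L a \mr\L f$ (careful: $a \mr\L f$ by choice, and $g_k \mr\L a$, so $g_k \mr\L f$), giving the last step to the edge $f$. Hence $e$ and $f$ are in the same component, and $\GH(D)$ is connected.

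The main obstacle — really the only thing needing care — is the bookkeeping at the two ends of the zig-zag: one must verify that the alternation of $\R$'s and $\L$'s in Proposition~\ref{prop:DF} correctly matches up the $\R$-class of $e$ with the $\R$-class of $g_1$ (so that $e$ and $g_1$ are genuinely adjacent edges of the bipartite graph, sharing an $I$-vertex) and similarly that $g_k$ and $f$ share an $L$-vertex, given that $a \mr\R e$ and $a \mr\L f$. This is immediate from the statement of Proposition~\ref{prop:DF} once one notes $\R$ and $\L$ are transitive equivalences, so $g_1 \mr\R a \mr\R e$ and $g_k \mr\L a \mr\L f$. A final remark: the hypothesis that $S$ is idempotent-generated is used only to ensure $a \in \la\sfE(S)\ra$; if instead one merely knew this for the particular $D$-class (e.g.~that the principal factor is idempotent-generated), the same argument goes through, which matches the comment in the excerpt about the completely $0$-simple case.
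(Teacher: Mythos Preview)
Your proposal is correct and follows essentially the same route as the paper: pick an element $a$ in the appropriate $\H$-class, apply Proposition~\ref{prop:DF} to produce a zig-zag of $\R$/$\L$-related idempotents in $D$, and read this off as a path in $\GH(D)$. The only cosmetic difference is that the paper connects an arbitrary vertex $i\in I$ to an arbitrary vertex $j\in J$ directly (taking $a\in H_{i,j}$), whereas you connect two arbitrary edges $e$ and $f$; both framings are equivalent, and your bookkeeping at the endpoints is fine.
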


\begin{proof}
Keeping the notation of Definition \ref{defn:GHD}, it suffices to show that arbitrary vertices $i\in I$ and $j\in J$ are connected by a path in $\GH(D)$.  To do so, fix some such $i\in I$ and $j\in J$, and also fix an element~${a\in H_{i,j} = R_i\cap L_j}$.    
Now write $a=e_1\cdots e_k$ as in
Proposition \ref{prop:DF}.
Identifying each idempotent with an edge of $\GH(D)$ as explained in Definition \ref{defn:GHD},
the sequence $e_1,\ldots,e_k$ is the desired path from $i$ to $j$.
\end{proof}

\subsection{Biordered sets and singular squares}\label{ss:BS}

Let $S$ be a semigroup, and $E = \sfE(S)$ its set of idempotents.  A pair $(e,f)\in E\times E$ is said to be \emph{basic} if $\{ef,fe\}\cap\{e,f\} \not= \es$, i.e.~if at least one of the products $ef$ or $fe$ is equal to one of~$e$ or $f$.  In this case $ef$ and $fe$ are both idempotents, but they are not necessarily both equal to one of $e$ or $f$.  We write
\[
\Bp = \Bp(S) := \bigset{(e,f)\in E\times E}{\{ef,fe\}\cap\{e,f\} \not= \es}
\]
for the set of all such basic pairs.  Note that $\Bp$ consists of all pairs of idempotents that are comparable in one or both of the $\leq_\L$ or $\leq_\R$ orders; cf.~\eqref{eq:eleqf}. The \emph{biordered set of $S$} is the algebra with:
\bit
\item underlying set $E = \sfE(S)$, and 
\item a partial binary operation, which is the restriction to $\Bp$ of the product in $S$.
\eit
Biordered sets were also defined axiomatically by Nambooripad in \cite{Na79}, without reference to any containing semigroup.  We do not need to give this axiomatisation, however, as it was later shown by Easdown \cite{Easdown1985} that every (abstract) biordered set arises from a semigroup in the above way.

Given a biordered set $E = \sfE(S)$, and  idempotents $e,f\in E$, we define the \emph{sandwich set} 
\begin{equation}\label{eq:Sef}
\SS(e,f):=\{ h\in E\colon ehf=ef,\ fhe=h\}.
\end{equation}
Note that here the products $ehf$ and $ef$ are taken in the semigroup $S$ rather than in $E$, and these need not be idempotents. 
Nonetheless, $\SS(e,f)$ can be defined from the biordered set $E$ alone \cite[p2]{Na79}.
 The biordered set $E$ is said to be \emph{regular} if each sandwich set is non-empty.  Equivalently, $E$ is regular if and only if it is the biordered set of idempotents of some regular semigroup.  But note that a regular biordered set can also be the biordered set of a non-regular semigroup.

Fix a biordered set $E = \sfE(S)$, where $S$ is a semigroup.  A \emph{square of idempotents} in $E$ is an array~$\smat efgh$, where $e,f,g,h\in E$, and where
\[
e \mr\R f \COMMA g \mr\R h \COMMA e \mr\L g \AND f \mr\L h.
\]
Note that $\{e,f,g,h\}$ need not be a subsemigroup of $S$; specifically the `diagonal products', such as $eh$, need not be idempotents. The elements $e,f,g,h$ are not required to be distinct; if there are equalities among them,  the square is said to be \emph{degenerate}.  Because of the above $\R$- and $\L$-relationships, we have
\[
e=f \iff g=h \AND e=g \iff f=h.
\]

We say that the square $\smat efgh$ is a \emph{left-right} (\emph{LR}) \emph{singular square} if there exists $u\in E$ such that
\begin{equation}\label{eq:LR}
ue=e \COMMA ug=g \COMMA eu=f \AND gu = h.
\end{equation}
The idempotent $u$ is said to \emph{\emph{(}LR-\emph{)}singularise} the square.  In this case we will also say that~$\smat fehg$ is a \emph{right-left \emph{(}RL\emph{)} singular square, \emph{(}RL-\emph{)}singularised by $u$}.  We refer to both $\smat efgh$ and $\smat fehg$ as \emph{horizontal singular squares}, and we visually represent them as shown in Figure~\ref{fig:ssq}(a) and~(b).
The following facts are straightforward to check, and we will use them throughout without further reference:
\begin{itemize}
\item
If $\smat efgh$ is a square satisfying 
\eqref{eq:LR} then $uf=f=fu$ and $uh=h=hu$, which is to say that $f,h\leq u$ in the natural partial order on $E$.
\item
If $\smat efgh$ is a square satisfying the first two equalities of \eqref{eq:LR}, then the third and fourth equalities are equivalent to each other; thus in any specific situation it is sufficient to verify only one of them.
\item
If $e,g,u\in E$ satisfy $e\mr\L g$, and $ue=e$ and $ug=g$, then  $\smat e{eu}g{gu}$ is an LR singular square of idempotents (singularised by $u$). 
\end{itemize}

Dually, $\smat efgh$ is an \emph{up-down \emph{(}UD\emph{)} singular square}, with \emph{\emph{(}UD\emph{)}-singularising idempotent} $u$, if
\begin{equation}\label{eq:UD}
eu=e \COMMA fu=f \COMMA ue=g \AND uf = h.
\end{equation}
In this case we will also say that~$\smat ghef$ is a \emph{down-up \emph{(}DU\emph{)} singular square, \emph{(}DU-\emph{)}singularised by~$u$}.  These two squares are referred to as \emph{vertical}, and are visualised in Figure~\ref{fig:ssq}(c) and~(d).
Statements analogous to the above three bullet points hold for these squares.

\begin{figure}
\begin{center}

\setlength{\tabcolsep}{7mm}

\begin{tabular}{cccc}
\begin{tikzpicture}

\fill[lightgray!50] (-0.7,1.5)--(1.5,1.5)--(1.5,-0.5)--(-0.7,-0.5)--(-0.7,1.5);

\node (e) at (0,1) {$\mathrlap{e}{\phantom{f}}$};
\node (f) at (1,1) {$\mathrlap{f}{\phantom{f}}$};
\node (g) at (0,0) {$\mathrlap{g}{\phantom{f}}$};
\node (h) at (1,0) {$\mathrlap{h}{\phantom{f}}$};

\draw [->] (e) -- (f);
\draw [->] (g) -- (h);
\draw [->] (e) edge [loop left]  (e);
\draw [->] (g) edge [loop left] (g);

\node (u) at (0.5,1.8) {$u$};

\end{tikzpicture}
&
\begin{tikzpicture}

\fill[lightgray!50] (-0.5,1.5)--(1.7,1.5)--(1.7,-0.5)--(-0.5,-0.5)--(-0.5,1.5);

\node (f) at (0,1) {$\mathrlap{f}{\phantom{f}}$};
\node (e) at (1,1) {$\mathrlap{e}{\phantom{f}}$};
\node (h) at (0,0) {$\mathrlap{h}{\phantom{f}}$};
\node (g) at (1,0) {$\mathrlap{g}{\phantom{f}}$};

\draw [->] (e) -- (f);
\draw [->] (g) -- (h);
\draw [->] (e) edge [loop right]  (e);
\draw [->] (g) edge [loop right] (g);

\node (u) at (0.5,1.8) {$u$};

\end{tikzpicture}
&
\begin{tikzpicture}

\fill[lightgray!50] (-0.5,1.8)--(1.5,1.8)--(1.5,-0.5)--(-0.5,-0.5)--(-0.5,1.5);

\node [minimum size = 6mm] (e) at (0,1) {$\mathrlap{e}{\phantom{f}}$};
\node (f) at (1,1) {$\mathrlap{f}{\phantom{f}}$};
\node (g) at (0,0) {$\mathrlap{g}{\phantom{f}}$};
\node (h) at (1,0) {$\mathrlap{h}{\phantom{f}}$};

\draw [->] (e) -- (g);
\draw [->] (f) -- (h);
\draw [->] (e) edge [loop above]  (e);
\draw [->] (f) edge [loop above] (e);

\node (u) at (0.5,2.1) {$u$};

\end{tikzpicture}
&
\begin{tikzpicture}

\fill[lightgray!50] (-0.5,1.5)--(1.5,1.5)--(1.5,-0.8)--(-0.5,-0.8)--(-0.5,1.5);

\node (g) at (0,1) {$\mathrlap{g}{\phantom{f}}$};
\node (h) at (1,1) {$\mathrlap{h}{\phantom{f}}$};
\node (e) at (0,0) {$\mathrlap{e}{\phantom{f}}$};
\node (f) at (1,0) {$\mathrlap{f}{\phantom{f}}$};

\draw [->] (e) -- (g);
\draw [->] (f) -- (h);
\draw [->] (e) edge [loop below]  (e);
\draw [->] (f) edge [loop below] (f);

\node (u) at (0.5,1.8) {$u$};

\end{tikzpicture}
\\
(a) & (b) & (c) & (d)
\end{tabular}

\caption{Visual representations of singular squares:
(a) and (b) respectively represent the LR and~RL squares $\smat{e}{f}{g}{h}$ and $\smat{f}{e}{h}{g}$ satisfying \eqref{eq:LR};
(c) and (d) respectively represent the UD and DU squares $\smat{e}{f}{g}{h}$ and $\smat{g}{h}{e}{f}$ satisfying~\eqref{eq:UD}.  In all cases the singularising element is $u$.}
\label{fig:ssq}
\end{center}
\end{figure}
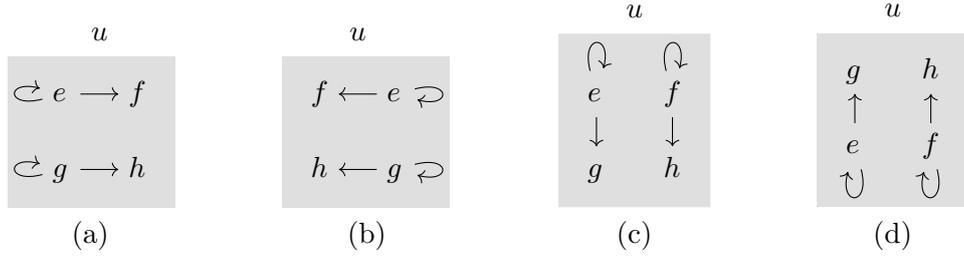

\subsection[Free (regular) idempotent-generated semigroups $\IG(E)$ and $\RIG(E)$]{\boldmath Free (regular) idempotent-generated semigroups $\IG(E)$ and $\RIG(E)$}
\label{ss:IGE}

Fix a semigroup $S$, and its biordered set $E = \sfE(S)$.  Recall from the Introduction that the \emph{free idempotent-generated semigroup}~$\IG(E)$ is defined by the presentation
\[
\bigl\langle X_E\mid x_ex_f=x_{ef}\ ((e,f)\in \Bp)\bigr\rangle .
\]
Here $X_E:=\{ x_e\colon e\in E\}$ is an abstract generating set in one-one correspondence with $E$, and 
$\Bp:=\bigl\{ (e,f)\in E\times E\colon \{ef,fe\}\cap \{e,f\}\neq \es\bigr\}$ is the set of basic pairs.

Let $\pi\colon \IG(E)\rightarrow S$ be the natural homomorphism induced by $x_e\mapsto e$, for $e\in E$.  (Here we recall that the one-letter word $x_e$ is identified with its equivalence class modulo the relations.)  The image of $\pi$ is $\la E\ra$, the idempotent-generated subsemigroup of $S$.
The following are some basic facts that will be used extensively throughout.
\begin{enumerate}[label=\textsf{(IG\arabic*)},leftmargin=10mm]
\item\label{it:IG1}
$\sfE(\IG(E))=X_E$, and $\pi$ induces an isomorphism of biordered sets of $\IG(E)$ and $S$.  
\item\label{it:IG2}
For $e\in E$, the $\D$-class $D_{x_e}$ of $\IG(E)$ is mapped onto the $\D$-class $D_e$ of $\la E\ra$ under $\pi$.
Furthermore, $\pi$ induces bijections between the sets of $\R$-, $\L$- and $\H$-classes in $D_{x_e}$ and 
the corresponding sets in $D_e$.
\end{enumerate}
Intuitively, \ref{it:IG2} is saying that the egg-box diagrams of the $\D$-classes $D_{x_e}$ (in $\IG(E)$) and $D_e$ (in~$\la E\ra$) look the same, except that the maximal subgroups (and hence the $\H$-class sizes) may be larger in $D_{x_e}$.  As noted in Subsection \ref{ss:Green}, the $\D$-classes $D_{x_e}$ and $D_e$ ($e\in E$) are precisely the regular $\D$-classes of $\IG(E)$ and~$\la E\ra$, respectively.
The statement \ref{it:IG1} follows from the main result of \cite{Easdown1985}, while~\ref{it:IG2} follows from~\ref{it:IG1} and~\cite{Fi72}, as discussed in \cite[p148]{GR12IJM}.

The semigroup $\IG(E)$ need not be regular, even when $E$ is a regular biordered set.  But $\IG(E)$ in this case does have a free-est \emph{regular} homomorphic image still with the same biordered set of idempotents.  
This is the \emph{free regular idempotent-generated semigroup} $\RIG(E)$, which is defined by the presentation
\[
\bigl\langle X_E\mid x_ex_f=x_{ef}\ ((e,f)\in \Bp),\ 
x_ex_hx_f=x_ex_f\ (e,f\in E,\ h\in \SS(e,f))\bigr\rangle .
\]
The second family of relations involve the sandwich set $\SS(e,f)$, defined  in \eqref{eq:Sef}.
This semigroup is regular by \cite[Proposition 4.9 and Corollary 5.10]{Pa80}; an alternative construction using categorical machinery can be found in \cite{Na79}.  It is clear that there are homomorphisms $\pi_\supRIG\colon \RIG(E)\rightarrow S$, $x_e\mapsto e$ and $\theta\colon \IG(E)\rightarrow \RIG(E)$, $x_e\mapsto x_e$, and that $\theta\pi_\supRIG=\pi$.  In particular, it follows that:
\begin{enumerate}[label=\textsf{(RIG\arabic*)},leftmargin=12mm]
\item\label{it:RIG1}
$\sfE(\RIG(E))=X_E$, and $\pi_\supRIG$ induces an isomorphism of biordered sets of $\RIG(E)$ and $S$.
\item\label{it:RIG2}
For $e\in E$, the $\D$-class $D_{x_e}$ of $\RIG(E)$ is mapped onto the $\D$-class $D_e$ in $\la E\ra$ under $\pi_\supRIG$.
Furthermore, $\pi_\supRIG$ induces bijections between the sets of $\R$-, $\L$- and $\H$-classes in $D_{x_e}$ and 
the corresponding sets in $D_e$.
\end{enumerate}
By \cite[Theorem 3.6]{BM09} we also have:
\begin{enumerate}[label=\textsf{(RIG\arabic*)},leftmargin=12mm]
\setcounter{enumi}{2}
\item\label{it:RIG3}
For any $e\in E$ the homomorphism $\theta$ induces an isomorphism between the maximal subgroups containing $x_e$ in $\IG(E)$ and $\RIG(E)$.
\end{enumerate}
This time, the entire $\RIG(E)$ has exactly the same Green's structure as $\langle E\rangle $, except that the maximal subgroups of $\RIG(E)$ are homomorphic preimages of those in $\langle E\rangle $.

\subsection{Regular $*$-semigroups}
\label{ss:rss}

Now we gather some basic definitions and properties concerning a regular $*$-semigroup $S$. For ease of reference we include some already encountered in the Introduction.
First recall that by definition,  $S$ has a unary operation $a\mapsto a^*$ such that the following hold:
\begin{equation}\label{eq:*}
(a^*)^* = a = aa^*a \ANd  (ab)^* = b^*a^*\qquad \text{for all } a,b\in S.
\end{equation}
Unless otherwise stated, the proofs of the following can be found in \cite[Subsection 2.1]{EGMR} or in earlier references cited therein.

\begin{enumerate}[label=\textup{\textsf{(RS\arabic*)}},leftmargin=13mm]
\item\label{it:RS1} 
A \emph{projection} is an idempotent $p$ satisfying $p^*=p$; the set of all projections of $S$ is denoted by $P = \sfP(S)$.
\item \label{it:RS2} 
For $a\in S$ the element $aa^*$ is the unique projection $\R$-related to $a$, and $a^*a$ is the unique projection $\L$-related to $a$.  Consequently, $a\mr\R b \iff aa^*=bb^*$ and $a\mr\L b \iff a^*a=b^*b$.
\item \label{it:RS3} The product of two projections is an idempotent, but need not be a projection.
\item\label{it:RS4} 
The \emph{friendliness relation} on $P$ is given by
\begin{align*}
\F &:= \big\{ (p,q)\in P\times P\colon pqp=p,\ qpq=q\big\} \\
&\phantom{:}= \big\{ (p,q)\in P\times P\colon \text{$R_p\cap L_q$ is a group (with identity~$pq$)} \big\}.
\end{align*}
\item \label{it:RS5} 
For any idempotent $e\in \sfE(S)$, the pair $(ee^*,e^*e)$ is the unique pair $(p,q)\in\F$ such that~${e=pq}$.
\item \label{it:RS6} The product of two idempotents need not be an idempotent.
\item \label{it:RS7}
For $a\in S$ and $p\in P$ we have $a^*pa\in P$; 
in particular, $qpq\in P$ for all $p,q\in P$.
\item\label{it:RS8} 
The set $P = \sfP(S)$ together with the unary operations $\theta_p\colon q\mapsto pqp$ (one operation for each $p\in P$) is called the \emph{projection algebra} of $S$.
\item\label{it:RS9} 
Projection algebras can be abstractly axiomatised, and every abstract projection algebra is the projection algebra of a regular $*$-semigroup \cite{Im83,Jo12,EGMR}.
\item\label{it:RS10}
Associated to $\F$, there is a reflexive relation $\leq_\F$ on $P$ defined by $p\leq_\F q \iff pqp=p$, and we have ${\F} = {\leq_\F}\cap{\geq_\F}$.
In fact, $p\leq_\F q \iff p \mr\F qpq$.
\item\label{it:RS11}
For $p,q\in P$ we have $p\leq_\F q$ if and only if $pq\mr\R p$ (see below), and dually $p\leq_\F q$ if and only if $qp\mr\L p$.  Consequently, $p\mr\F q \iff p \mr\R pq \mr\L q$.
\item\label{it:RS12} 
Every projection algebra $P$ uniquely determines an associated biordered set $E = \sfE(P)$ in the sense that for every regular $*$-semigroup $S$ with $\sfP(S)\cong P$ we have $\sfE(S)\cong E$; see \cite[Theorem 6.19]{EGMR}.
\end{enumerate}
As justification for \ref{it:RS11}, note that $p\leq_\F q \implies p = pqp \implies p \mr\R pq$.  Conversely, by \ref{it:RS2} we have $p \mr\R pq \implies p = pq(pq)^* = pqp \implies p\leq_\F q$.

\subsection[Free projection-generated semigroups $\PG(P)$]{\boldmath Free projection-generated semigroups $\PG(P)$}
\label{ss:PGP}

Let $P$ be a projection algebra, and let $S$ be a regular $*$-semigroup such that $P=\sfP(S)$.
The \emph{free projection-generated semigroup} $\PG(P)$ was constructed in \cite{EGMR} using categorical machinery.  By \cite[Theorem 7.2]{EGMR}, we can take $\PG(P)$ to be the semigroup defined by the presentation
\[
\bigl\langle X_P \mid 
x_p^2=x_p,\ (x_px_q)^2=x_px_q,\ x_px_qx_p=x_{pqp}\ (p,q\in P)\bigr\rangle,
\]
where $X_P:=\{ x_p\colon p\in P\}$, and where the product $pqp$ is taken in $S$.  Note that $pq$ itself need not belong to $P$, but $pqp$ always does; see~\ref{it:RS3} and \ref{it:RS7}.  
The following facts include some that are  proved in \cite{EGMR}, and some simple consequences:

\begin{enumerate}[label=\textup{\textsf{(PG\arabic*)}},leftmargin=13mm]
\item\label{it:PG1}
The mapping $x_p\mapsto p$ extends to a homomorphism $\pi\colon \PG(P)\rightarrow S$.
The image of $\pi$ is the (regular $*$-)subsemigroup $\langle P\rangle$ of $S$ generated by the projections, which coincides with the subsemigroup generated by the idempotents.
See \cite[Theorem 5.8]{EGMR}.
\item\label{it:PG2}
$\sfP(\PG(P))=X_P$, and the restriction of $\pi$ to $X_P$ is an isomorphism of projection algebras.
Again see \cite[Theorem 5.8]{EGMR}.
\item\label{it:PG3}
Consequently, the homomorphism $\pi$ induces bijections between the sets of $\D$-, $\R$-, $\L$- and $\H$-classes of $\PG(P)$ and $\langle P\rangle$.
It also induces an isomorphism between the biordered sets of idempotents of $\PG(P)$ and $\langle P\rangle$.
\item\label{it:PG4}
It also follows that for every projection $p\in \sfP(S)$, the homomorphism $\pi$ induces a surmorphism from the maximal subgroup of $\PG(P)$ containing $x_p$ onto the maximal subgroup of $\langle P\rangle $ containing~$p$.
\item\label{it:PG5}
Letting $E = \sfE(P)$ be the biordered set associated with $P$ as per \ref{it:RS12},
the mapping ${x_e\mapsto x_{ee^*}x_{e^*e}}$ ($e\in E$) induces a surmorphism $\theta\colon \IG(E)\rightarrow \PG(P)$.
Recall here that~$x_e$ is a typical generator of $\IG(E)$, and that $ee^*$ and $e^*e$ are both projections by \ref{it:RS2}.
See \cite[p45 and the proof of Theorem 7.10]{EGMR}.
\item\label{it:PG6}
Again, the surmorphism $\theta$ induces bijections between the sets of regular $\D$-, $\R$- and $\L$-classes of $\IG(E)$ and (all) the $\D$-, $\R$- and $\L$-classes of $\PG(P)$, an isomorphism between their biordered sets of idempotents, and surmorphisms from maximal subgroups of $\IG(E)$ onto the corresponding maximal subgroups of $\PG(P)$.
See \ref{it:IG2}, \ref{it:RS12} and \ref{it:PG2}.
\item\label{it:PG7} Analogous statements hold about the relationship between $\RIG(E)$ and $\PG(P)$.
See \ref{it:RIG2}, \ref{it:RS12} and \ref{it:PG2}.
\end{enumerate}

\subsection[Some remarks about the relationships between $S$, $\PG(P)$, $\IG(E)$ and $\RIG(E)$]{\boldmath Some remarks about the relationships between $S$, $\PG(P)$, $\IG(E)$ and $\RIG(E)$}
\label{ss:rships}

The observations that follow are technically not needed for the subsequent development, but the reader may find them useful because they reflect the viewpoint and motivation of this paper; unless otherwise stated, each assertion follows from various combinations of items in the preceeding subsections.
Let $S$ be a projection-generated regular $*$-semigroup, and let $P:=\sfP(S)$ and $E:=\sfE(S)$.
Then we have a chain of natural surmorphisms
\[
\IG(E)\stackrel{\theta_{\sfI}}{\longrightarrow} \RIG(E)\stackrel{\theta_{\sfR}}{\longrightarrow} \PG(P)\stackrel{\theta_{\sfP}}{\longrightarrow} S.
\]
Results of the current paper can be used to show that none of these mappings are isomorphisms when $S = \la\sfE(\P_n)\ra$ is the idempotent-generated submonoid of the partition monoid of degree $n\geq3$ (see the next subsection for definitions):
\bit
\item $\theta_{\sfI}$ is not an isomorphism because $\RIG(E)$ is regular but $\IG(E)$ is not.  Indeed, it follows quickly from \cite[Theorem 3.6]{DG17} that any word of the form $x_ex_f$ with $(ef,e)\not\in\R$ and $(ef,f)\not\in\L$ represents a non-regular element of $\IG(E)$.  Such elements $e$ and $f$ are easy to find in $\P_n$. 
\item $\theta_{\sfR}$ is not an isomorphism because maximal subgroups corresponding to projections of rank $0\leq r\leq n-2$ are finite in $\PG(P)$, but infinite in $\RIG(E)$.  See Theorems \ref{thm:mainPGPn} and \ref{thm:mainIGPn}, and recall that $\RIG(E)$ and $\IG(E)$ have the same subgroups up to isomorphism.
\item Finally, $\theta_{\sfP}$ is not an isomorphism because $S$ is finite, but $\PG(P)$ is infinite.  Indeed, it follows from Corollary \ref{cor:PGPnn-1} that in the second-top $\D$-class of $\PG(P)$, maximal subgroups are free of rank $\binom{n-1}2\geq1$.
\eit
However, the following strong links are present (for any $S$):
\bit
\item
Each of $\theta_{\sfI}$, $\theta_{\sfR}$ and $\theta_{\sfP}$ restricts to an isomorphism of biordered sets;
furthermore, $\theta_{\sfP}$ restricts to an isomorphism of projection algebras.
\item
Let $\theta$ be any of $\theta_{\sfI}$, $\theta_{\sfR}$ or $\theta_{\sfP}$, and let $D$ be a regular $\D$-class
in the domain of $\theta$ (which means \emph{any} $\D$-class in the case of $\theta_{\sfR}$ or $\theta_{\sfP}$).
Then $D\theta$ is a (regular) $\D$-class in the image of $\theta$, and~$\theta$ induces 
bijections between the $\R$-, $\L$- and $\H$-classes of $D$ and $D\theta$.
\item
Restricted to any group $\H$-class, $\theta_{\sfI}$ is a group isomorphism,
while $\theta_{\sfR}$ and $\theta_{\sfP}$ are group surmorphisms.
The latter two are not isomorphisms in general (for example when ${S=\la\sfE(\P_n)\ra}$, as above).
\item
As a consequence, $\IG(E)$ and $\RIG(E)$ do not in general admit a regular $*$-structure that would be natural in the sense of being compatible with the above scheme of surmorphisms.
\eit
The topic of this paper is to analyse the maximal subgroups of $\IG(E)$ (equivalently $\RIG(E)$),~$\PG(P)$ and $S$ in relation to each other.

\subsection[The partition monoid $\P_n$ and the full transformation monoid $\T_n$]{\boldmath The partition monoid $\P_n$ and the full transformation monoid $\T_n$}
\label{ss:Pn}

Let $X$ be a set, and $X' = \set{x'}{x\in X}$ a disjoint copy.  The \emph{partition monoid} $\P_X$ consists of all set partitions of $X\cup X'$.  Such a partition $a\in\P_X$ is identified with any graph on vertex set $X\cup X'$ whose connected components are the blocks of $a$; such a graph is typically drawn with vertices from $X$ and $X'$ on a top and bottom row, respectively.  Partitions $a,b\in\P_X$ are multiplied as follows.  First, let $X''=\set{x''}{x\in X}$ be another copy of $X$, and define three further graphs:
\bit
\item $a^\vee$, the graph on vertex set $X\cup X''$ obtained by changing each lower vertex $x'$ of $a$ to~$x''$,
\item $b^\wedge$, the graph on vertex set $X''\cup X'$ obtained by changing each upper vertex $x$ of $b$ to~$x''$,
\item $\Ga(a,b)$, the graph on vertex set $X\cup X''\cup X'$ whose edge set is the union of the edge sets of $a^\vee$ and $b^\wedge$.
\eit
We call $\Ga(a,b)$ the \emph{product graph} of $a$ and $b$.  The product $ab$ is the unique element of $\P_X$ such that $x,y\in X\cup X'$ belong to the same block of $ab$ if and only if they are in the same connected component of $\Ga(a,b)$.  When displaying product graphs, the vertices from $X''$ are shown on a middle row.
When $X = [n] = \{1,\ldots,n\}$ for some integer $n\geq0$, we write $\P_n$ for $\P_X$.  Unless otherwise specified, vertices are shown in the natural order, $1<\cdots<n$.  An example product is given in Figure \ref{fig:P6}, for
\begin{align*}
a &= \big\{\{1,4\},\{2,3,4',5'\},\{5,6\},\{1',2',6'\},\{3'\}\big\} ,\\
b &= \big\{\{1,2\},\{3,4,1'\},\{5,5',6'\},\{6\},\{2',3'\},\{4'\}\big\}, \\
\text{and}\quad ab &= \big\{\{1,4\},\{2,3,1',5',6'\},\{5,6\},\{2',3'\},\{4'\}\big\}.
\end{align*}
The partition monoids $\P_n$ contain many important diagram monoids as submonoids, including the Brauer, Temperley--Lieb and Motzkin.  For example, the Brauer monoid~$\B_n$ consists of all partitions whose blocks have size $2$.
The partition monoids $\P_n$ also contain (isomorphic copies of) the full transformation monoids~$\T_n$, as we will soon see.

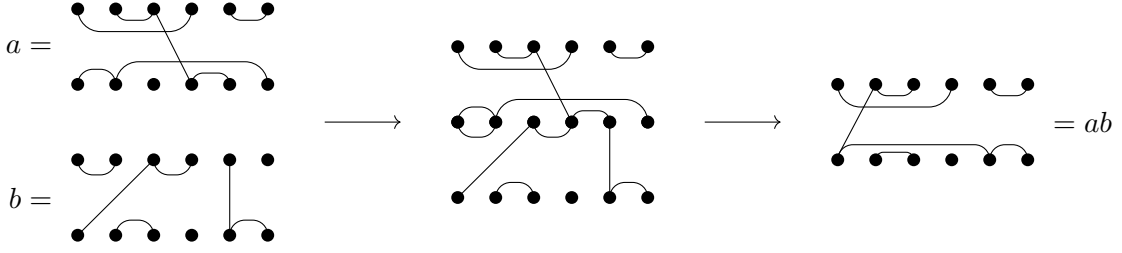
\begin{figure}[t]
\begin{center}
\begin{tikzpicture}[scale=.5]

\begin{scope}[shift={(0,0)}]	
\olduvs{1,...,6}
\oldlvs{1,...,6}
\olduarcx14{.6}
\olduarcx23{.3}
\olduarcx56{.3}
\olddarc12
\olddarcx26{.6}
\olddarcx45{.3}
\oldstline34
\draw(0.6,1)node[left]{$a=$};
\draw[->](7.5,-1)--(9.5,-1);
\end{scope}

\begin{scope}[shift={(0,-4)}]	
\olduvs{1,...,6}
\oldlvs{1,...,6}
\olduarc12
\olduarc34
\olddarc56
\olddarc23
\oldstline31
\oldstline55
\draw(0.6,1)node[left]{$b=$};
\end{scope}

\begin{scope}[shift={(10,-1)}]	
\olduvs{1,...,6}
\oldlvs{1,...,6}
\olduarcx14{.6}
\olduarcx23{.3}
\olduarcx56{.3}
\olddarc12
\olddarcx26{.6}
\olddarcx45{.3}
\oldstline34
\draw[->](7.5,0)--(9.5,0);
\end{scope}

\begin{scope}[shift={(10,-3)}]	
\olduvs{1,...,6}
\oldlvs{1,...,6}
\olduarc12
\olduarc34
\olddarc56
\oldstline31
\oldstline55
\olddarc23
\end{scope}

\begin{scope}[shift={(20,-2)}]	
\olduvs{1,...,6}
\oldlvs{1,...,6}
\olduarcx14{.6}
\olduarcx23{.3}
\olduarcx56{.3}
\olddarc15
\olddarc56
\oldstline2{.95}
\olddarcx23{.2}
\draw(6.4,1)node[right]{$=ab$};
\end{scope}

\end{tikzpicture}
\caption{Multiplication of partitions $a,b\in\P_6$, with the product graph $\Ga(a,b)$ shown in the middle.}
\label{fig:P6}
\end{center}
\end{figure}

For a set $X$, we call a non-empty subset $A$ of $X\cup X'$:
\bit
\item an \emph{upper non-transversal} (or \emph{upper block}) if $A\sub X$,
\item a \emph{lower non-transversal} (or \emph{lower block}) if $A\sub X'$,
\item a \emph{transversal} otherwise.
\eit
If $a\in\P_X$ for finite $X$, we write
\[
a = \begin{partn}{6} A_1&\cdots&A_r&C_1&\cdots&C_s\\ \hhline{~|~|~|-|-|-} B_1&\cdots&B_r&D_1&\cdots&D_t\end{partn}
\]
to indicate that $a$ has transversals $A_i\cup B_i'$ (for $1\leq i\leq r$), upper non-transversals $C_i$ (for $1\leq i\leq s$), and lower non-transversals $D_i'$ (for $1\leq i\leq t$).  For example, with $a\in\P_6$ from Figure~\ref{fig:P6}, we have $a = \begin{partn}{3} 2,3 & 1,4 & 5,6 \\ \hhline{~|-|-} 4,5 & 1,2,6 & 3 \end{partn}$.  
We sometimes adopt the convention of not listing singleton blocks, so for example, $b = \begin{partn}{3} 3,4 & 5 & 1,2 \\ \hhline{~|~|-} 1 & 5,6 & 2,3 \end{partn}$.

The partition monoid $\P_X$ has a natural involution $a\mt a^*$, obtained by swapping dashed and undashed vertices.  For finite $X$, if $a = \begin{partn}{6} A_1&\cdots&A_r&C_1&\cdots&C_s\\ \hhline{~|~|~|-|-|-} B_1&\cdots&B_r&D_1&\cdots&D_t\end{partn}$, then $a^* = \begin{partn}{6} B_1&\cdots&B_r&D_1&\cdots&D_t\\ \hhline{~|~|~|-|-|-} A_1&\cdots&A_r&C_1&\cdots&C_s\end{partn}$.  Graphically, this corresponds to a reflection in the horizontal axis.  This involution gives $\P_X$ the structure of a regular $*$-semigroup, as it is easy to check that the identities \eqref{eq:*} hold.
The above-mentioned submonoids (Brauer, Temperley--Lieb, etc) are closed under ${}^*$, and are hence themselves regular $*$-semigroups.

The \emph{domain} of a partition $a= \begin{partn}{6} A_1&\cdots&A_r&C_1&\cdots&C_s\\ \hhline{~|~|~|-|-|-} B_1&\cdots&B_r&D_1&\cdots&D_t\end{partn}\in\P_n$ is the set 
${\dom(a):=A_1\cup\cdots \cup A_r}$.
The \emph{kernel} of $a$ is the equivalence relation $\ker(a)$ on $[n]$ with classes
$A_1,\ldots,A_r,C_1,\ldots,C_s$.
The \emph{codomain} $\codom(a)$ and \emph{cokernel} $\coker(a)$ are defined dually, using the sets $B_1,\ldots, B_r,D_1,\ldots, D_t$.
The \emph{rank} of $a$ is the number of transversals, $\rank(a) = r$.
Green's equivalences on~$\P_n$ can be described using these parameters as follows; for proofs see \cite{Wi07,FL11}.

\begin{lemma}\label{la:Green_Pn}
For $a,b\in\P_n$, we have
\begin{thmenumerate}
\item $a \mr\R b \iff \dom(a)=\dom(b)$ and $\ker(a)=\ker(b)$;
\item $a \mr\L b \iff \codom(a)=\codom(b)$ and $\coker(a)=\coker(b)$;
\item $a \mr\D b \iff  \rank(a)=\rank(b)$;
\item every group $\H$-class of $\P_n$ consisting of partitions of rank $r$ is isomorphic to the symmetric group $\S_r$.
\end{thmenumerate}
\end{lemma}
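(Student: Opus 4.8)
The strategy is to reduce everything to a careful reading of the product‑graph definition: prove the $\mr\R$-description (i) head‑on, obtain the $\mr\L$-description (ii) from (i) by applying the involution $a\mt a^*$, deduce the $\mr\D$-description (iii) from the identity $\D=\R\circ\L$, and establish (iv) for a single conveniently chosen idempotent, invoking the already‑quoted fact that $\D$-related idempotents have isomorphic maximal subgroups.

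For (i), the forward direction rests on two monotonicity facts about right multiplication, both read off from $\Ga(a,b)$: a top vertex of $ab$ joined to some bottom vertex must first reach the middle row using edges of $a$ only, so its block in $a$ meets the bottom row, giving $\dom(ab)\sub\dom(a)$; and two top vertices in one block of $a$ stay joined in $\Ga(a,b)$, giving $\ker(a)\sub\ker(ab)$. Applying both to the one‑sided factorisations $a=bx$ and $b=ay$ witnessing $a\mr\R b$ forces $\dom(a)=\dom(b)$ and $\ker(a)=\ker(b)$. For the converse I would exhibit explicit right multipliers: writing $a$ and $b$ with their transversal top‑parts in a common order (legitimate exactly because their domains and kernels agree), one builds a partition $c$ whose $i$th transversal sends the bottom‑part of $b$'s $i$th transversal onto the bottom‑part of $a$'s $i$th transversal, whose lower non‑transversals are those of $a$, and which absorbs all remaining upper vertices into one non‑transversal block; tracing $\Ga(b,c)$ vertex by vertex gives $bc=a$, and the mirror construction gives a $d$ with $ad=b$, whence $a\mr\R b$. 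This vertex‑by‑vertex check — confirming that nothing in $\Ga(b,c)$ merges unexpectedly — is the fiddliest routine ingredient.

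Part (ii) is then formal: $a\mt a^*$ is an order‑reversing anti‑automorphism of $\P_n$ with $\dom(a^*)=\codom(a)$ and $\ker(a^*)=\coker(a)$, so applying (i) to $a^*$ and $b^*$ yields (ii). For (iii), I would first note that $\dom$ and $\ker$ jointly determine $\rank$ — the transversals are precisely the kernel classes lying inside the domain — and dually for $\codom$ and $\coker$; hence (i) and (ii) show that both $\R$ and $\L$ preserve rank, and therefore so does $\D=\R\circ\L$. Conversely, given $\rank(a)=\rank(b)=r$, pairing the $r$ transversal top‑parts of $a$ with the $r$ transversal bottom‑parts of $b$ produces a partition $c$ with the domain and kernel of $a$ and the codomain and cokernel of $b$; then $a\mr\R c\mr\L b$ by (i) and (ii), so $a\mr\D b$.

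Finally, for (iv): since maximal subgroups at $\D$-related idempotents are isomorphic, it is enough to analyse the idempotent $e$ of rank $r$ with $\dom(e)=\codom(e)$, $\ker(e)=\coker(e)$ and transversals $A_1\cup A_1',\dots,A_r\cup A_r'$. By (i) and (ii), $H_e$ is exactly the set of $a\in\P_n$ sharing the domain, kernel, codomain and cokernel of $e$; each such $a$ carries $A_i$ to some $A_{i\sigma_a}'$, and $a\mt\sigma_a$ is a well‑defined map $H_e\to\S_r$, injective because $a$ is determined by those four parameters together with $\sigma_a$, and surjective because $A_1\cup A_{1\sigma}',\dots,A_r\cup A_{r\sigma}'$ are the transversals of a member of $H_e$ for any $\sigma\in\S_r$. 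The step I expect to be the real obstacle is checking that this map respects products: one must verify from $\Ga(a,b)$ that for $a,b\in H_e$ the block containing $A_i$ on top runs down to $A_{i\sigma_a\sigma_b}'$ and that no blocks merge en route — which uses that $\ker$ and $\coker$ stay frozen along $H_e$ (keeping the $A_i$ separated, top and bottom) and that $\sigma_a$, $\sigma_b$ are bijections (keeping the intermediate middle‑row blocks separated), and which simultaneously re‑confirms $ab\in H_e$. Since $\S_r$ is anti‑isomorphic to itself, either variance of this map gives $H_e\cong\S_r$.
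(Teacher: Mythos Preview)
Your argument is correct and is the standard route to these facts. However, the paper does not actually supply a proof of this lemma: the sentence immediately preceding it reads ``for proofs see \cite{Wi07,FL11}'', and no proof environment follows. So there is nothing in the paper to compare your proposal against; what you have written is a faithful reconstruction of the kind of direct product-graph argument those cited references give.
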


The full transformation monoid $\T_n$ can be naturally viewed as the submonoid of $\P_n$ consisting of all partitions with full domain and trivial cokernel.
For such a partition
$a=\begin{partn}{6} A_1 & \cdots & A_r & \multicolumn{3}{c}{}\\ \hhline{~|~|~|-|-|-} b_1&\cdots & b_r& c_1&\cdots& c_{n-r}\end{partn}$ we will often use the more familiar transformation notation
$a =\trans{A_1 & \cdots & A_r}{b_1&\cdots&b_r}$.
It is well known that~$\T_n$ is a regular semigroup, and so it inherits Green's $\L$ and $\R$ equivalences from $\P_n$ by \cite[Proposition 2.4.2]{Ho95}.  The same happens to be true of the $\D$ relation, and one obtains the familiar description:

\newpage

\begin{lemma}\label{la:Green_Tn}
For $a,b\in\T_n$, we have
\begin{thmenumerate}
\item $a \mr\R b \iff \ker(a)=\ker(b)$;
\item $a \mr\L b \iff \codom(a)=\codom(b)$;
\item $a \mr\D b \iff  \rank(a)=\rank(b)$;
\item every group $\H$-class of $\T_n$ consisting of transformations of rank $r$ is isomorphic to the symmetric group $\S_r$.
\end{thmenumerate}
\end{lemma}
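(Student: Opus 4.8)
The plan is to deduce all four parts from the corresponding statements about $\P_n$ in Lemma \ref{la:Green_Pn}, using the fact (recalled just above the statement) that $\T_n$ is a regular submonoid of $\P_n$, so that by \cite[Proposition 2.4.2]{Ho95} the relations $\R$ and $\L$ on $\T_n$ are precisely the restrictions of the relations $\R$ and $\L$ on $\P_n$. For (i) and (ii) this is then immediate: for $a\in\T_n$ the domain $\dom(a)$ is always all of $[n]$ and the cokernel $\coker(a)$ is always trivial, so the two conditions in Lemma \ref{la:Green_Pn}(i) reduce to ``$\ker(a)=\ker(b)$'', and those in Lemma \ref{la:Green_Pn}(ii) to ``$\codom(a)=\codom(b)$''.

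For (iii) I would prove the two implications separately. If $a\mr\D b$ holds in $\T_n$, then there is $c\in\T_n$ with $a\mr\R c\mr\L b$ in $\T_n$, hence also in $\P_n$, so $a\mr\D b$ in $\P_n$ and therefore $\rank(a)=\rank(b)$ by Lemma \ref{la:Green_Pn}(iii). For the converse I would build an explicit connecting element. Suppose $\rank(a)=\rank(b)=r$; then $\ker(a)$ has exactly $r$ classes and $\codom(b)$ has exactly $r$ elements, so I can fix a bijection between the set of $\ker(a)$-classes and $\codom(b)$, and let $c\in\T_n$ be the transformation that is constant on each $\ker(a)$-class with value prescribed by this bijection. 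Then $\ker(c)=\ker(a)$ and $\codom(c)=\codom(b)$, so by (i) and (ii) we get $a\mr\R c\mr\L b$ in $\T_n$, whence $a\mr\D b$.

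For (iv) I would first invoke the standard fact, recalled in Subsection \ref{ss:Green}, that any two $\D$-related idempotents of a semigroup have isomorphic maximal subgroups; by (iii) it therefore suffices to identify $H_e$ for one idempotent $e\in\sfE(\T_n)$ of rank $r$. Since the multiplication in $\T_n$ is the restriction of that in $\P_n$, the group $H_e$ computed in $\T_n$ is a subgroup of the group $\H$-class of $e$ in $\P_n$, which is isomorphic to $\S_r$ by Lemma \ref{la:Green_Pn}(iv). It then remains only to count $|H_e|$: by (i) and (ii), $H_e$ consists of exactly those $a\in\T_n$ with $\ker(a)=\ker(e)$ and $\codom(a)=\codom(e)$, and each such $a$ amounts to a bijection from the $r$ classes of $\ker(e)$ onto the $r$-element set $\codom(e)$, so $|H_e|=r!$. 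A subgroup of $\S_r$ of order $r!$ is all of $\S_r$, giving $H_e\cong\S_r$.

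I do not expect any genuine difficulty here, since the argument is essentially a transfer from $\P_n$. The only point needing real (if minor) care is the converse direction of (iii), because Green's $\D$-relation need not restrict to a subsemigroup in general, so one cannot simply quote \cite{Ho95} for it and must exhibit the connecting transformation $c$ by hand; a secondary point is making sure in (iv) that the count is exactly $r!$, so that the subgroup of $\S_r$ in question really is the whole group.
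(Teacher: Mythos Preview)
Your proposal is correct and matches the paper's approach: the paper states this lemma without proof, merely noting beforehand that $\T_n$ is regular so inherits $\R$ and $\L$ from $\P_n$ via \cite[Proposition~2.4.2]{Ho95}, and that ``the same happens to be true of the $\D$ relation''. Your write-up simply fills in the routine details of this sketch, including the explicit construction of the connecting element $c$ for part~(iii) and the counting argument for part~(iv), both of which are standard.
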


Note that $\T_n$ is not closed under the involution ${}^*$.  Rather, $\T_n^* = \set{a^*}{a\in\T_n}$ is an anti-isomorphic copy of $\T_n$.

\section{Reidemeister--Schreier rewriting for maximal subgroups}
\label{sec:rs}

In this section we outline a general method for finding presentations for maximal subgroups of semigroups given by presentations, which was introduced in \cite{Ru99}, and has many similarities with the Reidemeister--Schreier rewriting theory for groups \cite[Section II.4]{LS01}.
This method was used in \cite{GR12IJM} to establish a presentation for maximal subgroups of free idempotent-generated semigroups $\IG(E)$, of which we will present a brief review in Section \ref{sec:maxIG}.
We will deploy it twice more, 
in Sections \ref{sec:pres-linked} and \ref{sec:pres-quot}, to find two different presentations for maximal subgroups of free projection-generated semigroups $\PG(P)$.

Let $S$ be the semigroup defined by a (semigroup) presentation $\langle X\mid\rels\rangle$, and let 
$e_0\in X^+$ be a word representing an idempotent in $S$.
We want to exhibit a presentation for the maximal subgroup~$G$ of~$S$ containing $e_0$, i.e.~for the group $\H$-class $H_{e_0}$.

Let $D=D_{e_0}$ be the $\D$-class of $e_0$.  Index the $\R$- and $\L$-classes in $D$ as $R_i$ ($i\in I$), and~$L_j$~($j\in J$).  For $i\in I$ and $j\in J$, let $H_{i,j}:=R_i\cap L_j$ -- these are the $\H$-classes in $D$.  Let $i_0\in I$ and~${j_0\in J}$ be such that $e_0\in H_{i_0,j_0}$; thus $G=H_{i_0,j_0}$.  Let 
\begin{equation}\label{eq:K}
K:=\big\{ (i,j)\in I\times J\colon H_{i,j} \text{ is a group}\big\}.
\end{equation}
For $(i,j)\in K$, let $e_{i,j}\in X^+$ be any word that represents the identity of $H_{i,j}$, with $e_{i_0,j_0}=e_0$.

By Green's lemma there is an action of $S$ on the set $(R_{i_0}/\H)\cup\{0\} = \set{H_{i_0,j}}{j\in J} \cup \{0\}$, consisting of the $\H$-classes contained in $R_{i_0}$ plus an extra symbol $0$, given by right multiplication.
This is equivalent to an action of $S$ on the set $J\cup\{0\}$, determined by 
\begin{equation}\label{eq:j.s}
j\cdot s=
\begin{cases} 
k & \text{if } H_{i_0,j}s=H_{i_0,k}\\ 
0 & \text{otherwise}.
\end{cases}
\end{equation}

A set of \emph{Schreier representatives} for this action is a prefix-closed set $\set{r_j}{j\in J}$ of words from~$X^*$, such that
$j_0\cdot r_j=j$ for  every $j\in J$.
It is routine to see that such a set must exist.  One way is to use the action to define a directed graph on vertex set $J$, with edges labelled by the generators (from $X$), observe that the graph must be strongly connected (i.e.~there is a directed path from any vertex to any other vertex) by the definition of the $\R$ equivalence, and then take a directed spanning tree rooted at $j_0$.  By this we mean a subgraph whose underlying undirected graph is a tree, such that there exists a (unique) directed path from~$j_0$ to every other vertex.  The edge-labels on these paths determine the representatives.

Also, by the definition of $\R$, and Green's Lemma, there exist words $\set{r_j'}{j\in J}$ from $X^*$ such that
$ar_jr_j'=a$ for all $a\in H_{i_0,j_0}$ and $br_j'r_j=b$ for all $b\in H_{i_0,j}$.
We note that nothing further is required from the words $r_j'$; e.g.~they need not be prefix-closed.

By \cite[Theorem 2.7]{Ru99}, the maximal subgroup $G$ 
 is generated by the set
 \[
 \{ e_0r_jxr_{j\cdot x}'\colon j\in J,\ x\in X,\ j\cdot x\neq 0\}.
 \]
So we introduce a new alphabet
\[B:=\bigl\{ [j,x]\colon j\in J,\ x\in X,\ j\cdot x\neq 0\bigr\},
\]
where $[j,x]$ is a letter/symbol, which we think of as representing the generator $e_0r_jxr_{j\cdot x}'$. The presentation for~$G$ will be written in terms of the alphabet $B$.

The defining relations for $G$ are obtained by \emph{rewriting} in some sense the defining relations~$\rels$ of $S$. To make this formal, we need the
\emph{rewriting mapping}:
\[
\phi\colon \{ (j,w)\in J\times X^\ast\colon j\cdot w\neq 0\}\rightarrow B^\ast,
\]
defined inductively by
\begin{equation}
\label{eq:rewr}
\phi(j,1)=1 \ANd \phi(j,xw)=[j,x]\phi(j\cdot x,w) \text{ for } w\in X^* \text{ with } j\cdot xw \not= 0.
\end{equation}

\begin{thm}[{see \cite[Theorem 2.9]{Ru99}}]
\label{thm:RSrw}
With the notation as above, the maximal subgroup $G$ is defined by the group presentation
\begin{alignat}{3}
\label{eq:gpIG1}
\bigl\langle B \mid\: & [j,x]=1 &\quad& (j\in J,\ x\in X,\ r_jx=r_{j\cdot x}),\\
\label{eq:gpIG2}
& \phi(j,u)=\phi(j,v)&& (j\in J,\ (u=v)\in \rels,\ j\cdot u\neq 0)\bigr\rangle.
\end{alignat}
\end{thm}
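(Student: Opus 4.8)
The statement is quoted from \cite[Theorem 2.9]{Ru99}; here is how I would organise the argument, writing $N$ for the normal closure of the relators appearing in \eqref{eq:gpIG1}--\eqref{eq:gpIG2} and $\overline\psi$ for the induced surmorphism onto $G$ that we wish to prove is an isomorphism.

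\emph{Setting up, and two elementary facts.} By \cite[Theorem 2.7]{Ru99} the group $G$ is generated by the elements $g_{j,x}:=e_0r_jxr'_{j\cdot x}$ (one for each $[j,x]\in B$), so $[j,x]\mapsto g_{j,x}$ extends to a surmorphism $\psi$ from the free group over $B$ onto $G$; the theorem is the assertion $\ker\psi=N$. Two facts about the rewriting map $\phi$ of \eqref{eq:rewr}, both by induction on word length, underpin everything: (a) $\phi$ is \emph{multiplicative}, $\phi(j,uv)=\phi(j,u)\,\phi(j\cdot u,v)$ whenever the left-hand side is defined; and (b) the \emph{tracking identity} $\psi(\phi(j,w))=e_0r_jwr'_{j\cdot w}$ in $S$, whose inductive step uses only that $e_0$ is the identity of $G$ together with the defining properties $ar_jr'_j=a$ ($a\in H_{i_0,j_0}$) and $br'_jr_j=b$ ($b\in H_{i_0,j}$) of the auxiliary words.

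\emph{The relations hold.} From (b): if $r_jx=r_{j\cdot x}$ then $\psi([j,x])=e_0r_{j\cdot x}r'_{j\cdot x}=e_0=1_G$, giving \eqref{eq:gpIG1}; and if $(u=v)\in\rels$ with $j\cdot u\neq 0$ then $j\cdot u=j\cdot v$ and $\psi(\phi(j,u))=e_0r_jur'_{j\cdot u}=e_0r_jvr'_{j\cdot v}=\psi(\phi(j,v))$, giving \eqref{eq:gpIG2}. Hence $N\subseteq\ker\psi$ and $\psi$ factors through a surmorphism $\overline\psi\colon\overline G\to G$, where $\overline G$ is the group presented by \eqref{eq:gpIG1}--\eqref{eq:gpIG2}.

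\emph{The converse inclusion, via an inverse map.} This is the heart of the proof. For $g\in G$ choose any word $w$ over $X$ with $w=g$ in $S$; since right-multiplying the $\H$-class $H_{i_0,j_0}=G$ by the element $g\in G$ returns $H_{i_0,j_0}$ (Green's Lemma, Lemma~\ref{lem:GL}), we have $j_0\cdot w=j_0$, so $\phi(j_0,w)$ is defined, and I set $\theta(g):=\phi(j_0,w)N\in\overline G$. The \textbf{main obstacle} is that $\theta$ is well defined, i.e.\ independent of the choice of $w$ --- and this is exactly where \eqref{eq:gpIG2} is needed. If $w=w'$ in $S$ they are joined by a chain of single $\rels$-applications; for one step $w=\alpha u\beta\to\alpha v\beta$ with $(u=v)\in\rels$, every word in the chain still represents $g$, so $j_1:=j_0\cdot\alpha$ and $j_1\cdot u=j_1\cdot v$ are all nonzero, and (a) gives $\phi(j_0,\alpha u\beta)=\phi(j_0,\alpha)\,\phi(j_1,u)\,\phi(j_1\cdot u,\beta)$ and likewise for $v$, the two differing only in the middle factor, where $\phi(j_1,u)\equiv\phi(j_1,v)\pmod N$ by \eqref{eq:gpIG2}. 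Granting this, $\theta$ is a homomorphism (by (a) and $j_0\cdot w=j_0$ for $w=g\in G$), and (b) together with $r_{j_0}=1$ and the fact that right-multiplication by $r'_{j_0}$ fixes $G$ pointwise gives $\overline\psi(\theta(g))=g$, so in particular $\theta$ is injective. Finally $\theta\overline\psi=\mathrm{id}_{\overline G}$ is checked on the generators $[j,x]$: computing $\theta(g_{j,x})$ via the representative word $e_0r_jxr'_{j\cdot x}$ and using (a), this reduces to showing $\phi(j_0,e_0)\equiv 1$, $\phi(j_0,r_j)\equiv 1$ and $\phi(j\cdot x,r'_{j\cdot x})\equiv 1$ modulo $N$ --- the first because $\theta(e_0)$ is an idempotent of the group $\overline G$ hence trivial; the second because every prefix of the Schreier word $r_j$ is again a Schreier word, so each letter of $\phi(j_0,r_j)$ is killed by \eqref{eq:gpIG1}; and the third by combining well-definedness of $\theta$ applied to the $S$-identity $e_0r_{j\cdot x}r'_{j\cdot x}=e_0$ with the first two facts. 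Together, $\overline\psi\theta=\mathrm{id}_G$ and $\theta\overline\psi=\mathrm{id}_{\overline G}$, so $\overline\psi$ is an isomorphism. The hardest point is the well-definedness of $\theta$ (that \eqref{eq:gpIG2} records \emph{enough} relations); a secondary nuisance is the treatment of the words $r'_j$, which, unlike the $r_j$, are not prefix-closed and so cannot be handled by \eqref{eq:gpIG1} alone.
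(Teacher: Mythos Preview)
The paper does not supply its own proof of this theorem: it is stated with a reference to \cite[Theorem~2.9]{Ru99} and then used as a black box, so there is no proof in the paper for your proposal to be compared against. Your sketch is a faithful and correct reconstruction of the standard Reidemeister--Schreier argument underlying that reference: verify the relations hold via the tracking identity, then build an inverse map $\theta\colon G\to\overline G$ by rewriting any word for $g$ and checking independence of the choice using \eqref{eq:gpIG2}, with the Schreier (prefix-closed) property of the $r_j$ handling \eqref{eq:gpIG1}. Your identification of well-definedness of $\theta$ as the crux, and your device of deducing $\phi(j\cdot x,r'_{j\cdot x})\equiv 1$ from the already-established well-definedness applied to $e_0r_{j\cdot x}r'_{j\cdot x}=e_0$ (rather than trying to use \eqref{eq:gpIG1} directly, which would fail since the $r'_j$ are not prefix-closed), are both exactly right.
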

 
\section{\boldmath Maximal subgroups of $\IG(E)$}
\label{sec:maxIG}

\subsection{Applying the general rewriting}
\label{ss:genrew}

The above theory was applied to the case of maximal subgroups of $\IG(E)$ in \cite{GR12IJM}.
Here we briefly review the steps, with a two-fold aim: 1) we want to make a small modification to the way the final presentation is written out; and 2) we will want to re-run this process in the special case where $E$ comes from a regular $*$-semigroup in Section \ref{sec:pres-quot}, which will yield a comparison between maximal subgroups of $\IG(E)$ and $\PG(P)$.

Let $E$ be a biordered set.  As previously discussed, we may take $E$ to be embedded in a semigroup~$S$ so that $E=\sfE(S)$.  There are two main reasons for adopting this viewpoint,
as opposed to working with $E$ as an abstract biordered set, as specified by the axioms. Firstly, while this axiomatics is natural and reflects the fundamental properties of idempotent structure, it is certainly more cumbersome than working within a single semigroup. And, secondly, our main results in the second half of the paper will follow this pattern: we will take a concrete semigroup, specifically the partition monoid $\P_n$, and investigate the maximal subgroups of the free idempotent- and projection-generated semigroups arising from its idempotents or projections.

In what follows, we will further assume that $S$ is idempotent-generated. If it were not, we would replace it with the subsemigroup $\langle E\rangle$ generated by the idempotents.

Recall the presentational definition of $\IG(E)$ given in Subsection \ref{ss:IGE}:
\begin{equation}
\label{eq:IGpres}
\IG(E) := \bigl\langle X_E\mid x_ex_f=x_{ef}\ ((e,f)\in \Bp)\bigr\rangle ,
\end{equation}
where $X_E:=\{ x_e\colon e\in E\}$, and 
$\Bp:=\bigl\{ (e,f)\in E\times E\colon \{ef,fe\}\cap \{e,f\}\neq \es\bigr\}$.
Since we assumed that $S$ is idempotent-generated, it follows that the natural homomorphism $\pi\colon\IG(E)\rightarrow S$, $x_e\mapsto e$, is surjective.
Suppose now that we are given a fixed idempotent $e_0\in E$, and we want to obtain a presentation for the maximal subgroup $\GI = H_{x_{e_0}}$  of $\IG(E)$ containing $x_{e_0}$.
We apply the method from the previous section, so we carry the notation over, including the sets 
\[
D = D_{e_0} \COMMA R_i\ (i\in I) \COMMA L_j\ (j\in J) \COMMA H_{i,j} = R_i\cap L_j \AND K.
\]
We will also write $E_D = E\cap D = \set{e_{i,j}}{(i,j)\in K}$ for the set of idempotents in $D$, and we let $i_0\in I$ and $j_0\in J$ be such that $e_0\in H_{i_0,j_0}$.

However, there are two important technical subtleties that the reader will need to keep in mind, as we now record:

\begin{rem}
A key aspect in Section \ref{sec:rs}
is the action of the semigroup under consideration on the $\H$-classes within a fixed $\R$-class.
So, for us here, that would be the action of $\IG(E)$ on the $\H$-classes in the $\R$-class of $x_{e_0}$ in $\IG(E)$. 
However, the natural surmorphism $\pi$ induces a bijection between the $\L$-classes in 
the $\D$-class of $x_{e_0}$ in $\IG(E)$ and those in the $\D$-class of $e_0$ in $S$ by \ref{it:IG2}.
In turn, this induces a bijection between the $\H$-classes in the $\R$-class of $x_{e_0}$ in~$\IG(E)$ and the $\H$-classes in the $\R$-class of $e_0$ in $S$.
Furthermore, 
each $x_e$ acts on the former in the same way as  $e$ acts on the latter.
A complete formal proof that these two actions are the same is given in \cite[Lemma 2.8]{DG17}.  
Since it is $S$ that is \emph{a priori} given to us, we will throughout use its actions in place of those of $\IG(E)$.
This point of view was also adopted and explained in \cite{GR12IJM}.
\end{rem}

\begin{rem}
\label{re:Schrestr}
\emph{A priori}, a set of Schreier representatives would be a set of words over the generating set $X_E$ of $\IG(E)$.
However, recalling that we are assuming that $S$ is idempotent-generated, it follows by Proposition \ref{prop:DF}  that 
$D\subseteq \langle E_D\rangle$. Repeating the reasoning for the existence of Schreier representatives from Section \ref{sec:rs}, but using only edges labelled by elements of $E_D$, yields a set of Schreier representatives over the alphabet $\{x_e\colon e\in E_D\}$.
\end{rem}

The presentation for $\GI$ given in Theorem \ref{thm:RSrw} has generators
\[
B:= \bigl\{ [j,x_e]\colon j\in J,\ e\in E,\ j\cdot e\neq 0\bigr\}
\]
and defining relations
\begin{alignat}{2}
\label{eq:IG1}
&[j,x_e]=1 &\quad& (j\in J,\ e\in E,\ r_jx_e = r_{j\cdot e}),\\
\label{eq:IG22}
&[j,x_e][j\cdot e, x_f]=[j,x_{ef}] && (j\in J,\ (e,f)\in\Bp,\ j\cdot ef\neq 0).
\end{alignat}

This presentation is transformed in \cite{GR12IJM} in the following way.
For every $i\in I$ pick any $\mapj(i)\in J$ such that $(i,\mapj(i))\in K$; this exists because $D$ is regular. 
Introduce new generating symbols via
\begin{equation}
\label{eq:IG4}
\fg{i,j} := [\mapj(i), x_{e_{i,j}}] \text{ for }  (i,j)\in K,
\quad\text{and set}\quad
A:=\bigl\{ \fg{i,j}\colon (i,j)\in K\bigr\}.
\end{equation}
The original generators can be eliminated as follows.
Consider an arbitrary $[t,x_e]\in B$.
Let $j:= t\cdot e$.
By \cite[Lemma 3.3]{GR12IJM}, there exists $i\in I$ such that $(i,t),(i,j)\in K$ and $es=s$ for all $s\in R_i$,
and then \cite[Lemma 3.1]{GR12IJM} yields
\begin{equation}
\label{eq:IG4a}
[t,x_e]=\fg{i,t}^{-1} \fg{i,j}. 
\end{equation}
Performing this elimination 
yields the following presentation for the maximal subgroup $\GI$ of $\IG(E)$ containing $x_{e_0}$ (see \cite[Theorem 5]{GR12IJM}):
\begin{alignat}{3}
\label{eq:IG5}
\bigl\langle A\mid\ && &\fg{i,j}=\fg{i,l} && ((i,j),(i,l)\in K,\ r_j x_{e_{i,l}} = r_l),\\
\label{eq:IG6}
&& &\fg{i,\mapj(i)}=1&& (i\in I),\\
\label{eq:IG7}
&& & \fg{i,j}^{-1} \fg{i,l} = \fg{k,j}^{-1} \fg{k,l} &\quad& (\smat{e_{i,j}}{e_{i,l}}{e_{k,j}}{e_{k,l}}\in  \Sq)\bigr\rangle,
\end{alignat}
where $\Sq$ stands for the set of all singular squares of idempotents of $D$, as defined in Subsection~\ref{ss:BS}.  Recall that if $\smat{e_{i,j}}{e_{i,l}}{e_{k,j}}{e_{k,l}}$ is an LR singular square, then $\smat{e_{i,l}}{e_{i,j}}{e_{k,l}}{e_{k,j}}$ is an RL-square; clearly these squares give equivalent relations of the form \eqref{eq:IG7}.  Similarly, matching pairs of UD- and DU-squares give equivalent relations.  A relation \eqref{eq:IG7} coming from a degenerate square is trivially true; i.e.~it holds in the free group over $A$.

Before we move on, we make a brief historical remark about the presentation \eqref{eq:IG5}--\eqref{eq:IG7}.  
A topological version of the above presentation, in the case of $\RIG(E)$  ($E$ regular), is given in \cite{BM09} and attributed to Nambooripad.  
In the same paper it is shown that  the maximal subgroups of $\RIG(E)$ and $\IG(E)$ corresponding to the same idempotent are isomorphic, with the consequence that \eqref{eq:IG5}--\eqref{eq:IG7} is indeed a presentation for $\GI$ when $E$ is regular.  The general case, where $E$ need not be regular, was established in \cite{GR12IJM}.

\subsection{Spanning trees}
\label{ss:span}

For our purposes here, we want to recast the presentation \eqref{eq:IG5}--\eqref{eq:IG7} in a slightly different way, using the language of spanning trees.  We keep the notation of Subsection \ref{ss:genrew}.

Since the $\D$-class $D$ is regular, its Graham--Houghton graph~$\GH(D)$ is connected by Proposition \ref{pr:GHconn}.  We can therefore fix a spanning tree $T$ of $\GH(D)$. We will identify $T$ with its set of edges, 
 i.e.~we consider it as a subset of $E_D = E\cap D$.
 We claim that $T$ yields a natural set of Schreier representatives $r_j$ and elements~$\mapj(i)$.
 Indeed, for $j\in J$, consider the unique path from~$j_0$ to~$j$ using edges from $T$:
 \begin{equation}
 \label{eq:path}
 j_0- i_1-j_1-\cdots- i_{k-1}-j_{k-1}-i_k-j_k=j.
 \end{equation}
This means that $(i_1,j_0),(i_1,j_1),\ldots, (i_k,j_{k-1}),(i_k,j_k)\in K$.  The path \eqref{eq:path} is empty if $j=j_0$.
We now define
\[
r_j:= x_{e_{i_1,j_1}} x_{e_{i_2,j_2}}\cdots x_{e_{i_k,j_k}},
\]
i.e.~the product of generators corresponding to every other edge in the path, starting with $i_1-j_1$.  Note that $r_{j_0}$ is necessarily the empty word.
We claim that $\{ r_j\colon j\in J\}$ is a set of Schreier representatives.
Note that we have $H_{i_0,j_{t-1}}e_{i_t,j_t}=H_{i_0,j_t}$ because $H_{i_t,j_{t-1}}$ is a group.
Therefore $j_{t-1}\cdot x_{e_{i_t,j_t}}=j_t$, and so
\[
j_0\cdot r_j=j_0\cdot x_{e_{i_1,j_1}} x_{e_{i_2,j_2}}\cdots x_{e_{i_k,j_k}}
=j_1x_{e_{i_2,j_2}}\cdots x_{e_{i_k,j_k}}
=\cdots=j_k=j.
\]
To show that the $r_j$ are prefix closed consider an arbitrary prefix $x_{e_{i_1,j_1}} \cdots x_{e_{i_t,j_t}}$ of $r_j$.
The sequence $ j_0- i_1-j_1-\cdots- i_t-j_t$ is a subpath of \eqref{eq:path}, and hence the unique path from $j_0$ to~$j_t$ using edges from $T$.
Hence $x_{e_{i_1,j_1}} \cdots x_{e_{i_t,j_t}} = r_{j_t}$, completing the proof that $\{ r_j\colon j\in J\}$ is a system of Schreier representatives.
Furthermore, in reference to Remark \ref{re:Schrestr}, we note that, by construction, each $r_j$ is a word over the alphabet $\{ x_e\colon e\in E_D\}$.

Moving on to selecting the elements $\mapj(i)$, suppose $i\in I$ is arbitrary, and let 
 \[
 j_0- i_1-j_1-\cdots- i_{k-1}-j_{k-1}-i_k=i
\]
be the unique path from $j_0$ to $i$ using edges from $T$.
Then define $\mapj(i):= j_{k-1}$. It is clear that $(i,\mapj(i))\in K$.

Consider the presentation \eqref{eq:IG5}--\eqref{eq:IG7} arising from this choice of the $r_j$ and $\mapj(i)$.
In particular, consider an arbitrary relation \eqref{eq:IG5}.
From the condition $r_jx_{e_{i,l}}=r_l$ (and keeping in mind \eqref{eq:path}), it follows that the final two edges in the unique $T$-path from $j_0$ to $l$ are $e_{i,j}$ and $e_{i,l}$. Therefore $j=\mapj(i)$, and so, in the presence of relations \eqref{eq:IG6}, relations \eqref{eq:IG5} can be simplified to
\begin{equation}
\label{eq:IG6a}
\fg{i,l}=1 \quad  \text{whenever } x_{e_{i,l}} \text{ is the last letter of } r_l.
\end{equation}
Now notice that for every $e_{i,j}$ in $T$ one of the following two conditions holds:
\bit
\item
$j$ immediately precedes $i$ in the unique $T$-path from $j_0$ to $i$, and so $j=\mapj(i)$; or
\item
$i$ immediately precedes $j$ in the unique $T$-path from $j_0$ to $j$, in which case $e_{i,j}$ is the final letter of~$r_j$. 
\eit
Therefore the relations \eqref{eq:IG6} and \eqref{eq:IG6a} can together be written as
\[
\fg{i,j}=1\quad (e_{i,j}\in T).
\]
Renaming the letters $\fg{i,j}$ into $a_{e_{i,j}}$, so that $A=\{ a_e\colon e\in E_D\}$, we have proved the following:

\begin{thm}[cf.~{\cite[Theorem 5]{GR12IJM}}]
\label{thm:spantreepres}
With the notation as above, and $T$ a spanning tree for the Graham--Houghton graph $\GH(D)$, the maximal subgroup of $\IG(E)$ containing $x_{e_0}$ is defined by the presentation
\begin{alignat}{3}
\nonumber\bigl\langle A \mid\: & a_e=1 && (e\in T),\\
 &
  a_e^{-1}a_f=a_g^{-1}a_h &\quad& (\smat{e}{f}{g}{h}\in  \Sq)\bigr\rangle.
 \tag*{\qed}
\end{alignat}
\end{thm}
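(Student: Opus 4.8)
The plan is to derive Theorem \ref{thm:spantreepres} directly from the presentation \eqref{eq:IG5}--\eqref{eq:IG7} by making the specific choice of Schreier representatives $r_j$ and the functions $\mapj$ dictated by a fixed spanning tree $T$ of $\GH(D)$, exactly as set up in the paragraphs immediately preceding the statement. The work has essentially already been laid out there; what remains is to assemble it into a clean deduction and verify the last simplification step. So first I would record that $\GH(D)$ is connected (Proposition \ref{pr:GHconn}, using that $S$, hence the relevant $\D$-class, is idempotent-generated), and fix a spanning tree $T$, viewed as a subset of $E_D$.

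Next, I would confirm that the words $r_j$ defined as alternating products of generators $x_{e_{i_t,j_t}}$ along the unique $T$-path \eqref{eq:path} from $j_0$ to $j$ do form a valid system of Schreier representatives: they are prefix-closed because any prefix is itself the $T$-path to an intermediate vertex (uniqueness of paths in a tree), and $j_0\cdot r_j = j$ follows by stepping along the path and using that each $H_{i_t,j_{t-1}}$ is a group so that $j_{t-1}\cdot x_{e_{i_t,j_t}} = j_t$. I would similarly check that $\mapj(i):=j_{k-1}$, the second-to-last vertex on the $T$-path from $j_0$ to $i$, satisfies $(i,\mapj(i))\in K$. Feeding these choices into \eqref{eq:IG5}--\eqref{eq:IG7} and renaming $\fg{i,j}$ to $a_{e_{i,j}}$ so that $A = \{a_e : e\in E_D\}$, the relations \eqref{eq:IG7} become exactly the singular-square relations $a_e^{-1}a_f = a_g^{-1}a_h$ for $\smat efgh\in\Sq$, with no change needed.

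The one genuinely substantive step — and the place where a little care is required — is showing that the two families of "trivial generator" relations \eqref{eq:IG5} and \eqref{eq:IG6} collapse to the single family $a_e = 1$ for $e\in T$. For \eqref{eq:IG6} this is immediate since $\mapj(i)$ was chosen so that $e_{i,\mapj(i)}\in T$. For \eqref{eq:IG5}: the hypothesis $r_j x_{e_{i,l}} = r_l$ forces $r_l$ to end in $x_{e_{i,j}}x_{e_{i,l}}$, so by the tree structure the last two edges of the $T$-path to $l$ are $e_{i,j}$ and $e_{i,l}$; hence $j = \mapj(i)$, and modulo \eqref{eq:IG6} relation \eqref{eq:IG5} reduces to $a_{e_{i,l}} = 1$ whenever $x_{e_{i,l}}$ is the last letter of $r_l$, i.e.\ \eqref{eq:IG6a}. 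Finally, every edge $e_{i,j}\in T$ falls into exactly one of two cases along the relevant $T$-paths — either $j$ immediately precedes $i$ on the path to $i$ (so $j = \mapj(i)$ and the generator is killed by \eqref{eq:IG6}) or $i$ immediately precedes $j$ on the path to $j$ (so $e_{i,j}$ is the final letter of $r_j$ and the generator is killed by \eqref{eq:IG6a}) — and conversely only edges of $T$ arise this way. This bijective bookkeeping between $T$-edges and the relations of type \eqref{eq:IG6}/\eqref{eq:IG6a} is the main (though not deep) obstacle; once it is in place the presentation reads precisely as claimed, and the theorem follows.
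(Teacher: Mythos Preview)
Your proposal is correct and follows exactly the paper's own argument, which is laid out in the paragraphs of Subsection~\ref{ss:span} immediately preceding the theorem. One tiny imprecision: the hypothesis $r_j x_{e_{i,l}} = r_l$ does not literally force $r_l$ to end in $x_{e_{i,j}}x_{e_{i,l}}$ (only alternate edges of the $T$-path contribute letters to $r_l$, so $x_{e_{i,j}}$ is never one of them), but your intended conclusion---that the last two edges of the $T$-path to $l$ are $e_{i,j}$ and $e_{i,l}$, hence $j=\mapj(i)$---is correct and is precisely what the paper uses.
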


Motivated by this, we introduce some notation for future use:

\begin{defn}
For a subset $F\subseteq E_D$, let:
\bit
\item
$\rels_1(F):=\big\{ (a_e=1)\colon e\in F\big\}$;
\item
$\rels_{\subSq}:=\big\{ (a_e^{-1}a_f=a_g^{-1}a_h)\colon \smat{e}{f}{g}{h}\in\Sq\big\}$;
\item
$\presn(F):=\big\langle A\mid \rels_1(F),\ \rels_{\subSq}\big\rangle$.
\eit
\end{defn}

With this notation, Theorem \ref{thm:spantreepres} asserts that $\GI$, the maximal subgroup of $\IG(E)$ containing~$x_{e_0}$, is defined by $\presn(T)$,
where $T$ is any spanning tree of the Graham--Houghton graph~$\GH(D)$.
One can arrive at the same result by using the fact that $\GI$ is the fundamental group of the 
\emph{Graham--Houghton complex} of $D$ \cite[Section 3]{BM09}. However, in what follows we will deploy 
variants of the above argument in situations where such a topological interpretation is not so readily available.

\begin{rem}
\label{rem:noss}
When there are no singular squares, 
the Graham--Houghton complex is simply a graph, and its fundamental group is free of rank 
$|E_D\setminus T|$, which is equal to $|E_D|-|I|-|J|+1$ when $E_D$ is finite. 
This is also explained in \cite{BM09} after Theorem 4.1, and
 can be seen as an immediate consequence of Theorem \ref{thm:spantreepres}.
Furthermore, if $S$ is a regular $*$-semigroup, we have $|I|=|J|=|P_D|$, the number of projections in $D$, and hence the rank is $|E_D|-2|P_D|+1$. 
When $E=\sfE(\P_n)$ and $D$ is the $\D$-class of partitions of rank $n-1$, there are no singular squares
in $D$ because the identity is the only idempotent $\J$-above $D$. 
In this case we have $|P_D| = n + \binom n2$, and $|E_D| = n + 5\binom n2$; see \cite{EG17}.
Hence, the maximal subgroups in $\IG(E)$ corresponding to idempotents of rank $n-1$ are free
of rank $|E_D|-2|P_D|+1=\frac{(n-1)(3n-2)}{2}$.
\end{rem}

\section{Maximal subgroups in \boldmath{$\IG(\sfE(\T_n))$}}
\label{sec:IGETn}

When we come to determining the maximal subgroups of the free projection-generated regular $*$-semigroup 
$\PG(\sfP(\P_n))$ and the free idempotent-generated semigroup $\IG(\sfE(\P_n))$ arising from the partition monoid $\P_n$,
the key idea will be to make a series of reductions towards the maximal subgroups of the free idempotent-generated semigroup $\IG(E(\T_n))$ arising from the full transformation monoid $\T_n$.
These latter maximal subgroups have been determined in \cite{GR12}. Specifically, the maximal subgroup containing an idempotent of rank $r\leq n-2$ is isomorphic to the symmetric group $\S_r$.
The starting presentation in \cite{GR12} arises from a specific choice of Schreier representatives and elements $\mapj(i)$. For our purposes here, we need a presentation that arises from a spanning tree.  It is important, however, that the two presentations are \emph{equivalent}, in the sense that they have the same generators, and the two sets of defining relations imply each other; this means that any consequence of the relations from \cite{GR12} are also consequences of ours.

Now, in the previous subsection we have seen that every spanning tree gives rise to a natural choice of Schreier representatives and elements $\mapj(i)$. Unfortunately, the Schreier representatives and the $\mapj(i)$ from \cite{GR12} do not arise in such a way. Nonetheless, we are able to identify a spanning tree that works, with a little bit of extra justification.

Let us briefly review the set-up from \cite{GR12}; for full details see Section 2 in that paper.
Let $1\leq r\leq n-2$, and take $e_0:= \bigl(\begin{smallmatrix} 1&\cdots &r & r+1&\cdots & n\\ 1&\cdots &r& r&\cdots &r\end{smallmatrix}\bigr)$, a fixed idempotent of rank $r$ in $\T_n$.
Denote by~$D(n,r)$ the $\D$-class of~$e_0$ in $\T_n$; it consists of all mappings of rank $r$.
Based on Lemma \ref{la:Green_Tn}, we can
index  the  $\R$-classes in $D(n,r)$ by the set $I$ of all partitions of $[n] = \{1,\ldots,n\}$ into $r$ blocks, 
and the $\L$-classes by the set $J$  of all subsets of~$[n]$ of size $r$.
For $C\in J$ and $V\in I$ we write $C\perp V$ if $C$ is a cross-section of $V$.
This is precisely the necessary and sufficient condition for the existence of an idempotent transformation with kernel~$V$ and image~$C$, which we denote by $e_{V,C}$; in other words, $K=\bigl\{ (V,C)\in I\times J\colon C\perp V\bigr\}$.  If $(V,C)\in K$, and if $V = \{V_1,\ldots,V_r\}$ and $C = \{c_1,\ldots,c_r\}$ with each $c_i\in V_i$, then $e_{V,C} = \trans{V_1&\cdots&V_r}{c_1&\cdots&c_r}$.

For every partition $V=\{ V_1,\ldots, V_r\}$ in $I$, define
\[
\mapC(V):=\{ \min(V_1),\ldots,\min(V_r)\}\in J.
\]
This is the lexicographically smallest $C\in J$ with $C\perp V$.
Note that subsets from $J$ are ordered lexicographically as follows.  For $C=\{c_1<\cdots<c_r\}$ and $D=\{d_1<\cdots<d_r\}$, we have $C<_\lex D$ if there exists $1\leq k\leq r$ such that $c_i=d_i$ for $1\leq i<k$ and $c_k< d_k$.
The sets~$\mapC(V)$ will serve as the $\R$-class representatives, which were denoted by $\mapj(i)$ in the previous two sections.

Analogously, for every set $C=\{c_1<\cdots<c_r\}$ in $J$, define
\[
\mapV(C):=\bigl\{ [1,c_1],(c_1,c_2],\ldots, (c_{r-2},c_{r-1}],(c_{r-1},n]\bigr\}\in I.
\]
This is the lexicographically smallest $V\in I$ with $C\perp V$.  Note that partitions from $I$ are ordered lexicographically as follows.  For ${V=\{V_1<_\lex\cdots<_\lex V_r\}}$ and $W=\{W_1<_\lex\cdots<_\lex W_r\}$, we have $V<_\lex W$ if there exists $1\leq k\leq r$ such that $V_i=W_i$ for $1\leq i<k$ and $V_k<_\lex W_k$.

In the presentation established in Subsection \ref{ss:genrew}, relations \eqref{eq:IG6} are precisely  $\fg{V,\mapC(V)}=1$
for $V\in I$, while the relations \eqref{eq:IG5} take the form $\fg{V,C}=\fg{V,D}$ for some $V\in I$ and some ${C,D\in J}$.
Which combinations are taken depends on the actual choice of Schreier representatives, but we do not need this specific information. Key for us is to note that, by the construction in \cite{GR12}, $V$ is a \emph{convex} partition in each of those relations, i.e.~a partition in which all the blocks are intervals.
In other words, there exist convex partitions $V^{(1)},\ldots,V^{(t)}$ in $I$ and sets $C_1,\ldots,C_t, D_1,\ldots, D_t$ in $J$ so that the maximal subgroup $\GI$ is defined by the presentation
\begin{alignat}{3}
\label{eq:GRpres1}
\bigl\langle A \mid\: & \fg{V^{(i)},C_i}=\fg{V^{(i)},D_i} && (i=1,\ldots ,t),\\
\label{eq:GRpres2}
& \fg{V,\mapC(V)}=1 && (V\in I),\\
\label{eq:GRpres3}
& \fg{V,C}^{-1}\fg{V,D}=\fg{W,C}^{-1}\fg{W,D}&\quad& 
(\smat{e_{V,C}}{e_{V,D}}{e_{W,C}}{e_{W,D}}\in \Sq)\bigr\rangle.
\end{alignat}

\begin{prop}[cf.~{\cite[Main Theorem]{GR12}}]
\label{pr:treeTn}
The set
\[
\Tlex=\Tlex(n,r):=\big\{ e_{V,\mapC(V)}\colon V\in I\big\} \cup \big\{ e_{\mapV(C),C}\colon C\in J\big\}.
\]
is a spanning tree for the Graham--Houghton graph of $D(n,r)$ in $\T_n$.
The resulting presentation~$\presn(\Tlex) = \langle A\mid \rels_1(\Tlex),\ \rels_{\subSq}\rangle$ is equivalent to the presentation \eqref{eq:GRpres1}--\eqref{eq:GRpres3}, and defines the symmetric group $\S_r$.
\end{prop}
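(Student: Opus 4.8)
The plan is to establish the two assertions in turn: first the combinatorial claim that $\Tlex$ is a spanning tree of $\GH(D(n,r))$, and then the group-theoretic claim relating $\presn(\Tlex)$ to the presentation \eqref{eq:GRpres1}--\eqref{eq:GRpres3}. Write $V_0:=\mapV([r])$; since $\ker(e_0)=V_0$ and $\codom(e_0)=[r]$, the edge $e_0=e_{V_0,[r]}$ joins the two root $\R$-/$\L$-classes of the argument. For the spanning tree I would argue as follows. Each listed element is a genuine edge: $e_{V,\mapC(V)}$ exists because the set of block-minima $\mapC(V)$ is a cross-section of $V$, and $e_{\mapV(C),C}$ exists because $C$ is a cross-section of the interval partition $\mapV(C)$; and every vertex of $\GH(D(n,r))$ is visibly met. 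To count edges, note that the families $\{e_{V,\mapC(V)}:V\in I\}$ and $\{e_{\mapV(C),C}:C\in J\}$ have $|I|$ and $|J|$ elements and meet in exactly one edge: $e_{V,\mapC(V)}=e_{\mapV(C),C}$ forces $V=\mapV(C)$ and $\mapC(V)=C$, and comparing the block-minima of the intervals $[1,c_1],(c_1,c_2],\dots,(c_{r-1},n]$ with $\{c_1,\dots,c_r\}$ forces $C=[r]$, whence $V=V_0$. Thus $|\Tlex|=|I|+|J|-1$. For connectivity, attach to each $V\in I$ the incident edge $e_{V,\mapC(V)}$ and to each $C\in J$ the incident edge $e_{\mapV(C),C}$ and follow these from an arbitrary vertex towards $[r]$: from $V$ one reaches $\mapC(V)$, and if $\mapC(V)\neq[r]$ one then reaches $\mapV(\mapC(V))$, which satisfies $\mapV(\mapC(V))\le_\lex V$ since it is the $\le_\lex$-least partition admitting the cross-section $\mapC(V)$ while $V$ itself admits it; this is strict unless $V=\mapV(\mapC(V))$, and by the overlap computation the only such $V$ is $V_0$ (for which $\mapC(V_0)=[r]$). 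By finiteness of $I$ this process reaches $[r]$ from any vertex, hence reaches $V_0$ via $e_0\in\Tlex$, so $(I\sqcup J,\Tlex)$ is connected. A connected graph on $|I|+|J|$ vertices with $|I|+|J|-1$ edges is a spanning tree.

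For the second assertion I would first invoke Theorem \ref{thm:spantreepres}: now that $\Tlex$ is a spanning tree, $\presn(\Tlex)=\langle A\mid\rels_1(\Tlex),\rels_{\subSq}\rangle$ defines $\GI$, the maximal subgroup of $\IG(\sfE(\T_n))$ containing $x_{e_0}$, and $\GI\cong\S_r$ by \cite{GR12} (via \cite{GR12IJM}); so $\presn(\Tlex)$ defines $\S_r$. The ``extra justification'' needed for the equivalence with \eqref{eq:GRpres1}--\eqref{eq:GRpres3} is to note that the Schreier data attached to $\Tlex$ by the construction of Subsection \ref{ss:span} reproduces the map $\mapj=\mapC$ used in \cite{GR12}: by the connectivity argument the $\Tlex$-path from the root $[r]$ to $V$ terminates with the edge $e_{V,\mapC(V)}$, so $\mapj(V)=\mapC(V)$. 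Consequently relations \eqref{eq:IG6}$=$\eqref{eq:GRpres2} (namely $\fg{V,\mapC(V)}=1$) and the singular-square relations \eqref{eq:IG7}$=$\eqref{eq:GRpres3}$=\rels_{\subSq}$ occur in both presentations, and \eqref{eq:GRpres2} lies in $\rels_1(\Tlex)$ since $e_{V,\mapC(V)}\in\Tlex$.

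It then remains to show the two full relation sets generate the same normal subgroup of the free group over $A$. Since both present the finite group $\S_r$ over the same generators, the two normal closures have equal index $r!$, so one containment suffices, and I would prove that every relation of $\rels_1(\Tlex)\cup\rels_{\subSq}$ is a consequence of \eqref{eq:GRpres1}--\eqref{eq:GRpres3}. The singular-square relations are literally \eqref{eq:GRpres3}; a relation $a_e=1$ with $e\in\Tlex$ is either $\fg{V,\mapC(V)}=1$ (which is \eqref{eq:GRpres2}) or $\fg{\mapV(C),C}=1$ for some $C\in J$. For the latter I would use that $\mapV(C)$ is a \emph{convex} partition, together with the fact --- read off from the reduction to $\S_r$ carried out in \cite{GR12} --- that relations \eqref{eq:GRpres1} and \eqref{eq:GRpres2} already force $\fg{W,C'}=1$ for every convex $W\in I$ and every $C'\in J$ with $C'\perp W$; specialising $W=\mapV(C)$ and $C'=C$ gives $\fg{\mapV(C),C}=1$. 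I expect this final point --- pinning down precisely what the relations of \cite{GR12} entail for the generators indexed by convex partitions, and so reconciling that paper's non-tree-based Schreier choice with the tree $\Tlex$ --- to be the main obstacle; everything preceding it is routine bookkeeping.
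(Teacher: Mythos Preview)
Your argument is correct, and for the spanning-tree part it is essentially the paper's proof run on the dual side: the paper shows $\mapC(\mapV(C))<_\lex C$ unless $C=[r]$ and inducts on~$J$, while you show $\mapV(\mapC(V))<_\lex V$ unless $V=V_0$ and induct on~$I$. Both are fine.

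For the equivalence of presentations, however, your route genuinely differs from the paper's. The paper proves \emph{both} implications directly: it first uses \cite[Lemma~4.2]{GR12} (exactly as you do) to deduce $\rels_1(\Tlex)$ from \eqref{eq:GRpres1}--\eqref{eq:GRpres3}, and then, for the converse, it reproves the analogue of \cite[Lemma~4.2]{GR12} for $\presn(\Tlex)$ by a separate lexicographic induction---showing $\fg{V,C}=1$ for all convex~$V$ and all $C\perp V$, as a consequence of $\rels_1(\Tlex)\cup\rels_\subSq$, via explicit rectangular-band singular squares. You bypass this inductive step entirely with the finite-index observation: once one normal closure is contained in the other and both quotients have order~$r!$, equality follows. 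This is a legitimate and shorter argument. What the paper's route buys is the explicit statement that $\fg{V,C}=1$ holds in $\sfG(\Tlex)$ for convex~$V$; but since this is not invoked independently later (Lemma~\ref{la:GRtransl} only needs the equivalence itself), nothing is lost by your shortcut.

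Two small remarks. First, your paragraph on $\mapj=\mapC$ is correct but not needed for the index argument: equivalence of presentations is a statement about normal closures in the free group on~$A$, and is indifferent to which elements of $\GI$ the letters happen to represent. Second, what you flag as the ``main obstacle'' (the convex-partition fact) is actually the easy direction, already available as \cite[Lemma~4.2]{GR12}; the paper's extra work lies precisely in the direction you dispatch by index.
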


\begin{proof}
Observe that $e_0$ belongs to both sets forming $\Tlex$, so that $|\Tlex|\leq |I|+|J|-1$. Hence to show that $\Tlex$ is a spanning tree, it is sufficient to show that it is connected.

Note that for an arbitrary $C=\{c_1<\cdots <c_r\}\in J$ we have 
\[
\mapC(\mapV(C))=\{ 1,c_1+1,c_2+1,\ldots, c_{r-1}+1\}.
\]
Thus $\mapC(\mapV(C))\leq_\lex C$, with equality holding if and only if $C=[r]$.
In $\Tlex$ we have a path $C-\mapV(C)-\mapC(\mapV(C))$. 
An inductive argument now shows that every $C\in J$ is connected to $[r]$ by a path in $\Tlex$.
Also, any $V\in I$ is connected to $\mapC(V)\in J$ by an edge in $\Tlex$, and so again there is a path from $V$ to $[r]$. Hence $\Tlex$ is indeed connected, and so a tree.

Now consider the presentation \eqref{eq:GRpres1}--\eqref{eq:GRpres3}.
By \cite[Lemma 4.2]{GR12}, the relations \eqref{eq:GRpres1}--\eqref{eq:GRpres3} imply
all the relations $\fg{V,C}=1$ where $V$ is a convex partition and $C\perp V$.
In particular, they imply $\fg{\mapV(C),C}=1$ for all $C\in J$, and we add these relations
to the presentation. Noting that these relations together with \eqref{eq:GRpres2} are precisely the relations $\rels_1(\Tlex)$, we now have the presentation
$\langle A\mid \eqref{eq:GRpres1},\ \rels_1(\Tlex),\ \rels_\subSq\rangle$ for $\GI$.

The proof of the proposition will be complete if we can show that 
\eqref{eq:GRpres1} is a consequence of $\rels_1(\Tlex)\cup \rels_\subSq$. In fact we prove the following stronger statement, which is the analogue of \cite[Lemma 4.2]{GR12} for the new presentation $\presn(\Tlex)$:
for every convex partition $V\in I$ and every $C\in J$ with $C\perp V$, the relation $\fg{V,C}=1$ is a consequence of the relations 
$\rels_1(\Tlex)\cup \rels_\subSq$.

The proof is inductive with respect to the lexicographic ordering on the set of pairs under consideration, where both the set of all convex partitions, and the set $J$ are taken to be themselves ordered lexicographically.
First note that if $V=\bigl\{\{ 1\},\ldots,\{r-1\},[r,n]\bigr\}$ then we must have $C=\{1,\ldots,r-1,q\}$ for some $r\leq q\leq n$; in this case, $V=\mapV(C)$, and the relation $\fg{V,C}=1$ is included in $\rels_1(\Tlex)$. 
This includes the case where $C=[r]$.
Now consider any pair $(V,C)$ where $C\perp V$,  $V\neq \bigl\{\{ 1\},\ldots,\{r-1\},[r,n]\bigr\}$ and $C\neq [r]$.

We know that $\mapC(V)\leq_\lex C$ and $\mapV(C)\leq_\lex V$.
If either of these is an equality, the relation $\fg{C,V}=1$ is in $\rels_1(\Tlex)$, and we are done.
So we may suppose that $D:=\mapC(V)<_\lex C$ and $W:=\mapV(C)<_\lex V$.

We claim that $D\perp W$.
If $C=\{ c_1<\cdots<c_r\}$ and $V=\bigl\{ [v_1,w_1]<\cdots<[v_r,w_r]\bigr\}$ then
$D=\{ v_1,\ldots, v_r\}$ and $W=\bigl\{[1,c_1],(c_1,c_2],\ldots, (c_{r-2},c_{r-1}], (c_{r-1},n]\bigr\}$.
From $C\perp V$ we have $v_i\leq c_i\leq w_i$ for all $i$.
Therefore $c_{i-1}\leq w_{i-1}<v_i\leq c_i$ for all $i\geq 2$. Additionally, 
$1= v_1\leq c_1$, and so indeed $D\perp W$, as claimed.

We therefore have a square of idempotents $\smat{e_{V,C}}{e_{V,D}}{e_{W,C}}{e_{W,D}}$.
One can verify that this is in fact a rectangular band, by computing the diagonal products,
e.g.~$e_{V,C}e_{W,D}=e_{V,D}$. By \cite[Remark~2.8]{GR12} every rectangular band in $\T_n$ is a singular square, so $\smat{e_{V,C}}{e_{V,D}}{e_{W,C}}{e_{W,D}}\in \Sq$.
Thus, 
$\rels_\subSq$ contains the relation $\fg{V,C}^{-1}\fg{V,D}=\fg{W,C}^{-1}\fg{W,D}$.
From $D<_\lex C$ and $W<_\lex V$ it follows by induction that $\fg{V,D}=\fg{W,C}=\fg{W,D}=1$ are consequences
of $\rels_1(\Tlex)\cup \rels_\subSq$. Therefore $\fg{V,C}=1$ is also a consequence
of $\rels_1(\Tlex)\cup \rels_\subSq$, completing the proof.
\end{proof}

\begin{rem}\label{rem:Tlex42}
In Figure \ref{fig:Tlex42} we picture the Graham--Houghton graph $\GH(D(4,2))$, and the spanning tree $\Tlex(4,2)$.
The vertices from $I$ and $J$ are shown on the upper and lower rows, respectively, and are ordered lexicographically, from left to right.  Partitions are indicated by an obvious notation, so for example $(1|234)$ represents $\big\{\{1\},\{2,3,4\}\big\}$.  
The idempotents~$e_{V,\mapC(V)}$~$(V\in I$) and $e_{\mapV(C),C}$ ($C\in J$) are indicated by red and blue edges, respectively.  As noted at the start of the above proof, the idempotent $e_0$ is the unique edge coloured both red and blue.
\end{rem}

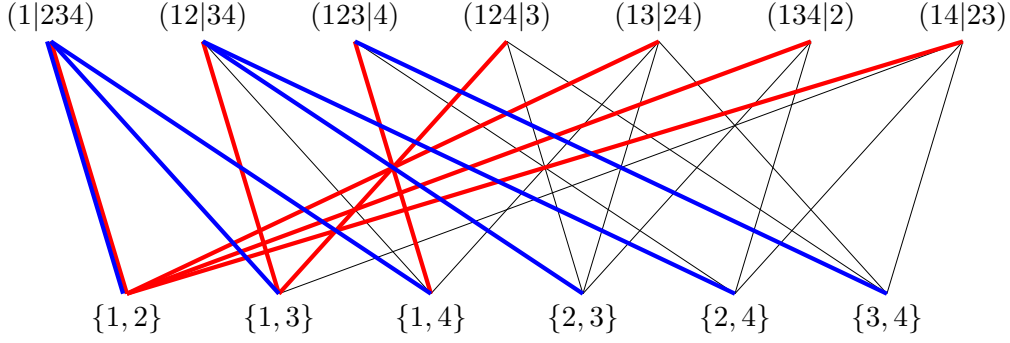
\begin{figure}[t!]
\begin{center}
\scalebox{1}{
\begin{tikzpicture}[scale=1]
\node (a) at (0,4) {$(1|234)$};
\node (a') at (-.05,4) {\phantom{$(1|234)$}};
\node (b) at (2,4) {$(12|34)$};
\node (c) at (4,4) {$(123|4)$};
\node (d) at (6,4) {$(124|3)$};
\node (e) at (8,4) {$(13|24)$};
\node (f) at (10,4) {$(134|2)$};
\node (g) at (12,4) {$(14|23)$};
\node (1) at (1,0) {$\{1,2\}$};
\node (1') at (.95,0) {\phantom{$\{1,2\}$}};
\node (2) at (3,0) {$\{1,3\}$};
\node (3) at (5,0) {$\{1,4\}$};
\node (4) at (7,0) {$\{2,3\}$};
\node (5) at (9,0) {$\{2,4\}$};
\node (6) at (11,0) {$\{3,4\}$};
\foreach \x in {1,2,3} {\draw[ultra thin] (a.south)--(\x.north);};
\foreach \x in {2,3,4,5} {\draw[ultra thin] (b.south)--(\x.north);};
\foreach \x in {3,5,6} {\draw[ultra thin] (c.south)--(\x.north);};
\foreach \x in {2,4,6} {\draw[ultra thin] (d.south)--(\x.north);};
\foreach \x in {1,3,4,6} {\draw[ultra thin] (e.south)--(\x.north);};
\foreach \x in {1,4,5} {\draw[ultra thin] (f.south)--(\x.north);};
\foreach \x in {1,2,5,6} {\draw[ultra thin] (g.south)--(\x.north);};
\foreach \x/\y in {a/1, b/2, c/3, d/2, e/1, f/1, g/1} {\draw[ultra thick, red] (\x.south)--(\y.north);};
\foreach \x/\y in {1'/a'} {\draw[ultra thick, blue] (\y.south)--(\x.north);};
\foreach \x/\y in {2/a, 3/a, 4/b, 5/b, 6/c} {\draw[ultra thick, blue] (\y.south)--(\x.north);};
\end{tikzpicture}
}
\caption{The Graham--Houghton graph $\GH(D(4,2))$, with the edges of the spanning tree $\Tlex(4,2)$ shown in colour.  See Remark~\ref{rem:Tlex42} for more details.}
\label{fig:Tlex42}
\end{center}
\end{figure}

\section{First family of presentations for maximal subgroups of \boldmath{$\PG(P)$}}
\label{sec:pres-linked}

In this section and the next we obtain several general presentations for maximal subgroups of the free projection-generated semigroup $\PG(P)$ associated to a projection algebra $P$.  The presentations in each section have their own `flavour', and are derived from two different presentations for $\PG(P)$ itself, by applying the Reidemeister--Schreier rewriting method described in Section~\ref{sec:rs}.  Here we start with the presentation in terms of the generating set $P$, and in the next section we use the generating set $E = \sfE(P)$.

\subsection{Presentations via linked diamonds}
\label{ss:linked}

Throughout this section, $P$ denotes a fixed projection algebra, and without loss of generality we take $P=\sfP(S)$ for some projection-generated regular $*$-semigroup $S$.
We start by recalling the standard presentation for~$\PG(P)$ stated in Subsection \ref{ss:PGP}.  It has generators $X_P=\{x_p\colon p\in P\}$, and defining relations
\begin{alignat}{2}
\label{eq:pg1}
& x_p^2=x_p&& (p\in P),
\\
\label{eq:pg2}
& (x_px_q)^2=x_px_q &\quad& (p,q\in P),
\\
\label{eq:pg3}
&x_px_qx_p=x_{pqp} && (p,q\in P).
\end{alignat}

By \ref{it:RS2}, every $\D$-class of a regular $*$-semigroup contains projections. Since maximal subgroups contained in a single $\D$-class are all isomorphic, every maximal subgroup is isomorphic to a maximal subgroup containing a projection.
By \ref{it:PG2}, the projections of $\PG(P)$ are precisely (the equivalence-classes of) the generators $x_p$, for $p\in P$. Thus, we will fix an arbitrary projection $p_0\in P$, and seek a presentation for the maximal subgroup $\GP$ containing $x_{p_0}$
(which is of course precisely the $\H$-class of $x_{p_0}$ in $\PG(P)$).

Following the methodology of Section \ref{sec:rs}, we need to index 
the $\R$- and $\L$-classes of the $\D$-class of $p_0$ and find a system of Schreier representatives.
Analogous to the discussion in Subsection~\ref{ss:genrew}, but this time based on \ref{it:PG3}, we can make these choices in $S$ instead of $\PG(P)$.

So, given $p_0\in P$, let $D=D_{p_0}$ be
its $\D$-class.
Let $P_D:=P\cap D$.
Then $\{R_p\colon p\in P_D\}$ and $\{L_p\colon p\in P_D\}$ are precisely the $\R$- and $\L$-classes contained in $\D$ by \ref{it:RS2}; note here that~$R_p$ and~$L_p$ simply denote the $\R$- and $\L$-classes of the projection $p\in P_D$. Thus, we will be using $P_D$ as a natural and convenient index set for both $\R$- and $\L$-classes.
Thus, in the notation of Section~\ref{sec:rs}, we are taking $I=J=P_D$.
For $p,q\in P_D$, let $H_{p,q}:=R_p\cap L_q$.
Note that, by \ref{it:RS4}, we have
\[
K=\big\{ (i,j)\in I\times J\colon H_{i,j}\text{ is a group}\big\}=\big\{(p,q)\in P_D\times P_D\colon pq\in E_D\big\}=\F_D,
\]
the restriction of the friendliness relation $\F$ to $D$.

From the above, the $\H$-classes in $R_{p_0}$ are  $\{ H_{p_0,p}\colon p\in P_D\}$.
 By \ref{it:RS2} and \ref{it:RS7}, the action~\eqref{eq:j.s} of $S$ on $P_D\cup\{0\}$ induced by the action of $S$ on these $\H$-classes is given by
 \begin{equation}
 \label{eq:sract}
 p\cdot s = \begin{cases} s^*ps &\text{if } ps\in R_{p}\\ 0 &\text{otherwise.}\end{cases}
 \end{equation}
 When $s=q \in P$ is a projection, using \ref{it:RS11}, this action becomes
  \begin{equation}
  \label{eq:qract}
 p\cdot q = \begin{cases} qpq &\text{if } p\leq_\F q \\ 0 &\text{otherwise.}\end{cases}
 \end{equation}

 We are now going to establish a set of Schreier representatives, which we additionally require to consist of words over the alphabet $\{ x_p\colon p\in P_D\}$.
 A routine modification of the argument in Remark \ref{re:Schrestr} shows that such a set exists.
 However, we are going to be a bit more specific, and adapt the construction from Subsection \ref{ss:span}.
 We begin by noting that there is a close connection between the Graham--Houghton graph $\GH(D)$, and the relation $\F_D$  considered as a (symmetric) digraph; these were called `projection graphs' in \cite{EG17}.
 Specifically, it follows from \ref{it:RS11} that $(p,q)\in \F_D$ if and only if the idempotent $pq$ (and also~$qp$) is an edge of $\GH(D)$.
 Since $S$ is projection-generated, Proposition~\ref{pr:GHconn} tells us that $\GH(D)$ is connected, and hence it follows that $\F_D$ is strongly connected.
 Let $T$ be a directed spanning tree of $\F_D$, rooted at $p_0$.

 Consider an arbitrary $q\in P_D$. Let $p_0\rightarrow p_1\rightarrow\cdots\rightarrow p_{k-1}\rightarrow p_k=q$ be the unique path from~$p_0$ to $q$ via edges from $T$.
 Define $r_q:=x_{p_1}\cdots x_{p_k}$. It is clear that $r_q$ is a word over the alphabet $\{ x_p\colon p\in P_D\}$, and that the set $\{ r_p\colon p\in P_D\}$ is prefix-closed. 
Since $(p_i,p_{i+1})\in\F$ for each $0\leq i<k$, it follows that $p_{i+1}p_ip_{i+1}=p_{i+1}$ for each $i$.  Thus, using \eqref{eq:qract}, we have
 \[
 p_0\cdot r_q=p_0\cdot x_{p_1}\cdots x_{p_k}=(p_k\cdots p_1)p_0(p_1\cdots p_k)=p_k=q.
 \]
 We conclude that $\{r_p\colon p\in P_D\}$ is a set of Schreier representatives as required.  For the words $\set{r_p'}{p\in P_D}$, we can take $r_p' = r_p^*x_{p_0}$.  Indeed, since $x_{p_0}r_p \mr\R x_{p_0}$, and since $\PG(P)$ is a regular $*$-semigroup, it follows from \ref{it:RS2} that $x_{p_0} = x_{p_0}r_p(x_{p_0}r_p)^* = x_{p_0}r_pr_p^*x_{p_0} = x_{p_0}r_pr_p'$.

Theorem \ref{thm:RSrw} now yields a presentation for $\GP$, the maximal subgroup of $\PG(P)$ containing~$x_{p_0}$.  The generators are
\[
B=\bigl\{ [t,p]\colon t\in P_D,\ p\in P,\ t\cdot p\neq 0\bigr\}=\bigl\{ [t,p]\colon t\in P_D,\ p\in P,\ t\leq_\F p\bigr\},
\]
where \eqref{eq:qract} is used for the second equality.
Actually, in the notation of Theorem \ref{thm:RSrw}, the generators from~$B$ were denoted $[t,x_p]$, but we have abbreviated these to $[t,p]$ in order to avoid subsequent calculations occurring in subscripts.
Keeping in mind the above choice of words $r_p'$, the generator $[t,p]$ stands for the element
\begin{equation}\label{eq:tp}
[t,p] = x_{p_0} r_tx_pr_{t\cdot p}' = x_{p_0} r_tx_pr_{ptp}^*x_{p_0}.
\end{equation}
Recalling the definition \eqref{eq:rewr} of the rewriting mapping~$\phi$, 
the general presentation \eqref{eq:gpIG1}--\eqref{eq:gpIG2} for $\GP$, applied to the presentation
\mbox{\eqref{eq:pg1}--\eqref{eq:pg3}} for $\PG(P)$, becomes:
\begin{align}
\label{eq:rwp0}
&[t,p]=1&&(t,p\in P_D,\ r_tx_p=r_{t\cdot p}),\\
\label{eq:rwp1}
&[t,p][ptp,p]=[t,p]&&(t\in P_D,\ p\in P,\ t\leq_\F p),\\
\label{eq:rwp2}
&[t,p][ptp,q][qptpq,p][pqptpqp,q]=[t,p][ptp,q] &&(t\in P_D,\ p,q\in P,\ t\leq_\F p,\ ptp\leq_\F q),\\
\label{eq:rwp3}
&[t,p][ptp,q][qptpq,p]=[t,pqp]&&(t\in P_D,\ p,q\in P,\ t\leq_\F p,\ ptp\leq_\F q).
\end{align}

We are now going to transform this presentation into a form analogous to Theorem~\ref{thm:spantreepres}. First, with regards to \eqref{eq:rwp0}, we note that
$t\cdot p=ptp$ by \eqref{eq:qract}. Furthermore, it follows from the construction of the Schreier representatives $r_q$ that
$r_tx_p=r_{ptp}$ holds if and only if the unique $T$-path from $p_0$ to $p(=ptp)$ visits $t$ just before $p$. In turn, this is the case if and only if~$tp$ is an edge in $T$.
Keeping this in mind, and performing  the obvious cancellations in~\eqref{eq:rwp1} and~\eqref{eq:rwp2}, as we may because we are working with a \emph{group} presentation, the relations \eqref{eq:rwp0}--\eqref{eq:rwp3} become:
\begin{alignat}{2}
\label{eq:rwp4}
&[t,p]=1&&(t,p\in P_D,\ \ tp\in T),\\
\label{eq:rwp5}
&[ptp,p]=1&&(t\in P_D,\ p\in P,\ t\leq_\F p),\\
\label{eq:rwp6}
&[qptpq,p][pqptpqp,q]=1 &&(t\in P_D,\ p,q\in P,\ t\leq_\F p,\ ptp\leq_\F q),\\
\label{eq:rwp7}
&[t,p][ptp,q][qptpq,p]=[t,pqp]&\quad&(t\in P_D,\ p,q\in P,\ t\leq_\F p,\ ptp\leq_\F q).
\end{alignat}
We note that relations \eqref{eq:rwp5} contain among them
\begin{equation}
\label{eq:rwp8}
[p,p]=1\quad (p\in P_D).
\end{equation}
Now consider an arbitrary generator $[t,p]\in B$, so $t\in P_D$, $p\in P$ and $t\leq_\F p$.
Define ${q:=ptp\in P_D}$.
Trivially, $ptp\leq_\F q$, and hence we have a relation from \eqref{eq:rwp7}, which (after some simplification of terms such as $pqp=pptpp=ptp$) reads
\begin{equation}
\label{eq:rwp9}
[t,p][ptp,ptp][ptp,p]=[t,ptp].
\end{equation}
The generators $[ptp,ptp]$ and $[ptp,p]$ equal $1$ as a consequence of 
\eqref{eq:rwp8} and~\eqref{eq:rwp5}, respectively, so the relation~\eqref{eq:rwp9} becomes
\begin{equation}
\label{eq:rwp10}
[t,p]=[t,ptp].
\end{equation}
Also note that $t\leq_\F p$ implies $(t,ptp)\in\F_D$ (by~\ref{it:RS10}).
Thus, \eqref{eq:rwp10}  enables us to eliminate all the generators from $B$ of the form $[t,p]$, except those where
$(t,p)\in\F_D$.  Transforming~\mbox{\eqref{eq:rwp4}--\eqref{eq:rwp7}} in this way (and remembering that $pq$ is an idempotent for $p,q\in P$), yields the following presentation:
\begin{align}
\label{eq:rwp11}
&[t,p]=1&&(t,p\in P_D,\ tp\in T),\\
\label{eq:rwp12}
&[ptp,ptp]=1&&(t\in P_D,\ p\in P,\ t\leq_\F p),\\
\label{eq:rwp13}
&[qptpq,pqptpqp] [pqptpqp,qptpq]=1 &&(t\in P_D,\ p,q\in P,\ t\leq_\F p,\ ptp\leq_\F q),\\
\label{eq:rwp14}
&[t,ptp] [ptp,qptpq] [qptpq,pqptpqp]= [t,pqptpqp]&&(t\in P_D,\ p,q\in P,\ t\leq_\F p,\ ptp\leq_\F q).
\end{align}
Now note that the relations \eqref{eq:rwp12} coincide precisely with the relations $[p,p]=1$ ($p\in P_D$).
Also, each relation \eqref{eq:rwp13} has the form $[u,v]=[v,u]^{-1}$ with ${(u,v)\in\F_D}$.
Conversely, if ${(t,p)\in\F_D}$, then letting $q=t$ in~\eqref{eq:rwp13} we obtain $[t,p]=[p,t]^{-1}$.
Thus we arrive at the following presentation:
\begin{align}
\label{eq:rwp15}
&[t,p]=1&&(t,p\in P_D,\ tp\in T),\\
\label{eq:rwp16}
&[p,p]=1&&(p\in P_D),\\
\label{eq:rwp17}
&[p,q]=[q,p]^{-1}  &&((p,q)\in\F_D),\\
\label{eq:rwp18}
&[t,ptp] [ptp,qptpq] [qptpq,pqptpqp]= [t,pqptpqp]&&(t\in P_D,\ p,q\in P,\ t\leq_\F p,\ ptp\leq_\F q).
\end{align}
We now turn our attention to the final family of relations \eqref{eq:rwp18}.
A straightforward manipulation, including one application of \eqref{eq:rwp17}, lets us write it in the following form:
\begin{equation}
\label{eq:rwp19}
 [t,ptp]^{-1} [t,pqptpqp]=[qptpq,ptp]^{-1} [qptpq,pqptpqp] \quad (t\in P_D,\ p,q\in P,\ t\leq_\F p,\ ptp\leq_\F q).
\end{equation}
Let us say that a quadruple $(s,u;v,w)$ of elements of $P_D$ is a \emph{linked diamond} if 
\bit
\item ${(s,v),(s,w),(u,v),(u,w)\in \F_D}$, and 
\item there exists $p\in P$ 
such
that $psp=v$ and $pup=w$.
\eit
In this case we say that the diamond is \emph{$p$-linked}, and we note that $p$ generally does not belong to~$P_D$,  but rather comes from a $\D$-class higher up in the $\leq_\J$ order.  
We now press on towards the main result of this subsection, but following it there will be a brief discussion of the name choice and visualisation for linked diamonds in Remark \ref{rem:suvw}.

Denote by $\Lk_D$ the set of all linked diamonds with entries from $P_D$.
Note that in~\eqref{eq:rwp19}, the quadruple $(t,qptpq;ptp,pqptpqp)$ is $p$-linked.
In other words, all relations in \eqref{eq:rwp19} are contained in
\begin{equation}
\label{eq:rwp19a}
 [s,v]^{-1} [s,w]=[u,v]^{-1} [u,w] \quad ((s,u;v,w)\in \Lk_D).
\end{equation}
Now consider an arbitrary $p$-linked diamond $(s,u;v,w)\in\Lk_D$.
Let $t:=s$ and $q:=u$. Since $sp$ is an idempotent and $s\mr\F v$, we have
$tpt = sps = s(psp)s = svs = s = t$, i.e.~$t\leq_\F p$.
Also, $ptp = psp = v \mr\F u = q$, so certainly $ptp\leq_\F q$.
Hence, we have a relation~\eqref{eq:rwp19} for these values of $t$, $p$ and $q$,
and a straightforward calculation shows that it is precisely the relation \eqref{eq:rwp19a} for the original values of $s$, $u$, $v$ and $w$.
Therefore, relations \eqref{eq:rwp19a} coincide with \eqref{eq:rwp19}, and hence are equivalent to
 \eqref{eq:rwp18}.
If we introduce a new, more `letter-like', notation $a_{p,q}$ for the generating symbol $[p,q]$, and re-index mildly, we obtain the following:

\begin{thm}
\label{th:linkedpres}
Let $P$ be a projection algebra, let $S$ be any projection-generated regular $*$-semigroup with $P=\sfP(S)$, and let $p_0\in P$ be arbitrary.
Denote by $D$ the $\D$-class of $S$ containing~$p_0$, and let $T$ be any $p_0$-rooted directed spanning tree for $\F_D$.  
With the rest of the notation as above, the maximal subgroup of $\PG(P)$ containing $x_{p_0}$ has a presentation
with generators $A=\{ a_{p,q}\colon (p,q)\in\F_D\}$ and relations
\begin{alignat}{2}
\label{eq:rwp20}
&a_{p,q}=1&&(p,q\in P_D,\ pq\in T),\\
\label{eq:rwp21}
&a_{p,p}=1&&(p\in P_D),\\
\label{eq:rwp22}
&a_{p,q}=a_{q,p}^{-1}  &&((p,q)\in\F_D),\\
\label{eq:rwp23}
& a_{s,v}^{-1} a_{s,w}=a_{u,v}^{-1} a_{u,w} &\quad& ((s,u;v,w)\in \Lk_D).\\
\tag*{\qed}
\end{alignat}
\end{thm}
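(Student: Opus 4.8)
The plan is to run the Reidemeister--Schreier rewriting of Section~\ref{sec:rs} on the standard presentation \eqref{eq:pg1}--\eqref{eq:pg3} of $\PG(P)$, and then to simplify the resulting group presentation by a sequence of Tietze transformations until it takes the stated form. Indeed, the long derivation \eqref{eq:rwp0}--\eqref{eq:rwp19a} preceding the statement is exactly this plan carried out.

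First I would assemble the data the method requires. By \ref{it:RS2} and \ref{it:PG3} it is enough to treat a projection $p_0$, and the $\R$- and $\L$-classes of its $\D$-class $D$ may both be indexed by $P_D=P\cap D$, with the group $\H$-classes corresponding to $\F_D$. The action of $S$ on $P_D\cup\{0\}$ is computed from \ref{it:RS2}, \ref{it:RS7} and \ref{it:RS11}, giving \eqref{eq:qract}. Since $S$ is projection-generated, Proposition~\ref{pr:GHconn} shows $\GH(D)$, and hence $\F_D$ viewed as a digraph, is strongly connected, so a $p_0$-rooted directed spanning tree $T$ of $\F_D$ exists; reading off edge labels along $T$-paths yields a prefix-closed system of Schreier representatives $r_q=x_{p_1}\cdots x_{p_k}$ over $\{x_p:p\in P_D\}$ (cf.~Remark~\ref{re:Schrestr}), and $p_0\cdot r_q=q$ follows from the definition of $\F$. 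For the return words one takes $r_q'=r_q^*x_{p_0}$, which works because $\PG(P)$ is a regular $*$-semigroup (by \ref{it:RS2}).

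Next I would apply Theorem~\ref{thm:RSrw} directly: its generators are the symbols $[t,p]$ with $t\in P_D$, $p\in P$, $t\leq_\F p$, and its relations are the $\phi$-images of \eqref{eq:pg1}--\eqref{eq:pg3} together with the ``$r_tx_p=r_{t\cdot p}$'' relations, which is exactly \eqref{eq:rwp0}--\eqref{eq:rwp3}. The bulk of the argument is then formal manipulation: (i) note $t\cdot p=ptp$ and that $r_tx_p=r_{ptp}$ holds precisely when $tp\in T$; (ii) cancel common prefixes in \eqref{eq:rwp1}--\eqref{eq:rwp2} to obtain \eqref{eq:rwp4}--\eqref{eq:rwp7}; (iii) use the instance \eqref{eq:rwp9} of \eqref{eq:rwp7} together with \eqref{eq:rwp5} and \eqref{eq:rwp8} to derive \eqref{eq:rwp10}, which eliminates every generator $[t,p]$ with $(t,p)\notin\F_D$ --- this step needs $t\leq_\F p\Rightarrow(t,ptp)\in\F_D$, which is \ref{it:RS10}; (iv) after the elimination, relations \eqref{eq:rwp12} become $[p,p]=1$ and relations \eqref{eq:rwp13} become $[u,v]=[v,u]^{-1}$ for $(u,v)\in\F_D$, the converse symmetry $[t,p]=[p,t]^{-1}$ coming from the case $q=t$; (v) rearranging \eqref{eq:rwp18} and applying this symmetry gives \eqref{eq:rwp19}. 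Renaming $[p,q]$ to $a_{p,q}$ then produces \eqref{eq:rwp20}--\eqref{eq:rwp22} verbatim.

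The one genuinely non-routine point, and where I expect the care to be needed, is identifying the family \eqref{eq:rwp19} with the linked-diamond relations \eqref{eq:rwp23}. In the forward direction, for $t\leq_\F p$ and $ptp\leq_\F q$ one checks that $(t,\,qptpq;\,ptp,\,pqptpqp)$ is a $p$-linked diamond in $\Lk_D$, so \eqref{eq:rwp19} is an instance of \eqref{eq:rwp23}. Conversely, given a $p$-linked diamond $(s,u;v,w)$, setting $t=s$ and $q=u$ one uses $psp=v$, together with $s\mr\F v$ and $v\mr\F u$, to verify $t\leq_\F p$ and $ptp\leq_\F q$; a short substitution then shows that the corresponding relation \eqref{eq:rwp19} coincides with the relation \eqref{eq:rwp23} for $(s,u;v,w)$. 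Thus \eqref{eq:rwp23} and \eqref{eq:rwp19} are equivalent, completing the transformation. Keeping the long projection words such as $pqptpqp$ under control via the identities in \ref{it:RS2}, \ref{it:RS7} and \ref{it:RS10} is the only delicate bookkeeping; everything else is mechanical cancellation.
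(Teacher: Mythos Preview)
Your proposal is correct and follows essentially the same approach as the paper: the derivation \eqref{eq:rwp0}--\eqref{eq:rwp19a} preceding the theorem \emph{is} the proof, and you have accurately summarised each of its steps, including the key two-way identification of \eqref{eq:rwp19} with the linked-diamond relations via the substitution $t=s$, $q=u$.
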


\begin{rem}\label{rem:suvw}
A $p$-linked diamond $(s,u;v,w)$ can be visualised as in the left-hand diagram of Figure~\ref{fig:plinkeddiamond}.  In the diagram, the solid lines indicate $\F$-relationships, and the dotted arrows indicate the action of `conjugation' by $p$.
The dashed line between $v$ and $w$ indicates that we additionally have $v\mr\F w$.  This was not part of the defining properties of the linked diamond, but it quickly follows.  Indeed, using the assumed $\F$-relationships and $pwp = w$, we have
\[
vwv = pspwpsp = pswsp = psp = v \quad\text{and similarly}\quad wvw = w.
\]
On the other hand, $s$ and $u$ might not be $\F$-related. It is also worth noting that in the presence of \eqref{eq:rwp22}, the relation \eqref{eq:rwp23} can be rewritten as
\[
a_{s,v}a_{v,u}a_{u,w}a_{w,s} = 1.
\]
Intuitively, one can think of this as equating the path $s\to v\to u\to w\to s$ around the boundary of the diamond to the empty path.  This in turn links back to the topological interpretation of~$\PG(P)$ from \cite[Section 10]{EGMR} as the fundamental groupoid of a certain $2$-complex built from~$\F$.
\end{rem}

\begin{figure}[t]
\begin{center}
\scalebox{1}{
\begin{tikzpicture}[scale=1.2,>=Latex]
\nc\yy{0.6}
\nc\zz0
\node (v) at (0,0) {$v$};
\node (w) at (2,0) {$w$};
\node (s) at (1,1) {$s$};
\node (u) at (1,-1) {$u$};
\draw(s)--(v)--(u)--(w)--(s);
\draw[dashed](v)--(w);
\node[left] () at (0,-.03) {$psp={}$};
\node[right] () at (2,-.03) {${}=pup$};
\draw[dotted,-{latex}] (s) to [bend right = 30] (v);
\draw[dotted,-{latex}] (u) to [bend right = 30] (w);
\node () at (1.8,-.8) {\footnotesize$p$};
\node () at (.2,.8) {\footnotesize$p$};
\begin{scope}[shift = {(6,0)}]
\node (v) at (0,0+\yy) {$v$};
\node (w) at (2,0+\yy) {$w$};
\node (v') at (0,0-\zz) {$v$};
\node (w') at (2,0-\zz) {$w$};
\node (s) at (1,1+\yy) {$s$};
\node (u) at (1,-1-\zz) {$u$};
\draw(s)--(v)--(w)--(s)  (v')--(w')--(u)--(v');
\draw[dotted,-{latex}] (s) to [bend right = 30] (v);
\draw[dotted,-{latex}] (u) to [bend right = 30] (w');
\node () at (1.8,-.8-\zz) {\footnotesize$p$};
\node () at (.2,.8+\yy) {\footnotesize$p$};
\path [dotted, -{latex}] (w) edge  [loop right] node {\footnotesize$p$} ();
\path [dotted, -{latex}] (v') edge  [loop left] node {\footnotesize$p$} ();
\draw[transform canvas={xshift=-1pt}] (v) -- (v');
\draw[transform canvas={xshift=1pt}] (v') -- (v);
\draw[transform canvas={xshift=-1pt}] (w) -- (w');
\draw[transform canvas={xshift=1pt}] (w') -- (w);
\end{scope}
\end{tikzpicture}
}
\caption{Left: a $p$-linked diamond $(s,u;v,w)$.  Right: the corresponding $p$-linked triangles $(w,s,v)$ and $(v,u,w)$.  See Remarks \ref{rem:suvw} and \ref{rem:suv} for more details.}
\label{fig:plinkeddiamond}
\end{center}
\end{figure}
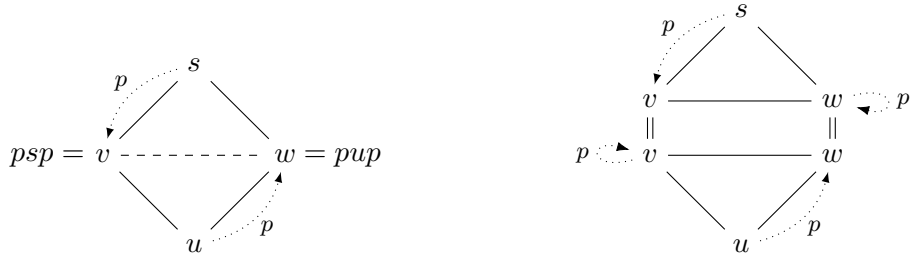

\subsection{Linked squares and linked pairs}\label{subsect:LSLP}

The generators $a_{p,q}$ ($(p,q)\in\F_D$) in the presentation we have just established are in one-one correspondence with the idempotents in $D$ by \ref{it:RS5}.
Thus, one may view the relations \eqref{eq:rwp23} as having the same form as the singular square relations for $\IG(E)$
that we saw in Section \ref{sec:maxIG}: they express equality of quotients of generators corresponding to four idempotents arranged in a `square' (and forming a rectangular band).
Which squares are taken, however, is different in the two situations, although some strong links will be established in Section \ref{sec:pres-quot}.

Let us be more specific.  Consider an arbitrary $p$-linked diamond $(s,u;v,w)\in\Lk_D$.  Then
\begin{equation}\label{eq:DtoS}
\smat efgh := \smat{sv}{sw}{uv}{uw}
\end{equation}
is a square of idempotents, indeed a rectangular band,
as follows from checking a diagonal product, e.g.~$eh = sv \sgap uw = spsp \sgap upup = sp \sgap up = s \sgap pup = sw = f$.
Additionally, we have $pee^*p=psp=v=e^*e$ and $phh^*p=pup=w=h^*h$.

Motivated by this, we call a square of idempotents $\smat efgh$ a \emph{$p$-linked square} (for some projection $p$) if
\begin{equation}\label{eq:peep}
pee^*p = e^*e \ANd phh^*p = h^*h.
\end{equation}
Note that we have similar conditions concerning the other idempotents in the square, since $e\mr\R f\mr\L h \implies ff^*=ee^*$ and $f^*f=h^*h$, and similarly $g^*g=e^*e$ and $gg^*=hh^*$.  Thus, if we define the projections
\begin{equation}\label{eq:StoD}
s := ee^* = ff^* \COMMa
u := gg^* = hh^* \COMMa
v := e^*e = g^*g \ANd
w := f^*f = h^*h,
\end{equation}
then condition \eqref{eq:peep} says that $psp = v$ and $pup = w$, which is one of the conditions for a linked diamond of projections.  The other condition involving $\F$-relationships comes from the very fact that the idempotents $e$, $f$, $g$ and $h$ exist in the first place; see \ref{it:RS4}.

Consider now a linked square $\smat efgh$, corresponding to the linked diamond $(s,u;v,w)$ as above.  
Writing $a_{pq}=a_{p,q}$ for $(p,q)\in\F_D$, the relation $ a_{s,v}^{-1} a_{s,w}=a_{u,v}^{-1} a_{u,w}$ from~\eqref{eq:rwp23} becomes $a_{e}^{-1} a_{f}=a_{g}^{-1} a_{h}$.
Transforming the remaining relations \eqref{eq:rwp20}--\eqref{eq:rwp22} in the presentation from Theorem \ref{th:linkedpres} we then obtain:

\begin{cor}\label{co:lkid}
Let $P$ be a projection algebra, let $S$ be any projection-generated regular $*$-semigroup with $P=\sfP(S)$, and let $p_0\in P$ be arbitrary.
Denote by $D$ the $\D$-class of $S$ containing~$p_0$, and let $T$ be any $p_0$-rooted directed spanning tree for $\F_D$.  
With the rest of the notation as above, the maximal subgroup of $\PG(P)$ containing $x_{p_0}$ has a presentation
with generators $A=\{ a_e\colon e\in E_D\}$ and relations
\begin{alignat}{2}
\label{eq:rwp40}
&a_e = 1&&(e\in T \cup P_D),\\
\label{eq:rwp42}
&a_e = a_{e^*}^{-1}  &&(e\in E_D),\\
\label{eq:rwp43}
& a_{e}^{-1} a_{f}=a_{g}^{-1} a_{h} &\quad& (\smat{e}{f}{g}{h} \text{ is a linked square of idempotents in $D$}).
\\
\tag*{\qed}
\end{alignat}
\end{cor}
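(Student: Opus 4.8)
The plan is to derive this presentation directly from the one in Theorem \ref{th:linkedpres} by the change of generators $a_{p,q}\mapsto a_{pq}$, using the bijection $E_D\to\F_D$, $e\mapsto(ee^*,e^*e)$ from \ref{it:RS5}. First I would note that this map is indeed a bijection: for $(p,q)\in\F_D$ the idempotent $pq$ has $(pq)(pq)^*=pqp=p$ and $(pq)^*(pq)=qpq=q$ by \ref{it:RS4}, so setting $a_e:=a_{ee^*,e^*e}$ is a legitimate renaming of the generating set, giving $A=\{a_e\colon e\in E_D\}$ and $a_{p,q}=a_{pq}$ for all $(p,q)\in\F_D$. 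Under this renaming each relation of Theorem \ref{th:linkedpres} must be translated, and I would handle the four families in turn.

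For \eqref{eq:rwp20}, the relation $a_{p,q}=1$ for $pq\in T$ (with $p,q\in P_D$) becomes $a_{pq}=1$; since $T$ is being identified with its edge set, regarded as a subset of $E_D$ via $pq\mapsto$ the idempotent in $H_{p,q}$, this is exactly $a_e=1$ for $e\in T$. For \eqref{eq:rwp21}, the relation $a_{p,p}=1$ for $p\in P_D$ becomes $a_p=1$ (noting $pp=p$, and a projection $p$ viewed as an idempotent has $p=pp^*=p^*p$), i.e.\ $a_e=1$ for $e\in P_D$. Together these give \eqref{eq:rwp40}. For \eqref{eq:rwp22}, the relation $a_{p,q}=a_{q,p}^{-1}$ becomes $a_{pq}=a_{qp}^{-1}$; since $(pq)^*=q^*p^*=qp$, this is precisely \eqref{eq:rwp42}, $a_e=a_{e^*}^{-1}$ for $e\in E_D$.

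The main content is the translation of \eqref{eq:rwp23} into \eqref{eq:rwp43}, which is the step I expect to require the most care. Given a $p$-linked diamond $(s,u;v,w)\in\Lk_D$, I would verify that $\smat{sv}{sw}{uv}{uw}$ is a square of idempotents in $D$ forming a rectangular band: each of $sv,sw,uv,uw$ is an idempotent by \ref{it:RS4} since $(s,v),(s,w),(u,v),(u,w)\in\F_D$; the $\R$- and $\L$-relationships follow from \ref{it:RS2} (e.g.\ $(sv)(sv)^*=svs=s$, so $sv\mr\R s\mr\R sw$, etc.), placing all four in $D$; and the diagonal products collapse correctly (e.g.\ $(sv)(uw)=svuw=s(vuw)=sw$ using $vuw=v$ from $v\mr\F w$ as in Remark \ref{rem:suvw}, wait—$vwv=v$ and $wvw=w$, so one checks $(sv)(uw)$ directly via the $\F$-relations). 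Conversely, given any $p$-linked square of idempotents $\smat efgh$, setting $s:=ee^*$, $v:=e^*e$, $u:=gg^*$, $w:=h^*h$ (so $e=sv$, etc.), I would check that $(s,u;v,w)$ is a $p$-linked diamond: the four $\F$-relationships come from $e,f,g,h$ being idempotents, $psp=pee^*p=v$ and $pup=phh^*p=w$ are given, and $s=ee^*=gg^*=u$ fails in general but is not required. Thus the $p$-linked diamonds and $p$-linked squares of idempotents correspond, and under $a_{p,q}=a_{pq}$ the relation $a_{s,v}^{-1}a_{s,w}=a_{u,v}^{-1}a_{u,w}$ becomes $a_e^{-1}a_f=a_g^{-1}a_h$. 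Assembling the four translated families yields exactly the presentation $\langle A\mid \eqref{eq:rwp40},\eqref{eq:rwp42},\eqref{eq:rwp43}\rangle$, which is therefore a presentation for the maximal subgroup of $\PG(P)$ containing $x_{p_0}$. The one genuinely fiddly point is bookkeeping the rectangular-band identities and confirming that the correspondence between diamonds and squares is exactly a bijection (not merely a surjection in one direction), so that no relations are lost or spuriously added.
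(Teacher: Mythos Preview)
Your approach is essentially identical to the paper's: it performs the renaming $a_e:=a_{ee^*,e^*e}$ via the bijection $\F_D\leftrightarrow E_D$ from \ref{it:RS5}, and then translates each family of relations from Theorem~\ref{th:linkedpres} exactly as you describe. One small point: your closing worry about whether the diamond/square correspondence is a genuine bijection is unnecessary, since in the paper a ``linked square'' is \emph{defined} to be a square $\smat{sv}{sw}{uv}{uw}$ arising from a linked diamond $(s,u;v,w)$ (equivalently, a square $\smat efgh$ satisfying $pee^*p=e^*e$ and $phh^*p=h^*h$; see Remark~\ref{rem:plinkedsquare}), so the correspondence is bijective by construction and no relations can be lost or gained.
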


\begin{rem}\label{rem:plinkedsquare}
We can visualise the $p$-linked square $\smat efgh$ in its egg-box as in Figure \ref{fig:plinkedsquare}.  In the diagram, $\R$- and $\L$-classes are indexed by projections, so for example $e,f\in R_s$ and $e,g\in L_v$, and we have $e=sv$.  As in Figure \ref{fig:plinkeddiamond}, dotted arrows indicate the action of conjugation by $p$.
\end{rem}

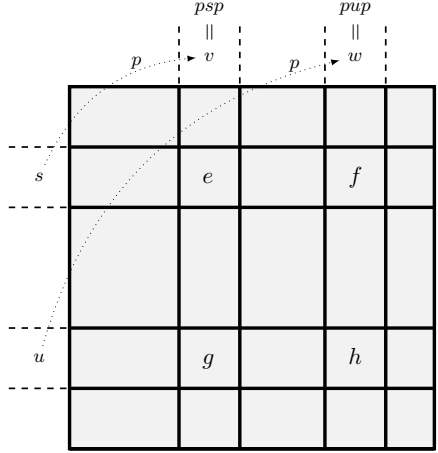
\begin{figure}[t]
\begin{center}
\scalebox{0.8}{
\begin{tikzpicture}[scale=1]

\fill[gray!10](0,0)--(6,0)--(6,6)--(0,6)--(0,0);

\foreach \x in {1,2,4,5} {\draw[dashed, thick] (-1,6-\x)--(0,6-\x); \draw[ultra thick] (0,6-\x)--(6,6-\x);}
\foreach \x in {1.5+.3,2.5+.3,3.5+.7,4.5+.7} {\draw[dashed, thick] (\x,7)--(\x,6); \draw[ultra thick] (\x,0)--(\x,6);}

\draw[ultra thick](0,0)--(6,0)--(6,6)--(0,6)--(0,0)--(6,0);

\node (v) at (2+.3,6.5) {\footnotesize $v$};
\node (w) at (4+.7,6.5) {\footnotesize $w$};
\node (s) at (-.5,4.5) {\footnotesize $s$};
\node (u) at (-.5,1.5) {\footnotesize $u$};

\node (vv) at (2+.3,6.5+.8) {\footnotesize $psp$};
\node (ww) at (4+.7,6.5+.8) {\footnotesize $pup$};

\node () at (2+.3+.03,6.5+.4) {$\rotatebox{90}{=}$};
\node () at (4+.7+.03,6.5+.4) {$\rotatebox{90}{=}$};

\node () at (2+.3,4.5) {$\mathrlap{e}{\phantom{f}}$};
\node () at (4+.7,4.5) {$\mathrlap{f}{\phantom{f}}$};
\node () at (2+.3,1.5) {$\mathrlap{g}{\phantom{f}}$};
\node () at (4+.7,1.5) {$\mathrlap{h}{\phantom{f}}$};

\draw[dotted,-{latex}] (s) to [bend left = 30] (v);
\draw[dotted,-{latex}] (u) to [bend left = 30] (w);
\node () at (1.1,6.4) {\footnotesize$p$};
\node () at (3.7,6.35) {\footnotesize$p$};

\end{tikzpicture}
}
\caption{A $p$-linked square $\smat efgh = \smat{sv}{sw}{uv}{uw}$, corresponding to the $p$-linked diamond $(s,u;v,w)$.  See Remark \ref{rem:plinkedsquare} for more details.}
\label{fig:plinkedsquare}
\end{center}
\end{figure}

Another equivalent form of the relation \eqref{eq:rwp23} is obtained by recalling from \cite{EGMR} the notion of a $p$-linked pair of projections.
It is easy to see that if $(s,u;v,w)$ is a $p$-linked diamond, then~$(s,u)$ is a $p$-linked pair, meaning that $s = spups$ and $u = upspu$. And conversely a $p$-linked pair~$(s,u)$ leads to the $p$-linked diamond $(s,u;psp,pup)$.  Hence we can write
 \eqref{eq:rwp23} as:
\[
a_{s,psp}^{-1} a_{s,pup}=a_{u,psp}^{-1} a_{u,pup} \quad (s,u\in P_D,\ (s,u)\text{ is a $p$-linked pair}).
\]

\subsection{Linked triangles}

Our final variation identifies a subset of \eqref{eq:rwp23} that implies the rest.
Let us call a triple $(s,u,w)$ of projections from $P_D$ a \emph{$p$-linked triangle} if $(s,u;s,w)$ is a $p$-linked diamond, i.e.~if $(s,w),(u,s),(u,w)\in\F_D$, $psp=s$ and $pup=w$. 
Let~$\Trs_D$ denote the set of all linked triangles in~$P_D$.
Now consider an arbitrary $p$-linked diamond $(s,u;v,w)$.  It is easy to check that the defining properties of this diamond (and the consequence $v\mr\F w$ from Remark \ref{rem:suvw}) imply that $(w,s,v),(v,u,w)\in\Trs_D$ (both are $p$-linked).  
The relations \eqref{eq:rwp23} arising from the diamonds $(w,s;w,v)$ and $(v,u;v,w)$ are
\[
a_{w,w}^{-1} a_{w,v}=a_{s,w}^{-1} a_{s,v}\quad\text{and}\quad 
a_{v,v}^{-1} a_{v,w}=a_{u,v}^{-1} a_{u,w}.
\]
These two relations, together with \eqref{eq:rwp21} and \eqref{eq:rwp22}, imply
\[
a_{s,v}^{-1}a_{s,w}=a_{w,v}^{-1} =a_{v,w}=a_{u,v}^{-1}a_{u,w},
\]
which is the relation \eqref{eq:rwp23} arising from the original diamond $(s,u;v,w)$.  Thus, we can replace the entire set of relations \eqref{eq:rwp23} by the subset corresponding to triangles $(s,u,w)\in\Trs_D$.  The relation \eqref{eq:rwp23} for the corresponding diamond $(s,u;s,w)\in\Lk_D$ is $a_{s,s}^{-1}a_{s,w} = a_{u,s}^{-1}a_{u,w}$.  Using~\eqref{eq:rwp21}, this simplifies to $a_{u,s}a_{s,w}= a_{u,w}$.
Hence we obtain the following:

\begin{cor}
\label{co:lkdp1}
With the notation as above, the maximal subgroup of $\PG(P)$ containing the projection $x_{p_0}$ has a presentation
with generators $A=\{ a_{p,q}\colon (p,q)\in\F_D\}$ and relations
\begin{align}
\nonumber
&a_{p,q}=1&&(p,q\in P_D,\ pq\in T),\\
\nonumber
&a_{p,p}=1&&(p\in P_D),\\
\nonumber
&a_{p,q}=a_{q,p}^{-1}  &&((p,q)\in\F_D),\\
&  a_{u,s}a_{s,w}= a_{u,w} && ((s,u,w)\in \Trs_D).
\tag*{\qed}
\end{align}
\end{cor}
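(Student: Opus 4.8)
The plan is to deduce the corollary from Theorem \ref{th:linkedpres} by showing that the two presentations are \emph{equivalent}. They have the same generating set $A=\{a_{p,q}\colon (p,q)\in\F_D\}$ and the same first three families of relations \eqref{eq:rwp20}--\eqref{eq:rwp22}, so it suffices to prove that, in the presence of \eqref{eq:rwp20}--\eqref{eq:rwp22}, the linked-diamond relations \eqref{eq:rwp23} and the linked-triangle relations $a_{u,s}a_{s,w}=a_{u,w}$ $((s,u,w)\in\Trs_D)$ are mutual consequences of one another.

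For the easier direction I would fix $(s,u,w)\in\Trs_D$. By definition $(s,u;s,w)\in\Lk_D$, so the relation \eqref{eq:rwp23} for this diamond reads $a_{s,s}^{-1}a_{s,w}=a_{u,s}^{-1}a_{u,w}$; since $\leq_\F$, and hence $\F$, is reflexive, $a_{s,s}\in A$, and using \eqref{eq:rwp21} to set $a_{s,s}=1$ turns this into $a_{u,s}a_{s,w}=a_{u,w}$, as required.

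For the converse, fix a $p$-linked diamond $(s,u;v,w)\in\Lk_D$ and recover \eqref{eq:rwp23} for it from triangle relations. The key preliminary step is to check that $(w,s,v)$ and $(v,u,w)$ both lie in $\Trs_D$, each $p$-linked. This uses, on the one hand, the defining $\F$-relationships of the diamond together with the extra relation $v\mr\F w$ established in Remark \ref{rem:suvw}, and on the other hand the fact that $p$ is idempotent, so that $pwp=p(pup)p=pup=w$ and $pvp=p(psp)p=psp=v$. Granting this, the diamonds corresponding to these two triangles are $(w,s;w,v)$ and $(v,u;v,w)$, and the relation \eqref{eq:rwp23} for them is
\[
a_{w,w}^{-1}a_{w,v}=a_{s,w}^{-1}a_{s,v}
\AND
a_{v,v}^{-1}a_{v,w}=a_{u,v}^{-1}a_{u,w}.
\]
Applying \eqref{eq:rwp21} to delete $a_{w,w}$ and $a_{v,v}$, and \eqref{eq:rwp22} to replace $a_{w,v}$ by $a_{v,w}^{-1}$, these become $a_{v,w}=a_{s,v}^{-1}a_{s,w}$ and $a_{v,w}=a_{u,v}^{-1}a_{u,w}$; comparing them yields exactly $a_{s,v}^{-1}a_{s,w}=a_{u,v}^{-1}a_{u,w}$, which is \eqref{eq:rwp23} for $(s,u;v,w)$. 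Hence every linked-diamond relation is a consequence of the linked-triangle relations together with \eqref{eq:rwp21}--\eqref{eq:rwp22}, so the two presentations are equivalent.

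I expect the only point requiring genuine care to be the bookkeeping that verifies $(w,s,v),(v,u,w)\in\Trs_D$: one must track which pairs are required to be $\F$-related and confirm each, the least obvious being $v\mr\F w$, which is supplied by Remark \ref{rem:suvw}. Everything else is routine manipulation in the free group on $A$.
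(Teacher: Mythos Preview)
Your proposal is correct and follows essentially the same approach as the paper's proof: you identify the same two $p$-linked triangles $(w,s,v)$ and $(v,u,w)$ arising from a given diamond $(s,u;v,w)$, verify their membership in $\Trs_D$ using the $\F$-relationships (including $v\mr\F w$ from Remark~\ref{rem:suvw}) and the idempotency of $p$, and then derive the diamond relation \eqref{eq:rwp23} by combining the two triangle relations via \eqref{eq:rwp21} and \eqref{eq:rwp22}. The paper's argument is organized slightly differently---it frames the result as replacing the full set \eqref{eq:rwp23} by the triangle subset rather than as a two-way equivalence---but the content is identical.
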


\begin{rem}\label{rem:suv}
As in Remark \ref{rem:suvw}, one can visualise a linked triangle $(s,u,w) \equiv (s,u;s,w)$ as:
\[
\begin{tikzpicture}[scale=1.2,>=Latex]
\nc\yy{0.3}
\node (w) at (2,0) {$w$};
\node (s) at (1,1) {$s$};
\node (u) at (1,-1) {$u$};
\draw(s)--(u)--(w)--(s);
\node[left] () at (1,1-.03) {$psp={}$};
\node[right] () at (2,-.03) {${}=pup$};
\draw[dotted,-{latex}] (u) to [bend right = 30] (w);
\node () at (1.8,-.8) {\footnotesize$p$};
\path [dotted, -{latex}] (s) edge  [loop above] node {\footnotesize$p$} ();
\end{tikzpicture}
\]
We saw above that a linked diamond $(s,u;v,w)$ induces two linked triangles $(w,s,v)$ and $(v,u,w)$.  These triangles can be seen in Figure \ref{fig:plinkeddiamond} (right), and this can again be interpreted topologically; see \cite[Section~10]{EGMR}.  
\end{rem}

One can also formulate Corollary \ref{co:lkdp1} in terms of the alphabet $\set{a_e}{e\in E_D}$.  For this, note that a $p$-linked triangle $(s,u,v)$ corresponds to the $p$-linked diamond $(s,u;s,w)$ by definition, which in turn corresponds (as in \eqref{eq:DtoS}) to the $p$-linked square $\smat efgh = \smat{ss}{sw}{us}{uw}$, in which $e=ss=s$ is a projection.  Conversely, given a $p$-linked square $\smat efgh$ with $e$ a projection, the associated $p$-linked diamond $(s,u;v,w)$ in \eqref{eq:StoD} satisfies $v=e=s$, and hence leads to the $p$-linked triangle $(s,u,w)$.  The next result now follows from Corollary \ref{co:lkdp1} in exactly the same way that Corollary \ref{co:lkid} followed from Theorem \ref{th:linkedpres}.

\begin{cor}\label{co:lkidp}
Let $P$ be a projection algebra, let $S$ be any projection-generated regular $*$-semigroup with $P=\sfP(S)$, and let $p_0\in P$ be arbitrary.
Denote by $D$ the $\D$-class of $S$ containing~$p_0$, and let $T$ be any $p_0$-rooted directed spanning tree for $\F_D$.  
With the rest of the notation as above, the maximal subgroup of $\PG(P)$ containing $x_{p_0}$ has a presentation
with generators $A=\{ a_e\colon e\in E_D\}$ and relations
\begin{alignat}{2}
&a_e = 1&&(e\in T \cup P_D),\\
&a_e = a_{e^*}^{-1}  &&(e\in E_D),\\
& a_{g} a_{f}= a_{h} &\quad& (\smat{e}{f}{g}{h} \text{ is a linked square of idempotents in $D$ with $e\in P_D$}).
\\
\tag*{\qed}
\end{alignat}
\end{cor}

\subsection{First application: degenerate diamonds and free groups}

We conclude this section by giving some applications of Theorem \ref{th:linkedpres}.  The first family of applications is based on the observation that in the absence of linked diamonds, the presentation \eqref{eq:rwp20}--\eqref{eq:rwp23} defines a free group.  We can actually do a bit better than that.  Let us call a linked diamond $( s,u;v,w)$ \emph{degenerate} if at least one of the following conditions is satisfied:
\begin{enumerate}[label=\textsf{(D\arabic*)},leftmargin=10mm]
\begin{multicols}{3}
\item \label{D1} $s=u$,
\item \label{D2} $v=w$,
\item \label{D3} $s=v$ and $u=w$.
\end{multicols}
\end{enumerate}
We note in passing that in fact \ref{D1} implies \ref{D2}.
In each of these three situations, the relation~\eqref{eq:rwp23} reads as follows:
\begin{enumerate}[label=\textsf{(D\arabic*)},leftmargin=10mm]
\begin{multicols}{3}
\item $a_{s,v}^{-1} a_{s,w}=a_{s,v}^{-1} a_{s,w}$,
\item $a_{s,v}^{-1} a_{s,v}=a_{u,v}^{-1} a_{u,v}$,
\item $a_{s,s}^{-1} a_{s,u}=a_{u,s}^{-1} a_{u,u}$.
\end{multicols}
\end{enumerate}
Thus in cases \ref{D1} and \ref{D2} the relation is trivially true, while in case \ref{D3} it is a consequence of 
\eqref{eq:rwp21} and \eqref{eq:rwp22}.  It is also worth noting that when $(s,u;v,w)$ is degenerate, the diamond pictured in Figure \ref{fig:plinkeddiamond} collapses to a line; the specific type of collapse depends on the type of degeneracy \ref{D1}--\ref{D3}.

In the next statement recall that $P_D = P\cap D$ and $E_D = E\cap D$, where $D$ is the $\D$-class of~$p_0$.  We also fix a subset $E_D'$ of $E_D\setminus P_D$ such that:
\bit
\item $E_D'$ contains exactly one of $e$ or $e^*$ for each $e\in E_D\setminus P_D$,
\item $E_D'$ contains the spanning tree $T$ from Theorem \ref{th:linkedpres}.
\eit

\begin{prop}
\label{pr:free}
With the notation of Theorem \ref{th:linkedpres}, if there are no non-degenerate linked diamonds in $D$, then the maximal subgroup of $\PG(P)$ containing $x_{p_0}$ is free of rank $|E_D'\setminus T|$.  In particular, when $P_D$ is finite, this rank is equal to
\[
\frac{|E_D|-3|P_D|}2 + 1.
\]
\end{prop}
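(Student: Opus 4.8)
The plan is to apply Theorem \ref{th:linkedpres} and show that, in the absence of non-degenerate linked diamonds, the presentation collapses to one with no nontrivial relators beyond generator-killing ones, so the group is free on the surviving generators. First I would observe that every linked diamond in $D$ is degenerate by hypothesis, so by the discussion immediately preceding this proposition (the case analysis of \ref{D1}--\ref{D3}), each relation \eqref{eq:rwp23} is either trivially true or a consequence of \eqref{eq:rwp21} and \eqref{eq:rwp22}. Hence the presentation of Theorem \ref{th:linkedpres} simplifies to the one with generators $A=\{a_{p,q}\colon (p,q)\in\F_D\}$ and relations only \eqref{eq:rwp20}, \eqref{eq:rwp21}, \eqref{eq:rwp22}.

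Next I would use the relations \eqref{eq:rwp22}, namely $a_{p,q}=a_{q,p}^{-1}$ for $(p,q)\in\F_D$, to eliminate roughly half the generators: via \ref{it:RS5} the set $\F_D$ is in bijection with $E_D$ through $(p,q)\mapsto pq$, and the involution $(p,q)\leftrightarrow(q,p)$ on $\F_D$ corresponds to $e\leftrightarrow e^*$ on $E_D$; the fixed points are exactly the pairs $(p,p)$ with $p\in P_D$, corresponding to the projections. So using \eqref{eq:rwp22} we can express every $a_{p,q}$ with $(q,p)\notin E_D'$-side in terms of one with the $E_D'$-side, leaving a free generating set indexed by $P_D \cup E_D'$ (one generator per projection, and one per chosen representative of each $\{e,e^*\}$ pair). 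Then \eqref{eq:rwp21} kills the generators indexed by $P_D$, and \eqref{eq:rwp20} kills those indexed by $T$ (recall $T\subseteq E_D'$ by our choice of $E_D'$, and the edges of $T$ lie in $E_D\setminus P_D$ since $T$ is a tree in the bipartite Graham--Houghton graph and so contains no loop at a projection). After deleting these, no relations remain, so the group is free on the set indexed by $E_D'\setminus T$, of rank $|E_D'\setminus T|$.

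Finally, for the numerical formula when $P_D$ is finite: $|E_D'|$ is obtained by taking one representative from each $\{e,e^*\}$ pair among the $|E_D|-|P_D|$ non-projection idempotents, giving $|E_D'|=\frac{|E_D|-|P_D|}{2}$; and $T$ is a spanning tree of the bipartite graph $\GH(D)$ whose two vertex sides are each indexed by $P_D$ (since $I=J=P_D$ by \ref{it:RS2}), so $|T|=2|P_D|-1$. Hence
\[
|E_D'\setminus T| = \frac{|E_D|-|P_D|}{2} - (2|P_D|-1) = \frac{|E_D|-3|P_D|}{2}+1,
\]
as claimed.

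I expect the main obstacle to be purely bookkeeping rather than conceptual: one must be careful that the elimination via \eqref{eq:rwp22} is consistent (that no generator is both eliminated and retained), which is guaranteed by choosing $E_D'$ to contain exactly one of $e,e^*$; and one must justify that $T$, $P_D$ and $E_D'\setminus T$ partition cleanly, i.e.\ that $T$ avoids the projections $P_D$ so that the two generator-killing families \eqref{eq:rwp20} and \eqref{eq:rwp21} act on disjoint sets. This last point follows because $\GH(D)$ is bipartite with no edge being a projection-to-itself loop; more precisely an edge $e\in E_D$ of $\GH(D)$ connecting the $\R$-class and $\L$-class indexed by the same projection $p$ would force $e=p$, but then $e$ is a loop and cannot appear in a spanning tree. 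Once these disjointness facts are pinned down, the count is immediate.
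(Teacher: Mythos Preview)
Your conceptual approach is essentially the same as the paper's: eliminate relations \eqref{eq:rwp23} using the degeneracy hypothesis, then use \eqref{eq:rwp22} to pair off generators, \eqref{eq:rwp21} to kill those indexed by $P_D$, and \eqref{eq:rwp20} to kill those indexed by $T$, leaving a free group on $E_D'\setminus T$.

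However, there is a genuine error in your numerical argument. The tree $T$ in Theorem~\ref{th:linkedpres} is \emph{not} a spanning tree of the bipartite Graham--Houghton graph $\GH(D)$; it is a $p_0$-rooted directed spanning tree of the friendliness digraph $\F_D$, whose vertex set is the single copy $P_D$. Hence $|T|=|P_D|-1$, not $2|P_D|-1$. Your displayed computation
\[
\frac{|E_D|-|P_D|}{2} - (2|P_D|-1) = \frac{|E_D|-3|P_D|}{2}+1
\]
is arithmetically wrong (the left-hand side equals $\frac{|E_D|-5|P_D|}{2}+1$); you arrived at the correct formula only because this error compensated for the wrong value of $|T|$. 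The correct calculation is $\frac{|E_D|-|P_D|}{2}-(|P_D|-1)=\frac{|E_D|-3|P_D|}{2}+1$.

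Relatedly, your justification that $T$ avoids $P_D$ is based on the wrong picture. In $\GH(D)$, projections \emph{are} perfectly good edges (each $p\in P_D$ is the edge joining the $\R$-vertex and $\L$-vertex indexed by $p$), and spanning trees of $\GH(D)$ can certainly contain them; see Corollary~\ref{cor:pgmaxq}. The correct reason $T\subseteq E_D\setminus P_D$ is that edges of a spanning tree of $\F_D$ are pairs $(p,q)$ with $p\neq q$, so $pq$ is never a projection.
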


\begin{proof}
In the absence of non-degenerate linked diamonds, 
the presentation from Theorem \ref{th:linkedpres} consists of generators $A = \set{a_{p,q}}{(p,q)\in\F_D}$ and relations \eqref{eq:rwp20}--\eqref{eq:rwp22}, and we recall that there is a bijection $\F_D\to E_D$ given by $(p,q)\mt pq$.
The generators of the form $a_{p,p}$ are equal to~$1$ by \eqref{eq:rwp21}, and the remaining generators
are mutual inverses in pairs according to \eqref{eq:rwp22}.  We can therefore eliminate all generators but those corresponding to $E_D'$.  Finally, the generators corresponding to~$T$ can be eliminated, being equal to $1$ by \eqref{eq:rwp20}.  When this elimination has been carried out, no relations remain.  This gives the first claim.

For the second claim, we note that when $P_D$ is finite, so too is $E_D$ and $T$, and so we have $|E_D'\setminus T| = |E_D'|-|T|$.  The formula then follows from $|E_D'| = \frac{|E_D|-|P_D|}2$ and $|T| = |P_D|-1$.
\end{proof}

\begin{rem}
Proposition \ref{pr:free} can also be interpreted and proved topologically.  By \cite[Theorem~10.7]{EGMR}, and since there are no non-degenerate linked diamonds in $D$, the maximal subgroup in question is (isomorphic to) the fundamental group of the graph with vertex set $P_D$, and an edge $\{p,q\}$ for distinct $p,q\in P_D$ with $p\mr\F q$.  It is well known (see \cite[Example 1.22]{Hatcher2002}) that the fundamental group of a connected graph with edge set $E$ is free of rank $|E\setminus T|$ for any spanning tree $T$.
\end{rem}

We now deduce a number of corollaries of this result.  The first concerns the \emph{adjacency semigroups} of Jackson and Volkov \cite{JV10}, which were treated as a key example in \cite{EGMR}.  Let $\Ga$ be a connected, symmetric, reflexive digraph with vertex set $P$ and edge set $E$.  The \emph{adjacency semigroup} $\sfA(\Ga)$ has underlying set $(P\times P)\cup\{0\}$, and product
\[
0^2=0=0(p,q)=(p,q)0
\AND
(p,q)(r,s) = \begin{cases}
(p,s) &\text{if $(q,r)\in E$}\\
0 &\text{otherwise.}
\end{cases}
\]
The involution $0^*=0$ and $(p,q)^*=(q,p)$ gives it the structure of a regular $*$-semigroup.  We have $\sfE(\sfA(\Ga)) = E\cup\{0\}$, and if we identify $p\in P$ with $(p,p)\in\sfA(\Ga)$, then $\sfP(\sfA(\Ga)) = P\cup\{0\}$.  It follows quickly from connectivity of $\Ga$ that $\sfA(\Ga)$ is projection-generated.  Indeed, for any $p,q\in P$ we have $(p,q) = p_1\cdots p_k$ for any path $p = p_1\to\cdots\to p_k=q$ in $\Ga$.  It follows that $D = P\times P$ is the unique non-zero $\D$-class in $\sfA(\Ga)$, and that there are no non-degenerate linked diamonds; specifically, it is easy to check that if $(s,u;v,w)$ is $p$-linked, then $v=w=p$.  Proposition \ref{pr:free} therefore applies, and we obtain the following:

\begin{cor}
\label{co:0rb}
Let $S = \sfA(\Ga)$ be the adjacency semigroup of a finite, connected, symmetric, reflexive digraph $\Ga = (P,E)$.  Suppose also that $\Ga$ has $|P| = n$ vertices, and that the simple (undirected, loop-free) reduct of $\Ga$ has $k$ edges.  Then the maximal subgroup of $\PG(\sfP(S))$ containing any non-zero projection is free of rank $k-n+1$.
\end{cor}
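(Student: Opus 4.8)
The plan is to apply Proposition \ref{pr:free} directly to $S = \sfA(\Ga)$, so the first task is to check that its hypotheses hold. All of the needed structural facts are recorded in the paragraph preceding the statement: $S$ is a projection-generated regular $*$-semigroup; writing $\sfP(S) = P\cup\{0\}$, the unique non-zero $\D$-class is $D = P\times P$, so every non-zero projection lies in $D$ and $P_D = \set{(p,p)}{p\in P}$ is finite; and $D$ contains no non-degenerate linked diamond, since any $p$-linked diamond $(s,u;v,w)$ in $D$ is degenerate. Granting these, Proposition \ref{pr:free} immediately yields that the maximal subgroup of $\PG(\sfP(S))$ containing any non-zero projection is free, of rank $\frac{|E_D|-3|P_D|}{2}+1$.

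It then remains to evaluate $|P_D|$ and $|E_D|$ in terms of $n$ and $k$. We have $|P_D| = |P| = n$. For $E_D = \sfE(S)\cap D$, I would observe that a pair $(p,q)\in P\times P$ is idempotent in $\sfA(\Ga)$ exactly when $(p,q)(p,q)=(p,q)$, i.e.~when $(q,p)\in E$, equivalently $(p,q)\in E$ by symmetry of $\Ga$; since $0\notin D$, this identifies $E_D$ with the edge set $E$ of $\Ga$ sitting inside $P\times P$. Reflexivity supplies a loop at each of the $n$ vertices, and each of the $k$ edges of the simple reduct of $\Ga$ contributes the two ordered pairs $(p,q)$ and $(q,p)$, so $|E_D| = n + 2k$.

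Substituting into the rank formula gives
\[
\frac{(n+2k)-3n}{2}+1 \;=\; \frac{2k-2n}{2}+1 \;=\; k-n+1,
\]
which is the assertion. I do not anticipate any genuine difficulty here: the argument is a one-line appeal to Proposition \ref{pr:free} followed by an arithmetic computation, and the only step calling for a little attention is the count of idempotents of $\sfA(\Ga)$ lying in $D$, where one must keep the $n$ loops separate from the $2k$ ordered pairs arising from the non-loop edges of the symmetric reflexive digraph.
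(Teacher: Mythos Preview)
Your proposal is correct and follows essentially the same approach as the paper: apply Proposition \ref{pr:free}, compute $|P_D|=n$ and $|E_D|=2k+n$, and substitute into the rank formula. You supply slightly more detail on the idempotent count than the paper does (which simply states $|E_D|=2k+n$), but the argument is identical.
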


\begin{proof}
Writing $D:=P\times P$, we have $|P_D| = n$ and $|E_D| = 2k+n$.  Proposition \ref{pr:free} and a simple calculation gives the stated rank.
\end{proof}

We can also give an $r=n-1$ version of Theorem \ref{thm:mainPGPn}:

\begin{cor}\label{cor:PGPnn-1}
Let $n\geq 1$, and let $p_0$ be a projection of rank $n-1$ in the partition monoid $\P_n$.
Then the maximal subgroup of the free projection-generated semigroup $\PG(\sfP(\P_n))$ containing~$x_{p_0}$ is free of rank $\binom{n-1}2$.
\end{cor}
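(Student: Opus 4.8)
The plan is to apply Proposition \ref{pr:free}, so the whole task reduces to two things: exhibiting a suitable projection-generated regular $*$-semigroup realising $P=\sfP(\P_n)$, and checking that its relevant $\D$-class contains no non-degenerate linked diamonds. For the first point I would take $S:=\langle\sfP(\P_n)\rangle=\langle\sfE(\P_n)\rangle$, the submonoid of $\P_n$ generated by the projections (equivalently the idempotents). Since $\P_n$ is a regular $*$-semigroup and this submonoid is closed under ${}^*$, it is a projection-generated regular $*$-semigroup with $\sfP(S)=P$; moreover $S$ consists of the identity together with all partitions of rank at most $n-1$, so by Lemma \ref{la:Green_Pn}(iii) the $\D$-class $D$ of $p_0$ in $S$ is precisely the set of all partitions of rank $n-1$, and $P_D$, $E_D$ are the sets of projections and idempotents of $\P_n$ of rank $n-1$.

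The main point is to show that every linked diamond $(s,u;v,w)$ with entries in $P_D$ is degenerate. Suppose it is $p$-linked, so $p\in P=\sfP(\P_n)$ with $psp=v$ and $pup=w$, where $s,u,v,w$ all have rank $n-1$. Since $\rank(psp)\le\rank(p)$ in $\P_n$ and $\rank(v)=n-1$, we get $\rank(p)\ge n-1$. If $\rank(p)=n$, then $p$ is a permutation that is also a projection, hence $p$ is the identity, so $v=s$ and $w=u$, and the diamond is degenerate of type \ref{D3}. If instead $\rank(p)=n-1$, then I would argue that $v=p$: from $v=p\cdot(sp)$ and $v=(ps)\cdot p$ we get $v\leq_\R p$ and $v\leq_\L p$ in $\P_n$; since $\rank(v)=\rank(p)=n-1$ and $\P_n$ is finite, stability of Green's relations gives $v\mr\R p$ and $v\mr\L p$, so $v\mr\H p$; as $H_p$ is a group with identity $p$ and $v$ is an idempotent lying in it, $v=p$. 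The same reasoning gives $w=p$, so $v=w$ and the diamond is degenerate of type \ref{D2}. Hence $D$ has no non-degenerate linked diamonds.

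Proposition \ref{pr:free} then tells us that the maximal subgroup of $\PG(P)$ containing $x_{p_0}$ is free, of rank $\tfrac12\bigl(|E_D|-3|P_D|\bigr)+1$. To finish I would substitute the known cardinalities $|P_D|=n+\binom n2$ and $|E_D|=n+5\binom n2$, which are recorded (with reference to \cite{EG17}) in Remark \ref{rem:noss}, giving rank $\binom n2-n+1=\binom{n-1}2$, as required. (The small cases $n=1,2$, where the rank is $0$, are covered by the same computation.)

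I expect the only real content to be the second paragraph, and within it the case $\rank(p)=n-1$: the key observation is that conjugating a rank-$(n-1)$ projection by another rank-$(n-1)$ projection cannot preserve the rank unless the result is $\H$-equivalent to $p$, hence equal to $p$ — this is the linked-diamond analogue of the fact (used in Remark \ref{rem:noss}) that the only idempotent $\J$-above $D$ in $\P_n$ is the identity. Everything else is bookkeeping: identifying $S$, and plugging the counts of \cite{EG17} into the formula of Proposition \ref{pr:free}.
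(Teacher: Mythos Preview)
Your proposal is correct and follows essentially the same approach as the paper: apply Proposition~\ref{pr:free} after checking that $D$ contains no non-degenerate linked diamonds, then plug in $|P_D|=n+\binom n2$ and $|E_D|=n+5\binom n2$ from Remark~\ref{rem:noss}. Your treatment is in fact more thorough than the paper's one-line justification (``since the only idempotent in the $\D$-class above $D$ is the identity element''); your explicit handling of the case $\rank(p)=n-1$ via stability, showing $v=w=p$, supplies the detail the paper leaves to the reader.
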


\begin{proof}
Let $D := D_{p_0}$.  As in Remark \ref{rem:noss} we have $|P_D| = n + \binom n2$, and $|E_D| = n + 5\binom n2$.  Since the only idempotent in the $\D$-class above $D$ is the identity element, there are no non-degenerate linked diamonds in $D$.  The proof again concludes with an application of Proposition~\ref{pr:free}, and a simple calculation.
\end{proof}

As a final non-obvious application of Proposition \ref{pr:free}, we can also determine the maximal subgroup in the bottom $\D$-class of $\PG(\sfP(\B_4))$, where $\B_4$ is the Brauer monoid of degree $4$, consisting of partitions from $\P_4$ into blocks all of size $2$.  

\begin{cor}\label{cor:PGB4}
Let $p_0$ be a projection of rank $0$ in the Brauer monoid $\B_4$.
Then the maximal subgroup of the free projection-generated semigroup $\PG(\sfP(\B_4))$ containing $x_{p_0}$ is free cyclic.
\end{cor}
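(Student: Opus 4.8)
The plan is to apply Proposition \ref{pr:free} to the bottom ($\D$-)class $D = D_{p_0}$ of the Brauer monoid $\B_4$ (equivalently, of its idempotent-generated submonoid $\langle\sfE(\B_4)\rangle$, which has the same projection algebra $\sfP(\B_4)$ and the same rank-$0$ $\D$-class, since all rank-$0$ elements of $\B_4$ are idempotents).  For this I need the cardinalities $|P_D|$ and $|E_D|$, together with the fact that $D$ contains no non-degenerate linked diamonds.

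First I would record the elementary counts.  A rank-$0$ element of $\B_4$ is specified by a pairing of the four top vertices and a pairing of the four bottom vertices; there are $3$ perfect matchings of a $4$-set, so there are $9$ such elements.  Every one of them is idempotent: multiplying two rank-$0$ diagrams preserves the top pairing of the first factor and the bottom pairing of the second, merges no blocks, and creates no transversal, producing only loops in the middle row which are discarded.  Hence $|E_D| = 9$.  A rank-$0$ element is a projection exactly when its top and bottom pairings coincide, so $|P_D| = 3$, and consequently $\F_D = P_D\times P_D$.

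Next I would check that there are no non-degenerate linked diamonds in $D$.  For any $p\in\sfP(\B_4)$ and any $s,u\in P_D$, the elements $psp$ and $pup$ are automatically rank-$0$ projections (by \ref{it:RS7} and a rank estimate), and since $\F_D$ is complete on $P_D$ the quadruple $(s,u;psp,pup)$ is always a $p$-linked diamond; so the task is to show such a diamond is always degenerate.  There are three cases according to $\rank(p)\in\{0,2,4\}$.  If $\rank(p)=0$ then $psp=p$ for every rank-$0$ projection $s$ (its top and bottom pairings are forced to equal that of $p$), so $psp=pup$ and the diamond is degenerate of type \ref{D2}.  If $\rank(p)=4$ then $p$ is the identity and $psp=s$, $pup=u$, giving a degenerate diamond of type \ref{D3}.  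The remaining case $\rank(p)=2$ is the crux: here $p$ has a unique non-transversal top block $B$ (and the transversal part of $p$ is necessarily the identity), and a direct computation with Brauer diagrams shows that $psp$ equals the rank-$0$ projection with top (and bottom) pairing $\{B,[4]\setminus B\}$, independently of $s$; hence again $psp=pup$, of type \ref{D2}.  So every linked diamond in $D$ is degenerate.

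Finally, Proposition \ref{pr:free} applies and yields that the maximal subgroup of $\PG(\sfP(\B_4))$ containing $x_{p_0}$ is free of rank $\tfrac{|E_D|-3|P_D|}{2}+1=\tfrac{9-9}{2}+1=1$, that is, free cyclic.  The main obstacle is the $\rank(p)=2$ sub-case of the no-non-degenerate-diamonds verification: this is a small but genuine diagram calculation (which can also be confirmed with the Semigroups package for GAP), and it is precisely what distinguishes this situation from Corollary \ref{cor:PGPnn-1}, where the only idempotent strictly above the relevant $\D$-class in the $\leq_\J$ order is the identity, so that the check is automatic.
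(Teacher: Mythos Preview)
Your proof is correct and follows essentially the same approach as the paper: compute $|P_D|=3$ and $|E_D|=9$, verify that all linked diamonds in $D$ are degenerate, and apply Proposition~\ref{pr:free}. The only cosmetic difference is that the paper handles the non-identity case uniformly rather than splitting into $\rank(p)=0$ and $\rank(p)=2$: if $p\neq 1$ then $p$ has some upper block $A$ (and lower block $A'$), and then $psp$ and $pup$ both contain $A$ and $A'$ as blocks, forcing $psp=pup$ since a rank-$0$ Brauer projection on four points is determined by a single size-$2$ block.
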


\begin{proof}
Note that $D:=D_{p_0}$ is a $3\times 3$ square band, because there are three equivalence relations on the set $[4]$ with classes of size~$2$.
We claim that there are no non-degenerate linked diamonds in $D$, and therefore Proposition~\ref{pr:free} applies.  Since $|P_D|=3$ and $|E_D|=9$, the proposition will tell us the group is free of rank $1$.

To prove the claim, consider a $p$-linked diamond $(s,u;v,w)$ in $D$.  If $p=1$, then $v=psp=s$ and $w=pup=u$, so the diamond is degenerate of type \ref{D3}.  Now suppose $p\not=1$, so that $p$ has an upper block $A$, and hence also the lower block $A'$.  We see then that $v=psp$ and $w=pup$ also contain the blocks $A$ and $A'$.  Since $v$ and $w$ have rank $0$, they also contain the blocks $[4]\setminus A$ and $[4]'\setminus A'$, and hence $v=w$.  Thus, the diamond is degenerate of type \ref{D2}.
\end{proof}

\subsection[Second application: maximal subgroups in $\PG(\sfP(\P_n))$, the case of rank~$0$]{\boldmath Second application: maximal subgroups in $\PG(\sfP(\P_n))$, the case of rank~$0$}

Our second application of Theorem \ref{th:linkedpres} is to determine the maximal subgroups of the free projection-generated semigroups over partition monoids corresponding to idempotents in the bottom $\D$-class, thereby establishing the~${r=0}$ case of Theorem \ref{thm:mainPGPn}.  This time there are \emph{many} non-degenerate linked diamonds, and these play an important role in the proof.

\begin{prop}
\label{p:PnD0}
Let $n\geq 1$, and let $p_0$ be a projection of rank $0$ in the partition monoid $\P_n$.
Then the maximal subgroup of the free projection-generated semigroup $\PG(\sfP(\P_n))$ containing $x_{p_0}$ is trivial.
\end{prop}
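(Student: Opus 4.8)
The plan is to apply Theorem~\ref{th:linkedpres} to a convenient spanning tree and then show that every generator of the resulting presentation is killed by the defining relations, so that the maximal subgroup is trivial (i.e.\ $\S_0$). Let $D$ be the $\D$-class of rank-$0$ partitions of $\P_n$. By Lemma~\ref{la:Green_Pn} each rank-$0$ projection is determined by a partition $\xi$ of $[n]$; write $p_\xi\in P_D$ for the one whose upper and lower blocks are the $\xi$-classes. For any partitions $\xi,\eta$ of $[n]$, the rank-$0$ partition whose upper blocks are the $\xi$-classes and whose lower blocks are the $\eta$-classes is readily checked to be idempotent and lies in $R_{p_\xi}\cap L_{p_\eta}$, so $(p_\xi,p_\eta)\in\F_D$; hence $\F_D=P_D\times P_D$, and the digraph $\F_D$ is the complete reflexive digraph on $P_D$. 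Since all group $\H$-classes in a $\D$-class are isomorphic, we may assume $p_0=p_\top$, with $\top$ the one-block partition, and take the spanning tree $T$ in Theorem~\ref{th:linkedpres} to be the set of arcs $\{(p_\top,p_\eta):\eta\neq\top\}$. Writing $a_{\xi,\eta}:=a_{p_\xi,p_\eta}$, relations \eqref{eq:rwp20}--\eqref{eq:rwp22} then give
\[
a_{\top,\eta}=a_{\eta,\top}=1,\qquad a_{\xi,\xi}=1,\qquad a_{\xi,\eta}=a_{\eta,\xi}^{-1}\qquad(\xi,\eta\text{ partitions of }[n]),
\]
and it remains to prove $a_{\xi,\eta}=1$ in general, using the linked-diamond relations \eqref{eq:rwp23}.

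The key input is a join formula. For a partition $\pi$ of $[n]$, let $\varepsilon_\pi\in P$ be the full-domain projection of $\P_n$ whose transversal blocks are the $\pi$-classes (so $\varepsilon_\pi$ has rank $|\pi|\geq1$). I claim that
\[
\varepsilon_\pi\, p_\sigma\, \varepsilon_\pi = p_{\pi\vee\sigma}\qquad\text{for every partition }\sigma ,
\]
$\vee$ denoting the join of partitions. This is a short product-graph computation: $\varepsilon_\pi p_\sigma$ is the rank-$0$ idempotent with kernel $\pi\vee\sigma$ and cokernel $\sigma$, and right-multiplying by $\varepsilon_\pi$ replaces the cokernel by $\pi\vee\sigma$ as well. (Alternatively, $\varepsilon_\pi p_\sigma\varepsilon_\pi$ has rank $0$ and is a projection by \ref{it:RS7}, so equals $p_\lambda$ for a unique $\lambda$, and tracing the product graphs gives $\lambda=\pi\vee\sigma$.) Consequently, for all partitions $\pi,\sigma,\tau$ the quadruple $(p_\sigma,p_\tau;\,p_{\pi\vee\sigma},\,p_{\pi\vee\tau})$ is an $\varepsilon_\pi$-linked diamond and so contributes a relation of type \eqref{eq:rwp23}.

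With this I would finish in two steps. First, take $\tau=\top$, so that $\pi\vee\tau=\top$ and $p_\tau=p_\top$; the relation \eqref{eq:rwp23} for the diamond $(p_\sigma,p_\top;\,p_{\pi\vee\sigma},\,p_\top)$ is $a_{\sigma,\pi\vee\sigma}^{-1}a_{\sigma,\top}=a_{\top,\pi\vee\sigma}^{-1}a_{\top,\top}$, and since $a_{\sigma,\top}$, $a_{\top,\pi\vee\sigma}$ and $a_{\top,\top}$ all equal $1$ by the relations displayed above, we deduce $a_{\sigma,\pi\vee\sigma}=1$. As $\pi$ ranges over all partitions, $\pi\vee\sigma$ runs through all $\mu\geq\sigma$; together with $a_{\mu,\sigma}=a_{\sigma,\mu}^{-1}$ this shows $a_{\sigma,\mu}=1$ whenever $\sigma$ and $\mu$ are comparable. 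Second, given arbitrary partitions $\xi,\eta$, apply the join formula with $\pi=\xi$: then $\varepsilon_\xi p_\xi\varepsilon_\xi=p_\xi$ and $\varepsilon_\xi p_\eta\varepsilon_\xi=p_{\xi\vee\eta}$, so $(p_\xi,p_\eta;\,p_\xi,\,p_{\xi\vee\eta})$ is a linked diamond, and its relation \eqref{eq:rwp23} reads $a_{\xi,\xi}^{-1}a_{\xi,\xi\vee\eta}=a_{\eta,\xi}^{-1}a_{\eta,\xi\vee\eta}$. Here $a_{\xi,\xi}=1$, while $a_{\xi,\xi\vee\eta}=a_{\eta,\xi\vee\eta}=1$ by the first step (both pairs are comparable). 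Hence $a_{\eta,\xi}=1$, and so $a_{\xi,\eta}=a_{\eta,\xi}^{-1}=1$. Thus every generator is trivial, and the maximal subgroup is the trivial group $\S_0$, as required.

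The step requiring the most care is the join formula $\varepsilon_\pi p_\sigma\varepsilon_\pi=p_{\pi\vee\sigma}$, where the product-graph bookkeeping must be carried out carefully; but the substantive point is the recognition that conjugation by the full-domain projection $\varepsilon_\pi$ computes the join with $\pi$, which is precisely what lets one both drive all generators towards the root $p_\top$ (by joining with the whole of $[n]$) and bridge an incomparable pair $\xi,\eta$ through its join $\xi\vee\eta$.
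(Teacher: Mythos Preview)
Your proof is correct and follows essentially the same approach as the paper's: both arguments index $P_D$ by partitions of $[n]$, take the star tree at the universal partition, and exploit the fact that conjugation of a rank-$0$ projection $p_\sigma$ by the full-domain projection $\varepsilon_\pi$ (the paper's $\widetilde\pi$) computes the join $p_{\pi\vee\sigma}$, thereby producing enough linked diamonds to kill all generators. One small difference worth noting: in the final step, the paper uses the $\widetilde\tau$-linked diamond $(\overline\sigma,\overline{\sigma\wedge\tau};\overline{\sigma\vee\tau},\overline\tau)$, which brings in the meet $\sigma\wedge\tau$; your choice of the $\varepsilon_\xi$-linked triangle $(p_\xi,p_\eta;p_\xi,p_{\xi\vee\eta})$ avoids the meet entirely and is marginally cleaner.
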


\begin{proof}
The projections in $D:=D_{p_0}$ are naturally indexed by the set $\Eq[n]$ of equivalence relations on $[n]$.
For $\sigma\in\Eq[n]$, let us write $\overline{\sigma}$ and $\widetilde{\sigma}$ respectively for the projection in $D$
and  the full-domain projection determined by $\sigma$. More precisely, if $A_1,\ldots,A_k$ are the $\sigma$-classes, then 
\[
\overline\sigma = \begin{partn}{3}A_1&\cdots&A_k\\\hhline{-|-|-}A_1&\cdots&A_k\end{partn}
\AND
\widetilde\sigma = \begin{partn}{3}A_1&\cdots&A_k\\\hhline{~|~|~}A_1&\cdots&A_k\end{partn}.
\]
The projections of $D$ are then precisely $P_D=\bigl\{ \overline{\sigma}\colon \sigma\in\Eq[n]\bigr\}$.

Let us denote by $\Delta=\Delta_{[n]}$ and $\nabla = \nabla_{[n]}$ the equality and full relations on $[n]$, respectively.  Without loss of generality we can assume that $p_0= \overline{\nabla}$.  For the abstract generators 
$a_{\overline{\sigma},\overline{\tau}}$ appearing in Theorem~\ref{th:linkedpres}, we omit the overlines, and also temporarily revert to the square bracket notation to enhance readability: $[\sigma,\tau]:=a_{\overline{\sigma},\overline{\tau}}$.  Since~$D$ is a rectangular band, $\F_D = \nabla_{P_D}$ is the universal relation on $P_D$, so for the spanning tree $T$ we can pick the star graph with edges $\overline{\nabla}\to\overline{\sigma}$ for $\sigma\in \Eq[n]\setminus\{\nabla\}$.  The presentation \mbox{\eqref{eq:rwp20}--\eqref{eq:rwp23}} becomes:
\begin{alignat}{2}
\label{eq:rwp31}
&[\nabla,\sigma]=1 && (\sigma\in\Eq[n]\setminus\{\nabla\}), \\
\label{eq:rwp32}
&[\sigma,\sigma]=1 && (\sigma\in\Eq[n]), \\
\label{eq:rwp33}
&[\sigma,\tau]=[\tau,\sigma]^{-1} && (\sigma,\tau\in\Eq[n]), \\
\label{eq:rwp34}
&[\sigma,\mu]^{-1} [\sigma,\nu]= [\tau,\mu]^{-1} [\tau,\nu] &\quad& ((\ol\sigma,\ol\tau;\ol\mu,\ol\nu)\in \Lk_D).
\end{alignat}
We aim to show that $[\sigma,\tau]=1$ for all $\sigma,\tau\in\Eq[n]$ as a consequence of 
\eqref{eq:rwp31}--\eqref{eq:rwp34}. We first prove a special instance:

\begin{claim}
\label{cl:pnD0-1}
If $\sigma,\tau\in\Eq[n]$ satisfy $\sigma\subseteq \tau$, then $[\sigma,\tau]=1$ as a consequence of \eqref{eq:rwp31}--\eqref{eq:rwp34}.
\end{claim}

\begin{proof}
Writing $\cdot$ for the action in \eqref{eq:sract}, notice that
\[
\ol\nabla \cdot \widetilde{\tau}=\ol\nabla \quad\text{and}\quad \ol\sigma\cdot\widetilde{\tau}=\ol\tau,
\]
the second equality being a consequence of the assumption $\sigma\subseteq\tau$.
Hence $(\ol\nabla,\ol\sigma;\ol\nabla,\ol\tau)$ is a~$\widetilde\tau$-linked diamond, and \eqref{eq:rwp34} contains the relation
\[
[\nabla,\nabla]^{-1}\: [\nabla,\tau]=[\sigma,\nabla]^{-1}\:[\sigma,\tau].
\]
But $[\nabla,\nabla]=[\nabla,\tau]=[\sigma,\nabla]=1$ from \eqref{eq:rwp31}-- \eqref{eq:rwp33}, so indeed $[\sigma,\tau]=1$.
\end{proof}

Now consider arbitrary $\sigma,\tau\in\Eq[n]$.
Denote by $\sigma\wedge\tau$ and $\sigma \vee\tau$ their meet (i.e.~intersection) and join in the lattice $\Eq[n]$.
This time we have
\[
\ol\sigma\cdot \widetilde{\tau}=\ol{\sigma\vee\tau}\quad\text{and}\quad
\ol{\sigma\wedge\tau}\cdot \widetilde{\tau}=\ol\tau,
\]
so that $(\ol\sigma,\ol{\sigma\wedge\tau};\ol{\sigma\vee\tau},\ol\tau)$ is $\widetilde\tau$-linked, and so \eqref{eq:rwp34} contains the relation
\[
[\sigma,\sigma\vee\tau]^{-1} \: [\sigma,\tau]=[\sigma\wedge\tau,\sigma\vee\tau]^{-1} \: [\sigma\wedge\tau,\tau].
\]
By Claim \ref{cl:pnD0-1} we have $[\sigma,\sigma\vee\tau] = [\sigma\wedge\tau,\sigma\vee\tau] = [\sigma\wedge\tau,\tau] = 1$, and it again follows that $[\sigma,\tau]=1$, 
completing the proof of the proposition.
\end{proof} 
\setcounter{claim}{0}

\begin{rem}
The previous result again has a topological interpretation, saying that a natural simplicial $2$-complex corresponding to the projections of rank $0$ is simply connected; cf.~\cite[Section 10]{EGMR}.  The presence of the linked diamonds means that this complex has many (non-degenerate) $2$-cells, and hence Proposition \ref{pr:free} does not apply.
One may wonder whether it is possible to gain a more intrinsic understanding of this complex, so as to naturally deduce its simple connectedness via a purely topological route.
\end{rem}

\section{Second family of presentations for maximal subgroups of \boldmath{$\PG(P)$}}
\label{sec:pres-quot}

We now aim to obtain a presentation for a maximal subgroup of $\PG(P)$ that would reflect the fact that such a group is a homomorphic image of the corresponding maximal subgroup of $\IG(E)$.
Here $P$ is a projection algebra, and $E = \sfE(P)$ is the associated biordered set, and, as 
in Section \ref{sec:pres-linked}, we can assume that $P = \sfP(S)$ and $E = \sfE(S)$ for some projection-generated regular $*$-semigroup~$S$.  We fix such an $S$ for the rest of this section.  We also fix a projection~${p_0\in P(\sub E)}$, and we continue to write $\GI$ and $\GP$ for the maximal subgroups of $\IG(E)$ and~$\PG(P)$ containing~$x_{p_0}$, respectively.

\subsection{Presentations via singular squares}
\label{ss:pres-quot}

Our starting point is \cite[Theorem 7.9]{EGMR}, which gives the following presentation for $\PG(P)$ as a homomorphic image of $\IG(E)$:

\begin{thm}
\label{thm:prespgig}
Let $P$ be a projection algebra,  let $E = \sfE(P)$ be the associated biordered set, and let $X_E=\{ x_e\colon e\in E\}$ be an alphabet in one-one correspondence with $E$.
Then the free projection-generated semigroup $\PG(P)$ is defined by the presentation
\begin{align}
\label{eq:pge1}
\langle X_E\ | \ & x_e x_f=x_{ef} \quad ((e,f)\in \Bp),
\\
\label{eq:pge2}
& x_px_q=x_{pq}\quad (p,q\in P)\rangle.
\end{align}
\end{thm}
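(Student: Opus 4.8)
The plan is to show that the presentation \eqref{eq:pge1}--\eqref{eq:pge2} defines the same semigroup as the standard presentation of $\PG(P)$ on the alphabet $X_P = \{x_p : p\in P\}$ recalled in Subsection~\ref{ss:PGP} (from \cite[Theorem~7.2]{EGMR}); concretely, I would write $T$ for the semigroup defined by \eqref{eq:pge1}--\eqref{eq:pge2} and construct mutually inverse homomorphisms between $\PG(P)$ and $T$. Throughout I use that $E = \sfE(S)$ and $P = \sfP(S)$ for a fixed projection-generated regular $*$-semigroup $S$, so that all of \ref{it:RS1}--\ref{it:RS12} are available; in particular every $e\in E$ equals $(ee^*)(e^*e)$ with $ee^*,e^*e\in P$ by \ref{it:RS5}, and $pqp,qpq\in P$ for all $p,q\in P$ by \ref{it:RS7}.

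First I would observe that $T$ is generated by $\{x_p : p\in P\}$, since relation \eqref{eq:pge2} gives $x_e = x_{ee^*}x_{e^*e}$ in $T$ for every $e\in E$. Next, I would check that the three defining relations of the standard presentation hold among the $x_p$ inside $T$. As $(p,p)$, $(pq,pq)$ and $(pq,p)$ all lie in $\Bp$ -- the last because $p(pq) = pq$ -- the relations \eqref{eq:pge1} give $x_p^2 = x_{pp} = x_p$, $x_{pq}^2 = x_{pq}$ and $x_{pq}x_p = x_{pqp}$, and combining with \eqref{eq:pge2} yields $x_p^2 = x_p$, $(x_px_q)^2 = x_{pq}^2 = x_{pq}$ and $x_px_qx_p = x_{pq}x_p = x_{pqp}$. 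Hence $x_p\mapsto x_p$ extends to a homomorphism $\alpha\colon \PG(P)\to T$, which is onto by the first sentence of this paragraph.

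In the other direction, by \ref{it:PG5} (see \cite[proof of Theorem~7.10]{EGMR}) the assignment $x_e\mapsto x_{ee^*}x_{e^*e}$ already respects the relations \eqref{eq:pge1}, so to obtain a homomorphism $\beta\colon T\to \PG(P)$ it remains to check it respects \eqref{eq:pge2}. For $p\in P$ one gets $x_p\mapsto x_{pp^*}x_{p^*p} = x_p^2 = x_p$ (using $p^* = p$ and idempotency of $x_p$ in $\PG(P)$), while from $(pq)(pq)^* = p(qq)p = pqp$ and $(pq)^*(pq) = q(pp)q = qpq$ one gets $x_{pq}\mapsto x_{pqp}x_{qpq}$. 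The required identity $x_px_q = x_{pqp}x_{qpq}$ then holds in $\PG(P)$ because, using the standard relations, $x_{pqp}x_{qpq} = (x_px_qx_p)(x_qx_px_q) = (x_px_q)^3 = x_px_q$, the final step using that $x_px_q$ is idempotent. Finally I would check that $\alpha$ and $\beta$ are mutually inverse on generators: $\beta\alpha(x_p) = x_{pp^*}x_{p^*p} = x_p$, and $\alpha\beta(x_e) = x_{ee^*}x_{e^*e} = x_{(ee^*)(e^*e)} = x_e$ by \eqref{eq:pge2} and \ref{it:RS5}. Thus $\PG(P)\cong T$.

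The genuinely load-bearing computation is the identity $x_px_q = x_{pqp}x_{qpq}$ in $\PG(P)$, which is exactly what licenses trading the relations $x_px_qx_p = x_{pqp}$ for the relations $x_px_q = x_{pq}$; the rest is formal, apart from the routine bookkeeping that the relevant pairs of projections are basic so that \eqref{eq:pge1} applies. One could alternatively present the argument as a sequence of Tietze transformations starting from the standard presentation -- adjoining the symbols $x_e$ via $x_e = x_{ee^*}x_{e^*e}$ and then exchanging the two families of relations -- but the two-homomorphism formulation keeps the manipulations transparent.
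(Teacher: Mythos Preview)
Your argument is correct. The paper itself does not prove this theorem: it is quoted as \cite[Theorem~7.9]{EGMR} and used as a starting point, so there is no in-paper proof to compare against. Your two-homomorphism argument is precisely the natural way to verify the equivalence of the two presentations, and all the checks are sound --- in particular, the key identity $x_{pqp}x_{qpq} = (x_px_q)^3 = x_px_q$ in $\PG(P)$, and the observation that $(pq,p)\in\Bp$ (since $p\cdot pq = pq$) so that \eqref{eq:pge1} yields $x_{pq}x_p = x_{pqp}$. The invocation of \ref{it:PG5} to handle the relations \eqref{eq:pge1} is appropriate, and the remaining verification for \eqref{eq:pge2} is exactly what is needed.
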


We will now apply  the Reidemeister--Schreier rewriting method to this presentation.
Roughly speaking, rewriting the defining relations \eqref{eq:pge1} for $\IG(E)$  will give us a presentation for its maximal subgroup $\GI$ as in Section \ref{sec:maxIG}, while rewriting the additional relations \eqref{eq:pge2} will yield  some further relations, giving us the quotient $\GP$.

Once again we need to set up the Reidemeister--Schreier framework.
As explained in Subsections \ref{ss:genrew} and \ref{ss:linked}, we can index the $\R$- and $\L$-classes in the $\D$-class $D$ of $p_0$ in $S$, and use this indexing for the $\D$-classes of $x_{p_0}$ in both $\PG(P)$ and $\IG(E)$.
And, as in Subsection \ref{ss:linked}, 
we take $I=J=P_D=P\cap D$, and $K=\F_D$.
Moreover, for $(p,q)\in\F_D$, the idempotent $e_{p,q} \in H_{p,q} = R_p\cap L_q$ is $e_{p,q} = pq$.
 Following the approach from Subsection \ref{ss:span}, we will fix an arbitrary spanning tree $T$ of the Graham--Houghton graph $\GH(D)$; this leads to a natural system of Schreier representatives~$\{{r_p\colon p\in P_D\}}$, as explained in Subsection \ref{ss:span}.
 These representatives are words over the alphabet~${\{ x_e\colon e\in E_D\}}$, where $E_D:=E\cap D$.

Applying  the Reidemeister--Schreier rewriting to the presentation for $\PG(P)$ given in Theorem \ref{thm:prespgig}, we obtain a presentation for $\GP$ over the alphabet
\[
B:= \bigl\{ [t,x_e]\colon t\in P_D,\ e\in E,\ t\cdot e\neq 0\bigr\},
\]
corresponding to the generating set
\[
\{ x_{p_0}r_t x_e r_{t\cdot e}'\colon t\in P_D,\ e\in E,\ t\cdot e\neq 0\},
\]
and with the following defining relations over $B$:
\begin{alignat}{2}
\label{eq:pq1}
&[t,x_e]=1 && (t\in P_D,\ e\in E_D,\ r_tx_e=r_{t\cdot e}),\\
\label{eq:pq2}
&[t,x_e][t\cdot e, x_f]=[t,x_{ef}] &\quad& (t\in P_D,\ (e,f)\in\Bp,\ t\cdot ef\neq 0),\\
\label{eq:pq3}
&[t,x_p][t\cdot p,x_q]=[t,x_{pq}]&& (t\in P_D,\ p,q\in P,\ t\cdot pq\neq 0).
\end{alignat}
The first two families of relations constitute the presentation for $\GI$ we considered in Section \ref{sec:maxIG}, where it was transformed into the presentation \eqref{eq:IG5}--\eqref{eq:IG7}.
Here we follow this same process, but in our new notation.
The key step is the introduction of new generators \eqref{eq:IG4}, which now becomes:
\begin{equation}
\label{eq:pq4}
\fg{p,q} = [\mapq(p), x_{pq}]\quad (p,q\in\F_D).
\end{equation}
Here, $\mapq(p)$ replaces $\mapj(i)$, and is obtained in reference to the spanning tree $T$, as explained in Subsection \ref{ss:span}; its salient property is that $H_{p,\mapq(p)}$ is a group, i.e.~$(p,\mapq(p))\in \F_D (= K)$.  Then the original generators were eliminated using~\eqref{eq:IG4a}, which now becomes
\begin{equation}
\label{eq:pq4a}
[t,x_e]=\fg{p,t}^{-1} \fg{p,q}, \qquad \text{where}\qquad p:=ee^*tee^*\ \text{and}\ q:=e^*te.
\end{equation}
In this elimination, note that the `$j$' from Subsection \ref{ss:genrew} is $j = t\cdot e = e^*te = q$, and the `$i$' is any element of $I$ such that ${(i,t),(i,j)\in K}$ and $es=s$ for all $s\in R_i$.  One can check that $i = ee^*tee^* = p$ is a valid choice.  For this check, note that a generator $[t,x_e]$ existing means that $t\cdot e\not=0$, i.e.~$te\mr\R t$, which gives $t = te(te)^* = tee^*t$.
The final presentation \eqref{eq:IG5}--\eqref{eq:IG7}, written in our current notation, becomes:
\begin{alignat}{2}
\label{eq:pq5}
&\fg{p,t}=\fg{p,q} && ((p,t),(p,q)\in\F_D,\ r_t x_{pq} = r_q),\\
\label{eq:pq6}
&\fg{p,\mapq(p)}=1&& (p\in P_D),\\
\label{eq:pq7}
&\fg{p,s}^{-1} \fg{p,t} = \fg{q,s}^{-1} \fg{q,t} &\quad& (\smat{ps}{pt}{qs}{qt}\in \Sq).
\end{alignat}
To obtain our desired presentation for $\GP$  we need to perform the substitution \eqref{eq:pq4a} in \eqref{eq:pq3} as well:
\begin{multline}
\label{eq:pq8}
\fg{ptp,t}^{-1} \fg{ptp,ptp} \fg{qptpq,ptp}^{-1} \fg{qptpq,qptpq}=\fg{pqptpqp,t}^{-1}\fg{pqptpqp,qptpq}\\
(t\in P_D,\ p,q\in P,\ t\cdot pq\neq 0).
\end{multline}
 A subset of these relations can be obtained by taking $(p,q)\in\F_D$ and $t=p$:
 \begin{equation}
\label{eq:pq9}
\fg{p,p}^{-1} \fg{p,p} \fg{qpq,p}^{-1} \fg{qpq,qpq}=\fg{pqpqp,p}^{-1}\fg{pqpqp,qpq}\quad
((p,q)\in \F_D).
\end{equation}
Using $(p,q)\in\F$, and swapping the left- and right-hand sides, this simplifies to:
 \begin{equation}
\label{eq:pq10}
 \fg{p,p}^{-1}\fg{p,q}=\fg{q,p}^{-1} \fg{q,q}\quad 
((p,q)\in \F_D).
\end{equation}

We will now show that in fact relations \eqref{eq:pq10}, together with the $\IG(E)$ relations
\eqref{eq:pq5}--\eqref{eq:pq7},
 imply all relations \eqref{eq:pq8}.
To do so, let $t\in P_D$ and $p,q\in P$ be such that $t\cdot pq \not= 0$, as in \eqref{eq:pq8}.
It follows from $t\cdot pq\not=0$ (and $pq=pqpq$) that $t \mr\R tpq \mr\R tpqp$, which (using the involution) implies $pqpt \mr\L t$.
But $t\cdot pq\not=0$ also implies $t\cdot p\not=0$, which gives $t\mr\R tp$, and hence $pt \mr\L t$.  
The previous two conclusions give $pt \mr\L pqpt$.  The elements $pt$ and $pqpt = pqp\sgap t$ are idempotents (as products of two projections), and since $p$ is a left identity for them both, we have an LR singular square
\[
\lmat{pt}{pt\sgap p}{pqpt}{pqpt\sgap p} = \lmat{ptp\sgap t}{ptp\sgap ptp}{pqptpqp\sgap t}{pqptpqp\sgap ptp}.
\] 
The corresponding relation from \eqref{eq:pq7} is:
\begin{equation}
\label{eq:pq11}
\fg{ptp,t}^{-1} \fg{ptp,ptp}=\fg{pqptpqp,t}^{-1} \fg{pqptpqp,ptp}.
\end{equation}
We saw above that $tp \mr\R t \mr\R tpqp$.  Since $\R$ is a left congruence, it follows that $qptp \mr\R qptpqp$.  These last two elements are again idempotents (products of two projections), and have $p$ as a common right identity, so we have a UD singular square
\[
\lmat{qptp}{qptpqp}{p\sgap qptp}{p\sgap qptpqp} = \lmat{qptpq\sgap ptp}{qptpq\sgap pqptpqp}{pqptpqp\sgap ptp}{pqptpqp\sgap pqptpqp}.
\]
This gives us the following relation from \eqref{eq:pq7}:
\begin{equation}
\label{eq:pq12}
\fg{qptpq,ptp}^{-1} \fg{qptpq,pqptpqp}=\fg{pqptpqp,ptp}^{-1} \fg{pqptpqp,pqptpqp}.
\end{equation}
And we observe that \eqref{eq:pq10} contain the following relation:
\begin{equation}
\label{eq:pq13}
\fg{qptpq,qptpq}^{-1} \fg{qptpq,pqptpqp}=\fg{pqptpqp,qptpq}^{-1} \fg{pqptpqp,pqptpqp}.
\end{equation}
Now we have
\begin{align*}
\fg{ptp,t}^{-1} & \fg{ptp,ptp} \fg{qptpq,ptp}^{-1} \fg{qptpq,qptpq}&&\\
&=\fg{pqptpqp,t}^{-1} \fg{pqptpqp,ptp} \fg{qptpq,ptp}^{-1} \fg{qptpq,qptpq}
&&\text{by \eqref{eq:pq11}}
\\
&=\fg{pqptpqp,t}^{-1} \fg{pqptpqp,pqptpqp} \fg{qptpq,pqptpqp}^{-1}  \fg{qptpq,qptpq}
&&\text{by \eqref{eq:pq12}}
\\
&=\fg{pqptpqp,t}^{-1} \fg{pqptpqp,qptpq}
&&\text{by \eqref{eq:pq13}},
\end{align*}
which is precisely the relation \eqref{eq:pq8}.
So, a presentation for the maximal subgroup $\GP$ consists of relations \eqref{eq:pq5}--\eqref{eq:pq7} and \eqref{eq:pq10}.

As a final transformation, recall 
that we derived the Schreier representatives $r_p$ and the elements $\mapq(p)$ from the spanning tree $T$. Therefore, following the argument from Subsection~\ref{ss:span}, the relations \eqref{eq:pq5} and \eqref{eq:pq6} can be replaced by the spanning tree relations $\rels_1(T)$.
And once again, we rename the generators $\fg{p,q}$ into $a_e$ where $e=pq\in E_D$, noting then that $p = ee^*$ and $q = e^*e$.
Thus we obtain the following, in which we note that relations \eqref{eq:pq18} and \eqref{eq:pq17} come from \eqref{eq:pq10} and \eqref{eq:pq7}, respectively:

\begin{thm}
\label{thm:pgmaxq}
Let $P$ be a projection algebra, let $S$ be any projection-generated regular $*$-semigroup with $P=\sfP(S)$, and let $p_0\in P$ be arbitrary.  Denote by $D$ the $\D$-class of $S$ containing~$p_0$, and let $T$ be any spanning tree for the Graham--Houghton graph $\GH(D)$.  With the rest of the notation as above, the maximal subgroup of $\PG(P)$ containing $x_{p_0}$ is defined by the presentation with generators $A = \{ a_e\colon e\in E_D\}$ and relations
\begin{alignat}{2}
\label{eq:pq15}
&a_e=1 && (e\in T),\\
\label{eq:pq18}
&a_{ee^*}^{-1}a_{e}=a_{e^*}^{-1} a_{e^*e} &\quad&(e\in E_D),\\
\label{eq:pq17}
&a_e^{-1} a_f = a_g^{-1} a_h && (\smat{e}{f}{g}{h}\in \Sq).
\\
\tag*{\qed}
\end{alignat}
\end{thm}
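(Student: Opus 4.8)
The plan is to apply the Reidemeister--Schreier rewriting machinery of Theorem~\ref{thm:RSrw} to the presentation for $\PG(P)$ given in Theorem~\ref{thm:prespgig}, and then carry out a sequence of Tietze transformations exactly paralleling the passage from \eqref{eq:IG1}--\eqref{eq:IG22} to \eqref{eq:IG5}--\eqref{eq:IG7} in Section~\ref{sec:maxIG}, with the additional family of relations \eqref{eq:pge2} tracked through the same rewriting. The set-up is forced: by \ref{it:RS2}, \ref{it:PG3} and \ref{it:PG2}, the $\R$- and $\L$-classes in $D$ are indexed by $P_D$, the group $\H$-classes by $\F_D$, and the relevant idempotents are $e_{p,q}=pq$. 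Fixing a spanning tree $T$ of $\GH(D)$ as in Subsection~\ref{ss:span} yields a canonical system of Schreier representatives $r_p$ over the alphabet $\{x_e\colon e\in E_D\}$ (using Remark~\ref{re:Schrestr} and Proposition~\ref{pr:GHconn}), and the action is given by \eqref{eq:sract}. Applying Theorem~\ref{thm:RSrw} then produces the generators $B$ and the three families of defining relations \eqref{eq:pq1}--\eqref{eq:pq3}.

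Next I would eliminate generators. The relations \eqref{eq:pq1}--\eqref{eq:pq2} are precisely the defining relations of $\GI$ treated in Subsection~\ref{ss:genrew}, so the same substitution \eqref{eq:pq4}--\eqref{eq:pq4a} applies verbatim; here one must verify (as indicated after \eqref{eq:pq4a}) that the choice $i=ee^*tee^*=p$ is legitimate, which uses $t\cdot e\neq 0 \implies t=tee^*t$. This transforms \eqref{eq:pq1}--\eqref{eq:pq2} into \eqref{eq:pq5}--\eqref{eq:pq7} (the $\IG(E)$ presentation in the new notation), and transforms \eqref{eq:pq3} into \eqref{eq:pq8} after substituting for each of the five bracket symbols. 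Specialising \eqref{eq:pq8} to $(p,q)\in\F_D$ and $t=p$, and simplifying using $pqp=p$ and $qpq=q$, gives the clean family \eqref{eq:pq10}. The crux of the argument is then to show that \eqref{eq:pq10} together with \eqref{eq:pq5}--\eqref{eq:pq7} already implies all of \eqref{eq:pq8}; this is the step I expect to be the main obstacle, since it requires identifying, for arbitrary $t\in P_D$ and $p,q\in P$ with $t\cdot pq\neq 0$, two genuine singular squares hidden in the data.

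For that step the plan is as follows. From $t\cdot pq\neq 0$ deduce $t\mr\R tpq\mr\R tpqp$ and $t\mr\R tp$; applying the involution gives $pqpt\mr\L t$ and $pt\mr\L t$, hence $pt\mr\L pqpt$. Since $pt$ and $pqpt$ are idempotents (products of two projections) with $p$ as a common left identity, one gets an LR singular square, whose \eqref{eq:pq7}-relation is \eqref{eq:pq11}. Dually, $tp\mr\R t\mr\R tpqp$ and left-compatibility of $\R$ give $qptp\mr\R qptpqp$, two idempotents with $p$ as a common right identity, yielding a UD singular square and the relation \eqref{eq:pq12}. Finally \eqref{eq:pq10} contains \eqref{eq:pq13}. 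Chaining \eqref{eq:pq11}, \eqref{eq:pq12}, \eqref{eq:pq13} as in the displayed computation recovers \eqref{eq:pq8}. Thus a presentation for $\GP$ consists of \eqref{eq:pq5}--\eqref{eq:pq7} and \eqref{eq:pq10}. As the last step, since the $r_p$ and $\mapq(p)$ come from the spanning tree $T$, the argument of Subsection~\ref{ss:span} replaces \eqref{eq:pq5}--\eqref{eq:pq6} by $\rels_1(T)$, i.e.~the relations \eqref{eq:pq15}; renaming $\fg{p,q}$ as $a_e$ with $e=pq$, so $p=ee^*$ and $q=e^*e$, turns \eqref{eq:pq10} into \eqref{eq:pq18} and \eqref{eq:pq7} into \eqref{eq:pq17}, completing the proof. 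The bookkeeping in simplifying the long words such as $ptpt$, $pqptpqp$, etc., via the projection-algebra identities is routine but must be done carefully; I would present only the key simplifications and leave the rest to the reader.
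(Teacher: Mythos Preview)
Your proposal is correct and follows essentially the same route as the paper's proof: apply Theorem~\ref{thm:RSrw} to the presentation of Theorem~\ref{thm:prespgig}, perform the elimination \eqref{eq:pq4}--\eqref{eq:pq4a} to obtain \eqref{eq:pq5}--\eqref{eq:pq8}, specialise to \eqref{eq:pq10}, and then recover all of \eqref{eq:pq8} from \eqref{eq:pq5}--\eqref{eq:pq7} and \eqref{eq:pq10} via the two singular squares yielding \eqref{eq:pq11} and \eqref{eq:pq12} together with the instance \eqref{eq:pq13}, before invoking Subsection~\ref{ss:span} to pass to the tree relations. The identification of the two singular squares and the chaining computation match the paper exactly.
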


The above presentation simplifies somewhat when one makes 
what seems a natural requirement on the spanning tree, namely that it contains all the projections from $D$.
This can always be done, by, for example, taking a spanning tree for the friendship digraph $\F_D$, as we did in Subsection~\ref{ss:linked}, and extending it by the set of projections.
Under this assumption,
 relations \eqref{eq:pq15} contain all $a_{p}=1$ ($p\in P_D$), and
relations \eqref{eq:pq18} then become $a_{e}=a_{e^*}^{-1}$.

\begin{cor}
\label{cor:pgmaxq}
With the notation as above, and assuming that the spanning tree $T$ contains~$P_D$, the maximal subgroup of $\PG(P)$ containing $x_{p_0}$
is defined by the presentation with generators $A = \{ a_e\colon e\in E_D\}$ and relations
\begin{alignat}{2}
\label{eq:pq19}
&a_e=1 && (e\in T),\\
\label{eq:pq22}
&a_e=a_{e^*}^{-1} &&(e\in E_D),\\
\label{eq:pq21}
&a_e^{-1} a_f = a_g^{-1} a_h &\quad& (\smat{e}{f}{g}{h}\in \Sq).\\
\tag*{\qed}
\end{alignat}
\end{cor}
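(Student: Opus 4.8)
The plan is to obtain this as a direct specialisation of Theorem~\ref{thm:pgmaxq}: one fixes a spanning tree $T$ of $\GH(D)$ that contains every projection of $D$, and then simplifies the presentation $\langle A\mid\eqref{eq:pq15},\eqref{eq:pq18},\eqref{eq:pq17}\rangle$ that Theorem~\ref{thm:pgmaxq} supplies for this $T$. Since Theorem~\ref{thm:pgmaxq} holds verbatim for \emph{every} spanning tree, the only things to establish are (a) the existence of such a $T$, and (b) the equivalence, for that $T$, of relations \eqref{eq:pq18} and \eqref{eq:pq22} in the presence of \eqref{eq:pq15}.

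For (a) I would first recall that the vertex set of $\GH(D)$ is $P_D\sqcup P_D$ and that, under the identifications $K=\F_D$ and $e_{p,q}=pq$, a projection $p\in P_D$ is exactly the idempotent $e_{p,p}$ sitting on the ``diagonal'' edge joining the two copies of $p$. These diagonal edges form a perfect matching, and hence a spanning forest, of $\GH(D)$; since $S$ is projection-generated, $\GH(D)$ is connected by Proposition~\ref{pr:GHconn}, so this forest extends to a spanning tree $T$ with $P_D\subseteq T$. Concretely, one can take a $p_0$-rooted directed spanning tree of $\F_D$ as in Subsection~\ref{ss:linked}, carry each of its $|P_D|-1$ edges into $\GH(D)$, and adjoin all $|P_D|$ diagonal edges; collapsing the diagonals recovers the tree on $P_D$, which shows the resulting graph on $2|P_D|$ vertices and $2|P_D|-1$ edges is connected, hence a spanning tree.

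For (b), with such a $T$ fixed, the two presentations share the generating set $A=\{a_e:e\in E_D\}$, and relations \eqref{eq:pq15}/\eqref{eq:pq19} and \eqref{eq:pq17}/\eqref{eq:pq21} coincide on the nose, so it remains only to match \eqref{eq:pq18} with \eqref{eq:pq22}. For $e\in E_D$, the projections $ee^*$ and $e^*e$ are $\mr\R$- and $\mr\L$-related to $e$ by \ref{it:RS2}, hence lie in $P_D\subseteq T$, so $a_{ee^*}=1$ and $a_{e^*e}=1$ appear among the relations \eqref{eq:pq15}. Feeding these into \eqref{eq:pq18} collapses it to $a_e=a_{e^*}^{-1}$, which is \eqref{eq:pq22}; conversely, \eqref{eq:pq22} together with $a_{ee^*}=a_{e^*e}=1$ (consequences of \eqref{eq:pq19}, since $ee^*,e^*e\in P_D\subseteq T$) give back \eqref{eq:pq18}. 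Hence the two presentations are equivalent and both define the maximal subgroup of $\PG(P)$ containing $x_{p_0}$.

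The only step with any genuine content is (a) -- the combinatorial observation that a spanning tree of $\GH(D)$ can be chosen to include all the diagonal (projection) edges -- after which the passage from Theorem~\ref{thm:pgmaxq} to the stated presentation is entirely mechanical.
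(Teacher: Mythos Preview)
Your proposal is correct and follows essentially the same approach as the paper: the paper's brief argument (in the paragraph immediately preceding the corollary) notes that such a tree exists by extending a spanning tree of $\F_D$ with the projections, and that once $a_p=1$ for all $p\in P_D$ is among the relations \eqref{eq:pq15}, relation \eqref{eq:pq18} collapses to $a_e=a_{e^*}^{-1}$. Your write-up is more explicit about both directions of the equivalence between \eqref{eq:pq18} and \eqref{eq:pq22}, but the underlying idea is identical; note also that the existence of such a $T$ is strictly an \emph{assumption} of the corollary, so part~(a) is context rather than part of the proof proper.
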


\subsection{The relationship between singular squares and linked squares}

The presentations for the group $\GP$ in Corollaries \ref{co:lkid} and \ref{cor:pgmaxq} are strikingly similar.  The first two sets of relations in each are the same (bearing in mind the connections between the two types of trees, as discussed before Corollary \ref{cor:pgmaxq}, and assuming we take the same tree in both presentations), while the third sets of relations in each presentation are of the form $a_e^{-1} a_f = a_g^{-1} a_h$ for squares $\smat efgh$ of two different kinds: linked squares or singular squares.
Tracing back the proofs, we claim that the letter $a_e$ ($e\in E_D$) in both presentations corresponds to the same element of $\PG(P)$, as long as one chooses the set-up carefully.  
Indeed, keeping in mind that $X_P\sub X_E$, we can make the same choice of words $\set{r_p}{p\in P_D}$ and $\set{r_p'}{p\in P_D}$ 
for both presentations, as in Subsection~\ref{ss:linked}, and we choose $\mapq(p) = p$ for all $p\in P_D$ for the second presentation.  Now consider a letter $a_e$, where $e\in E_D$, and write $t = ee^*$ and $p = e^*e$, so that~$(t,p)\in\F$ and $e=tp$.  Then in the second presentation, $a_e$ represents
\[
a_e = a_{tp} = \fg{t,p} = [\mapq(t),x_{tp}] = [t,x_e] = x_{p_0}r_tx_er_{t\cdot e}' 
= x_{p_0} r_t \sgap x_tx_p \sgap r_{e^*te}^*x_{p_0} 
= x_{p_0} r_t x_p  r_{ptp}^*x_{p_0},
\]
where in the second-last step we used \eqref{eq:pge2}, and in the last step $e^*te = pt\sgap t\sgap tp = ptp$, and the fact that the last letter of $r_t$ is $x_t$ by definition (so that $r_tx_t=r_t$ by \eqref{eq:pge2}).  Comparing this to~\eqref{eq:tp}, the claim is proved.

Now that we know the generators in the two presentations for $\GP$ correspond to the same elements of the group, we know \emph{a priori} that the sets of relations in the two presentations are equivalent.  One might then wonder if this can be shown directly, i.e.~show explicitly that each set of relations implies the other set.  It turns out that this is actually a worthwhile exercise, as it ultimately leads to a deeper understanding of the relationship between (and among) the two kinds of squares.  The next three lemmas explore this relationship, showing that any square implies the existence of other squares of various kinds. The squares under consideration in each of them are illustrated in  Figure \ref{fig:SStoLS}.  After we prove these lemmas, we use them to explicitly derive the presentations from Corollaries~\ref{co:lkid} and~\ref{cor:pgmaxq} from each other.
As an additional consequence of this derivation, we will see that some further simplification in the relations is possible, in the sense that certain `special' square relations imply the others; see Remark \ref{rem:LSSS}.

\begin{lemma}\label{la:LSSS}
Let $\smat efgh = \smat{sv}{sw}{uv}{uw}$ be a $p$-linked square of idempotents, corresponding to the $p$-linked diamond of projections $(s,u;v,w)$.  Then the squares 
\[
\smat efv{vw} = \smat{sv}{sw}v{vw} \ANd \smat gh{wv}w = \smat{uv}{uw}{wv}w
\]
are both UD-singularised by $p$. 
\end{lemma}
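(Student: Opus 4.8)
The plan is to deduce both squares from the UD-analogue of the third bullet point after \eqref{eq:LR}, specialised so that the singularising idempotent is $p$ itself: if $e\mr\R f$ are idempotents for which $p$ is a right identity (so $ep=e$ and $fp=f$), then $\smat{e}{f}{pe}{pf}$ is automatically a UD singular square of idempotents singularised by $p$. Note that $p$ is indeed an idempotent, being a projection. I will apply this twice, with top row $sv,sw$ and then with top row $uv,uw$.

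First I would collect the elementary identities that do all the work. From $v=psp$ and $w=pup$ (the defining property of a $p$-linked diamond) together with $p^2=p$ we get $pv=vp=v$ and $pw=wp=w$; from Remark~\ref{rem:suvw} we have $v\mr\F w$, hence $vwv=v$ and $wvw=w$; and by \ref{it:RS3} the products $ps$ and $pu$ are idempotent. Using \ref{it:RS2} together with $(s,v),(s,w)\in\F_D$, the idempotents $sv$ and $sw$ both have $\R$-class projection $s$ (indeed $(sv)(sv)^*=svs=s$, and similarly for $sw$), so $sv\mr\R sw$; likewise $uv\mr\R uw$. Moreover $(sv)p=s(vp)=sv$, $(sw)p=s(wp)=sw$, and symmetrically $(uv)p=uv$ and $(uw)p=uw$.

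Applying the bullet point with top row $sv,sw$ now shows $\smat{sv}{sw}{p(sv)}{p(sw)}$ is a UD singular square singularised by $p$, and with top row $uv,uw$ it gives the same for $\smat{uv}{uw}{p(uv)}{p(uw)}$. It only remains to identify the bottom rows by computing the four products, which is short and uses just the identities above plus the idempotency of $ps$ and $pu$: for instance $p(sv)=(ps)(psp)=(ps)^2p=(ps)p=v$ and $p(sw)=(ps)(pup)=(psp)(up)=v(up)=vw$ (the last step using $vp=v$), while symmetrically $p(uv)=(pu)(psp)=pupsp=(pup)(psp)=wv$ and $p(uw)=(pu)(pup)=(pu)^2p=(pu)p=w$. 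This produces exactly $\smat{sv}{sw}{v}{vw}$ and $\smat{uv}{uw}{wv}{w}$, as claimed.

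I do not expect a genuine obstacle: once the preliminary identities are in hand, the rest is bookkeeping. The two points needing a little care are (i) invoking the UD version of the bullet point, rather than accidentally producing a DU, LR, or RL square; and (ii) tracking which three-letter products of projections are idempotent, since that is what powers the cancellations $(ps)^2=ps$ and $(pu)^2=pu$. As a consistency check one can also verify $v,vw,wv,w\leq p$ in the natural partial order on $E$, as befits $p$ being the singularising idempotent.
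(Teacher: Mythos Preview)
Your proposal is correct and follows essentially the same approach as the paper: verify that $p$ is a right identity for the top-row idempotents (which follows from $v=psp$ and $w=pup$), then compute the bottom rows as $p\cdot sv=v$, $p\cdot sw=vw$, etc. The paper's proof is more terse—it computes only $p\cdot sv=pspsp=psp=v$ and declares the rest analogous—while you spell out all four products and the preliminary identities $pv=vp=v$, $pw=wp=w$; but the underlying argument is identical.
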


\begin{proof}
Because of the $\F$-relationships among the projections $s,u,v,w$ (including $v\mr\F w$ as explained in Remark~\ref{rem:suvw}), we have the following idempotents:
\[
\begin{pmatrix}
s & sv & & sw \\
vs & v & vu & vw \\
 & uv & u & uw \\
 ws & wv & wu & w
\end{pmatrix}.
\]
In the above, entries in the same row or column are $\R$- or $\L$-related, respectively, 
but it is possible for some of these $\R$- or $\L$-classes to coincide. We only show that the square $\smat efv{vw} = \smat{sv}{sw}v{vw}$ is UD-singularised by $p$, as the other is analogous.  Since $v=psp$ and $w=pup$ (as $(s,u;v,w)$ is $p$-linked), it follows that $p$ is a right identity for both $sv$ and $sw$.  We also have $p\sgap sv = pspsp = psp = v$.
\end{proof}

\begin{figure}[t]
\begin{center}
\scalebox{.88}{
\begin{tikzpicture}[scale=1]

\nc\colsq[3]{\fill[#3](#1,#2)--(#1+1,#2)--(#1+1,#2+1)--(#1,#2+1)--(#1,#2);}

\foreach \x/\y in {0/0,0/2,1/1,1/3,2/0,2/2,3/1,3/3,4/3,4/1} {\colsq\x\y{gray!30}}
\foreach \x/\y in {0/3,1/2,2/1,3/0} {\colsq\x\y{blue!20}}

\foreach \x in {0,1,2,3,4} {\draw[dashed, thick] (-1,4-\x)--(0,4-\x); \draw[ultra thick] (0,4-\x)--(5,4-\x);}
\foreach \x in {0,1,2,3,4,5} {\draw[dashed, thick] (\x,5)--(\x,4); \draw[ultra thick] (\x,0)--(\x,4);}

\draw[ultra thick] (0,0)--(5,0)--(5,4)--(0,4)--(0,0)--(4,0);

\node () at (-.5,3.5) {\footnotesize $s$};
\node () at (-.5,2.5) {\footnotesize $v$};
\node () at (-.5,1.5) {\footnotesize $u$};
\node () at (-.5,0.5) {\footnotesize $w$};

\node () at (0.5,4.5) {\footnotesize $s$};
\node () at (1.5,4.5) {\footnotesize $v$};
\node () at (2.5,4.5) {\footnotesize $u$};
\node () at (3.5,4.5) {\footnotesize $w$};

\node[inner sep=0pt] (e) at (1.5,3.5) {$\mathrlap{e}{\phantom{f}}$};
\node[inner sep=0pt] (f) at (3.5,3.5) {$\mathrlap{f}{\phantom{f}}$};
\node[inner sep=0pt] (g) at (1.5,1.5) {$\mathrlap{g}{\phantom{f}}$};
\node[inner sep=0pt] (h) at (3.5,1.5) {$\mathrlap{h}{\phantom{f}}$};
\node (ep) at (4.5,3.5) {$\mathrlap{ep}{\phantom{ff}}$};
\node (gp) at (4.5,1.5) {$\mathrlap{gp}{\phantom{ff}}$};

\draw[very thick] (e)--(f)--(h)--(g)--(e);

\draw[very thick, red] (3.75,3.25)--(4.5,3.25)--(4.5,1.75)--(3.75,1.75)--(3.75,3.25)--(4.5,3.25);
\draw[very thick, red] (1.25,3.75)--(4.75,3.75)--(4.75,1.25)--(1.25,1.25)--(1.25,3.75)--(4.75,3.75);

\begin{scope}[shift={(7,0)}]

\foreach \x/\y in {0/0,0/2,1/0,1/1,1/3,2/0,2/2,3/1,3/2,3/3} {\colsq\x\y{gray!30}}
\foreach \x/\y in {0/3,1/2,2/1,3/0} {\colsq\x\y{blue!20}}

\foreach \x in {0,1,2,3,4} {\draw[dashed, thick] (-1,4-\x)--(0,4-\x); \draw[ultra thick] (0,4-\x)--(4,4-\x);}
\foreach \x in {0,1,2,3,4} {\draw[dashed, thick] (\x,5)--(\x,4); \draw[ultra thick] (\x,0)--(\x,4);}

\draw[ultra thick] (0,0)--(4,0)--(4,4)--(0,4)--(0,0)--(4,0);

\node () at (-.5,3.5) {\footnotesize $s$};
\node () at (-.5,2.5) {\footnotesize $v$};
\node () at (-.5,1.5) {\footnotesize $u$};
\node () at (-.5,0.5) {\footnotesize $w$};

\node () at (0.5,4.5) {\footnotesize $s$};
\node () at (1.5,4.5) {\footnotesize $v$};
\node () at (2.5,4.5) {\footnotesize $u$};
\node () at (3.5,4.5) {\footnotesize $w$};

\node[inner sep=0pt] (e) at (1.5,3.5) {$\mathrlap{e}{\phantom{f}}$};
\node[inner sep=0pt] (f) at (3.5,3.5) {$\mathrlap{f}{\phantom{f}}$};
\node[inner sep=0pt] (g) at (1.5,1.5) {$\mathrlap{g}{\phantom{f}}$};
\node[inner sep=0pt] (h) at (3.5,1.5) {$\mathrlap{h}{\phantom{f}}$};

\draw[very thick] (e)--(f)--(h)--(g)--(e);

\draw[very thick, red] (.25,3.75)--(3.75,3.75)--(3.75,2.5)--(.25,2.5)--(.25,3.75)--(3.75,3.75);
\draw[very thick, red] (2.5,2.25)--(3.25,2.25)--(3.25,1.75)--(2.5,1.75)--(2.5,2.25)--(3.25,2.25);

\end{scope}

\begin{scope}[shift={(-6,0)}]

\foreach \x/\y in {0/0,0/2,1/0,1/1,1/3,2/0,2/2,3/1,3/2,3/3} {\colsq\x\y{gray!30}}
\foreach \x/\y in {0/3,1/2,2/1,3/0} {\colsq\x\y{blue!20}}

\foreach \x in {0,1,2,3,4} {\draw[dashed, thick] (-1,4-\x)--(0,4-\x); \draw[ultra thick] (0,4-\x)--(4,4-\x);}
\foreach \x in {0,1,2,3,4} {\draw[dashed, thick] (\x,5)--(\x,4); \draw[ultra thick] (\x,0)--(\x,4);}

\draw[ultra thick] (0,0)--(4,0)--(4,4)--(0,4)--(0,0)--(4,0);

\node () at (-.5,3.5) {\footnotesize $s$};
\node () at (-.5,2.5) {\footnotesize $v$};
\node () at (-.5,1.5) {\footnotesize $u$};
\node () at (-.5,0.5) {\footnotesize $w$};

\node () at (0.5,4.5) {\footnotesize $s$};
\node () at (1.5,4.5) {\footnotesize $v$};
\node () at (2.5,4.5) {\footnotesize $u$};
\node () at (3.5,4.5) {\footnotesize $w$};

\node[inner sep=0pt] (e) at (1.5,3.5) {$\mathrlap{e}{\phantom{f}}$};
\node[inner sep=0pt] (f) at (3.5,3.5) {$\mathrlap{f}{\phantom{f}}$};
\node[inner sep=0pt] (g) at (1.5,1.5) {$\mathrlap{g}{\phantom{f}}$};
\node[inner sep=0pt] (h) at (3.5,1.5) {$\mathrlap{h}{\phantom{f}}$};

\draw[very thick] (e)--(f)--(h)--(g)--(e);

\draw[very thick, red] (1.75,3.25)--(3.25,3.25)--(3.25,2.5)--(1.75,2.5)--(1.75,3.25)--(3.25,3.25);
\draw[very thick, red] (1.75,3.25-2)--(3.25,3.25-2)--(3.25,2.5-2)--(1.75,2.5-2)--(1.75,3.25-2)--(3.25,3.25-2);

\end{scope}

\end{tikzpicture}
}
\caption{Left: a $p$-linked square (black), and the two resulting singular squares (red) from Lemma~\ref{la:LSSS}.
Middle: a singular square (black), and the two resulting projection-singularised squares (red) from Lemma \ref{la:SSpSS}.
Right: a projection-singularised square (black), and the two resulting linked squares (red) from Lemma \ref{la:SSLS}.
In each diagram, blue $\H$-classes contain projections, and grey $\H$-classes contain idempotents.}
\label{fig:SStoLS}
\end{center}
\end{figure}

Of course applying the involution to the vertical squares in Lemma \ref{la:LSSS} yields a pair of horizontal squares.  Similarly, the next two results have duals, which we will not state explicitly.

\begin{lemma}\label{la:SSpSS}
Let $\smat efgh$ be a singular square that is LR-singularised by the idempotent $pq$, where $(p,q)\in\F$.  Then the squares 
\[
\smat e{ep}g{gp} \ANd \smat f{ep}h{gp}
\]
are both LR-singularised by $p$.  
\end{lemma}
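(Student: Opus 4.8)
The plan is to verify directly the defining conditions~\eqref{eq:LR} for an LR singular square, for each of the two squares in turn, using only the hypothesis that $pq$ LR-singularises $\smat efgh$ together with $(p,q)\in\F$, i.e.~$pqp=p$ and $qpq=q$. Recall that $pq$ LR-singularising $\smat efgh$ means (by~\eqref{eq:LR}) that
\[
(pq)e=e,\quad (pq)g=g,\quad e(pq)=f,\quad g(pq)=h.
\]
First I would record some easy consequences. Since $e\mr\R f$ and $e\mr\L g$, the projections $ee^*=ff^*$ and $e^*e=g^*g$ are well defined, and likewise $gg^*=hh^*$, $g^*g=e^*e$, $f^*f=h^*h$. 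From $(pq)e=e$ we get that $e$, hence $f$, lies $\leq_\R$ below the idempotent $pq$; combined with $p(pq)=(pqp)q=pq$ I expect to be able to show $pe=e$ and $pg=g$ (intuitively, $p$ is a left identity for anything $pq$ is a left identity for, because $p$ fixes $pq$ on the left). Concretely: $pe = p(pq)e = (p\cdot pq)e = (pq)e = e$, using $p\cdot pq = pqp\cdot q = pq$; and similarly $pg=g$. So the first two equalities of~\eqref{eq:LR} hold for $\smat e{ep}g{gp}$ with singularising element $p$, and also $pe=e=pf$ handles the first two equalities for $\smat f{ep}h{gp}$ once we note $pf = p\cdot e(pq) = (pe)(pq) = e(pq) = f$; likewise $ph=h$.

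Next, for the third/fourth equalities of~\eqref{eq:LR}, I need to check that $ep$, $gp$ are idempotents and that they are $\R$-related to $e$, $g$ respectively and $\L$-related to each other, so that $\smat e{ep}g{gp}$ and $\smat f{ep}h{gp}$ are genuine squares; then verify $ep = e\cdot p$, $gp=g\cdot p$ (trivially the right-multiplication form of~\eqref{eq:LR}), $(ep)p = ep$, $(gp)p=gp$ (immediate since $p^2=p$), and that $p$ fixes $ep,gp$ on the left (from $pe=e$, $pg=g$ and $p^2=p$). That $ep$ is idempotent: $ep\cdot ep = e(pe)p = e\cdot e\cdot p = ep$ using $pe=e$; similarly for $gp$. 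That $ep\mr\R e$: clearly $ep\leq_\R e$, and $ep\cdot$ (something) $=e$ — indeed $e = pe = pqe\cdot$? here I would instead argue $e(pq)=f$ and $f(pq)^{-1}$-style, but more cleanly: $ep\mr\R e$ because $e\mr\R f=e(pq)=ep\cdot q$, so $e\leq_\R ep$, combined with $ep\leq_\R e$. And $ep\mr\L gp$: from $e\mr\L g$ and left-multiplying/right structure — since $e=xg$ and $g=ye$ for suitable $x,y\in S^1$, we get $ep=xgp$ and $gp=yep$, giving $ep\mr\L gp$. The second square $\smat f{ep}h{gp}$ needs $f\mr\R ep$ and $h\mr\R gp$; but $f=e(pq)=ep\cdot q$ so $f\leq_\R ep$, and $ep = f$? no — rather $ep\leq_\R e\mr\R f$, so $ep\leq_\R f$, giving $f\mr\R ep$; similarly $h\mr\R gp$. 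Finally I would note the singularising conditions for $\smat f{ep}h{gp}$: $pf=f$, $ph=h$ (shown above), $f p = ?$ — we need $fp$ to equal the "$f$-entry" of the square in position matching~\eqref{eq:LR}; reading the LR square $\smat f{ep}h{gp}$ against~\eqref{eq:LR} with $u=p$ demands $pf=f$, $ph=h$, $fp = ep$, $hp = gp$. So the one genuinely new computation is $fp = ep$ and $hp = gp$: compute $fp = e(pq)p = e(pqp) = ep$ using $pqp=p$, and $hp = g(pq)p = g(pqp) = gp$.

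So the whole argument reduces to a short chain of one-line identities, the only inputs being $pq$-singularity, $pqp=p$, $qpq=q$, $p^2=p$, and the $\R/\L$-relationships packaged in the definition of a square. The step I expect to be the "main obstacle" — though it is minor — is being careful about which of the four equalities in~\eqref{eq:LR} actually need separate verification versus which follow automatically (the excerpt notes that once the first two equalities hold, the last two are equivalent to each other, so it suffices to check one of them in each square), and keeping straight the roles of the entries $f$ versus $e$ in the second square $\smat f{ep}h{gp}$, where $f$ occupies the top-left slot. Once that bookkeeping is fixed, the computation $fp=ep$ via $pqp=p$ is the crux and everything else is formal. I would present the proof by first establishing $pe=e$, $pg=g$, $pf=f$, $ph=h$, then $ep\cdot ep=ep$, $gp\cdot gp = gp$, then $fp=ep$, $hp=gp$, then note the $\R$- and $\L$-relationships making both arrays squares, and conclude both are LR-singularised by $p$.
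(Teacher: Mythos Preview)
Your proof is correct and follows essentially the same route as the paper: show that $p$ is a left identity for $e,f,g,h$ (since $p\cdot pq = pq$), invoke this to get the two LR-squares $\smat e{ep}g{gp}$ and $\smat f{fp}h{hp}$, and then compute $fp=e(pq)p=e(pqp)=ep$ (and $hp=gp$) using $pqp=p$. The paper is more concise only because it cites the earlier bullet point that $ue=e$, $ug=g$, $e\mr\L g$ automatically give an LR square $\smat e{eu}g{gu}$, whereas you verify the square structure (idempotency, $\R/\L$-relations) by hand.
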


\begin{proof}
Since $pq$ is a left identity for each of $e$, $f$, $g$ and $h$, it follows that $p$ is a left identity for the same elements. Hence $\smat e{ep}g{gp}$ and $\smat f{fp}h{hp}$ are singularised by $p$. 
Since $epq=f$ and  $p\mr\F q$ it follows that
$fp=(epq)p=ep$; analogously, $hp=gp$, completing the proof.
\end{proof}

\begin{lemma}\label{la:SSLS}
Let $\smat efgh = \smat{sv}{sw}{uv}{uw}$ be a singular square that is LR-singularised by the projection~$p$, and where $(s,v),(s,w),(u,v),(u,w)\in\F$.  Then $(s,v;s,w)$ and $(u,v;u,w)$ are $p$-linked diamonds of projections (i.e.~$(s,v,w)$ and $(u,v,w)$ are $p$-linked triangles), yielding the $p$-linked squares of idempotents
\[
\smat sf{e^*}{vw} = \smat s{sw}{vs}{vw} \ANd \smat uh{g^*}{vw} = \smat u{uw}{vu}{vw}.
\]
\end{lemma}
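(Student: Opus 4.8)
The plan is to squeeze out of the singularising hypothesis all the relations it forces among the projections $s,u,v,w$ and $p$, then to establish the one genuinely non-formal fact $(v,w)\in\F_D$, and finally to quote the correspondence between $p$-linked diamonds and $p$-linked squares recorded just before Corollary~\ref{co:lkid}.

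First I would record the routine consequences of the hypothesis that $p$ LR-singularises $\smat efgh$. From $pe=e$, $pg=g$ and the standard consequence $fp=f$ of \eqref{eq:LR}, passing to the associated projections via \eqref{eq:eleqf} and \ref{it:RS2} gives $ps=s$, $pu=u$ and $wp=w$; applying the involution (all of $s,u,w,p$ being projections) upgrades these to $ps=sp=s$, $pu=up=u$ and $pw=wp=w$, whence $psp=s$ and $pup=u$. Next I would exploit the ``horizontal'' identities $ep=f$ and $gp=h$, that is $svp=sw$ and $uvp=uw$: left-multiplying $svp=sw$ by $v$ and using $vsv=v$ gives $vp=vsw$, and the involution gives $pv=wsv$; then $pvp=(pv)p=ws(vp)=wsvsw=w(svs)w=wsw=w$, using $svs=s$ and $wsw=w$.

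The crux is $(v,w)\in\F_D$. From $pw=w=wp$ together with $vp=vsw$ and $pv=wsv$ one checks the reduced forms $vw=v(pw)=(vp)w=vsw$ and, dually, $wv=wsv$; then
\[
vwv = vswv = vswsv = v(sws)v = vsv = v
\qquad\text{and}\qquad
wvw = wsvw = wsvsw = w(svs)w = wsw = w,
\]
using only $svs=s$, $sws=s$, $vsv=v$. Since $v,w\in P_D$ by \ref{it:RS2}, this gives $(v,w)\in\F_D$. Now all four side conditions $(s,s),(s,w),(v,s),(v,w)\in\F_D$ hold ($\F$ being reflexive on projections and symmetric), and together with $psp=s$ and $pvp=w$ this is precisely the assertion that $(s,v;s,w)$ is a $p$-linked diamond, i.e.\ that $(s,v,w)$ is a $p$-linked triangle. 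The same argument with $u$ in place of $s$ (using $pup=u$, $pvp=w$ and $(u,v),(u,w)\in\F_D$) shows $(u,v;u,w)$ is $p$-linked. Finally, applying the construction before Corollary~\ref{co:lkid} to these two diamonds yields the $p$-linked squares $\smat{ss}{sw}{vs}{vw}=\smat s{sw}{vs}{vw}=\smat sf{e^*}{vw}$ (since $e^*=vs$) and $\smat{uu}{uw}{vu}{vw}=\smat u{uw}{vu}{vw}=\smat uh{g^*}{vw}$ (since $g^*=vu$), completing the proof.

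The step I expect to be the main obstacle is exactly the verification of $(v,w)\in\F_D$ (and, relatedly, $pvp=w$): for a generic singular square the $\L$-classes of $e$ and $f$ need not be $\F$-related, so this step must genuinely combine the singularising identity $ep=f$ with the full strength of the hypotheses $(s,v),(s,w)\in\F$. Everything else is bookkeeping with the projection-algebra identities.
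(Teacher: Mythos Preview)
Your proof is correct and follows essentially the same line as the paper's. The paper's computation is marginally more streamlined: it works directly with the idempotents $e=sv$ and $f=sw$, obtaining $psp=(pe)(pe)^*=ee^*=s$ and $pvp=(ep)^*(ep)=f^*f=w$ in one shot, and then verifies $(v,w)\in\F$ via $wvw=pvp\cdot v\cdot pvp=pvp=w$ and $vwv=vpv=e^*(ep)(ep)^*e=e^*ff^*e=e^*ee^*e=v$, whereas you first extract the intermediate projection identities $vp=vsw$ and $pv=wsv$ and compute from there; but the substance is the same.
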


\begin{proof}
We prove the statement for $(s,v;s,w)$; $(u,v;u,w)$ is analogous.
Since, by assumption, $(s,v),(s,w)\in\F$, and $(s,s)\in\F$ trivially,
we just need to verify that $(v,w)\in\F$, $psp=s$ and $pvp=w$.
Keeping in mind \ref{it:RS5}, we have $s=ee^*=ff^*$, $v=e^*e$ and $w=f^*f$.
Now we have:
\[
psp =pee^*p=(pe)(pe)^*=ee^*=s \AND
pvp=pe^*ep=(ep)^*(ep)=f^*f=w.
\]
With the help of the second equality above, and remembering that the product of two projections is an idempotent, we also have $wvw = pvp\sgap v\sgap pvp=pvp=w$, and 
\begin{align*}
vwv &= v\sgap pvp\sgap v=vpv=e^*eppe^*e=e^* (ep)(ep)^*e=e^*ff^*e=e^*ee^*e=e^*e=v,
\end{align*}
completing the proof.
\end{proof}

We now have:

\begin{prop}
\label{pr:coreq}
The presentations given in Corollaries \ref{co:lkid} and \ref{cor:pgmaxq} are equivalent.
\end{prop}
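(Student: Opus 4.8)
The plan is to prove equivalence by showing directly that each of the two sets of "square relations" — the linked-square relations \eqref{eq:rwp43} of Corollary \ref{co:lkid} and the singular-square relations \eqref{eq:pq21} of Corollary \ref{cor:pgmaxq} — is a consequence of the other, \emph{modulo} the common relations $a_e=1$ ($e\in T\cup P_D$) and $a_e=a_{e^*}^{-1}$ ($e\in E_D$), which are shared between the two presentations (as explained before Corollary \ref{cor:pgmaxq}). Since we have already observed \emph{a priori} that the presentations are equivalent (the generators $a_e$ correspond to the same group elements), the point of this proof is to exhibit the derivations explicitly, and the three lemmas \ref{la:LSSS}, \ref{la:SSpSS}, \ref{la:SSLS} are exactly the combinatorial tools needed. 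Throughout I will freely use the relation $a_p=1$ for $p\in P_D$ (a projection idempotent) and $a_e=a_{e^*}^{-1}$.

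First I would show that the singular-square relations imply the linked-square relations. Take a $p$-linked square $\smat efgh=\smat{sv}{sw}{uv}{uw}$. By Lemma \ref{la:LSSS}, the squares $\smat efv{vw}=\smat{sv}{sw}v{vw}$ and $\smat gh{wv}w=\smat{uv}{uw}{wv}w$ are both singular (UD-singularised by $p$), so \eqref{eq:pq21} gives $a_e^{-1}a_f=a_v^{-1}a_{vw}$ and $a_g^{-1}a_h=a_{wv}^{-1}a_w$. Now $v,w\in P_D$, so $a_v=a_w=1$, and $a_{wv}=a_{(vw)^*}=a_{vw}^{-1}$; hence $a_e^{-1}a_f=a_{vw}=a_g^{-1}a_h$, which is the desired relation \eqref{eq:rwp43}. (If the $p$-linked square is degenerate — say $v=w$ — the relation is trivial; this needs a one-line remark.)

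Next, the converse: linked-square relations imply singular-square relations. Here the obstacle is that a general singular square need not be directly linked, so one must reduce via the auxiliary squares. Given a singular square $\smat efgh$, say LR-singularised by an idempotent $e_0'=pq$ with $(p,q)\in\F$ (the LR/RL and UD/DU cases are symmetric, and degenerate squares are trivial). By Lemma \ref{la:SSpSS}, the squares $\smat e{ep}g{gp}$ and $\smat f{ep}h{gp}$ are LR-singularised by the \emph{projection} $p$. Applying Lemma \ref{la:SSLS} to each of these projection-singularised squares produces $p$-linked squares, whose relations are instances of \eqref{eq:rwp43}. Combining the resulting identities among the $a$'s — using $a_{ep}=a_{fp}$, $a_{gp}=a_{hp}$ (the last two columns of the Lemma \ref{la:SSpSS} squares coincide) together with the shared relations $a_e=a_{e^*}^{-1}$ and $a_p=1$ for the projections $p,v,w$ appearing — should collapse to $a_e^{-1}a_f=a_g^{-1}a_h$. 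The bookkeeping of which idempotents lie in $P_D$ (and hence have $a$-value $1$) and which are starred versions of one another is the fiddly part; the picture in Figure \ref{fig:SStoLS} is the guide. I expect the main work of the proof to be precisely this second direction: carefully chasing the chain singular $\leadsto$ projection-singularised $\leadsto$ linked and verifying the algebra cancels, with the subtlety that one must handle the case where some of the relevant $\R$- or $\L$-classes coincide (noted already in the proof of Lemma \ref{la:LSSS}), so that the four idempotents of an auxiliary square need not be distinct. A concluding remark can record that, as a by-product, certain "special" square relations suffice to imply all the others, foreshadowing Remark \ref{rem:LSSS}.
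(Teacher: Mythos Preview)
Your proposal is correct and follows essentially the same approach as the paper: the first direction (singular implies linked) is carried out exactly as in the paper, via Lemma \ref{la:LSSS}; the second direction uses Lemmas \ref{la:SSpSS} and \ref{la:SSLS} in the intended way.

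The only organisational difference is that the paper treats the projection-singularised case as an intermediate step rather than jumping straight from a $pq$-singularised square to linked squares. Concretely: for $\smat efgh$ LR-singularised by a projection $p$, Lemma \ref{la:SSLS} yields the linked squares $\smat sf{e^*}{vw}$ and $\smat uh{g^*}{vw}$, and the linked-square relations then give $a_f=a_s^{-1}a_f=a_{e^*}^{-1}a_{vw}=a_ea_{vw}$ and $a_h=a_ga_{vw}$, whence $a_e^{-1}a_f=a_{vw}=a_g^{-1}a_h$. Only after this is established does the paper invoke Lemma \ref{la:SSpSS} for the general $pq$-singularised case, applying the just-proved projection case to the two squares $\smat e{ep}g{gp}$ and $\smat{ep}f{gp}h$ and multiplying the resulting identities. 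Your plan to apply Lemma \ref{la:SSLS} to all the auxiliary squares at once would produce four linked squares and more bookkeeping; the paper's two-stage organisation makes the ``should collapse to'' step you flag entirely routine. Your remark that $a_{ep}=a_{fp}$ is in fact an equality of idempotents ($ep=fp$), not just of generators, which is precisely what makes the combination work.
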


\begin{proof}
($\Leftarrow$)
We show that  relations \mbox{\eqref{eq:pq19}--\eqref{eq:pq21}} imply~\eqref{eq:rwp43}.
Suppose $\smat efgh$ is a linked square of idempotents in $E_D$, so that ${\smat efgh = \smat{sv}{sw}{uv}{uw}}$ for a corresponding linked diamond of projections $(s,u;v,w)$; cf.~Remark~\ref{rem:plinkedsquare}.  Using the singular squares $\smat efv{vw}$ and $\smat gh{wv}w$ from Lemma \ref{la:LSSS}, we calculate
\[
a_e^{-1}a_f 
\stackrel{\text{\tiny\eqref{eq:pq21}}}{=} a_v^{-1}a_{vw} 
\stackrel{\text{\tiny\eqref{eq:pq19}}}{=} a_{vw} 
\stackrel{\text{\tiny\eqref{eq:pq22}}}{=} a_{wv}^{-1} 
\stackrel{\text{\tiny\eqref{eq:pq19}}}{=} a_{wv}^{-1} a_w
\stackrel{\text{\tiny\eqref{eq:pq21}}}{=} 
a_g^{-1}a_h,
\]
which is exactly relation \eqref{eq:rwp43} for our original square $\smat efgh$.

($\Rightarrow$)
Now we show that  relations \eqref{eq:rwp40}--\eqref{eq:rwp43} imply the singular square relations
arising from $\smat efgh = \smat{sv}{sw}{uv}{uw}$.
First we do this in the case when the square is LR-singularised by a projection $p$.  By Lemma \ref{la:SSLS} we have two $p$-linked squares of idempotents
$\smat sf{e^*}{vw}$ and $ \smat uh{g^*}{vw}$.
Using these, we calculate
\[
a_f
\stackrel{\text{\tiny\eqref{eq:rwp40}}}{=} a_s^{-1}a_f
\stackrel{\text{\tiny\eqref{eq:rwp43}}}{=} a_{e^*}^{-1}a_{vw}
\stackrel{\text{\tiny\eqref{eq:rwp42}}}{=} a_ea_{vw}
\ANd
a_h
\stackrel{\text{\tiny\eqref{eq:rwp40}}}{=} a_u^{-1}a_h
\stackrel{\text{\tiny\eqref{eq:rwp43}}}{=} a_{g^*}^{-1}a_{vw}
\stackrel{\text{\tiny\eqref{eq:rwp42}}}{=} a_ga_{vw}.
\]
These in turn yield $a_e^{-1}a_f = a_{vw}$ and $a_g^{-1}a_h = a_{vw}$, which combine to give $a_e^{-1}a_f = a_g^{-1}a_h$, i.e.~the relation~\eqref{eq:pq21} for the square $\smat efgh$.

Now consider a square $\smat efgh$ that is LR-singularised by an arbitrary idempotent $pq$, where $(p,q)\in\F$. By Lemma \ref{la:SSpSS}, the squares $\smat e{ep}g{gp}$ and $\smat {ep}f{gp}h$ are LR-singularised by the projection~$p$. By the previous paragraph, \eqref{eq:rwp40}--\eqref{eq:rwp43} imply
\[
a_e^{-1}a_{ep} = a_g^{-1}a_{gp} \ANd a_{ep}^{-1}a_f = a_{gp}^{-1}a_h,
\]
which, in turn, imply
$a_e^{-1}a_f = a_e^{-1}a_{ep} \sgap a_{ep}^{-1}a_f = a_g^{-1}a_{gp} \sgap a_{gp}^{-1}a_h = a_g^{-1}a_h$, i.e.~relation~\eqref{eq:pq21} for the LR square $\smat efgh$.

The proof for UD squares is dual, and the proposition follows.
\end{proof}

\begin{rem}\label{rem:LSSS}
It is worth drawing out the fact (which is apparent in the proof of Proposition~\ref{pr:coreq}) that in the presentation from Corollary \ref{cor:pgmaxq}, we only need to include singular square relations~\eqref{eq:pq21} for squares singularised by projections.
In fact, looking at the form of the squares in Lemma \ref{la:LSSS} (which were used in the proof of Proposition \ref{pr:coreq}), we only need to include such relations for projection-singularised squares in which one of the entries (of the square) is a projection.

It is also worth noting that the exact form of the spanning tree $T$ appearing in the presentations from Corollaries \ref{co:lkid} and \ref{cor:pgmaxq} did not play a part in the above deductions, only that it contains the projections $P_D$.
\end{rem}

\section{Maximal subgroups in \boldmath{$\PG(\sfP(\P_n))$} and \boldmath{$\IG(\sfE(\P_n))$}:
the common set-up}
\label{sec:common}

At this point we turn our attention to our main application of the theory developed in the first half of the paper: computing the maximal subgroups in the free projection-generated regular $*$-semigroup $\PG(P)$ and in the free idempotent-generated semigroup $\IG(E)$ arising from the partition monoid~$\P_n$. 
Following the first preliminaries on $\P_n$ from Subsection \ref{ss:Pn},
here we establish the common notation and framework to be used in the subsequent sections.

\subsection{Relations and presentations}

Throughout what follows, we will consider the partition monoid $\P_n$ for some $n\geq 2$, and a projection $p_0\in \P_n$ of rank $r\leq n-2$. We let $P:=\sfP(\P_n)$ be the projection algebra of $\P_n$, and $E:=\sfE(\P_n)$ the biordered set of idempotents of $\P_n$.
We will again denote by $\GP$ and $\GI$ the maximal subgroups of $\PG(P)$ and $\IG(E)$ containing $x_{p_0}$, respectively.  
In $\P_n$, we will write:
\bit
\item $D(n,r)$ for the $\D$-class of all partitions of rank $r$,
\item $P(n,r)$ for the set of all projections in $D(n,r)$, 
\item $E(n,r)$ for the set of all idempotents in $D(n,r)$,
\item $\Sq(n,r)$ for the set of all singular squares  involving idempotents from~$E(n,r)$, and
\item $A(n,r):=\big\{ a_e\colon e\in E(n,r)\big\}$ for the alphabet featuring in the presentations from Theorems~\ref{thm:spantreepres} and~\ref{thm:pgmaxq} and Corollaries \ref{co:lkid} and \ref{cor:pgmaxq}.
\eit
We also write
\[
A_F:=\{ a_e\colon e\in F\}\subseteq A(n,r) \quad\text{for any $F\subseteq E(n,r)$.}
\]
The relations featuring in the presentations established in Sections \ref{sec:maxIG}--\ref{sec:pres-quot} come in families, and we will use different notations for these.  As before, we have the sets
\[
\rels_1(F):=\big\{ (a_e=1)\colon e\in F\big\} \quad \text{for any $F\sub E(n,r)$},
\]
and
\[
\rels_\subSq(n,r):=\big\{ (a_e^{-1}a_f=a_g^{-1}a_h)\colon \smat{e}{f}{g}{h}\in \Sq(n,r)\big\}.
\]
Finally, we gather the relations that say that $a_e$ is the inverse of $a_{e^*}$, which are present only in  Corollary \ref{cor:pgmaxq}:
\[
\rels_\subinv(n,r):=\big\{ (a_e=a_{e^*}^{-1})\colon e\in E(n,r)\big\}.
\]
With this notation, Theorem \ref{thm:spantreepres} and Corollary \ref{cor:pgmaxq} respectively say that:
\bit
\item
$\GI$ has presentation $\langle A(n,r) \mid \rels_1(T),\ \rels_\subSq(n,r)\rangle$, where $T$ is any spanning tree of the Graham--Houghton graph of $D(n,r)$.
\item
$\GP$ has presentation $\langle A(n,r) \mid \rels_1(T),\ \rels_\subSq(n,r),\ \rels_\subinv(n,r)\rangle$, where $T$ is any spanning tree of the Graham--Houghton graph of $D(n,r)$ that contains $P(n,r)$.
\eit

\subsection{Projections and idempotents}

In the coming sections we will of course work a lot with projections and idempotents of $\P_n$, so we need a good understanding of what these look like.  Projections are very simple: it is readily checked (and well known) that they are the partitions of the form
\[
\begin{partn}{6} A_1&\cdots&A_r&B_1&\cdots&B_s \\ \hhline{~|~|~|-|-|-} A_1&\cdots&A_r&B_1&\cdots&B_s \end{partn}.
\]
Arbitrary idempotents of $\P_n$ are more complicated, and were described in \cite{DEEFHHL15}.  To review this, we need some notation.  First, given partitions $a\in\P_X$ and $b\in\P_Y$ over disjoint sets $X$ and $Y$, their (disjoint) union is a partition in $\P_{X\cup Y}$; we will denote it by $a\oplus b$.
Next, we define the \emph{super-kernel} of a partition $a\in \P_X$ to be the relation $\KER(a):=\ker(a)\vee\coker(a)$.  If $\KER(a)=\nabla_X$ (the universal relation on $X$) we say that $a$ is \emph{connected}.  This is equivalent to the middle row~$X''$ being contained in a single component of the product graph $\Ga(a,a)$.

\begin{prop}[see {\cite[Theorem 5]{DEEFHHL15}}]\label{prop:EPn}
The idempotents of $\P_n$ are precisely the partitions of the form $e = e_1\oplus\cdots\oplus e_k$, where  each component $e_i$ is connected and has rank $\leq 1$.
\end{prop}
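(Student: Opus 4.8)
The plan is to prove both inclusions by first reducing to \emph{connected} partitions, and then analysing the product graph $\Ga(e,e)$. The key preliminary observation is that direct sums are compatible with multiplication: if $a,c\in\P_X$ and $b,d\in\P_Y$ with $X\cap Y=\es$, then $\Ga(a\oplus b,c\oplus d)$ is the disjoint union of the graphs $\Ga(a,c)$ and $\Ga(b,d)$, so $(a\oplus b)(c\oplus d)=ac\oplus bd$; in particular $(a\oplus b)^2=a^2\oplus b^2$. Hence a direct sum is idempotent if and only if each summand is. On the other hand, any $a\in\P_n$ is the direct sum of its restrictions to $Y\cup Y'$ over the $\KER(a)$-classes $Y$, \emph{provided} every block of $a$ lies in a single $Y\cup Y'$; the only thing that can fail is a transversal block $A_i\cup B_i'$, which requires $A_i$ and $B_i$ to lie in a common $\KER(a)$-class. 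So the proof breaks into three steps: (i) a connected partition of rank $\le1$ is idempotent; (ii) if $e$ is idempotent then the $\KER(e)$-decomposition is legitimate; (iii) a connected idempotent has rank $\le1$.

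\textbf{Step (i).} First, \emph{every} rank-$0$ partition $c\in\P_X$ is idempotent: in $\Ga(c,c)$ the three vertex sets $X$, $X''$ and $X'$ are pairwise non-adjacent, since every block of $c^\vee$ or $c^\wedge$ lies entirely within one of them; therefore $c^2$ has the same upper blocks as $c$, the same lower blocks, and no transversals, i.e.\ $c^2=c$. Now let $c$ be connected of rank $1$, with transversal block $A\cup B'$. Connectedness means the whole middle row $X''$ lies in one component $\mathcal C$ of $\Ga(c,c)$. Since $A$ is joined through $c^\vee$ to the middle copies of $B$, and $B'$ is joined through $c^\wedge$ to the middle copies of $A$, the set $A\cup B'$ lies in $\mathcal C$; moreover $\mathcal C\cap(X\cup X')=A\cup B'$ exactly, because the remaining vertices of $X\cup X'$ — the upper and lower non-transversal parts — lie only in blocks of $c^\vee$ or $c^\wedge$ that avoid the middle row, and so form components meeting neither $X''$ nor each other across the two sides. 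These other components reproduce precisely the non-transversal blocks of $c$, so $c^2=c$.

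\textbf{Steps (ii) and (iii).} Here idempotency is used via the product graph. Suppose $e^2=e$, and let $A_i\cup B_i'$ be a transversal block of $e$. Being a block of $e^2=e$, the upper vertices $A_i$ and the lower vertices $B_i'$ lie in one component of $\Ga(e,e)$; also $A_i$ is joined to the middle copies of $B_i$ through $e^\vee$, and $B_i'$ to the middle copies of $A_i$ through $e^\wedge$. Chaining these three facts shows that the middle copies of $A_i$ and of $B_i$ lie in a common component of $\Ga(e,e)$. Now one verifies the refinement of the stated connectedness criterion that two middle vertices $x'',y''$ are in the same component of $\Ga(e,e)$ precisely when $x\mathbin{(\ker e\vee\coker e)}y=\KER(e)$-related; it follows that $A_i$ and $B_i$ lie in a common $\KER(e)$-class, which is (ii). Thus $e=\bigoplus_i e_i$ with each $e_i$ connected on its support (a short check identifies $\KER(e_i)$ with the full relation there) and, by the first paragraph, idempotent. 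For (iii): if a connected idempotent $c$ had rank $\ge2$, with distinct transversal blocks $A_1\cup B_1'$ and $A_2\cup B_2'$, then $A_1$ and $A_2$ are each joined in $\Ga(c,c)$ to some middle vertex, and since the whole middle row is one component, $A_1$ and $A_2$ lie in the same component, hence the same block of $c^2=c$ — contradicting that they come from different blocks. So $\rank(c)\le1$, and combining (i)--(iii) yields the proposition.

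\textbf{Main obstacle.} The direct-sum and rank-$0$ computations are routine, as is the rank-$1$ case once the structure of $\Ga(c,c)$ is unwound. The one point needing genuine care — and the hinge of the whole argument — is the adjacency analysis of the middle row, namely the equivalence ``$x''$ and $y''$ lie in the same component of $\Ga(e,e)$ $\iff$ $x\mathbin{(\ker e\vee\coker e)}y$'': one must show that any path in the product graph which dips into the top row $X$ (resp.\ bottom row $X'$) returns to the middle at vertices whose indices are $\coker(e)$-related (resp.\ $\ker(e)$-related), using that each vertex of $X$ lies in a unique block of $e^\vee$. Once this is established, (ii) and (iii) fall out as above.
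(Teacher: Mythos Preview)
Your argument is correct. The paper does not give its own proof of this proposition; it simply cites \cite[Theorem~5]{DEEFHHL15} and uses the result. So there is no ``paper's proof'' to compare against here.

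Your route is a clean self-contained version of the standard one: reduce via the direct-sum decomposition over $\KER(e)$-classes, then handle connected partitions directly. The three steps (i)--(iii) are all sound, and you have correctly identified the one non-trivial point, namely the characterisation of the connected components of the middle row $X''$ in $\Ga(e,e)$ as the $\KER(e)$-classes. Your reason for this is right: a vertex $z\in X$ lies in exactly one block of $e^\vee$ and in no block of $e^\wedge$, so any excursion from $X''$ into $X$ and back stays within a single $e^\vee$-block and hence moves only among $\coker(e)$-related middle vertices (and dually for excursions into $X'$). With that in hand, (ii) and (iii) follow exactly as you describe.
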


For example, one can check that the partitions $a,b\in\P_6$ pictured in Figure \ref{fig:P6} are both idempotents.  Moreover, $a$ is connected, and hence a single component of rank $1$, while $b$ is the sum of two such components, the $\KER(b)$-classes being $\{1,2,3,4\}$ and $\{5,6\}$.  On the other hand,~$ab$ is not an idempotent; the $\KER(ab)$-classes are $\{1,4,5,6\}$ and $\{2,3\}$, but $ab$ is not the union of partitions from $\P_{\{1,4,5,6\}}$ and $\P_{\{2,3\}}$.

The subsemigroup of $\P_n$ generated by its idempotents was proved in \cite{Ea11} to consist of the identity partition, plus all partitions of rank $<n$; in other words, $\langle \sfE(\P_n)\rangle$ is $\P_n$ with the non-identity permutations from $\S_n$ removed. 
In terms of Green's relations this semigroup is identical to $\P_n$, except for the very top $\D$-class, which is trivial. Therefore, in what follows we will work with $\P_n$ instead of $\langle \sfE(\P_n)\rangle$ as we would be required to, strictly-speaking, by the theory from Sections~\ref{sec:IGETn}--\ref{sec:pres-quot}.

\subsection{A note on rectangular bands and singular squares in $\P_n$}
\label{ss:wow}

As well as the idempotents of $\P_n$, we will also have to deal with singular squares formed by them. 
We begin with a word of warning concerning the nature of singular squares in $\P_n$, in contrast with the situation for $\T_n$.

Towards the end of the  proof of Proposition \ref{pr:treeTn}, we recalled from \cite{GR12} that every $2\times2$ rectangular band in~$\T_n$ is a singular square (and hence the $2\times2$ rectangular bands are precisely the non-degenerate singular squares in $\T_n$).  It was also noted in \cite[Remark~2.7]{GR12} that every singular square in~$\T_n$ is horizontal (and  possibly also  vertical).  It turns out that both of these special properties of $\T_n$ fail in the partition monoid $\P_n$.  

For example, the elements $\smat {e_1}{f_1}{g_1}{h_1}$ of $\sfE(\P_2)$ pictured in Figure \ref{fig:ssP2P3} (left) form a rectangular band, but this square is not singular.  Indeed, it is easy to check that the only common left identity for these elements is $1$ (cf.~\eqref{eq:LR}), and that the only common right identity is also~$1$ (cf.~\eqref{eq:UD}).  And clearly $1$ cannot singularise any non-degenerate square.

In fact, the square $\smat {e_1}{f_1}{g_1}{h_1}$ has the form $\smat p{pq}{qp}q$ for the friendly pair of projections ${(p,q) = (e_1,h_1)}$ in~$\sfP(\P_2)$.  Such a square $\smat p{pq}{qp}q$ exists for any friendly pair $(p,q)\in\F$ in a regular $*$-semigroup~$S$, but can only be singular if it is totally degenerate, meaning that $p=q=pq=qp$.  Indeed, suppose this square $\smat p{pq}{qp}q$ is LR-singularised by $u\in\sfE(S)$, so that:
\begin{multicols}{4}
\begin{thmenumerate}
\item \label{prosq1} $up = p$,
\item \label{prosq2} $uqp = qp$,
\item \label{prosq3} $pu = pq$, 
\item \label{prosq4} $qpu = q$.
\end{thmenumerate}
\end{multicols}
\noindent We then have $p \stackrel{\text{\tiny\ref{prosq1}}}{=} up = uu^*up \stackrel{\text{\tiny\ref{prosq1}}}{=} uu^*p = u(pu)^* \stackrel{\text{\tiny\ref{prosq3}}}{=} u(pq)^* = uqp \stackrel{\text{\tiny\ref{prosq2}}}{=} qp$.  It then follows that $p = p^* = (qp)^* = pq$, and since $p\mr\F q$ that $q = qpq = pq = p$.

It is still possible to find non-singular $2\times2$ rectangular bands in $\P_n$ involving no projections.  For example, this is the case for the elements $\smat {e_2}{f_2}{g_2}{h_2}$ of $\sfE(\P_3)$ pictured in Figure \ref{fig:ssP2P3} (middle).  Again, the only common left or right identity for these elements is $1$.

Regarding the failure in $\P_n$ of the second special property of $\T_n$, i.e.~that all singular squares are horizontal, Figure \ref{fig:ssP2P3} (right) shows a square $\smat{e_3}{f_3}{g_3}{h_3}$ in $\sfE(\P_3)$, and its UD-singularising element $u\in\sfE(\P_3)$.  This is not a horizontal square, however, as the only common left identity is again~$1$.  Of course, applying the involution yields a horizontal singular square that is not vertical.

The upshot of all of this is that when we want to establish singularity of a square in $\P_n$ we must specify the singularising idempotent, in addition to the four idempotents forming the square.

\begin{figure}[t!]
\begin{center}
\scalebox{0.7}{
\begin{tikzpicture}[scale=.5]
\begin{scope}[shift={(0,0)}]
\diagramshading2
\ediagram{e_1}{2}{\udline11 \uuline12 \ddline12}
\fdiagram{f_1}{2}{\udline11 \uuline12}
\gdiagram{g_1}{2}{\udline11 \ddline12}
\hdiagram{h_1}{2}{\udline11}
\end{scope}
\begin{scope}[shift={(13,0)}]
\diagramshading3
\ediagram{e_2}{3}{\udline22 \uuline13 \ddline12}
\fdiagram{f_2}{3}{\udline22 \uuline13 \ddline23}
\gdiagram{g_2}{3}{\udline22 \ddline12}
\hdiagram{h_2}{3}{\udline22 \ddline23}
\end{scope}
\begin{scope}[shift={(28,0)}]
\diagramshading3
\ediagram{e_3}{3}{\udline11 \ddline23}
\fdiagram{f_3}{3}{\ddline13 \udline11}
\gdiagram{g_3}{3}{\udline11 \uuline23 \ddline23}
\hdiagram{h_3}{3}{\ddline13 \udline11 \uuline23}
\udiagram{u}{3}{\udline11 \udline22 \ddline23 \uuline23}
\UDarrows
\end{scope}
\end{tikzpicture}
}
\caption{Certain squares of idempotents in $\P_2$ and $\P_3$.  See Subsection \ref{ss:wow} for more details.}
\label{fig:ssP2P3}
\end{center}
\end{figure}
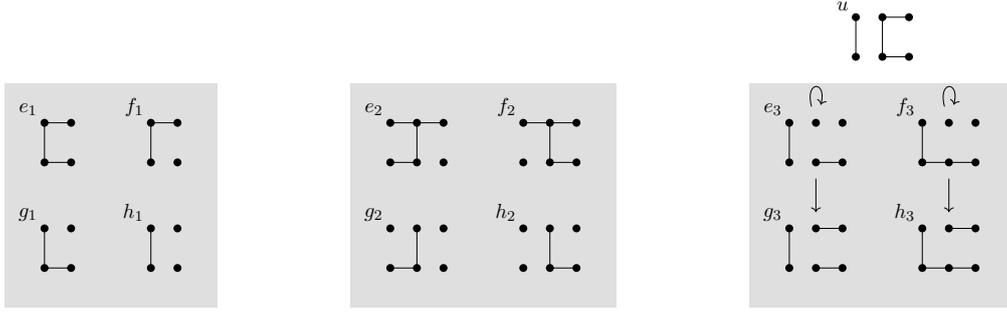

\subsection{NT-reducing squares and generator elimination}
\label{ss:geneli}

We now establish two types of singular squares, one general (Lemma \ref{la:ehrsq}) and the other fairly special (Lemma \ref{la:type7}), which then let us eliminate all generators from the presentations for $\PG(P)$ and $\IG(E)$, except for those of three particular kinds.
The squares of the first type  will be used extensively in the subsequent sections as well.

For a partition $a\in \P_n$, let $\NTu(a)$ and $\NTd(a)$ respectively denote the numbers of upper and lower non-transversals of $a$, and let $\NT(a):=\NTu(a)+\NTd(a)$ be the total number of non-transversals of~$a$.  If we write $\Vert\sigma\Vert$ for the number of $\sigma$-classes for an equivalence relation $\sigma$, then
\begin{equation}\label{eq:Vert}
\NTu(a) = \Vert{\ker(a)}\Vert - \rank(a) \ANd \NTd(a) = \Vert{\coker(a)}\Vert - \rank(a).
\end{equation}
Note also that $0\leq\NTu(a)\leq n-r$ for $a\in D(n,r)$, and that
\[
\NTu(a) = 0 \iff \dom(a) = [n] \ANd \NTu(a) = n-r\iff \ker(a) = \Delta_{[n]}.
\]
Analogous statements hold for $\NTd(a)$ and codomains/cokernels.  Also note that $\NTu(a),\NTd(a)\geq1$ when $r=0$.
Since Green's $\R$- and $\L$-relations are governed by equality of (co)domains and (co)kernels (see Lemma~\ref{la:Green_Pn}), it follows that 
\begin{equation}\label{eq:NT}
a\mr\R b \implies \NTu(a) = \NTu(b) \ANd a\mr\L b \implies \NTd(a) = \NTd(b).
\end{equation}
If $p$ is a projection then $\NTu(p)=\NTd(p)$.
We stratify $P(n,r)$ according to the number of upper (equivalently, lower) blocks:
\[
P_k=P_k(n,r):=\{ p\in P(n,r)\colon \NTu(p)=k\} \quad \text{for } 0\leq k\leq n-r.
\]

\begin{rem}\label{rem:eggbox1}
We can also use the $\NTu$ and $\NTd$ parameters to stratify the egg-box diagram of the entire $\D$-class $D(n,r)$, as shown in Figure \ref{fig:eggbox1} (left).  For each $0\leq k,l\leq n-r$, it follows from~\eqref{eq:NT} that the sets
\[
D^k = D^k(n,r) := \set{a\in D(n,r)}{\NTu(a)=k} \ANd D_l = D_l(n,r) := \set{a\in D(n,r)}{\NTd(a)=l}
\]
are unions of $\R$- and $\L$-classes, respectively.  We call the sets $D^k$ and $D_l$ horizontal and vertical \emph{strips}, respectively.  The intersection of a horizontal and vertical strip,
\[
D^k_l = D^k_l(n,r) := D^k(n,r) \cap D_l(n,r) = \set{a\in D(n,r)}{\NTu(a)=k,\ \NTd(a)=l},
\]
is a union of $\H$-classes.  In Figure \ref{fig:eggbox1}, we have coloured certain sets of the above kinds.  For example, $D_0^0$ and $D_1^1$ are blue and green, respectively.  The rest of $D^0$ and $D_0$ are pink.  The set $D^0_{n-r} = D(n,r)\cap\T_n$ has been outlined in red.  The further significance of these shaded regions will be explained in Remarks \ref{rem:eggbox2} and \ref{rem:eggbox3}.

Note that this stratification of $D(n,r)$ induces one of $E(n,r)$ as well.  Moreover,
\[
E^k_l(n,r) := E\cap D^k_l(n,r)
\]
is non-empty for each $0\leq k,l\leq n-r$ when $r\geq1$, or for each $1\leq k,l\leq n-r$ when $r=0$.  For example, if $(n,r,k,l) = (10,5,4,2)$, then we have
\[
\begin{tikzpicture}[scale=.3]
\olduvs{1,...,10}
\oldlvs{1,...,10}
\draw(5,2)--(6,2) (5,0)--(8,0);
\oldstline11
\oldstline22
\oldstline33
\oldstline44
\oldstline55
\oldstline6{8.1}
\draw(10.4,1)node[right]{$\in E^k_l(n,r)$.};
\end{tikzpicture}
\]
\end{rem}

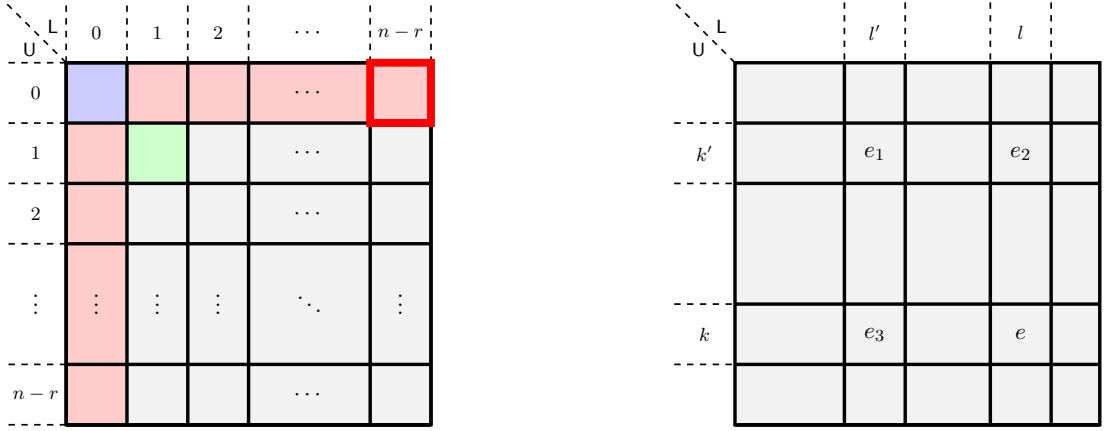
\begin{figure}[t]
\begin{center}
\scalebox{0.8}{
\begin{tikzpicture}[scale=1]

\fill[gray!10](0,0)--(6,0)--(6,6)--(0,6)--(0,0);

\fill[blue!20] (0,6)--(1,6)--(1,5)--(0,5)--(0,6);
\fill[green!20] (1,5)--(2,5)--(2,4)--(1,4)--(1,5);
\fill[red!20] (1,6)--(6,6)--(6,5)--(1,5)--(1,6);
\fill[red!20] (0,5)--(0,0)--(1,0)--(1,5)--(0,5);

\foreach \x in {0,1,2,3,5,6} {\draw[ultra thick] (\x,0)--(\x,6) (0,6-\x)--(6,6-\x);}

\draw[ultra thick](0,0)--(6,0)--(6,6)--(0,6)--(0,0)--(6,0);

\draw[thick, dashed] (0,6)--(-1,7);

\foreach \x in {0,1,2,3,6} {\node () at (\x-.5,2+.1) {$\vdots$};}
\foreach \x in {0,1,2,3,6} {\node () at (4,6.5-\x) {$\cdots$};}
\node () at (4,2+.1) {$\ddots$};

\node () at (-.6,6.22) {\footnotesize $\NTu$};
\node () at (-.22,6.6) {\footnotesize $\NTd$};

\foreach \x in {0,1,2,3,5,6} {\draw[thick, dashed] (\x,6)--(\x,7) (0,6-\x)--(-1,6-\x);}

\node () at (0.5,6.5) {\footnotesize $0$};
\node () at (1.5,6.5) {\footnotesize $1$};
\node () at (2.5,6.5) {\footnotesize $2$};
\node () at (5.5,6.5) {\footnotesize $n-r$};
\node () at (-.5,5.5) {\footnotesize $0$};
\node () at (-.5,4.5) {\footnotesize $1$};
\node () at (-.5,3.5) {\footnotesize $2$};
\node () at (-.5,.5) {\footnotesize $n-r$};

\draw[line width=1.3mm,red] (5,5)--(6,5)--(6,6)--(5,6)--(5,5)--(6,5);

\begin{scope}[shift={(11,0)}]
\fill[gray!10](0,0)--(6,0)--(6,6)--(0,6)--(0,0);

\foreach \x in {1,2,4,5} {\draw[dashed, thick] (-1,6-\x)--(0,6-\x); \draw[ultra thick] (0,6-\x)--(6,6-\x);}
\foreach \x in {1.5+.3,2.5+.3,3.5+.7,4.5+.7} {\draw[dashed, thick] (\x,7)--(\x,6); \draw[ultra thick] (\x,0)--(\x,6);}

\draw[ultra thick](0,0)--(6,0)--(6,6)--(0,6)--(0,0)--(6,0);

\draw[thick, dashed] (0,6)--(-1,7);

\node () at (-.6,6.22) {\footnotesize $\NTu$};
\node () at (-.22,6.6) {\footnotesize $\NTd$};

\node () at (2+.3,6.5) {\footnotesize $l'$};
\node () at (4+.7,6.5) {\footnotesize $l$};
\node () at (-.5,4.5) {\footnotesize $k'$};
\node () at (-.5,1.5) {\footnotesize $k$};

\node () at (2+.3,4.5) {$e_1$};
\node () at (4+.7,4.5) {$e_2$};
\node () at (2+.3,1.5) {$e_3$};
\node () at (4+.7,1.5) {$e$};
\end{scope}

\end{tikzpicture}
}
\caption{Left:  the $\D$-class $D(n,r)$ for $r\geq1$, stratified by the $\NTu$ and $\NTd$ parameters; when $r=0$, the indexing starts at $1$; see Remarks \ref{rem:eggbox1}, \ref{rem:eggbox2} and~\ref{rem:eggbox3} for more details.
Right: an NT-reducing singular square $\smat{e_1}{e_2}{e_3}e$ with base $e$; see Remark \ref{rem:NTred} for more details.}
\label{fig:eggbox1}
\end{center}
\end{figure}

Our first goal is to show that our groups $\GI$ and $\GP$ are generated by letters $a_e$ corresponding to idempotents $e\in E(n,r)$ with small $\NTu$ and $\NTd$ parameters; see Lemma \ref{la:geneli}.  The key idea behind this is the following:

\begin{defn}\label{defn:NT}
We say that a singular square $\smat{e_1}{e_2}{e_3}{e}\in\Sq(n,r)$ is \emph{NT-reducing} with \emph{base} $e$ if
\[
\NTu(e_2)<\NTu(e) \quad \text{and} \quad \NTd(e_3)<\NTd(e).
\]
\end{defn}

Combined with \eqref{eq:NT}, the above inequalities in fact yield
\[
\NTu(e_1)=\NTu(e_2)<\NTu(e_3)=\NTu(e) \quad\textup{and}\quad
\NTd(e_1)=\NTd(e_3)<\NTd(e_2)=\NTd(e),
\]
which in turn yields
\begin{equation}\label{eq:NTei}
\NT(e_i)< \NT(e)\quad \text{for } i=1,2,3.
\end{equation}
Consequently, every NT-reducing square is non-degenerate.

\begin{rem}\label{rem:NTred}
One can visualise an NT-reducing square $\smat{e_1}{e_2}{e_3}e$ using the stratification of the $\D$-class $D(n,r)$, as explained in Remark \ref{rem:eggbox1}.  In the notation of that remark, we have
\[
e \in E^k_l \ANd e_1 \in E^{k'}_{l'} \qquad\text{for some $0\leq k'<k\leq n-r$ and $0\leq l'<l\leq n-r$,}
\]
and then we also have $e_2\in E^{k'}_l$ and $e_3\in E^k_{l'}$.  In particular, $e_1$, $e_2$ and $e_3$ are closer than $e$ to the top-left corner of the $\D$-class~$D(n,r)$.  This is all shown in Figure \ref{fig:eggbox1} (right).
\end{rem}

For an equivalence relation $\sigma$ on a set $X$, with equivalence classes $A_1,\ldots,A_k$,
let $\id_\sigma$ denote the full-(co)domain projection  $\begin{partn}{3} A_1 &\cdots & A_k\\ \hhline{~|~|~} A_1& \cdots 
&A_k\end{partn}$ in $\P_X$.
For any $a\in\P_n$, define the projections ${\ED(a):= \id_{\ker(a)}}$ and $\ER(a):= \id_{\coker(a)}$.
Observe that
$\ED(a)a=a=a\ER(a)$.
In fact, the $\ED$ and $\ER$ operations give $\P_n$ the structure of an \emph{Ehresmann semigroup}; see \cite{EG21}.

\begin{lemma}
\label{la:ehrsq}
Let $e,f\in E(n,r)$ be such that $e\mr\L f$ and $\ker(e)\subseteq \ker(f)$.
Then 
\[
\begin{pmatrix} e_1 & e_2\\ e_3& e\end{pmatrix}:=
\begin{pmatrix} f\ED(e) & f \\ e\ED(e)&e\end{pmatrix}
\] 
is a singular square. Furthermore, if $\ker(e)\neq \ker(f)$ and $\ker(e)\nsubseteq\coker(e)$ then
the square is NT-reducing with base $e$.
\end{lemma}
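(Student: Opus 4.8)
The plan is to verify directly that the idempotent $u:=\ED(e)$ both lives in $E(n,r)$ and LR-singularises (or, more likely, UD-singularises) the square $\smat{e_1}{e_2}{e_3}{e}=\smat{f\ED(e)}{f}{e\ED(e)}{e}$, and then to check the rank and $\NT$ inequalities that give the ``NT-reducing'' conclusion. First I would record the elementary facts about $\ED$: for any $a\in\P_n$ we have $\ED(a)=\id_{\ker(a)}$, $\ED(a)a=a$, $\ker(\ED(a))=\ker(a)=\coker(\ED(a))$, and $\rank(\ED(a))=\Vert\ker(a)\Vert\ge\rank(a)$; moreover $\ED(a)$ is a projection, hence an idempotent. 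Since $e$ has rank $r$ and $\ker(e)$ need not be trivial, $\ED(e)$ generally has \emph{larger} rank than $r$, so it does not lie in $D(n,r)$; instead it sits $\J$-above $D(n,r)$, which is exactly the situation in which a singularising idempotent is allowed to live. I would then confirm the $\R$/$\L$ structure of the square: $e_2=f\mr\L e$ is given; $e_1=f\ED(e)$ and $e_3=e\ED(e)$; and one checks $e\ED(e)\mr\R e$ using Lemma~\ref{la:Green_Pn} (the domain and kernel of $e\ED(e)$ equal those of $e$, since post-multiplying $\ED(e)$ — which has codomain equal to $\dom(e)$ — does not change $\dom$ or $\ker$; here the hypothesis $\ker(e)\subseteq\ker(f)$ is what makes $f\ED(e)$ behave compatibly, i.e.\ have the same cokernel/codomain as $f$, so that $f\ED(e)\mr\L e\ED(e)$ and the $2\times2$ array is genuinely a square). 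So the main structural content of the first assertion is: $\ED(e)$ is a common right identity for $e$ and $f\ED(e)$ trivially (as $\ED(e)$ is idempotent and post-multiplied), $\ED(e)\cdot e=e$ would be false in general — rather we want $\ED(e)$ acting on the \emph{left} to produce the bottom row from the top; here I expect the correct reading is that $\smat{e_1}{e_2}{e_3}{e}$ is UD-singularised by $u=\ED(e)$, i.e.\ we verify $e_1 u=e_1$, $e_2u=e_2$, $ue_1=e_3$, $ue_2=e$ using $\ED(e)e=e$, $\ED(e)f=\ED(e)f$ and $\ker(e)\subseteq\ker(f)$ (the latter ensures $\ED(e)f$ makes sense and equals $e_3$ — actually one computes $\ED(e)\cdot f\ED(e)$ and $\ED(e)\cdot f$ directly, using $\ker(e)\subseteq\ker(f)$ to identify $\ED(e)f$ with a suitable partition). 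In any case the verification is a short computation in $\P_n$ with product graphs, checking the four defining equalities \eqref{eq:UD} (or \eqref{eq:LR}).

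For the ``furthermore'' clause I would argue as follows. Write $k:=\Vert\ker(e)\Vert-r=\NTu(e)$ and note $\ker(e)\ne\ker(f)$ with $\ker(e)\subseteq\ker(f)$ forces $\Vert\ker(f)\Vert<\Vert\ker(e)\Vert$, hence $\NTu(f)<\NTu(e)$, i.e.\ $\NTu(e_2)<\NTu(e)$ — the first of the two NT-reducing inequalities. For the second, $\NTd(e_3)<\NTd(e)$: since $e_3\mr\R e$ we have $\NTu(e_3)=\NTu(e)$, and since $e_3\mr\L e_1=f\ED(e)$ I need to show $\NTd(f\ED(e))<\NTd(e)$. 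Here is where the hypothesis $\ker(e)\nsubseteq\coker(e)$ enters: post-multiplying $f$ by $\ED(e)=\id_{\ker(e)}$ merges codomain/cokernel classes of $f$ exactly along non-$\coker(e)$-relations forced by $\ker(e)$, and the condition $\ker(e)\nsubseteq\coker(e)$ guarantees at least one genuine merge occurs, strictly decreasing $\Vert\coker\Vert$ and hence $\NTd$. I would make this precise by computing $\coker(f\ED(e))=\coker(f)\vee(\text{image of }\ker(e)\text{ under the bijection }\dom(e)\leftrightarrow\text{classes})$ — using $\coker(f)=\coker(e)$ since $e\mr\L f$ — and observing that this join is strictly coarser than $\coker(e)$ precisely when $\ker(e)\nsubseteq\coker(e)$. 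Combining: $\NTd(e_3)=\NTd(e_1)=\NTd(f\ED(e))<\NTd(f)=\NTd(e)$.

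The main obstacle I anticipate is bookkeeping the product-graph computations cleanly — specifically identifying $\coker(f\ED(e))$ and $\ker(e\ED(e))$ correctly and confirming the four singularisation equalities without sign errors, since $\ED(e)$ has rank exceeding $r$ and one must be careful that multiplying by it does not change the relevant (co)domains while it \emph{does} change the relevant (co)kernels in a controlled way. A secondary subtlety is making sure the $2\times2$ array really is a square of idempotents in the sense of Subsection~\ref{ss:BS}, i.e.\ that $e_1=f\ED(e)$ and $e_3=e\ED(e)$ are actually idempotents (they are, by Proposition~\ref{prop:EPn}, since they are obtained from idempotents by the Ehresmann right-restriction $a\mapsto a\ER(\cdot)$-type operation, but I would spell this out: $e\ED(e)=e$ in fact, once one checks $\ED(e)$ acts as a right identity on $e$ — wait, $\ED(e)$ is a \emph{left} identity for $e$, namely $\ED(e)e=e$; the right identity of $e$ is $\ER(e)$; so $e\ED(e)$ need \emph{not} equal $e$, and its idempotency must be checked via Proposition~\ref{prop:EPn} or directly). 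Clarifying this point — left versus right identity, and which of \eqref{eq:LR}/\eqref{eq:UD} the square satisfies — is the step I would be most careful about, and everything else is routine once it is pinned down.
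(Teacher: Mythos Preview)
You correctly flag the left/right-identity question as the crux, but you resolve it the wrong way. The identity $\ED(e)e=e$ is \emph{always} true (it is recorded just before the lemma, not ``false in general'' as you write), and since $\ker(e)\subseteq\ker(f)$ we also have $\ED(e)f=f$. Thus $\ED(e)$ is a \emph{left} identity for both $e$ and $f$, i.e.\ for the right-hand column of the square, and right-multiplication by $\ED(e)$ carries that column to the left column: $f\mapsto f\ED(e)=e_1$ and $e\mapsto e\ED(e)=e_3$. This is precisely RL-singularisation of $\smat{e_1}{e_2}{e_3}{e}$ by $\ED(e)$ (equivalently, $\smat{f}{f\ED(e)}{e}{e\ED(e)}$ is LR-singularised, by the general observation in Subsection~\ref{ss:BS} that $ue=e$, $ug=g$ and $e\mr\L g$ yield an LR square $\smat{e}{eu}{g}{gu}$). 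Your proposed UD verification fails immediately: you would need $ue_2=e$, but $\ED(e)f=f\neq e$; and you would need $e_2u=e_2$, but $f\ED(e)=e_1\neq f$ in general.

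With the correct orientation, idempotency of $e_1$ and $e_3$ comes for free from that same general observation, or directly: $(e\ED(e))^2=e(\ED(e)e)\ED(e)=e\ED(e)$, and similarly for $f\ED(e)$ using $\ED(e)f=f$.

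Your NT-reducing argument is essentially right but takes a detour. There is no need to route through $e_1$: compute $\coker(e_3)=\coker(e\ED(e))=\coker(e)\vee\ker(e)$ directly (post-multiplying by $\ED(e)$ joins the cokernel of $e$ with the cokernel of $\ED(e)$, which is $\ker(e)$), and this is strictly coarser than $\coker(e)$ exactly when $\ker(e)\nsubseteq\coker(e)$, giving $\NTd(e_3)<\NTd(e)$. The argument for $\NTu(e_2)<\NTu(e)$ is fine as stated.
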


\begin{proof}
We have already noted that $\ED(e) e=e$; also, $\ED(e)f=f$, as $\ker(e)\subseteq \ker(f)$.
It therefore follows that $\smat{e_1}{e_2}{e_3}{e}$ is indeed an RL-square singularised by $\ED(e)$. This square is illustrated in Figure \ref{fig:ehr}.

When $\ker(e)\neq \ker(e_2)$, we have $\ker(e)\subsetneq\ker(e_2)$, so there are more $\ker(e)$-classes than $\ker(e_2)$-classes.  Since $\rank(e) = \rank(e_2)$, we then deduce $\NTu(e)>\NTu(e_2)$ from \eqref{eq:Vert}.

If $\ker(e)\nsubseteq\coker(e)$, then
\[
\coker(e_3)=\coker(eD(e))=\coker(e)\vee \ker(e)\supsetneq \coker(e),
\]
so it again follows from \eqref{eq:Vert} that $\NTd(e)>\NTd(e_3)$.
\end{proof}

The previous lemma has a dual, but we do not need to state it explicitly.

\begin{rem}
As a special instance of the square from Lemma \ref{la:ehrsq}, we can take $f = R(e)e$.  It follows from $eR(e)=e$ that $e\mr\L f$, and we have $\ker(f) = \ker(R(e)e) = \coker(e)\vee\ker(e) \supseteq \ker(e)$.  So we indeed have the square
\[
\lmat {R(e)eD(e)}{R(e)e}{eD(e)}{e},
\]
which is in fact RL- and DU-singularised by $D(e)$ and $R(e)$, respectively.  (This square is present in any Ehresmann semigroup.)
If $\ker(e)\not\sub\coker(e)$ and ${\coker(e)\not\sub\ker(e)}$ both hold, then the square is NT-reducing.
\end{rem}

\begin{figure}[t!]
\begin{center}
\scalebox{0.7}{
\begin{tikzpicture}[scale=.7]

\nc\yyy{4.2}

\nc\XL{-1}
\nc\XR{22}
\nc\YL{-.5}
\nc\YR{7}
\fill[lightgray!30] (\XL,\YL)--(\XR,\YL)--(\XR,\YR)--(\XL,\YR)--(\XL,\YL);

\begin{scope}[shift={(0,\yyy)}]
\Eredupper13
\Eredupper{3.2}{4.8}
\Eblueupper5{5.5}
\Ebluetrans{5.7}858
\Eredlower1{4.8}
\Ebluepartlabel{fD(e)}
\draw[<-] (8+1.5,.75)--(8+3.5,.75);
\end{scope}

\begin{scope}[shift={(12,\yyy)}]
\Eredupper13
\Eredupper{3.2}{4.8}
\Eblueupper5{5.5}
\Eblueupper{7.2}8
\Ebluetrans{5.7}8{6.2}8
\Eredlower1{2.2}
\Eredlower{2.4}{3.8}
\Eredlower{4}{4.8}
\Ebluelower56
\Ebluepartlabel{f}
\node (R) at (8.5,.75) {\phantom{\Large$E$}};
\draw[->] (R) edge [loop right] ();
\end{scope}

\begin{scope}[shift={(0,0)}]	
\Eredupper1{1.5}
\Eredupper{1.7}3
\Eredupper{3.2}{4.8}
\Eblueupper5{5.5}
\Eblueupper{7.2}8
\Ebluetrans{5.7}758
\Eredlower1{4.8}
\Ebluepartlabel{eD(e)}
\draw[<-] (8+1.5,.75)--(8+3.5,.75);
\end{scope}

\begin{scope}[shift={(12,0)}]	
\Eredupper1{1.5}
\Eredupper{1.7}3
\Eredupper{3.2}{4.8}
\Eblueupper5{5.5}
\Eblueupper{7.2}8
\Ebluetrans{5.7}7{6.2}8
\Eredlower1{2.2}
\Eredlower{2.4}{3.8}
\Eredlower{4}{4.8}
\Ebluelower56
\Ebluepartlabel{e}
\node (R) at (8.5,.75) {\phantom{\Large$E$}};
\draw[->] (R) edge [loop right] ();
\foreach \x in {1,1.5,1.7,3,3.2,4.8,5,5.5,7.2,8,5.7,7} {\draw[black!70,dotted](\x,0) -- (\x,1.5+\yyy);}
\end{scope}

\begin{scope}[shift={(6,2*\yyy)}]
\Eredtrans1{1.5}1{1.5}
\Eredtrans{1.7}3{1.7}3
\Eredtrans{3.2}{4.8}{3.2}{4.8}
\Ebluetrans5{5.5}5{5.5}
\Ebluetrans{7.2}8{7.2}8
\Ebluetrans{5.7}7{5.7}7
\Ebluepartlabel{D(e)}
\end{scope}

\end{tikzpicture}
}
\caption{An example of the singular square featuring in Lemma \ref{la:ehrsq}.}
\label{fig:ehr}
\end{center}
\end{figure}
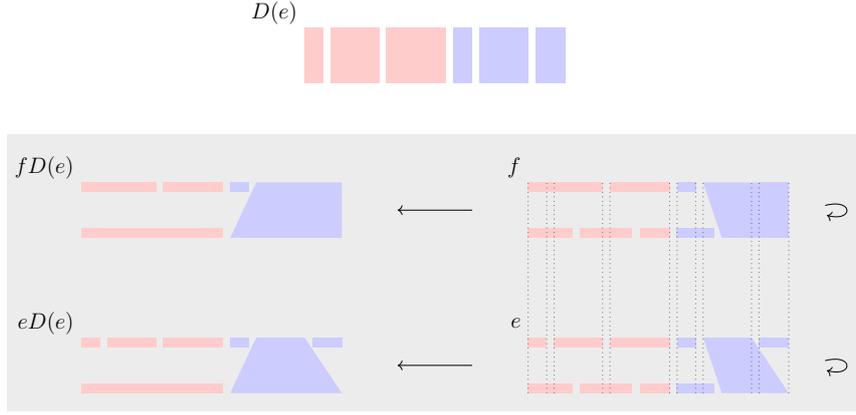

The rest of the results of this section are valid for $r\geq1$.  Analogues for $r=0$ will be given when we eventually consider this case in Section \ref{sec:r0}.

\begin{lemma}
\label{la:type7}
If $e=\begin{partn}{3} A & B& C \\ \hhline{~|-|-} A& B & C\end{partn}\oplus f\in E(n,r)$, then $\smat{e_1}{e_2}{e_3}{e}$ is an NT-reducing singular square with base $e$, where
\[
e_1:= \begin{partn}{2} A\cup C & B  \\ \hhline{~|-} A\cup B & C\end{partn}\oplus f \COMMa
e_2 := \begin{partn}{3} A\cup C & \multicolumn{2}{c}{B}  \\ \hhline{~|-|-} A& B & C\end{partn}\oplus f \ANd
e_3  := \begin{partn}{3} A& B & C  \\ \hhline{~|-|-} A\cup B & \multicolumn{2}{c}{C}\end{partn}\oplus f.
\]
\end{lemma}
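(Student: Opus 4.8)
The plan is to verify directly that the array $\smat{e_1}{e_2}{e_3}{e}$ is a square of idempotents, that it is singularised by a suitable idempotent, and that the $\NT$-inequalities of Definition~\ref{defn:NT} hold. First I would confirm that $e_1,e_2,e_3$ are genuinely idempotents of $\P_n$ of rank $r$: each is a direct sum of a connected rank-$1$ component (the block structure on $A\cup B\cup C$) with the idempotent $f$, so Proposition~\ref{prop:EPn} applies once one checks the displayed components are connected and have rank~$1$ — this is immediate from their diagrams, since in each case the super-kernel collapses $A\cup B\cup C$ to a single class. Then I would read off domains, codomains, kernels and cokernels from the $\begin{partn}{}\end{partn}$ notation to check the $\R$- and $\L$-relationships: using Lemma~\ref{la:Green_Pn}, $e_1\mr\R e_2$ and $e_3\mr\R e$ because they share domain and kernel (both $e_1,e_2$ have domain $A\cup C$ and kernel with classes $A\cup C$, $B$; both $e_3,e$ have domain $A$, $B$, $C$ with kernel $\Delta$ on those), while $e_1\mr\L e_3$ and $e_2\mr\L e$ because they share codomain and cokernel (codomain $A\cup B$, $C$ with those as cokernel classes for $e_1,e_3$; codomain $A$, $B$, $C$ with $\coker=\Delta$ for $e_2,e$) — in every case the $f$-summand contributes identically.

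Next I would exhibit the singularising idempotent. The natural candidate is $u := \ED(e)$, the full-domain projection on $\ker(e)$, which here is $\begin{partn}{3} A & B & C\\ \hhline{~|~|~} A& B& C\end{partn}\oplus \ED(f)$ (a projection of full domain, not in $D(n,r)$). I expect to check the defining equations \eqref{eq:UD} or \eqref{eq:LR}: since $\ker(e_2)$ and $\ker(e)$ are both refined by $\ker(e)$ — indeed $\ker(e)=\Delta$ on $A\cup B\cup C$ while $\ker(e_2)$ has class $A\cup C$ — we get $u e_2 = e_2$-type relations on the relevant side, and the `crossing' products $u e = e_3$ and $u e_2 = e_1$ (or their transposes) follow by a short diagram computation. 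In fact this is precisely an instance of Lemma~\ref{la:ehrsq} applied with the pair $(e, e_2)$ in place of $(e,f)$: one has $e\mr\L e_2$ and $\ker(e)=\Delta_{A\cup B\cup C}\oplus\ker(f)\subseteq\ker(e_2)$, so Lemma~\ref{la:ehrsq} already gives that $\smat{e_2\ED(e)}{e_2}{e\ED(e)}{e}$ is a singular square; the content is then the computations $e_2\ED(e)=e_1$ and $e\ED(e)=e_3$, which I would do by composing the partition diagrams.

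Finally, for the $\NT$-reducing property, I would compute the four parameters. From the displayed forms: $\NTu(e) = \NTd(e) = \NTu(f)$ while $e$ contributes three transversals and no non-transversals on $A\cup B\cup C$, whereas $e_2$ has one upper non-transversal there (the block $A\cup C$ is a kernel class not meeting the domain-cross-section in the expected way — more precisely $\Vert\ker(e_2)\Vert - \rank(e_2)$ gains $1$ relative to $e$), so $\NTu(e_2) = \NTu(e) - 1 < \NTu(e)$ wait — I should instead argue as in Lemma~\ref{la:ehrsq}: $\ker(e)\subsetneq\ker(e_2)$ forces $\Vert\ker(e_2)\Vert < \Vert\ker(e)\Vert$, hence $\NTu(e_2) < \NTu(e)$ by \eqref{eq:Vert}; dually $\coker(e)\subsetneq\coker(e_3)$ gives $\NTd(e_3)<\NTd(e)$. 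Invoking \eqref{eq:NT} to propagate equalities across $\R$- and $\L$-classes then yields \eqref{eq:NTei} and non-degeneracy. The main obstacle I anticipate is purely bookkeeping: making sure the block partitions of $A\cup B\cup C$ in $e_1,e_2,e_3$ are transcribed correctly when computing the diagram products $e_2\ED(e)$ and $e\ED(e)$, and confirming the codomain/cokernel data so that all four $\R/\L$ relations hold on the nose rather than merely up to the $\D$-class — there is no conceptual difficulty, only the risk of an indexing slip, which a careful diagram of the product graph $\Ga(e_2,\ED(e))$ would resolve.
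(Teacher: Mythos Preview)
Your approach via Lemma~\ref{la:ehrsq} with singularising element $\ED(e)$ does not work here, and this is precisely why Lemma~\ref{la:type7} exists as a separate statement. On the $A\cup B\cup C$ portion, the element $e$ has $\ker(e)=\coker(e)$ (both with classes $A$, $B$, $C$). Consequently, right-multiplication by $\ED(e)$ does nothing on that portion: one gets $e\ED(e)=e$ and $e_2\ED(e)=e_2$ there, so the computations $e\ED(e)=e_3$ and $e_2\ED(e)=e_1$ that you were hoping for are false (compare cokernels on $A\cup B\cup C$: classes $A,B,C$ versus $A\cup B,C$). The Lemma~\ref{la:ehrsq} square is therefore degenerate. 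This is also reflected in the NT-reducing clause of Lemma~\ref{la:ehrsq}, which requires $\ker(e)\nsubseteq\coker(e)$ and fails here. The paper's subsequent Lemma~\ref{la:decomp} makes the division of labour explicit: Lemma~\ref{la:ehrsq} handles the non-projection case, while Lemma~\ref{la:type7} is invoked exactly for the projection-like case you are in.

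The paper instead exhibits a different singularising idempotent of rank $r+1$, namely
\[
u = \begin{partn}{3} A & B & C \\ \hhline{~|-|~} A\cup B & & C\end{partn}\oplus f,
\]
in which $C\cup C'$ has become a second transversal on the $A\cup B\cup C$ portion. One then checks directly that $ue_2=e_2$, $ue=e$, $e_2u=e_1$ and $eu=e_3$, so $\smat{e_1}{e_2}{e_3}{e}$ is RL-singularised by $u$. Your final paragraph on the $\NT$-inequalities (via the strict kernel/cokernel inclusions and~\eqref{eq:Vert}) is fine once the correct square is in place.
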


\begin{proof}
The square is RL-singularised by $u = \begin{partn}{3} A&B & C \\ \hhline{~|-|~} A\cup  B & &C\end{partn}\oplus f$, and is clearly NT-reducing; see Figure~\ref{fig:type7}. 
\end{proof}

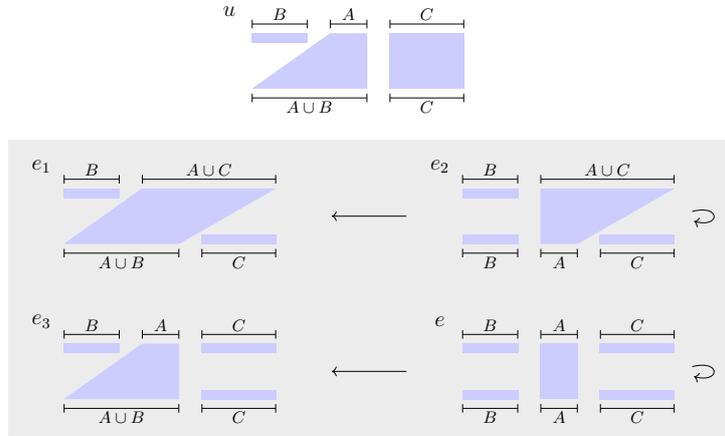
\begin{figure}[t!]
\begin{center}
\scalebox{0.7}{
\begin{tikzpicture}[scale=.7]

\nc\BBB{1.5}
\nc\AAA{1}
\nc\CCC{2}

\nc\DDD{0.6}

\pgfmathsetmacro\BL{1}
\pgfmathsetmacro\BR{1+\BBB}
\pgfmathsetmacro\AL{1+\BBB+\DDD}
\pgfmathsetmacro\AR{1+\AAA+\BBB+\DDD}
\pgfmathsetmacro\CL{1+\AAA+\BBB+2*\DDD}
\pgfmathsetmacro\CR{1+\AAA+\BBB+\CCC+2*\DDD}

\nc\yyy{4.2}

\nc\XL{-.5}
\nc\XR{19}
\nc\YL{-1}
\nc\YR{7}
\fill[lightgray!30] (\XL,\YL)--(\XR,\YL)--(\XR,\YR)--(\XL,\YR)--(\XL,\YL);

\begin{scope}[shift={(0,\yyy)}]
\bluetrans\AL\CR\BL\AR{A\cup C}{A\cup B}
\blueupper\BL\BR{B}
\bluelower\CL\CR{C}
\draw[<-] (\CR+1.5,.75)--(\CR+3.5,.75);
\bluepartlabel{e_1}
\end{scope}

\begin{scope}[shift={(\CR+4,\yyy)}]
\bluetrans\AL\CR\AL\AR{A\cup C}{A}
\blueupper\BL\BR{B}
\bluelower\BL\BR{B}
\bluelower\CL\CR{C}
\bluepartlabel{e_2}
\node (R) at (\CR,.75) {\phantom{\Large$E$}};
\draw[->] (R) edge [loop right] ();
\end{scope}

\begin{scope}[shift={(0,0)}]	
\bluetrans\AL\AR\BL\AR{A}{A\cup B}
\blueupper\BL\BR{B}
\blueupper\CL\CR{C}
\bluelower\CL\CR{C}
\draw[<-] (\CR+1.5,.75)--(\CR+3.5,.75);
\bluepartlabel{e_3}
\end{scope}

\begin{scope}[shift={(\CR+4,0)}]	
\bluetrans\AL\AR\AL\AR{A}{A}
\blueupper\BL\BR{B}
\blueupper\CL\CR{C}
\bluelower\BL\BR{B}
\bluelower\CL\CR{C}
\bluepartlabel{e}
\node (R) at (\CR,.75) {\phantom{\Large$E$}};
\draw[->] (R) edge [loop right] ();
\end{scope}

\begin{scope}[shift={(\AR/2+3,2*\yyy)}]
\bluetrans\AL\AR\BL\AR{A}{A\cup B}
\blueupper\BL\BR{B}
\bluetrans\CL\CR\CL\CR{C}{C}
\bluepartlabel{u}
\end{scope}

\end{tikzpicture}
}
\caption{The NT-reducing singular square featuring in Lemma \ref{la:type7}; in each partition, the `${}\oplus f$' part has been omitted.}
\label{fig:type7}
\end{center}
\end{figure}

We now define a very special set of idempotents, still assuming that $r\geq1$.  Specifically, let $\Esub(n,r)$ be the set of all idempotents from $E(n,r)$ that are full-domain or full-codomain or projections with precisely one upper (equivalently, lower) block:
\begin{equation}\label{eq:Fnr}
\Esub(n,r):= \bigl\{ e\in E(n,r)\colon \NTu(e)=0\textup{ or }
\NTd(e)=0 \bigr\} \cup P_1(n,r).
\end{equation}

\begin{rem}\label{rem:eggbox2}
The elements of $F = F(n,r)$ are contained in the coloured regions in Figure~\ref{fig:eggbox1}~(left).  The coloured horizontal and vertical strips contain the idempotents
\[
E^0 = E^0(n,r) := \bigl\{ e\in E(n,r)\colon \NTu(e)=0 \bigr\} 
\ANd
E_0 = E_0(n,r) := \bigl\{ e\in E(n,r)\colon \NTd(e)=0 \bigr\} ,
\]
respectively.  Their intersection $E^0\cap E_0$ is the set $P_0$ of all full-(co)domain projections; this is contained in the blue region, $D_0^0$.  The set $P_1$ is contained in the green region, $D_1^1$, which typically contains non-projection idempotents as well.
In fact, all $D_i^i$ contain non-projection idempotents except when $i=0$ or $i=n-r$.
\end{rem}

The next proof refers to the components of an idempotent, as in Proposition \ref{prop:EPn}.

\begin{lemma}
\label{la:decomp}
If $1\leq r\leq n-2$, then every element of  $E(n,r)\setminus F(n,r)$
is the base of an NT-reducing singular square.
\end{lemma}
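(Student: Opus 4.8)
The plan is to take an arbitrary $e\in E(n,r)\setminus F(n,r)$ and exhibit an NT-reducing singular square with base $e$, using one of the two constructions already available, namely Lemma~\ref{la:ehrsq} or Lemma~\ref{la:type7}. The first move is to unpack what it means for $e$ to lie outside $F(n,r)$: by \eqref{eq:Fnr} we have $\NTu(e)\geq 1$, $\NTd(e)\geq 1$, and $e\notin P_1(n,r)$. Since $\NTu(e)=\NTd(e)$ when $e$ is a projection, the last condition rules out only projections with exactly one upper block; a projection with $\NTu(e)\geq 2$ is still permitted, as is any non-projection idempotent with $\NTu(e),\NTd(e)\geq 1$. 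So the proof naturally splits according to how the kernel and cokernel of $e$ interact.

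First I would treat the ``generic'' case where $\ker(e)\nsubseteq\coker(e)$ and $\coker(e)\nsubseteq\ker(e)$. Here I would invoke Lemma~\ref{la:ehrsq} (or its dual) with $f:=R(e)e$, as in the remark following that lemma: one has $e\mr\L f$ and $\ker(f)=\coker(e)\vee\ker(e)\supsetneq\ker(e)$, so $\ker(e)\subsetneq\ker(f)$ and $\ker(e)\nsubseteq\coker(e)$, and the lemma delivers an NT-reducing square $\smat{f\ED(e)}{f}{e\ED(e)}{e}$ with base $e$. The remaining case is when $\ker(e)$ and $\coker(e)$ are comparable, say $\ker(e)\subseteq\coker(e)$ (the other possibility being handled by applying $*$). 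I claim that in this situation $e$ must have a component $\begin{partn}{3} A & B& C \\ \hhline{~|-|-} A& B & C\end{partn}$, i.e.\ a rank-one connected component whose single transversal block decomposes as $A\cup B'$ with $A,B$ both nonempty, together with at least one more lower non-transversal $C$; indeed, since $\NTu(e)\geq 1$ and $\NTd(e)\geq 1$ and $\ker(e)\subseteq\coker(e)$, counting forces a transversal block to be ``split'' in the codomain while there is a spare lower non-transversal available, and connectedness of components lets one locate these pieces inside a single component (here the hypothesis $r\leq n-2$, equivalently $n-r\geq 2$, guarantees enough room). Once such a component is isolated, writing $e=\begin{partn}{3} A & B& C \\ \hhline{~|-|-} A& B & C\end{partn}\oplus f$ puts us exactly in the hypothesis of Lemma~\ref{la:type7}, which supplies the desired NT-reducing square.

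The main obstacle I anticipate is the combinatorial bookkeeping in the comparable-kernels case: verifying that $\ker(e)\subseteq\coker(e)$ together with $e\notin F(n,r)$ really does force the presence of a component of the precise shape required by Lemma~\ref{la:type7}, rather than merely some rank-one component. The subtlety is that a rank-one component could a priori have $A$ or $B$ empty (making it a projection-type or non-transversal-type piece) or could fail to contain a ``spare'' lower block $C$ within the \emph{same} component; one has to argue, using Proposition~\ref{prop:EPn} (each component connected of rank $\leq 1$), that either the splitting $A\cup B$ and the extra block $C$ occur in one component, or else one can relabel/absorb components so that they do, or else $\ker(e)=\coker(e)$ in which case $e$ is a projection with $\NTu(e)\geq 2$ and a direct ad hoc square (or again Lemma~\ref{la:type7} applied to $e=\begin{partn}{3} A & \multicolumn{2}{c}{}\\ \hhline{~|-|-} A& B & C\end{partn}\oplus f$ with $A$ a singleton transversal) finishes it. I would organize this as a short case analysis on the types of components of $e$, keeping the hypothesis $n-r\geq 2$ visibly in play to ensure a non-degenerate square always exists.
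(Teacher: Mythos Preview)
Your case~1 (incomparable kernel and cokernel, with $f=R(e)e$) is fine and matches the remark after Lemma~\ref{la:ehrsq}. The gap is in case~2. You have misread the hypothesis of Lemma~\ref{la:type7}: it requires $e=\begin{partn}{3} A & B& C \\ \hhline{~|-|-} A& B & C\end{partn}\oplus f$, meaning a transversal $A\cup A'$ with the \emph{same} set top and bottom, together with sets $B$ and $C$ each occurring as \emph{both} an upper and a lower non-transversal. This is a projection-like piece, and your verbal description (``transversal $A\cup B'$ together with a lower non-transversal $C$'') does not match it. A concrete counterexample: take $n=4$, $r=1$, and let $e$ have blocks $\{1,1',2',3'\}$, $\{2\}$, $\{3\}$, $\{4\}$, $\{4'\}$. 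Then $e\in E(4,1)\setminus F(4,1)$ with $\ker(e)=\Delta_{[4]}\subsetneq\coker(e)$, but the unique transversal is not of the form $A\cup A'$, and only $\{4\}$ occurs as both an upper and a lower block, so Lemma~\ref{la:type7} does not apply. The ``bookkeeping obstacle'' you anticipate is therefore not bookkeeping at all but an actual failure of the strategy in the strictly-comparable sub-case.

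The paper's split avoids this entirely: projection versus non-projection. If $e$ is a projection, then (since $e\notin P_1$) it has at least two upper blocks $B,C$ and every transversal is of the form $A\cup A'$, so Lemma~\ref{la:type7} applies directly. If $e$ is not a projection, then $\ker(e)\neq\coker(e)$, so without loss of generality $\ker(e)\nsubseteq\coker(e)$; one then builds $f$ by hand, merging an upper block $A$ with a transversal $B\cup C'$ (chosen in the same component as $A$ when possible, so that $f$ remains an idempotent via Proposition~\ref{prop:EPn}). This gives $e\mr\L f$ and $\ker(e)\subsetneq\ker(f)$, and Lemma~\ref{la:ehrsq} yields the NT-reducing square. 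The point is that $f=R(e)e$ only works when \emph{both} non-containments hold; for the general non-projection case you need this hand-crafted $f$.
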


\begin{proof}
Let $e\in E(n,r)\setminus F(n,r)$.
If $e$ is a projection, then it must have at least two upper (and lower) blocks, and Lemma \ref{la:type7} applies.
So now suppose $e$ is not a projection, i.e.~${\ker(e)\neq\coker(e)}$.
Without loss of generality assume that $\ker(e)\nsubseteq\coker(e)$.
By Lemma  \ref{la:ehrsq} it suffices to show that
there exists an idempotent $f\in E(n,r)$ with $e\mr\L f$ and $\ker(e)\subsetneq\ker(f)$. 
As $\NTu(e)\neq 0$, there must exist an upper block $A$ in $e$. 
Let $B\cup C'$ be a transversal of $e$, chosen so that it belongs to the same component as $A$ if possible (and is arbitrary otherwise).
We can now take $f$ to be obtained from $e$, by replacing the blocks $A$ and $B\cup C'$ by a single block $A\cup B\cup C'$.
Proposition~\ref{prop:EPn} can be used to show that $f$ is indeed an idempotent.  We have $e\mr\L f$ (cf.~Lemma~\ref{la:Green_Pn}) and $\ker(e) \subsetneq\ker(f)$ by construction.
\end{proof}

\begin{lemma}
\label{la:geneli}
Let $1\leq r\leq n-2$, and let $G$ be a group defined by a presentation with generators~$A(n,r)$, and defining relations that include all the square relations $\rels_{\subSq}(n,r)$.
Then $G$ is in fact generated by the set $A_{F(n,r)}$.
\end{lemma}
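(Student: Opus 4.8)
The plan is to eliminate, one at a time, every generator $a_e$ with $e\in E(n,r)\setminus F(n,r)$, using an induction on $\NT(e)$, with the NT-reducing singular squares from Lemma \ref{la:decomp} doing the work at each step. Concretely, I would order the generators of $A(n,r)$ by the value of $\NT(e)\in\{0,1,\dots,2(n-r)\}$, and prove by downward induction on this parameter that every $a_e$ is equal, in $G$, to a word in the generators $\{a_f\colon f\in F(n,r)\ \text{or}\ \NT(f)<\NT(e)\}$. Since the base case (smallest possible $\NT$) is vacuous — such idempotents automatically lie in $F(n,r)$, because $\NT(e)=0$ forces $\NTu(e)=\NTd(e)=0$ — the induction, once established, shows each $a_e$ rewrites to a word over $A_{F(n,r)}$, whence $G=\langle A_{F(n,r)}\rangle$.

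For the inductive step, take $e\in E(n,r)\setminus F(n,r)$. By Lemma \ref{la:decomp}, since $1\leq r\leq n-2$, the idempotent $e$ is the base of an NT-reducing singular square $\smat{e_1}{e_2}{e_3}{e}\in\Sq(n,r)$. The corresponding square relation, which is among the defining relations $\rels_\subSq(n,r)$ of $G$ by hypothesis, reads $a_{e_1}^{-1}a_{e_2}=a_{e_3}^{-1}a_e$, and this rearranges in the free group to
\[
a_e = a_{e_3}a_{e_1}^{-1}a_{e_2}.
\]
By \eqref{eq:NTei} we have $\NT(e_i)<\NT(e)$ for $i=1,2,3$, so each of $a_{e_1},a_{e_2},a_{e_3}$ — and hence $a_e$ — is a consequence of relations expressing it in terms of generators $a_f$ with $f\in F(n,r)$ or $\NT(f)<\NT(e)$; then applying the inductive hypothesis to $e_1,e_2,e_3$ (for those $e_i$ not already in $F(n,r)$) rewrites $a_e$ as a word over $A_{F(n,r)}$. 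This completes the induction and the proof.

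The only slightly delicate point — and the place I'd want to be careful — is making the induction bookkeeping precise: strictly speaking one should phrase it as a statement about consequences of the relation set, showing that for every $e\in E(n,r)$ the relation "$a_e=$ (some word over $A_{F(n,r)}$)" follows from $\rels_\subSq(n,r)$, and then note that for $e\in F(n,r)$ this is trivial. There is no real obstacle here, since Lemma \ref{la:decomp} already packages the structural content; the work is purely formal. One should also remember to invoke $r\geq1$ explicitly, as Lemma \ref{la:decomp} (and the shape of $F(n,r)$ in \eqref{eq:Fnr}) is only valid in that range; the $r=0$ analogue is deferred to Section \ref{sec:r0} as flagged in the text. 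I expect the entire argument to occupy only a few lines.
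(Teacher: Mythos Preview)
Your proposal is correct and follows essentially the same argument as the paper: induct on $\NT(e)$, invoke Lemma~\ref{la:decomp} to obtain an NT-reducing square with base $e$ whenever $e\notin F(n,r)$, and use the resulting square relation to rewrite $a_e=a_{e_3}a_{e_1}^{-1}a_{e_2}$ in terms of generators with strictly smaller $\NT$ value. One small quibble: you say ``downward induction'' but the argument you describe is ordinary (upward) induction on $\NT(e)$, with the base case $\NT(e)=0$; the paper phrases it this way too.
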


\begin{proof}
Throughout the proof we write $F=F(n,r)$, and we must show that $a_e\in \langle A_F\rangle $ for all $e\in E(n,r)$.
We prove this by induction on $\NT(e)$.
If $\NTu(e)=0$ or $\NTd(e)=0$  then $e\in F$, and there is nothing to prove.
This in particular includes the induction anchor $\NT(e)=0$.

Now suppose $\NTu(e)>0$ and $\NTd(e)>0$, and assume without loss of generality that $e\not\in F$.
By Lemma \ref{la:decomp} there exists an NT-reducing square $\smat{e_1}{e_2}{e_3}{e}$ with base $e$.
In particular, \eqref{eq:NTei} gives $\NT(e_i)<\NT(e)$ for each $i$, so by induction each $a_{e_i}\in \langle A_F\rangle$.
By assumption, the singular square relation $a_{e_1}^{-1}a_{e_2}=a_{e_3}^{-1} a_e$  holds in $G$. But then $a_e=a_{e_3}a_{e_1}^{-1}a_{e_2}\in \langle A_{F}\rangle$, and we are done.
\end{proof}

\subsection{Labels}
\label{ss:labs}

Now we introduce an auxiliary device of \emph{labelling} the idempotents from~$E(n,r)$
by permutations from $\S_r$. It actually turns out that the label of $e\in E(n,r)$ is the image of the generator $a_e$ under the natural homomorphism from $\GP$ or $\GI$ to~$\P_n$, provided a suitable spanning tree is chosen, and the corresponding maximal subgroup in~$\P_n$ is appropriately identified with~$\S_r$.
However, at this stage the reader need not concern themselves with this, and can regard the labelling as formalising a certain permutational aspect of each $e\in E(n,r)$.

All we say here is vacuous for $r=0$, so we assume that $r\geq1$ for the rest of this section.
Consider an arbitrary  $e\in E(n,r)$.  Suppose the transversals of $e$ are $A_1\cup B_1',\ldots, A_r\cup B_r'$, and let $a_i:=\min(A_i)$ and $b_i:=\min(B_i)$ for each $i\in [r]$.  Let $\lamp(e)$ be the partial bijection $\trans{a_1&\cdots&a_r}{b_1&\cdots&b_r}$.  And then let $\lam(e)$, the \emph{label} of $e$,  be the result of `scaling down' $\lamp(e)$ to a permutation in $\S_r$, i.e.~mapping each of the sets $\{ a_1,\ldots,a_r\}$ and $\{ b_1,\ldots,b_r\}$ in an order-preserving fashion onto~$[r]$.

Diagrammatically, one obtains $\lamp(e)$ by connecting the minimum elements in the top and bottom parts of each transversal of $e$.  The label $\lam(e)$ is then obtained by deleting points that do not participate in transversals of $\lamp(e)$, and compressing the remaining diagram to the left.  Here is an example for an idempotent $e\in E(5,3)$:
\[
\begin{tikzpicture}[scale=.4]
\olduvs{1,...,5}
\oldlvs{1,...,5}
\oldstline22
\oldstline44
\oldstline55
\olduarc14
\olddarc35
\draw(0.6,1)node[left]{$e=$};
\draw[-{latex}] (6.5,1)--(7.5,1);
\begin{scope}[shift={(11.5,0)}]
\olduvs{1,...,5}
\oldlvs{1,...,5}
\oldstline14
\oldstline22
\oldstline53
\draw(0.6,1)node[left]{$\lamp(e)=$};
\draw[-{latex}] (11,1)--(12,1);
\draw(5.4,1)node[right]{$=\trans{1&2&5}{4&2&3}$};
\end{scope}
\begin{scope}[shift={(27.5,0)}]
\olduvs{1,...,3}
\oldlvs{1,...,3}
\oldstline13
\oldstline21
\oldstline32
\draw(0.6,1)node[left]{$\lam(e)=$};
\draw(3.4,1)node[right]{$=\trans{1&2&3}{3&1&2}$.};
\end{scope}
\end{tikzpicture}
\]
The idempotents $a,b\in\sfE(\P_6)$ in Figure \ref{fig:P6} both have trivial label: $\lam(a)=1\in\S_1$ and~${\lam(b)=1\in\S_2}$.

The main purpose of the  labels for us is that they will enable us to `import' certain technical results concerning the full transformation monoid $\T_n$ from \cite{GR12}.
Throughout we will identify~$\T_n$ with its natural copy inside $\P_n$, i.e.~with the set of all partitions
with full domain and trivial cokernel; see Subsection \ref{ss:Pn}. 
In \cite[Subsection 2.5]{GR12} a labelling function was defined for idempotents in $\T_n$, and it is easy to check that this is the restriction to $E(\T_n)$ of our labelling of~$E(\P_n)$.
The following facts are also clear.

\begin{lemma}
\label{la:lamprops}
\begin{thmenumerate}
\item\label{it:lp1}
For every projection $p\in \sfP(\P_n)$ we have $\lam(p)=1$.
\item\label{it:lp2}
For every idempotent $e\in\sfE(\P_n)$ we have $\lam(e^*)=\lam(e)^{-1}$.\qed
\end{thmenumerate}
\end{lemma}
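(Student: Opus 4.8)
The statement asserts two simple facts about labels: that every projection has trivial label, and that passing to the $*$-transpose inverts the label. Both follow directly from the definition of the labelling function given just above, so the proof is a short unpacking of definitions.

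\textbf{Part \ref{it:lp1}.} A projection $p$ has the form $\begin{partn}{6} A_1&\cdots&A_r&B_1&\cdots&B_s \\ \hhline{~|~|~|-|-|-} A_1&\cdots&A_r&B_1&\cdots&B_s \end{partn}$, so its transversals are $A_i\cup A_i'$ for $i\in[r]$. For each such transversal the minimum of the top part equals the minimum of the bottom part, namely $\min(A_i)$. Hence $\lamp(p)$ is the partial bijection sending $\min(A_i)$ to $\min(A_i)$ for each $i$, i.e.\ (the restriction of) an identity map. Scaling down to $\S_r$ then yields the identity permutation, so $\lam(p)=1$.

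\textbf{Part \ref{it:lp2}.} Let $e\in\sfE(\P_n)$ have transversals $A_1\cup B_1',\ldots,A_r\cup B_r'$. Applying the involution swaps dashed and undashed vertices, so $e^*$ has transversals $B_1\cup A_1',\ldots,B_r\cup A_r'$. Writing $a_i=\min(A_i)$ and $b_i=\min(B_i)$, we get $\lamp(e)=\trans{a_1&\cdots&a_r}{b_1&\cdots&b_r}$ and $\lamp(e^*)=\trans{b_1&\cdots&b_r}{a_1&\cdots&a_r}=\lamp(e)^{-1}$. The scaling-down procedure sends the set $\{a_1,\ldots,a_r\}$ (resp.\ $\{b_1,\ldots,b_r\}$) order-preservingly onto $[r]$, and the same two order-isomorphisms are used, in swapped roles, when scaling $\lamp(e)$ and $\lamp(e^*)$; since conjugating a partial bijection by order-isomorphisms and then inverting commute, we obtain $\lam(e^*)=\lam(e)^{-1}$.

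The whole argument is routine; there is no real obstacle, only the mild bookkeeping in Part \ref{it:lp2} of checking that the order-preserving rescalings of the two index sets $\{a_i\}$ and $\{b_i\}$ are compatible with taking inverses, which is immediate once one notes that inverting a partial bijection simply interchanges its domain and codomain while preserving the pairing. I would present Part \ref{it:lp1} in one line, and Part \ref{it:lp2} in two or three lines as above, possibly illustrating with the diagrammatic description (reflecting the diagram of $\lamp(e)$ in the horizontal axis gives the diagram of $\lamp(e^*)$, and likewise after deleting non-transversal points and compressing to the left).
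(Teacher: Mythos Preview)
Your proposal is correct; the paper itself gives no proof at all (the lemma ends with a \qed{} after the phrase ``The following facts are also clear''), so your unpacking of the definitions is exactly the intended argument, just written out in more detail than the authors deemed necessary.
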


Next we list the results from \cite{GR12} that will be used subsequently.  We do so with slightly changed notation and terminology, to make them suitable for our purposes.  Specifically, we will write:
\bit
\item
$D^\T(n,r):= \T_n\cap D(n,r)$ for the $\D$-class of $\T_n$ consisting of mappings of rank $r$,
\item
$E^\T(n,r):= \T_n\cap E(n,r)$ for the set of idempotents in $D^\T(n,r)$,
\item 
$A^\T(n,r):= {\{a_e\colon e\in E^\T(n,r)\}}$ for the set of generators from $A(n,r)$ corresponding to idempotents in~$\T_n$,
\item
$\rels_\subSq^\T(n,r)$ for the set of singular square relations involving only idempotents from $E^\T(n,r)$.
\eit
We also continue to write $T_\lex$ for the spanning tree for the Graham--Houghton graph $\GH(D^\T(n,r))$, which was constructed in Section \ref{sec:IGETn}.
We call an idempotent $e\in E(n,r)$ a \emph{Coxeter idempotent} if $\lam(e) = (i,i+1)$ for some $1\leq i\leq r-1$, i.e.~if $\lam(e)$ is a Coxeter transposition.

\begin{lemma}
\label{la:GRtransl}
Let $G$ be the group defined by the presentation ${\langle A^\T(n,r)\mid \rels_1(T_\lex),\ \rels_\subSq^\T(n,r)\rangle}$, where $1\leq r\leq n-2$, and let $e,f\in E^\T(n,r)$.
\begin{thmenumerate}
\item\label{it:GRt1}
If $\lam(e)=1$ then $a_e=1$ in $G$.
\item\label{it:GRt2}
If $\lam(e)=\lam(f)$ then $a_e=a_f$ in $G$.
\item\label{it:GRt3}
If $e$ is a Coxeter idempotent, then $a_e^2=1$ in $G$.
\item\label{it:GRt4}
If $r\geq2$, then ${a_e = a_{e_1}\cdots a_{e_k}}$ in $G$ for some Coxeter idempotents $e_1,\ldots,e_k\in E^\T(n,r)$.
\end{thmenumerate}
\end{lemma}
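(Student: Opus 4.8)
The statement is essentially a translation lemma: it asserts that the group $G$ presented by $\langle A^\T(n,r)\mid \rels_1(T_\lex),\ \rels_\subSq^\T(n,r)\rangle$ satisfies the four combinatorial properties \ref{it:GRt1}--\ref{it:GRt4} that are known to hold in the corresponding group from \cite{GR12}. By Proposition \ref{pr:treeTn}, the presentation $\presn(\Tlex) = \langle A^\T(n,r)\mid \rels_1(\Tlex),\ \rels_\subSq^\T(n,r)\rangle$ is \emph{equivalent} to the presentation \eqref{eq:GRpres1}--\eqref{eq:GRpres3} used in \cite{GR12}: same generators, and the two sets of defining relations imply one another. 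Consequently any equality of words over $A^\T(n,r)$ that is a consequence of the relations in \cite{GR12} is also a consequence of $\rels_1(\Tlex)\cup\rels_\subSq^\T(n,r)$, and vice versa. So the plan is simply to quote, for each of \ref{it:GRt1}--\ref{it:GRt4}, the corresponding assertion from \cite{GR12}, and invoke this equivalence to transport it to $G$.

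Concretely: for \ref{it:GRt1}, the statement that $\lam(e)=1$ implies $a_e = 1$ is (after the notational identification of our labelling with that of \cite[Subsection~2.5]{GR12}, noted just before the lemma) exactly the content of \cite[Lemma 4.2]{GR12} in the special case of the identity permutation, together with the fact that $\fg{V,\mapC(V)}=1$ for convex $V$; alternatively it follows from the fact, established in the proof of Proposition \ref{pr:treeTn}, that $\fg{V,C}=1$ whenever $V$ is convex and $C\perp V$, since $\lam(e_{V,C}) = 1$ precisely when $e_{V,C}$ lies in a row indexed by a convex partition reachable by the inductive argument there. For \ref{it:GRt2}, that $\lam(e)=\lam(f)$ forces $a_e = a_f$, this is \cite[Lemma~4.something / Main Theorem]{GR12}: the whole point of the computation in \cite{GR12} is that $G\cong\S_r$ via $a_e\mapsto\lam(e)$, so in particular the value of $a_e$ in $G$ depends only on $\lam(e)$. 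For \ref{it:GRt3}, that a Coxeter idempotent $e$ satisfies $a_e^2 = 1$: since $\lam(e) = (i,i+1)$ is an involution, we have $\lam(e) = \lam(e)^{-1} = \lam(e^*)$ by Lemma \ref{la:lamprops}\ref{it:lp2}, so by \ref{it:GRt2} we get $a_e = a_{e^*}$; combined with the involution relation $a_e = a_{e^*}^{-1}$ — which for $\IG(\T_n)$ one derives from $\rels_\subSq^\T(n,r)$ and $\rels_1(\Tlex)$ exactly as $\rels_\subinv$ was derived in Section \ref{sec:pres-quot}, or one simply reads it off from the isomorphism $G\cong\S_r$ — this yields $a_e^2 = 1$. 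Finally \ref{it:GRt4}, that every $a_e$ is a product of generators $a_{e_i}$ corresponding to Coxeter idempotents when $r\geq 2$: since $\{(i,i+1)\colon 1\leq i\leq r-1\}$ generates $\S_r$, write $\lam(e) = \lam(e_1)\cdots\lam(e_k)$ with each $\lam(e_i)$ a Coxeter transposition; choosing any $e_i\in E^\T(n,r)$ with that label (such idempotents exist in $\T_n$, e.g.\ appropriate rank-$r$ maps), property \ref{it:GRt2} applied to $a_{e_1}\cdots a_{e_k}$ — which has label $\lam(e)$ under the isomorphism — gives $a_e = a_{e_1}\cdots a_{e_k}$ in $G$.

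The only mild subtlety, and the step I would be most careful about, is making the identification with \cite{GR12} airtight: one must confirm that our generating alphabet $A^\T(n,r) = \{a_e\colon e\in E^\T(n,r)\}$ is literally the alphabet $A = \{\fg{V,C}\colon (V,C)\in K\}$ of \cite{GR12} under $a_{e_{V,C}} = \fg{V,C}$, that our relation sets $\rels_1(\Tlex)$ and $\rels_\subSq^\T(n,r)$ match those implied by \eqref{eq:GRpres1}--\eqref{eq:GRpres3} (which is Proposition \ref{pr:treeTn}), and that the labelling function agrees (which is the remark preceding this lemma). Once those bookkeeping points are in place, \ref{it:GRt1}--\ref{it:GRt4} are immediate consequences of the fact — proved in \cite{GR12} and re-affirmed via the equivalent presentation $\presn(\Tlex)$ in Proposition \ref{pr:treeTn} — that $G$ is isomorphic to the symmetric group $\S_r$ via the map $a_e\mapsto\lam(e)$. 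In short, each of the four clauses is the statement that a certain identity holds in $\S_r$ (under the labelling), pulled back along this isomorphism, so no genuinely new computation is required beyond invoking \cite{GR12} and Proposition \ref{pr:treeTn}.
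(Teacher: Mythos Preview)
Your overall strategy---transport the needed facts from \cite{GR12} via the equivalence of presentations in Proposition~\ref{pr:treeTn}---is exactly the paper's approach, and parts \ref{it:GRt1}, \ref{it:GRt2} are handled correctly. The paper simply pins down which lemmas in \cite{GR12} supply each part: Lemma~4.3 for \ref{it:GRt1}, Lemma~6.2 plus induction on Lemma~7.1 for \ref{it:GRt2}, Lemma~8.1 for \ref{it:GRt3}, and Lemmas~6.5 and~7.1 for \ref{it:GRt4}.

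Your first argument for \ref{it:GRt3}, however, contains a genuine error. The monoid $\T_n$ is \emph{not} a regular $*$-semigroup (the paper says so explicitly in Subsection~\ref{ss:Pn}: $\T_n^*$ is a disjoint anti-isomorphic copy). So for $e\in E^\T(n,r)$ the element $e^*$ does not lie in $E^\T(n,r)$, the letter $a_{e^*}$ is not in the alphabet $A^\T(n,r)$, and there is no relation $a_e = a_{e^*}^{-1}$ to invoke. The derivation of $\rels_\subinv$ in Section~\ref{sec:pres-quot} uses the extra $\PG$-relations $x_px_q = x_{pq}$ for projections, which are absent in $\IG(\sfE(\T_n))$; that argument does not transfer.

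Your fallback---read everything off the isomorphism $G\cong\S_r$, $a_e\mapsto\lam(e)$---does work, and is a legitimate alternative for both \ref{it:GRt3} and \ref{it:GRt4}. Note though that for \ref{it:GRt4} you cannot literally ``apply property \ref{it:GRt2} to $a_{e_1}\cdots a_{e_k}$'' (that property compares single generators); you really are using the full isomorphism, which says $a_e$ and $a_{e_1}\cdots a_{e_k}$ have the same image in $\S_r$ and hence are equal. This is fine, just be precise about what is being invoked.
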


\begin{proof}
Bearing Proposition \ref{pr:treeTn} in mind, parts \ref{it:GRt1} and \ref{it:GRt3} are precisely \cite[Lemma~4.3]{GR12} and \cite[Lemma~8.1]{GR12}, respectively.
Part  \ref{it:GRt2} is proved in \cite[Lemma 6.2]{GR12} for consecutive cycles, and the general statement follows by an easy induction based on \cite[Lemma 7.1]{GR12}.
Likewise, part 
\ref{it:GRt4} can be obtained by induction based on
\cite[Lemmas 6.5 and 7.1]{GR12}.
\end{proof}

In what follows we often need to check that $\lam(e) = \lam(f)$ for idempotents $e,f\in E(n,r)$.  One obvious way for this to happen is for $\lamp(e)$ and $\lamp(f)$ to be equal.  It also happens when $\lamp(e)$ and $\lamp(f)$ differ in a small number of entries, by a small amount, so that the relative positions of entries in relation to each other do not change when passing from $\lam'$ to $\lam$.  Here is one instance of this:

\begin{lemma}\label{la:consec}
If $e,f\in E(n,r)$ are such that
\[
\lamp(e) = \trans{i_1&\cdots&i_{k-1}&i_k&i_{k+1}&\cdots&i_r}{j_1&\cdots&j_{k-1}&j_k&j_{k+1}&\cdots&j_r} \ANd \lamp(f) = \trans{i_1&\cdots&i_{k-1}&i&i_{k+1}&\cdots&i_r}{j_1&\cdots&j_{k-1}&j&j_{k+1}&\cdots&j_r},
\]
where $i\in\{i_k,i_k\pm1\}$ and $j\in\{j_k,j_k\pm1\}$, then $\lam(e) = \lam(f)$.
\qed
\end{lemma}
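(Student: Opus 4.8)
The statement is, in essence, purely combinatorial: it asserts that the ``scaling down'' operation $\lamp\mapsto\lam$ is insensitive to a perturbation of one of the domain entries $i_k$ and one of the codomain entries $j_k$ by at most $1$, provided we still land in an admissible partial bijection of the form coming from some idempotent in $E(n,r)$. The plan is to reduce everything to the definition of $\lam$ and argue purely about relative order of the entries.

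Recall how $\lam$ is obtained from $\lamp$: given $\lamp(e) = \trans{a_1&\cdots&a_r}{b_1&\cdots&b_r}$ (not necessarily with the $a_i$ listed in increasing order, but with $A_i\cup B_i'$ the transversals of $e$), one scales down by replacing each $a_m$ by its rank among $\{a_1,\ldots,a_r\}$ and each $b_m$ by its rank among $\{b_1,\ldots,b_r\}$. Equivalently, $\lam(e)$ is the unique permutation $\sigma\in\S_r$ such that for all $m,m'$ we have $a_m < a_{m'} \iff \sigma$-preimage order matches, and likewise for the $b$'s; put differently, writing $\alpha\colon\{a_1,\ldots,a_r\}\to[r]$ and $\beta\colon\{b_1,\ldots,b_r\}\to[r]$ for the order isomorphisms, $\lam(e)$ sends $\alpha(a_m)\mapsto\beta(b_m)$. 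So the key observation to isolate and prove is: if two sequences $(a_1,\ldots,a_r)$ and $(a_1',\ldots,a_r')$ of distinct integers induce the \emph{same relative order} (i.e. $a_m < a_{m'} \iff a_m' < a_{m'}'$ for all $m,m'$), then they have the same order isomorphism onto $[r]$, hence yield the same scaled-down indexing. First I would state this as a one-line sub-lemma.

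With that in hand, the argument is: the two partial bijections $\lamp(e)$ and $\lamp(f)$ agree in all entries except position $k$, where the domain entry changes from $i_k$ to $i\in\{i_k,i_k\pm1\}$ and the codomain entry from $j_k$ to $j\in\{j_k,j_k\pm1\}$. I would argue separately for the domain side and the codomain side that the relative order of the $r$ domain entries (resp. $r$ codomain entries) is unchanged. For the domain side: the entries $i_1,\ldots,i_{k-1},i_{k+1},\ldots,i_r$ are fixed, and we only move $i_k$ to a neighbouring integer $i$. Since all the $i_m$ (including both $i_k$ and $i$) are distinct integers — this is forced: $i$ occurs as the domain-minimum of a transversal of the \emph{genuine} idempotent $f$, so $\{i_1,\ldots,i_{k-1},i,i_{k+1},\ldots,i_r\}$ has $r$ distinct elements — moving $i_k$ by at most $1$ cannot make it cross any other $i_m$: if it could, some $i_m$ would lie strictly between $i_k$ and $i_k\pm1$, impossible for integers, or would coincide with $i$, contradicting distinctness. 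Hence the relative order of the domain entries is identical in $e$ and $f$. The same reasoning applies verbatim to the codomain entries. By the sub-lemma, the scaled-down domain indexings agree and the scaled-down codomain indexings agree, and since $\lamp(e)$ and $\lamp(f)$ match entry $m\leftrightarrow m$ for all $m$, we conclude $\lam(e)=\lam(f)$.

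The only subtlety — and the ``hard part'', though it is mild — is the distinctness claim: one must be careful that $i$ is genuinely an entry of an admissible $\lamp(f)$, so that $\{i_1,\ldots,i_{k-1},i,i_{k+1},\ldots,i_r\}$ are $r$ distinct integers; this is guaranteed by the hypothesis that $f\in E(n,r)$ (its transversals have distinct domain-minima by definition), and similarly for $j$. Given that, the ``integers can't jump over each other by $1$'' observation finishes it. I would write this up in two short paragraphs: one stating and proving the relative-order sub-lemma, and one applying it after noting the distinctness of entries.
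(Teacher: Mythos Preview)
Your proposal is correct and follows exactly the reasoning the paper intends: the lemma is stated there with a bare \qed and no proof, the preceding sentence simply noting that ``the relative positions of entries in relation to each other do not change when passing from $\lam'$ to $\lam$.'' Your write-up is a faithful unpacking of precisely that observation, including the one point that needs care (distinctness of the perturbed entry from the remaining ones, guaranteed by $f\in E(n,r)$).
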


\subsection[The tree $\Tfd$]{\boldmath The tree $\Tfd$}\label{ss:Tfd}

Our ability to link up with the results from \cite{GR12} cited in Lemma \ref{la:GRtransl} will depend on a `good' choice of a spanning tree~$T$ of the Graham--Houghton graph $\GH(D(n,r))$
for $r\geq 1$.
Different spanning trees will be used in Sections \ref{sec:maxPGPn} and \ref{sec:maxIGPn} below, but they will have a certain subset in common, which we specify now.  
When we come to deal with the case $r=0$ in Section~\ref{sec:r0} we will make use of an entirely different spanning tree.

In what follows, we will take the vertices of $\GH(D(n,r))$ to be $I \sqcup J$, where
\[
I:=\{ i_p\colon p\in P(n,r)\} \ANd J:=\{ j_p\colon p\in P(n,r)\}
\]
are two disjoint copies of $P(n,r)$, respectively indexing the $\R$- and $\L$-classes in $D(n,r)$.  This is simply to avoid the notational difficulties arising from the fact that normally we use the same set for both, namely $P(n,r)$ itself. Furthermore, we partition these sets as
\[
I=I_0\cup \cdots\cup I_{n-r} \AND J=J_0\cup \cdots\cup J_{n-r},
\]
where 
\[
I_k:=\big\{ i_p\colon p\in P_k(n,r)\big\} \AND J_k:=\big\{ j_p\colon p\in P_k(n,r)\big\}.
\]
(Recall that $P_k = P_k(n,r) = \set{p\in P(n,r)}{\NTu(p)=k}$.)

Let us start with the spanning tree $\Tlex$ for the Graham--Houghton graph $\GH(D^\T(n,r))$, introduced in Section \ref{sec:IGETn}; see Proposition  \ref{pr:treeTn} and Figure \ref{fig:Tlex42}.  According to our interpretation of~$\T_n$ as partitions of full domain and trivial cokernel, $\Tlex$ is a spanning tree for the subgraph of~$\GH(D(n,r))$ induced on $I_0\cup J_{n-r}$.

We now add some edges to $\Tlex$.
For a projection ${p=\begin{partn}{6} A_1 & \cdots & A_r & B_1 & \cdots & B_k\\ \hhline{~|~|~|-|-|-} A_1 & \cdots & A_r & B_1 & \cdots & B_k\end{partn}\in P(n,r)}$ with $\min(A_1)<\cdots<\min(A_r)$, define
\begin{equation}
\label{eq:ep}
e_p:= \begin{partn}{7} A_1\cup B_1\cup\cdots \cup B_k &A_2& \cdots & A_r & \multicolumn{3}{c}{} \\ \hhline{~|~|~|~|-|-|-}
A_1 & A_2 & \cdots & A_r & B_1 & \cdots & B_k\end{partn}.
\end{equation}
Note that $e_p\in E(n,r)$ has full-domain; considered as an edge in $\GH(D(n,r))$, it connects some $i\in I_0$ to
$j_p\in J_k$. 
Hence the set 
\begin{equation}
\label{eq:Tfd}
\Tfd=\Tfd(n,r):=\Tlex\cup \{ e_p\colon p\in P_1\cup \cdots \cup P_{n-r-1}\}
\end{equation}
is a spanning tree for the subgraph of $\GH(D(n,r))$ induced on $I_0\cup (J_1\cup J_2\cup\cdots \cup J_{n-r})$.  A schematic diagram of the tree $\Tfd$ is shown in Figure \ref{fig:Tfd}.

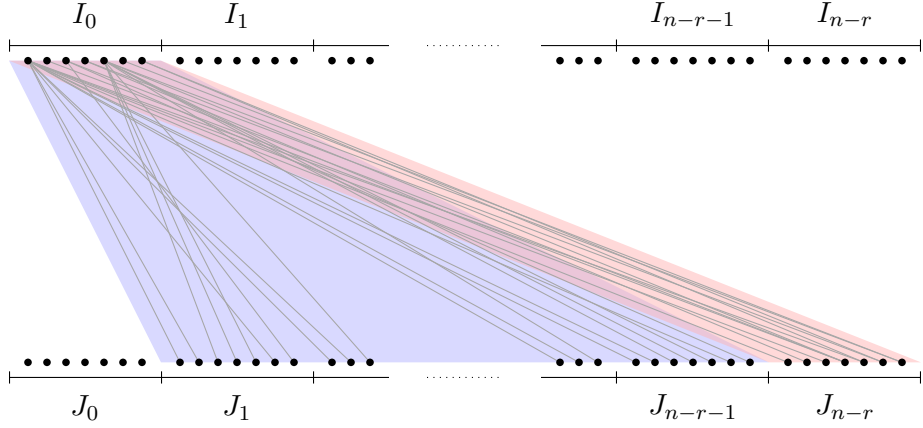
\begin{figure}[t]
\begin{center}
\scalebox{1}{
\begin{tikzpicture}[scale=1]

\nc\yy{0.2}
\nc\zz{0.4}

\foreach \x/\y in {0/2,2/4,4/5,12/10,10/8,8/7} {\draw[|-] (\x,0-\yy)--(\y,0-\yy); \draw[|-] (\x,4+\yy)--(\y,4+\yy);}
\draw[dotted] (5.5,-\yy)--(6.5,-\yy);
\draw[dotted] (5.5,4+\yy)--(6.5,4+\yy);

\node () at (1,4+\yy+\zz) {$I_0$};
\node () at (3,4+\yy+\zz) {$I_1$};
\node () at (9,4+\yy+\zz) {$I_{n-r-1}$};
\node () at (11,4+\yy+\zz) {$I_{n-r}$};
\node () at (1,0-\yy-\zz) {$J_0$};
\node () at (3,0-\yy-\zz) {$J_1$};
\node () at (9,0-\yy-\zz) {$J_{n-r-1}$};
\node () at (11,0-\yy-\zz) {$J_{n-r}$};

\fill[blue!30,opacity=0.5] (0,4)--(2,4)--(10,0)--(2,0)--(0,4);
\fill[red!30,opacity=0.5] (0,4)--(2,4)--(12,0)--(10,0)--(0,4);

{\nc\zzz2 \foreach \x/\y in {1/1,2/1,3/5,4/5,5/5,6/1,7/3} {\draw[gray!70](\y*.25,4)--(\x*.25+\zzz,0);}; }
{\nc\zzz4 \foreach \x/\y in {1/1,2/1,3/5} {\draw[gray!70](\y*.25,4)--(\x*.25+\zzz,0);}; }
{\nc\zzz6 \foreach \x/\y in {5/1,6/1,7/5} {\draw[gray!70](\y*.25,4)--(\x*.25+\zzz,0);}; }
{\nc\zzz8 \foreach \x/\y in {1/1,2/1,3/5,4/5,5/5,6/1,7/3} {\draw[gray!70](\y*.25,4)--(\x*.25+\zzz,0);}; }
{\nc\zzz{10} \foreach \x/\y in {1/1,1/3,1/5,2/4,3/4,4/2,4/6,5/4,5/7,6/7,7/5,7/6} {\draw[gray!70](\x*.25,4)--(\y*.25+\zzz,0);}; }

\foreach \x in {.25,.5,.75,1,1.25,1.5,1.75} {\fill (\x,4)circle(.05); \fill (\x,0)circle(.05);}
\foreach \x in {.25,.5,.75,1,1.25,1.5,1.75} {\fill (\x+2,4)circle(.05); \fill (\x+2,0)circle(.05);}
\foreach \x in {.25,.5,.75} {\fill (\x+4,4)circle(.05); \fill (\x+4,0)circle(.05);}
\foreach \x in {1.25,1.5,1.75} {\fill (\x+6,4)circle(.05); \fill (\x+6,0)circle(.05);}
\foreach \x in {.25,.5,.75,1,1.25,1.5,1.75} {\fill (\x+8,4)circle(.05); \fill (\x+8,0)circle(.05);}
\foreach \x in {.25,.5,.75,1,1.25,1.5,1.75} {\fill (\x+10,4)circle(.05); \fill (\x+10,0)circle(.05);}

\end{tikzpicture}
}
\caption{Schematic diagram of the tree $\Tfd$ from \eqref{eq:Tfd}, which is a spanning tree for the induced subgraph of the Graham--Houghton graph $\GH(D(n,r))$ on vertices $I_0\cup (J_1\cup J_2\cup\cdots \cup J_{n-r})$, in the case $r\geq1$.  The edges from $\Tlex$ are contained in the pink region (with vertices $I_0\cup J_{n-r}$), and the remaining edges are in the blue region.}
\label{fig:Tfd}
\end{center}
\end{figure}

The following result is key for our link between $\IG(\sfE(\T_n))$ on the one hand, and $\PG(\sfP(\P_n))$ and $\IG(\sfE(\P_n))$ on the other:

\begin{lemma}
\label{la:fdtoTn}
For every non-projection idempotent $e\in E(n,r)$ with full domain, there exists an idempotent transformation $f\in E^\T(n,r)$ such that
$\lam(e)=\lam(f)$.
Furthermore, if ${\Tfd\subseteq T\subseteq E(n,r)}$, then the idempotent $f$ can be chosen so that $a_e=a_f$ is a consequence of~$\presn(T)$.
\end{lemma}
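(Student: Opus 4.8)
The statement has two parts: first, the ``existence'' claim that every full-domain non-projection idempotent $e\in E(n,r)$ has the same label as some idempotent transformation $f\in E^\T(n,r)$; and second, the ``presentation'' claim that $f$ can be chosen so that $a_e=a_f$ follows from $\presn(T)$ whenever $\Tfd\subseteq T$. I would separate these two concerns cleanly, since the first is a purely combinatorial observation about partitions while the second requires exhibiting a chain of singular squares (all of whose idempotents lie in $E(n,r)$, hence whose square relations are among $\rels_\subSq(n,r)\subseteq\presn(T)$).

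For the existence claim, write $e = e_1\oplus\cdots\oplus e_m$ as in Proposition~\ref{prop:EPn}, with each $e_i$ connected of rank $\leq1$; since $\NTu(e)=0$ but $e$ is not a projection, at least one component $e_i$ has rank~$1$ but is not a single point, and $\coker(e)\neq\ker(e)$ so there is at least one lower non-transversal. The idea is to replace $e$ by a transformation $f$ that ``forgets'' all the lower non-transversals, merging each lower block into one of the transversals of the same component, while keeping the domain and the kernel (and hence the minima $a_1<\cdots<a_r$ of the transversal-tops) exactly as in $e$. Because $\lam'(e)$ only records the minima of the tops and the minima of the bottoms of each transversal, and because we only ever enlarge the bottom parts of transversals (never changing which transversal a given top-minimum belongs to, and choosing the merge so that the bottom-minimum is unchanged), the resulting bijection $\lam'(f)$ has the same pattern of relative orders as $\lam'(e)$; invoking Lemma~\ref{la:consec} repeatedly (or just directly checking that scaling down gives the same permutation) yields $\lam(f)=\lam(e)$, and $f$ has full domain and trivial cokernel so $f\in E^\T(n,r)$.

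For the presentation claim, I would realise the passage from $e$ to $f$ as a sequence of single steps, each of which merges one lower non-transversal into a transversal and is accompanied by an NT-reducing singular square of the type produced by Lemma~\ref{la:ehrsq} (applied with the roles of $\ker$ and $\coker$ swapped, i.e.\ the dual square, so that the singularising idempotent is $R(e)$ rather than $D(e)$ and the step reduces $\NTd$). Concretely: if $e'$ is the current idempotent and $e''$ is obtained from $e'$ by merging a lower block $C'$ into a transversal $B\cup C_0'$ of the same component, then $e'\mr\R e''$ and $\coker(e')\subsetneq\coker(e'')$, and the dual of Lemma~\ref{la:ehrsq} gives a singular square whose four corners are $e'$, $e''$, $e'\ER(e')$-type partitions, \emph{all lying in $E(n,r)$}, and crucially the two ``new'' corners are projections or have $\NTu=0$, so by Lemma~\ref{la:GRtransl}\ref{it:GRt1} (or by the spanning-tree relations once one observes these corners lie in $\Tfd$) their generators are trivial. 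The corresponding square relation in $\presn(T)$ then reads $a_{e'} = a_{e''}$ after the trivial generators are cancelled. Iterating, $a_e = a_{f}$ follows. The one point requiring care — and what I expect to be the main obstacle — is bookkeeping: verifying that at every intermediate stage the ``auxiliary'' corners of the singular squares genuinely lie in the tree $\Tfd$ (or are projections/full-codomain idempotents whose $a$-generators are already killed by $\rels_1(\Tfd)\subseteq\rels_1(T)$), so that each square relation really does collapse to $a_{e'}=a_{e''}$ rather than to something involving uncontrolled generators. Getting the exact shape of $\Tfd$ in \eqref{eq:ep}--\eqref{eq:Tfd} to match the corners produced by the (dual) Ehresmann square of Lemma~\ref{la:ehrsq} is the technical heart of the argument; everything else is routine manipulation of partition diagrams plus one invocation of Lemma~\ref{la:GRtransl}\ref{it:GRt2} at the end to match labels on the $\T_n$ side.
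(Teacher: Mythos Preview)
Your approach has a genuine gap, and it is not a bookkeeping issue but a structural one.

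First, your construction of $f$ does not produce a transformation. Merging each lower non-transversal $C_j'$ into a transversal $A_i\cup B_i'$ \emph{coarsens} the cokernel; after absorbing all the $C_j$'s you obtain a full-domain, full-codomain partition of rank $r$, i.e.\ a projection in $P_0(n,r)$, not an element of $E^\T(n,r)$. Such a projection has label~$1$, so in general $\lam(f)\neq\lam(e)$ and the existence claim already fails. (To land in $\T_n$ you must \emph{refine} the cokernel to $\Delta_{[n]}$, not coarsen it.)

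Second, and more seriously, the iteration via the dual of Lemma~\ref{la:ehrsq} produces only degenerate squares here. For any full-domain idempotent $e'\in E(n,r)$ one has $\coker(e')\subseteq\ker(e')$: each $\coker$-class (a $B_i$ or a $C_j$) must lie inside a single $\ker$-class $A_i$, for otherwise two transversals would fall in the same component of $e'$, contradicting Proposition~\ref{prop:EPn}. Consequently $\ker(R(e'))=\coker(e')\subseteq\ker(e')$, so $R(e')e'=e'$ and likewise $R(e')e''=e''$ for any $e''\mr\R e'$ with full domain. The dual Ehresmann square you invoke therefore collapses to $\smat{e''}{e'}{e''}{e'}$ and yields no relation. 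Neither merging nor splitting lower blocks escapes this, because the hypothesis $\coker(e')\nsubseteq\ker(e')$ needed for non-degeneracy in the dual lemma is never satisfied for full-domain idempotents.

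The paper's proof avoids this by using a \emph{single} square $\smat{e}{f}{g}{h}$, RL-singularised by $R(e)$, in which $g$ and $h$ lie in a \emph{different} $\R$-class from $e$ and $f$. Here $f\in E^\T(n,r)$ is defined directly (same kernel as $e$, bottoms $b_i=\min B_i$), while $g$ is chosen to be exactly the tree element $e_q\in\Tfd$ for $q=e^*e$, and $h\in E^\T(n,r)$ has label~$1$ because $\min(B_1\cup C_1\cup\cdots\cup C_k)=1$. The square relation then reads $a_e^{-1}a_f=a_g^{-1}a_h$ with $a_g=1$ from $\rels_1(\Tfd)$ and $a_h=1$ from Lemma~\ref{la:GRtransl}\ref{it:GRt1}, giving $a_e=a_f$ in one stroke. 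The crucial move you are missing is that $R(e)$ acts usefully by \emph{right} multiplication on the transformations $f,h$ (turning trivial cokernel into $\coker(e)$), not by left multiplication on $e$ itself.
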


\begin{proof}
When $e\in \T_n$ there is nothing to prove, so suppose instead that $e\not\in\T_n$, and write $e=\begin{partn}{6} A_1&\cdots&A_r&\multicolumn{3}{c}{} \\ \hhline{~|~|~|-|-|-} B_1&\cdots&B_r&C_1&\cdots&C_k\end{partn}$.  Since $e$ is an idempotent, we have $B_i\sub A_i$ for each $i\in[r]$.  Let $b_i=\min(B_i)$ for each $i$, assuming that $b_1<\cdots<b_r$, and also pick arbitrary $c_i\in C_i$ for $i\in [k]$.  Define another three idempotents:
\[
f:=\begin{partn}{3} A_1&\cdots&A_r \\ \hhline{~|~|~}
b_1&\cdots&b_r\end{partn}
\COMMa
g:= \begin{partn}{7} B_1\cup C_1\cup\cdots \cup C_k&B_2&\cdots&B_r&\multicolumn{3}{c}{} \\ \hhline{~|~|~|~|-|-|-}
B_1&B_2&\cdots&B_r&C_1&\cdots&C_k\end{partn}
\ANd
h:=\begin{partn}{4} B_1\cup C_1\cup\cdots\cup C_k &B_2&\cdots&B_r \\ \hhline{~|~|~|~}
b_1&b_2&\cdots&b_r\end{partn}.
\]
Note that we are using the convention of omitting lower singletons in $f$ and $h$.  Note also that $g = e_q$ for the projection $q = e^*e = \begin{partn}{6} B_1&\cdots&B_r&C_1&\cdots&C_k\\ \hhline{~|~|~|-|-|-} B_1&\cdots&B_r&C_1&\cdots&C_k\end{partn}$.
We claim that $\smat{e}{f}{g}{h}$ is a singular square, RL-singularised by
\[
u := R(e) = \id_{\coker(e)} = \begin{partn}{6} B_1 &\cdots & B_r& C_1&\cdots& C_k\\ \hhline{~|~|~|~|~|~}
B_1 &\cdots & B_r& C_1&\cdots& C_k\end{partn}.
\]
For an illustration see Figure \ref{fig:fdtoTn}.
Indeed, it is straightforward to verify that $f$, $g$ and $h$ are idempotents, and that the appropriate $\R$- and $\L$-relationships hold.  The equality $uh=h$ is easily checked, while $uf=f$ follows from the observation that the typical component of $e$ (see~Proposition \ref{prop:EPn}) has the form $\begin{partn}{4} A_i & \multicolumn{3}{c}{} \\ \hhline{~|-|-|-} B_i & C_{j_1}&\cdots & C_{j_l}\end{partn}$ for some $l\geq 0$ and $j_1,\ldots, j_l\in [k]$.  The equalities $fu=e$ and $hu=g$ hold because $b_i\in B_i$ for each $i\in[r]$.

Since $e$ is neither a projection nor a transformation, it follows that $1\leq k\leq n-r-1$, and so $q$ (as above) belongs to $P_1\cup\cdots\cup P_{n-r-1}$, and hence $g = e_q \in\Tfd\sub T$,
so the relation $a_g=1$ is in~$\presn(T)$.
Also notice that $f,h\in \T_n$, with $\lam(f)=\lam(e)$, and $\lam(h)=1$.  (For the latter, note that $\min(B_1\cup C_1\cup\cdots\cup C_k) = 1$.)
Recalling that $\Tfd$, and hence $T$, contains the tree~$\Tlex$, Lemma~\ref{la:GRtransl}\ref{it:GRt1}
 implies that
$a_h=1$ is a consequence of $\presn(T)$. The square $\smat{e}{f}{g}{h}$ yields the relation
$a_e^{-1}a_f=a_g^{-1}a_h$ from $\rels_\subSq$, which is present in $\presn(T)$,
and hence $a_e=a_f$ as required.
\end{proof}

\begin{figure}[t]
\begin{center}
\scalebox{0.7}{
\begin{tikzpicture}[scale=.7]

\nc\yyy{4.2}

\nc\XL{0}
\nc\XR{23}
\nc\YL{-.5}
\nc\YR{7}
\fill[lightgray!30] (\XL,\YL)--(\XR,\YL)--(\XR,\YR)--(\XL,\YR)--(\XL,\YL);

\begin{scope}[shift={(0,\yyy)}]
\Eredtrans1{4.8}{1.7}{3.3}
\Eredlower1{1.4}
\Eredlower{3.6}{4}
\Eredlower{4.3}{4.7}
\Ebluetrans59{6.5}9
\Ebluelower5{5.4}
\Ebluelower{5.7}{6.2}
\Ebluepartlabel{e}
\draw[<-] (8.5+1.5,.75)--(8.5+3.5,.75);
\end{scope}

\begin{scope}[shift={(12,\yyy)}]
\Eredtrans1{4.8}{1.7}{1.7}
\Ebluetrans59{6.5}{6.5}
\Ebluepartlabel{f}
\node (R) at (9.5,.75) {\phantom{\Large$E$}};
\draw[->] (R) edge [loop right] ();
\draw[red,thick,dotted](1,0)--(4.7,0);
\draw[blue,thick,dotted](5,0)--(9,0);
\end{scope}

\begin{scope}[shift={(0,0)}]	
\Eredtrans1{6.2}{1.7}{3.3}
\Eredlower1{1.4}
\Eredlower{3.6}{4}
\Eredlower{4.3}{4.7}
\Ebluetrans{6.5}9{6.5}9
\Eredlower5{5.4}
\Eredlower{5.7}{6.2}
\Ebluepartlabel{g}
\draw[<-] (8.5+1.5,.75)--(8.5+3.5,.75);
\foreach \x in {1,1.4,1.7,3.3,3.6,4,4.3,4.7,5,5.4,5.7,6.2,6.5,9} {\draw[black!70,dotted](\x,1.5) -- (\x,\yyy);}
\end{scope}

\begin{scope}[shift={(12,0)}]	
\Eredtrans1{6.2}{1.7}{1.7}
\Ebluetrans{6.5}9{6.5}{6.5}
\Ebluepartlabel{h}
\node (R) at (9.5,.75) {\phantom{\Large$E$}};
\draw[->] (R) edge [loop right] ();
\draw[red,thick,dotted](1,0)--(6.2,0);
\draw[blue,thick,dotted](6.5,0)--(9,0);
\end{scope}

\begin{scope}[shift={(6,2*\yyy)}]
\Eredtrans{1.7}{3.3}{1.7}{3.3}
\Eredtrans1{1.4}1{1.4}
\Eredtrans{3.6}{4}{3.6}{4}
\Eredtrans{4.3}{4.7}{4.3}{4.7}
\Ebluetrans{6.5}9{6.5}9
\Ebluetrans5{5.4}5{5.4}
\Ebluetrans{5.7}{6.2}{5.7}{6.2}
\Ebluepartlabel{u=R(e)}
\end{scope}

\end{tikzpicture}
}
\caption{The singular square featuring in the proof of Lemma \ref{la:fdtoTn}.}
\label{fig:fdtoTn}
\end{center}
\end{figure}
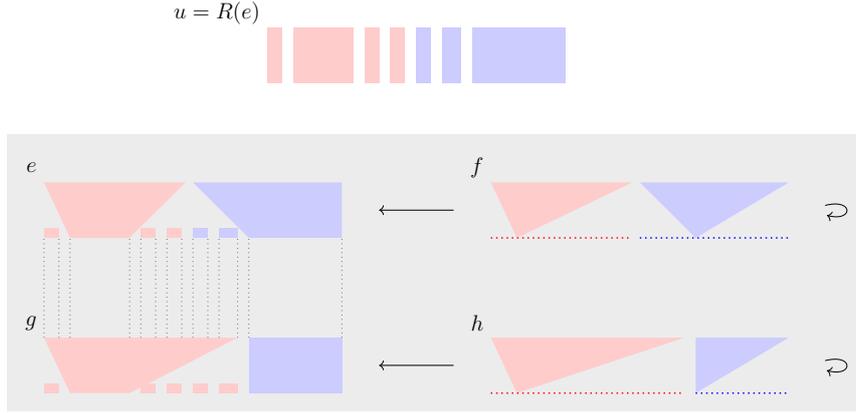

\section{Maximal subgroups of \boldmath{$\PG(\sfP(\P_n))$}, the case of rank \boldmath{$\geq 1$}}
\label{sec:maxPGPn}

We have now reached the point in our treatment of $\P_n$ where we can identify the maximal subgroups of $\PG(\sfP(\P_n))$ corresponding to projections of rank $1\leq r\leq n-2$, and thus prove the main part of Theorem \ref{thm:mainPGPn} (the case $r=0$ was Proposition~\ref{p:PnD0}):

\begin{thm}
\label{thm:maxPGPn}
Let $n\geq 3$, and let $p_0\in \P_n$ be a projection of rank $1\leq r\leq n-2$.
Then the maximal subgroup of $\PG(\sfP(\P_n))$ containing $x_{p_0}$ is isomorphic to the symmetric group $\S_r$.
\end{thm}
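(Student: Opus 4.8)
The plan is to use the presentation for $\GP$ from Corollary \ref{cor:pgmaxq}, together with the spanning tree machinery set up in Section \ref{sec:common}, and reduce the computation to the known result for $\IG(\sfE(\T_n))$ recalled in Lemma \ref{la:GRtransl}. Concretely, $\GP$ is defined by $\langle A(n,r) \mid \rels_1(T),\ \rels_\subSq(n,r),\ \rels_\subinv(n,r)\rangle$ for any spanning tree $T$ of $\GH(D(n,r))$ containing $P(n,r)$; I would take $T$ to be an extension of the tree $\Tfd$ of Subsection \ref{ss:Tfd} that also includes all of $P(n,r)$ (this is possible since $\Tfd$ already contains $P_0\cup P_1(n,r)$ in the relevant $\R$- or $\L$-classes, and the remaining projections can be attached without creating cycles). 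The target is to show this presentation defines $\S_r$.

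\textbf{Upper bound: every generator is controlled by its label.} First I would show that in $\GP$ each $a_e$ is determined by $\lam(e)\in\S_r$, via the homomorphism $\psi\colon \GP\to\S_r$ induced by the natural map $\PG(P)\to\P_n$ (using \ref{it:PG4}) composed with an identification of the relevant maximal subgroup of $\langle P\rangle\le\P_n$ with $\S_r$ (Lemma \ref{la:Green_Pn}(iv)); the fact that $\psi(a_e)=\lam(e)$ is exactly the point flagged in Subsection \ref{ss:labs}. To get the reverse inequality I would argue, by induction on $\NT(e)$, that $a_e$ equals (a consequence of the presentation) a word in the generators $a_f$ with $f\in\Esub(n,r)$: this is precisely Lemma \ref{la:geneli}, whose hypotheses ($\rels_\subSq(n,r)$ present) are met. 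Then for $f\in\Esub(n,r)$: if $f$ is a projection then $a_f=1$ by $\rels_1(T)$ (projections are in $T$); if $\NTd(f)=0$ then $a_{f^*}=1$ for the same reason and $a_f=a_{f^*}^{-1}=1$ by $\rels_\subinv(n,r)$; and if $\NTu(f)=0$ (full domain, not a projection) then Lemma \ref{la:fdtoTn} gives an idempotent transformation $f'\in E^\T(n,r)$ with $\lam(f')=\lam(f)$ and $a_f=a_{f'}$ a consequence of $\presn(T)$. Since $T\supseteq\Tfd\supseteq\Tlex$ and $\rels_\subSq(n,r)\supseteq\rels_\subSq^\T(n,r)$, every consequence of $\langle A^\T(n,r)\mid \rels_1(\Tlex),\ \rels_\subSq^\T(n,r)\rangle$ holds in $\GP$; thus by Lemma \ref{la:GRtransl}\ref{it:GRt4} (for $r\ge2$) $a_{f'}$ is a product of $a_{f_i'}$ with $f_i'$ Coxeter idempotents, and for $r=1$ there are no non-projection full-domain idempotents of rank $1$ up to label so $\S_1$ is trivial. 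Hence $\GP$ is generated by the classes of $a_c$ for Coxeter idempotents $c$, and there are at most $r-1$ of these distinct elements (one per Coxeter transposition, using Lemma \ref{la:GRtransl}\ref{it:GRt2}), each of order dividing $2$ by Lemma \ref{la:GRtransl}\ref{it:GRt3}.

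\textbf{Lower bound and braid relations.} It remains to see $|\GP|\ge r!$, equivalently that the Coxeter generators $\overline{a}_i$ ($1\le i\le r-1$, where $\overline a_i$ is the class of $a_c$ for a chosen Coxeter idempotent $c$ with $\lam(c)=(i,i+1)$) satisfy the symmetric group (Coxeter/braid) relations, so that $\psi$ restricted to $\langle \overline a_1,\dots,\overline a_{r-1}\rangle$ is onto $\S_r$ and the group is exactly $\S_r$. The relations $\overline a_i^2=1$ are Lemma \ref{la:GRtransl}\ref{it:GRt3}. For the braid relations $\overline a_i\overline a_{i+1}\overline a_i=\overline a_{i+1}\overline a_i\overline a_{i+1}$ and the commuting relations $\overline a_i\overline a_j=\overline a_j\overline a_i$ for $|i-j|\ge2$, I would invoke the corresponding statements inside $\langle A^\T(n,r)\mid \rels_1(\Tlex),\ \rels_\subSq^\T(n,r)\rangle$: since that presentation defines $\S_r$ by Proposition \ref{pr:treeTn}, these braid/commuting identities hold there, hence are consequences of $\rels_1(\Tlex)\cup\rels_\subSq^\T(n,r)$, hence hold in $\GP$. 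Therefore $\GP$ is a quotient of $\S_r$ and surjects onto $\S_r$ via $\psi$, forcing $\GP\cong\S_r$.

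\textbf{Main obstacle.} The delicate point is the \emph{upper bound} step: ensuring that Lemma \ref{la:fdtoTn} and Lemma \ref{la:geneli} really do cut everything down to $E^\T(n,r)$, and in particular that the extra relations $\rels_\subinv(n,r)$ are used precisely once (to kill full-codomain idempotents) and never obstruct the transfer of the $\T_n$ relations. One must also be careful that the spanning tree $T$ chosen for $\GP$ genuinely contains $\Tlex$ as required by Lemma \ref{la:fdtoTn} and Lemma \ref{la:GRtransl} while simultaneously containing all of $P(n,r)$ as required by Corollary \ref{cor:pgmaxq}; verifying $\Tfd$ (suitably extended) meets both demands without introducing cycles is the main bookkeeping hurdle. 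Everything else is a routine assembly of the cited lemmas.
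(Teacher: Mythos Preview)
Your overall approach matches the paper's: take $T=\Tfd\cup P(n,r)$ in Corollary~\ref{cor:pgmaxq}, reduce to $A_{\Esub(n,r)}$ via Lemma~\ref{la:geneli}, eliminate down to $A^\T(n,r)$, and conclude by sandwiching $\GP$ between two copies of $\S_r$. The tree $T=\Tfd\cup P(n,r)$ is unproblematic: each projection edge $p$ attaches a new leaf (either $j_p\in J_0$ for $p\in P_0$, or $i_p\in I_k$ for $p\in P_k$ with $k\geq1$) to the existing tree $\Tfd$, so no cycles arise.

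There is, however, a genuine error in your elimination step. For a full-codomain non-projection $f$ you assert ``$a_{f^*}=1$ for the same reason'' (namely that projections lie in $T$), and hence $a_f=a_{f^*}^{-1}=1$. But $f^*$ is a full-domain \emph{non}-projection and need not lie in $T$: the non-$\Tlex$ edges of $\Tfd$ are only the specific idempotents $e_p$ of~\eqref{eq:ep}, not arbitrary full-domain idempotents. Indeed, under the surjection $\GP\to\S_r$ that you yourself invoke, $a_{f^*}$ maps to $\lam(f^*)$, which can be any element of $\S_r$, so $a_{f^*}=1$ is false in general. The correct step---and the one the paper takes---is to use $\rels_\subinv$ to get $a_f=a_{f^*}^{-1}$ and then apply Lemma~\ref{la:fdtoTn} to $f^*$ (which is full-domain) to express $a_{f^*}$ in terms of $A^\T(n,r)$. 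With this fix your argument goes through; in fact your separate ``braid relations'' paragraph then becomes redundant, since once every generator lies in $\langle A^\T(n,r)\rangle$, the relations $\rels_1(\Tlex)\cup\rels_\subSq^\T(n,r)$ are already among the defining relations of $\GP$, and Proposition~\ref{pr:treeTn} gives the quotient-of-$\S_r$ conclusion directly.
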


\begin{proof}
We will work with the presentation given in Corollary \ref{cor:pgmaxq}.
For this we need to specify a spanning tree for the Graham--Houghton graph $\GH(D(n,r))$.
We start with the set $\Tfd$, which was constructed in Subsection \ref{ss:Tfd} as an extension of $\Tlex$ to $I_0\cup (J_1\cup\cdots\cup J_{n-r})$; see \eqref{eq:Tfd}.
 We expand $\Tfd$ to
$T:=\Tfd \cup P(n,r)$.
This is easily seen to be a spanning tree for $\GH(D(n,r))$; see Figure \ref{fig:T} for a schematic diagram.
By
Corollary \ref{cor:pgmaxq}, the maximal subgroup $\GP$ of $\PG(P(\P_n))$ containing $x_{p_0}$ is defined by the presentation
\[
\langle A(n,r)\mid \rels_1(T),\ \rels_{\subSq}(n,r),\ \rels_\subinv(n,r)\rangle.
\]
By Lemma \ref{la:geneli}, $\GP$ is generated by the set $A_F = \set{a_e}{e\in F}$, where $F = F(n,r)$ is as in~\eqref{eq:Fnr}.  Note that $F$ decomposes as $F=F_1\cup F_2\cup P_0\cup P_1$, where
\begin{equation}\label{eq:F1F2}
F_1:= \{ e\in E(n,r)\colon \NTu(e)=0<\NTd(e)\} \ANd
F_2:= \{ e\in E(n,r)\colon \NTd(e)=0<\NTu(e)\} .
\end{equation}
For $e\in P_0\cup P_1$, the relation $a_e=1$ is contained in $\rels_1(T)$,
while for $e\in F_2$ the relation  $a_e=a_{e^*}^{-1}$ is in $\rels_\subinv$, and we have $e^*\in F_1$.
Thus all generators $a_e$ with $e\in E(n,r)\setminus F_1$ 
can be eliminated in terms of generators $a_f$ with $f\in F_1$.
Furthermore, since $\Tfd\subseteq T$, 
for every such $f$ we have  $a_f=a_{f'}$ for some $f'\in E^\T(n,r)$ by Lemma \ref{la:fdtoTn}.
Denoting by $A'$ the alphabet $\big\{ a_h\colon h\in E^\T(n,r)\big\}$,
we see that for every $a_e\in A(n,r)\setminus A'$ there exists a word $w_e\in (A')^+$
such that $a_e=w_e$.  We also define $w_e=a_e$ for $e\in A'$. 
Performing the generator elimination then yields
the presentation $\langle A'\mid \rels\rangle$ for~$\GP$, where $\rels$ is obtained by substituting every instance of every $a_e$ by $w_e$ in all relations from
$\rels_1(T)\cup \rels_\subSq(n,r)\cup \rels_{\subinv}(n,r)$.
This set~$\rels$ of defining relations contains $\rels_1(\Tlex)$, as well as the
set $\rels_\subSq'(n,r)$ consisting of all square relations from $\rels_\subSq(n,r)$ involving solely idempotents from~$\T_n$.
In other words, $\GP$ is defined by $\langle A'\mid \rels_1(\Tlex),\ \rels_\subSq'(n,r),\ \rels''\rangle$, for some set of relations~$\rels''$.
By Proposition \ref{pr:treeTn}, the presentation $\langle A'\mid \rels_1(\Tlex),\ \rels_\subSq'(n,r)\rangle$ defines the symmetric group~$\S_r$.
Thus $\GP$ is a homomorphic image of~$\S_r$.
On the other hand, $\GP$ has the maximal subgroup of $\P_n$ containing $p_0$ as a homomorphic image, and this latter group is also isomorphic to $\S_r$ (cf.~Lemma \ref{la:Green_Pn}). Therefore $\GP\cong \S_r$, and the theorem is proved.
\end{proof}

\begin{rem}\label{rem:eggbox3}
The sets $F_1$ and $F_2$ from \eqref{eq:F1F2} are contained in the horizontal and vertical pink regions in Figure \ref{fig:eggbox1}, respectively.  The set $\T_n\cap E(n,r)$ -- corresponding to the final generating set~$A'$ -- consists of the idempotents in the region~$D^0_{n-r}(n,r)$, outlined in red in Figure~\ref{fig:eggbox1}.
\end{rem}

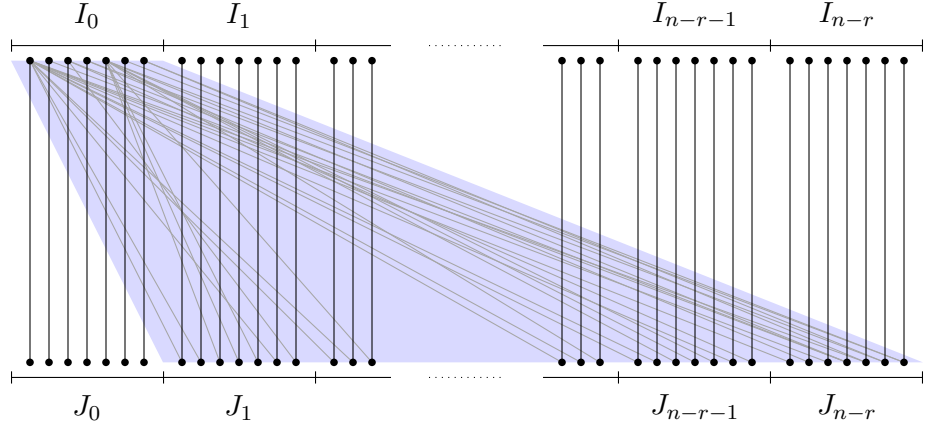
\begin{figure}[t]
\begin{center}
\scalebox{1}{
\begin{tikzpicture}[scale=1]

\nc\yy{0.2}
\nc\zz{0.4}

\foreach \x/\y in {0/2,2/4,4/5,12/10,10/8,8/7} {\draw[|-] (\x,0-\yy)--(\y,0-\yy); \draw[|-] (\x,4+\yy)--(\y,4+\yy);}
\draw[dotted] (5.5,-\yy)--(6.5,-\yy);
\draw[dotted] (5.5,4+\yy)--(6.5,4+\yy);

\node () at (1,4+\yy+\zz) {$I_0$};
\node () at (3,4+\yy+\zz) {$I_1$};
\node () at (9,4+\yy+\zz) {$I_{n-r-1}$};
\node () at (11,4+\yy+\zz) {$I_{n-r}$};
\node () at (1,0-\yy-\zz) {$J_0$};
\node () at (3,0-\yy-\zz) {$J_1$};
\node () at (9,0-\yy-\zz) {$J_{n-r-1}$};
\node () at (11,0-\yy-\zz) {$J_{n-r}$};

\fill[blue!30,opacity=0.5] (0,4)--(2,4)--(12,0)--(2,0)--(0,4);

{\nc\zzz2 \foreach \x/\y in {1/1,2/1,3/5,4/5,5/5,6/1,7/3} {\draw[gray!70](\y*.25,4)--(\x*.25+\zzz,0);}; }
{\nc\zzz4 \foreach \x/\y in {1/1,2/1,3/5} {\draw[gray!70](\y*.25,4)--(\x*.25+\zzz,0);}; }
{\nc\zzz6 \foreach \x/\y in {5/1,6/1,7/5} {\draw[gray!70](\y*.25,4)--(\x*.25+\zzz,0);}; }
{\nc\zzz8 \foreach \x/\y in {1/1,2/1,3/5,4/5,5/5,6/1,7/3} {\draw[gray!70](\y*.25,4)--(\x*.25+\zzz,0);}; }
{\nc\zzz{10} \foreach \x/\y in {1/1,1/3,1/5,2/4,3/4,4/2,4/6,5/4,5/7,6/7,7/5,7/6} {\draw[gray!70](\x*.25,4)--(\y*.25+\zzz,0);}; }

\foreach \x in {.25,.5,.75,1,1.25,1.5,1.75} { \draw (\x,4)--(\x,0);\fill (\x,4)circle(.05); \fill (\x,0)circle(.05);}
\foreach \x in {.25,.5,.75,1,1.25,1.5,1.75} { \draw (\x+2,4)--(\x+2,0);\fill (\x+2,4)circle(.05); \fill (\x+2,0)circle(.05);}
\foreach \x in {.25,.5,.75} { \draw (\x+4,4)--(\x+4,0);\fill (\x+4,4)circle(.05); \fill (\x+4,0)circle(.05);}
\foreach \x in {1.25,1.5,1.75} { \draw (\x+6,4)--(\x+6,0);\fill (\x+6,4)circle(.05); \fill (\x+6,0)circle(.05);}
\foreach \x in {.25,.5,.75,1,1.25,1.5,1.75} { \draw (\x+8,4)--(\x+8,0);\fill (\x+8,4)circle(.05); \fill (\x+8,0)circle(.05);}
\foreach \x in {.25,.5,.75,1,1.25,1.5,1.75} { \draw (\x+10,4)--(\x+10,0);\fill (\x+10,4)circle(.05); \fill (\x+10,0)circle(.05);}

\end{tikzpicture}
}
\caption{Schematic diagram of the tree $T = \Tfd\cup P(n,r)$ from the proof of Theorem \ref{thm:maxPGPn}, which is a spanning tree for the Graham--Houghton graph $\GH(D(n,r))$, in the case $r\geq1$.  The edges from~$\Tfd$ are indicated in grey, and are contained in the blue region (with vertices $I_0\cup (J_1\cup J_2\cup\cdots \cup J_{n-r})$), while the edges from $P(n,r)$ are indicated in black.}
\label{fig:T}
\end{center}
\end{figure}

\section{Maximal subgroups in \boldmath{$\IG(\sfE(\P_n))$}, the case of rank \boldmath{$\geq 1$}}
\label{sec:maxIGPn}

This section comprises the most difficult argument in the paper, namely the computation of the maximal subgroups of $\IG(E)$, where $E:=\sfE(\P_n)$.
Specifically, we prove the following, with the $r=0$ case being deferred to Section \ref{sec:r0}:

\begin{thm}
\label{thm:IGPn}
Let $n\geq 3$, and let $e_0\in \P_n$ be an idempotent of rank $1\leq r\leq n-2$.  Then the maximal subgroup of $\IG(\sfE(\P_n))$ containing $x_{e_0}$ is isomorphic to $\Z\times \S_r$.
\end{thm}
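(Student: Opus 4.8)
\textbf{Proof plan for Theorem \ref{thm:IGPn}.}

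The plan is to use the presentation for $\GI$ given by Theorem \ref{thm:spantreepres}, namely $\langle A(n,r)\mid \rels_1(T),\ \rels_\subSq(n,r)\rangle$ for a well-chosen spanning tree $T$ of $\GH(D(n,r))$, and to compare it with the presentation for $\GP$ from Corollary \ref{cor:pgmaxq}, namely $\langle A(n,r)\mid \rels_1(T),\ \rels_\subSq(n,r),\ \rels_\subinv(n,r)\rangle$. Since the latter adds exactly the relations $\rels_\subinv(n,r) = \{(a_e=a_{e^*}^{-1})\colon e\in E(n,r)\}$, and since $\GP\cong\S_r$ by Theorem \ref{thm:maxPGPn}, we see $\GI$ is an extension of (the normal closure of) the elements $a_ea_{e^*}$ by $\S_r$. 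The strategy is therefore to (i) isolate a single generator whose image generates an infinite cyclic central subgroup $Z$, (ii) show $\GI/Z\cong\S_r$, and (iii) show the extension splits and is central, so $\GI\cong\Z\times\S_r$.

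First I would fix the spanning tree $T = \Tfd\cup P(n,r)$ as in the proof of Theorem \ref{thm:maxPGPn}, and run the same generator-elimination: by Lemma \ref{la:geneli} the group $\GI$ is generated by $A_{F(n,r)}$ with $F = F_1\cup F_2\cup P_0\cup P_1$; the generators $a_e$ for $e\in P_0\cup P_1$ are killed by $\rels_1(T)$, and by Lemma \ref{la:fdtoTn} every $a_e$ with $e\in F_1$ (full domain) equals some $a_f$ with $f\in\T_n\cap E(n,r)$. The key difference from the $\PG$ case is that we no longer have $\rels_\subinv$, so the generators $a_e$ for $e\in F_2$ (full codomain) cannot be collapsed onto $F_1$. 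Using a dual form of Lemma \ref{la:fdtoTn}, each such $a_e$ equals $a_{f}$ for $f$ in the anti-isomorphic copy $\T_n^*\cap E(n,r)$. So $\GI$ is generated by $A'\cup A'^*$ where $A' = \{a_h\colon h\in\T_n\cap E(n,r)\}$ and $A'^* = \{a_h\colon h\in \T_n^*\cap E(n,r)\}$. Next I would introduce $z := a_{e_0}a_{e_0^*}$ (or more canonically, pick a projection-labelled full-domain idempotent $g$ and its mirror $g^*$, and set $z = a_g^{-1}a_{g^*}^{-1}$, or similar), show via singular square relations and Lemma \ref{la:GRtransl}\ref{it:GRt1} that $a_e a_{e^*}$ represents the \emph{same} element $z$ (or $z^{\pm1}$) of $\GI$ for \emph{every} $e\in E(n,r)$ — this should follow by connecting $e$ and a fixed reference idempotent through NT-reducing squares and the $\T_n$-results — and show $z$ is central. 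Modding out by $\langle z\rangle$ then identifies $a_e$ with $a_{e^*}^{-1}$, recovering exactly the $\PG$ presentation, so $\GI/\langle z\rangle\cong\GP\cong\S_r$.

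The remaining work is to show $\langle z\rangle\cong\Z$ and that the extension $1\to\langle z\rangle\to\GI\to\S_r\to1$ is a direct product. For the splitting, I would exhibit an explicit copy of $\S_r$ inside $\GI$: the Coxeter idempotents $e$ with $\lam(e)=(i,i+1)$ lying in $\T_n\cap E(n,r)$ generate a subgroup on which the relations of $\S_r$ hold (by Lemma \ref{la:GRtransl}\ref{it:GRt3},\ref{it:GRt4} plus braid relations inherited from \cite{GR12}), and the composite $\S_r\hookrightarrow\GI\twoheadrightarrow\S_r$ is the identity; combined with centrality of $z$ and $\GI = \langle z\rangle\cdot\S_r$ this gives $\GI\cong\langle z\rangle\times\S_r$. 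Infinitude of $\langle z\rangle$ is the subtle part: abstractly $\GI$ surjects onto $\S_r$ with kernel generated by $z$, but one must rule out $z$ having finite order. Here I expect to invoke the connection flagged in the introduction with the twisted partition monoid $\Ptw_n$ (which has the same biordered set as $\P_n$ but whose maximal subgroups are $\Z\times\S_r$): since $\RIG(\sfE(\P_n))$ surjects onto $\Ptw_n$ (being the free-est regular semigroup on this biordered set, and $\Ptw_n$ being projection/idempotent-generated and regular), and $\GI\cong$ the corresponding subgroup of $\RIG(E)$ by \ref{it:RIG3}, the maximal subgroup of $\Ptw_n$ of rank $r$, which is $\Z\times\S_r$, is a homomorphic image of $\GI$; combined with the upper bound $\GI/\langle z\rangle\cong\S_r$ and $\langle z\rangle$ cyclic, this forces $\langle z\rangle\cong\Z$ and $\GI\cong\Z\times\S_r$. \textbf{The main obstacle} is the well-definedness and centrality of $z$ — i.e.\ proving that $a_ea_{e^*}$ represents one and the same central element for all $e\in E(n,r)$ — which requires carefully chaining NT-reducing squares (Lemmas \ref{la:ehrsq}, \ref{la:type7}, \ref{la:decomp}) together with the imported $\T_n$-identities, and dealing with the fact, emphasized in Subsection \ref{ss:wow}, that not every rectangular band in $\P_n$ is a singular square, so one cannot freely use arbitrary squares.
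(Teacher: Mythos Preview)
Your overall architecture (upper bound via presentation manipulation, lower bound via the twisted monoid $\Ptw_n$, Hopfian conclusion) matches the paper's, but the upper-bound part has a genuine gap stemming from your choice of spanning tree.

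You choose $T=\Tfd\cup P(n,r)$, the same tree as in Theorem~\ref{thm:maxPGPn}. With this tree, $a_p=1$ for \emph{every} projection $p$, so for any projection $e=e^*$ you get $a_ea_{e^*}=1$; thus your proposed element $z$, defined as $a_ea_{e^*}$ ``for every $e\in E(n,r)$'', is forced to be trivial --- precisely the opposite of what you need. More seriously, your plan to reduce the full-codomain generators via a ``dual form of Lemma~\ref{la:fdtoTn}'' does not go through: that lemma needs $\Tfd\subseteq T$, so its dual needs $\Tfc\subseteq T$, and $\Tfc$ is not contained in $\Tfd\cup P(n,r)$ (the idempotents $e_p^*$ are neither full-domain nor projections). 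So with your tree you can collapse $F_1$ onto $A'$ but you have no mechanism for $F_2$, and no visible generator for the $\Z$ factor.

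The paper resolves both problems by taking a \emph{different} tree, $T(s)=\Tfd\cup\Tfc\cup\{s\}$ with a \emph{single} projection $s\in P_0$ (Subsection~\ref{ss:st}). Now both Lemma~\ref{la:eqlabel} and its dual Lemma~\ref{la:eqlabeldual} apply, and the generator carrying the $\Z$ factor is $a_t$ for $t\in P_1$ (which is \emph{not} in the tree). The ``main obstacle'' you flag then becomes a long sequence of explicit singular-square computations: showing all $a_p$ for $p\in P_1$ are equal (Subsection~\ref{ss:eP1}), all $a_p$ for $p\in P_0$ are equal and hence $1$ (Subsection~\ref{ss:eP0}), full-domain and full-codomain Coxeter generators coincide (Subsection~\ref{ss:glt}), and $a_t$ commutes with the $\T_n$-generators (Subsection~\ref{ss:comm}). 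These are not consequences of NT-reducing squares alone; each requires bespoke squares in $\P_n$ (see Figures~\ref{fig:P1ep1}--\ref{fig:peep1}), and the inductive arguments in Lemmas~\ref{la:P1nr} and~\ref{la:P0nr} rely crucially on working in the group $\Ghat(n,r)=\sfG(O)$ \emph{without} the relation $a_s=1$ (Remark~\ref{rem:presnUlab}). Your use of $\Ptw_n$ for the lower bound is exactly right and matches Proposition~\ref{prop:preimage}.
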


The basic idea of the proof is the same as in the previous section, i.e.~a reduction to~$\IG(\sfE(\T_n))$.
However, a major difference here is that the end result is \emph{not} $\S_r$, the corresponding maximal subgroup of $\IG(\sfE(\T_n))$, but rather $\Z\times\S_r$.  Consequently, we also need to account for the direct factor $\Z$.  Since the argument to follow is rather involved, we provide the following `road map' for the convenience of the reader.
\bit
\item 
We begin in Subsection \ref{ss:st} by identifying a suitable spanning tree $T(s)$, from which to build a presentation $\presn(T(s))$ for our group $\GI \cong \sfG(T(s))$.  This tree contains certain non-projection full-domain or -codomain idempotents with label $1$, plus an arbitrary single projection~${s\in P_0}$.  
\item 
In Subsection \ref{ss:lab1} we identify a large set $\labU = \labU(n,r)$ of idempotents $e$ of label $1$ for which the relation $a_e=1$ holds in our group $\sfG(T(s))$.  It follows that $\GI$ is isomorphic to $\sfG(\labU\cup\{s\})$, the group defined by the larger presentation $\presn(\labU\cup\{s\})$.  Removing the relation $a_s=1$ leaves us with a group $\Ghat(n,r) = \sfG(\labU)$, which (when $r\leq n-3$) contains many natural copies of~$\Ghat(n-1,r)$.  This is crucial, as it enables inductive proofs of the technical results of the next two subsections.
\item 
By Lemma \ref{la:geneli}, the group $\sfG(\labU)$ is generated by the set $A_F$, where $F = P_0\cup P_1 \cup F_1\cup F_2$ is as in \eqref{eq:Fnr}; cf.~\eqref{eq:F1F2}.  In Subsection \ref{ss:eP1} we show that all generators corresponding to projections from $P_1$ are equal in $\sfG(\labU)$.  Subsection \ref{ss:eP0} does the same for $P_0$.  Bringing the relation $a_s=1$ back in, it follows that our group is generated by $A_{F_1\cup F_2\cup\{t\}}$ for any fixed $t\in P_1$.
\item
In Subsection \ref{ss:glt} we show that certain generators from $A_{F_2}$ are equal to corresponding generators from $A_{F_1}$.  Combining this with earlier results will ultimately allow us to eliminate \emph{all} generators from $A_{F_2}$, so that our group $\sfG(\labU\cup\{s\})$ is generated by $A_{F_1\cup\{t\}}$.  Bringing in additional earlier results allows us to replace $F_1\cup\{t\}$ with the set $E' = E^\T(n,r)\cup\{t\}$ consisting of $t$ and all idempotent transformations of rank $r$.  By results of \cite{GR12}, the generators corresponding to $E^\T(n,r)$ satisfy the defining relations of the symmetric group $\S_r$.
\item
In Subsection \ref{ss:comm} we show that every generator $a_e$ with $e\in E^\T(n,r)$ commutes with $a_t$ (for our fixed $t\in P_1$).  Ultimately this will allow us to deduce that our group $\GI$ is a homomorphic image of $\Z\times\S_r$: the subgroup $\la A_{E^\T(n,r)}\ra$ is a quotient of $\S_r$ (as per the previous point), while the subgroup~$\la a_t\ra$ is cyclic, i.e.~a quotient of $\Z$, and the generators of these two subgroups commute.
\item
In Subsection \ref{ss:tw} we connect with the ($\Z$-)twisted partition monoid $\Ptw_n$, noting that results of \cite{EGMR2} -- including a biordered set isomorphism $\sfE(\Ptw_n) \cong \sfE(\P_n)$ -- imply that our group $\GI$ is also a homomorphic \emph{pre}-image of $\Z\times\S_r$.
\item
The proof is rounded off in Subsection \ref{ss:theend}, where we note that Hopficity of $\Z\times\S_r$ \cite{Hi69} implies that indeed $\GI\cong\Z\times\S_r$.
\eit

\subsection{A spanning tree}
\label{ss:st}

In what follows we will work with different presentations, involving generators that are subsets of $E(n,r)$ and relations involving the square relations and some relations of the form $a_e=1$.  As before, for any subset $F\sub E(n,r)$, we write
\begin{nitemize}
\item
$\presn(F)$ for the presentation $\langle A(n,r)\mid \rels_1(F),\ \rels_\subSq(n,r)\rangle$, and
\item
$\sfG(F)$ for the group defined by $\presn(F)$.
\end{nitemize}
By Theorem \ref{thm:spantreepres}, the maximal subgroup $\GI$ of $\IG(\sfE(\P_n))$ containing 
$x_{e_0}$  is isomorphic to the group $\sfG(T)$ for any spanning tree $T$ of the Graham--Houghton graph $\GH(D(n,r))$.
We are going to pick a specific spanning tree below, but we will also work with sets of idempotents $F$ that do not correspond to spanning trees.

As in the previous section, we start from the set $\Tfd$ constructed in Subsection \ref{ss:Tfd}, which is a spanning tree for the subgraph of $\GH(D(n,r))$ induced on ${I_0\cup (J_1\cup J_2\cup\cdots \cup J_{n-r})}$; see \eqref{eq:Tfd}.  Recall that $\Tfd$ contains the spanning tree $\Tlex$ for the Graham--Houghton graph $\GH(D^\T(n,r))$.  Applying the involution~${}^*$, the set 
 \begin{equation}
 \label{eq:Tfc}
\Tfc=\Tfc(n,r):= \Tfd^*= \Tlex^*\cup \{ e_p^*\colon p\in P_1\cup \cdots \cup P_{n-r-1}\}
\end{equation}
is a spanning tree for the subgraph of $\GH(D(n,r))$ induced on $(I_1\cup\cdots \cup I_{n-r}) \cup J_0$.  Consequently, $\Tfd\cup\Tfc$ has two connected components, so adding \emph{any} single edge between the two components will yield a spanning tree.  
We will specifically choose this additional edge $s$ to be a full-(co)domain projection, and denote the resulting tree by:
\begin{equation}\label{eq:Ts}
T(s):=\Tfd\cup\Tfc\cup\{s\} \qquad\text{for $s\in P_0 = P_0(n,r)$.}
\end{equation}
A schematic diagram of this tree is shown in Figure \ref{fig:Ts}.

\begin{lemma}
\label{la:Tlexpres}
For any full-domain projection $s\in P_0(n,r)$, the group $\GI$  is defined by the presentation
\[
\presn(T(s))=\bigl\langle A(n,r)\mid \rels_1 (T(s)), \rels_\subSq(n,r)\bigr\rangle.
\]
In other words, $\GI\cong \sfG(T(s))$.
\end{lemma}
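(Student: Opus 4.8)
The statement to be proved, Lemma \ref{la:Tlexpres}, is an immediate specialisation of Theorem \ref{thm:spantreepres}: once we know that $T(s)$ is a spanning tree of the Graham--Houghton graph $\GH(D(n,r))$, Theorem \ref{thm:spantreepres} gives exactly the presentation $\presn(T(s)) = \langle A(n,r) \mid \rels_1(T(s)),\ \rels_\subSq(n,r)\rangle$ for $\GI$, the maximal subgroup of $\IG(\sfE(\P_n))$ containing $x_{e_0}$ (recall that any two idempotents in the same $\D$-class yield isomorphic maximal subgroups, so it is harmless to pass between $e_0$ and the projection $p_0$ of rank $r$). So the entire content of the proof is the verification that $T(s)$, as defined in \eqref{eq:Ts}, is indeed a spanning tree.

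First I would recall what has already been assembled. By construction in Subsection \ref{ss:Tfd}, $\Tfd = \Tfd(n,r)$ is a spanning tree for the subgraph of $\GH(D(n,r))$ induced on the vertex set $I_0 \cup (J_1\cup\cdots\cup J_{n-r})$; applying the involution ${}^*$ (which is a graph automorphism of $\GH(D(n,r))$ swapping the $I$- and $J$-sides and sending $I_k\leftrightarrow J_k$) shows that $\Tfc = \Tfd^*$ is a spanning tree for the subgraph induced on $(I_1\cup\cdots\cup I_{n-r})\cup J_0$; see \eqref{eq:Tfc}. Now the two vertex sets $V_1 := I_0\cup(J_1\cup\cdots\cup J_{n-r})$ and $V_2 := (I_1\cup\cdots\cup I_{n-r})\cup J_0$ together exhaust all vertices $I\cup J$ of $\GH(D(n,r))$, and they are disjoint: the $I$-side is partitioned as $I_0$ (in $V_1$) together with $I_1\cup\cdots\cup I_{n-r}$ (in $V_2$), and dually for the $J$-side. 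Hence $\Tfd\cup\Tfc$ is a forest spanning all vertices, with exactly two connected components, namely the trees on $V_1$ and on $V_2$. It is a standard fact that adding a single edge joining the two components of such a spanning forest produces a spanning tree, so it remains only to exhibit that $s$ is a legitimate edge of $\GH(D(n,r))$ joining $V_1$ and $V_2$.

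For this, take any full-domain projection $s\in P_0(n,r)$, i.e.\ a projection of rank $r$ with $\NTu(s) = \NTd(s) = 0$. Regarding $s$ as an idempotent, it is identified (as in Definition \ref{defn:GHD}, using the conventions of Subsection \ref{sec:common}) with the edge joining the $\R$-class vertex $i_s$ and the $\L$-class vertex $j_s$ of $\GH(D(n,r))$. Since $\NTu(s) = 0$ we have $i_s\in I_0\subseteq V_1$, and since $\NTd(s) = 0$ we have $j_s\in J_0\subseteq V_2$; thus $s$ is an edge with one endpoint in each component of the forest $\Tfd\cup\Tfc$. Therefore $T(s) = \Tfd\cup\Tfc\cup\{s\}$ is connected and has $|I| + |J| - 1$ edges, hence is a spanning tree of $\GH(D(n,r))$. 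Plugging this into Theorem \ref{thm:spantreepres} yields the claimed presentation $\presn(T(s))$ for $\GI$, so $\GI\cong\sfG(T(s))$, completing the proof. (If one wanted to, one could equally invoke Corollary \ref{co:lkid} or the discussion in Subsection \ref{sec:common}, but Theorem \ref{thm:spantreepres} is the cleanest route.)

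\textbf{Main obstacle.} There is essentially no obstacle here: the proof is bookkeeping about which vertex classes the relevant edges touch, plus the elementary graph-theoretic observation about joining components of a forest. The only thing to be careful about is the disjointness of $V_1$ and $V_2$ and that their union is everything --- which is exactly the point of having introduced the refined index sets $I = I_0\cup\cdots\cup I_{n-r}$ and $J = J_0\cup\cdots\cup J_{n-r}$ in Subsection \ref{ss:Tfd} --- and the fact that a full-domain projection lives in $I_0$ on the $\R$-side and in $J_0$ on the $\L$-side, which is immediate from $\NTu = \NTd = 0$ for projections.
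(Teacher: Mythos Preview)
Your proposal is correct and follows essentially the same approach as the paper: the paper's proof is a one-line appeal to Theorem \ref{thm:spantreepres}, since the verification that $T(s)$ is a spanning tree (the disjoint components $\Tfd$ and $\Tfc$, joined by the edge $s$ with $i_s\in I_0$ and $j_s\in J_0$) was already carried out in the paragraphs immediately preceding the lemma. You have simply folded that preceding discussion into the proof itself, which is harmless.
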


\begin{proof}
This is just the presentation given by Theorem \ref{thm:spantreepres} stated in terms of the specific spanning tree $T(s)$ we have just built.
\end{proof}

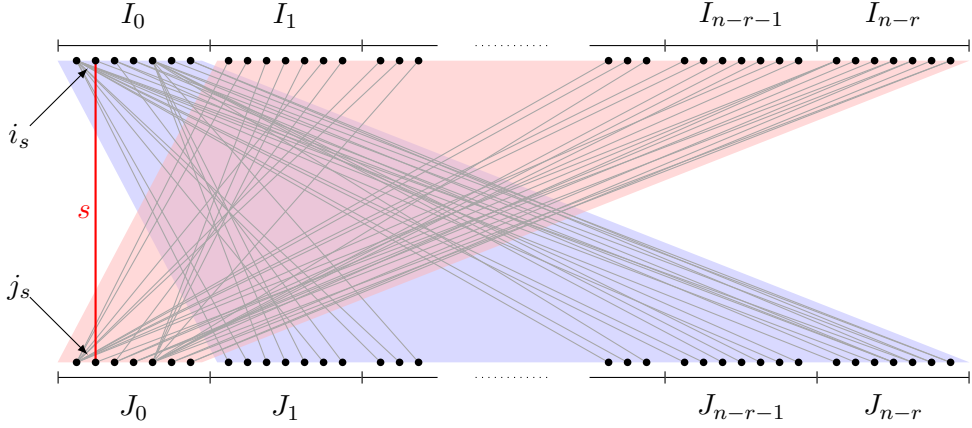
\begin{figure}[t]
\begin{center}
\scalebox{1}{
\begin{tikzpicture}[scale=1]

\nc\yy{0.2}
\nc\zz{0.4}

\foreach \x/\y in {0/2,2/4,4/5,12/10,10/8,8/7} {\draw[|-] (\x,0-\yy)--(\y,0-\yy); \draw[|-] (\x,4+\yy)--(\y,4+\yy);}
\draw[dotted] (5.5,-\yy)--(6.5,-\yy);
\draw[dotted] (5.5,4+\yy)--(6.5,4+\yy);

\node () at (1,4+\yy+\zz) {$I_0$};
\node () at (3,4+\yy+\zz) {$I_1$};
\node () at (9,4+\yy+\zz) {$I_{n-r-1}$};
\node () at (11,4+\yy+\zz) {$I_{n-r}$};
\node () at (1,0-\yy-\zz) {$J_0$};
\node () at (3,0-\yy-\zz) {$J_1$};
\node () at (9,0-\yy-\zz) {$J_{n-r-1}$};
\node () at (11,0-\yy-\zz) {$J_{n-r}$};

\fill[blue!30,opacity=0.5] (0,4)--(1.9,4)--(12,0)--(2.1,0)--(0,4);
\fill[red!30,opacity=0.5] (0,0)--(1.9,0)--(12,4)--(2.1,4)--(0,0);

{\nc\zzz2 \foreach \x/\y in {1/1,2/1,3/5,4/5,5/5,6/1,7/3} {\draw[gray!70](\y*.25,4)--(\x*.25+\zzz,0); \draw[gray!70](\y*.25,0)--(\x*.25+\zzz,4);}; }
{\nc\zzz4 \foreach \x/\y in {1/1,2/1,3/5} {\draw[gray!70](\y*.25,4)--(\x*.25+\zzz,0); \draw[gray!70](\y*.25,0)--(\x*.25+\zzz,4);}; }
{\nc\zzz6 \foreach \x/\y in {5/1,6/1,7/5} {\draw[gray!70](\y*.25,4)--(\x*.25+\zzz,0); \draw[gray!70](\y*.25,0)--(\x*.25+\zzz,4);}; }
{\nc\zzz8 \foreach \x/\y in {1/1,2/1,3/5,4/5,5/5,6/1,7/3} {\draw[gray!70](\y*.25,4)--(\x*.25+\zzz,0); \draw[gray!70](\y*.25,0)--(\x*.25+\zzz,4);}; }
{\nc\zzz{10} \foreach \x/\y in {1/1,1/3,1/5,2/4,3/4,4/2,4/6,5/4,5/7,6/7,7/5,7/6} {\draw[gray!70](\x*.25,4)--(\y*.25+\zzz,0); \draw[gray!70](\y*.25,0)--(\x*.25+\zzz,4);}; }

\draw[red,thick](.5,4)--(.5,0);
\node[red] () at (.35,2) {$s$};
\node (is) at (-.5,3) {};
\node () at (-.5,3) {$i_s$};
\node (js) at (-.5,1) {};
\node () at (-.5,1) {$j_s$};
\draw[-{latex}] (is)--(.4,3.9);
\draw[-{latex}] (js)--(.4,0.1);

\foreach \x in {.25,.5,.75,1,1.25,1.5,1.75} {\fill (\x,4)circle(.05); \fill (\x,0)circle(.05);}
\foreach \x in {.25,.5,.75,1,1.25,1.5,1.75} {\fill (\x+2,4)circle(.05); \fill (\x+2,0)circle(.05);}
\foreach \x in {.25,.5,.75} {\fill (\x+4,4)circle(.05); \fill (\x+4,0)circle(.05);}
\foreach \x in {1.25,1.5,1.75} {\fill (\x+6,4)circle(.05); \fill (\x+6,0)circle(.05);}
\foreach \x in {.25,.5,.75,1,1.25,1.5,1.75} {\fill (\x+8,4)circle(.05); \fill (\x+8,0)circle(.05);}
\foreach \x in {.25,.5,.75,1,1.25,1.5,1.75} {\fill (\x+10,4)circle(.05); \fill (\x+10,0)circle(.05);}

\end{tikzpicture}
}
\caption{Schematic diagram of the tree $T(s) = \Tfd\cup\Tfc\cup\{s\}$ from \eqref{eq:Ts}, which is a spanning tree for the Graham--Houghton graph $\GH(D(n,r))$ in the case $r\geq1$.  The edges from $\Tfd$ and $\Tfc$ are indicated in grey, and are contained in the blue and pink regions, respectively, and the edge $s$ is indicated in red.}
\label{fig:Ts}
\end{center}
\end{figure}

\subsection[Generators with label $1$]{Generators with label \boldmath{$1$}}
\label{ss:lab1}

We now prove a sequence of results involving the labels introduced in Subsection \ref{ss:labs}.  We start with Lemma \ref{la:Tlab1}, which observes that all elements of the spanning tree $T(s)$ in \eqref{eq:Ts} have label $1$, and then build to Lemma~\ref{la:Upres}, which says that we can add the relations $a_e=1$ to the presentation $\presn(T(s))$ for \emph{many} more idempotents $e$ with label~$\lam(e)=1$.

\begin{lemma}
\label{la:Tlab1}
All the idempotents in the spanning tree $T(s)$ have label $1$.
\end{lemma}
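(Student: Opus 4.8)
The statement to be proved is that every idempotent occurring as an edge of the spanning tree $T(s) = \Tfd\cup\Tfc\cup\{s\}$ has label $1$. Since $T(s)$ is built as a union of several explicitly described pieces, the plan is to verify the claim piece by piece, invoking Lemma~\ref{la:lamprops} wherever a projection or a transpose is involved.

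First I would recall from \eqref{eq:Ts} that $T(s) = \Tfd\cup\Tfc\cup\{s\}$, and from \eqref{eq:Tfd}, \eqref{eq:Tfc} that $\Tfd = \Tlex\cup\{e_p : p\in P_1\cup\cdots\cup P_{n-r-1}\}$ and $\Tfc = \Tlex^*\cup\{e_p^* : p\in P_1\cup\cdots\cup P_{n-r-1}\}$. Thus it suffices to check four families of idempotents: (1) the edges of $\Tlex$; (2) the idempotents $e_p$; (3) the idempotents $e_p^*$; and (4) the single extra edge $s$. For (4): $s\in P_0(n,r)$ is a projection, so $\lam(s)=1$ by Lemma~\ref{la:lamprops}\ref{it:lp1}. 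For (1): by Proposition~\ref{pr:treeTn}, $\Tlex = \{e_{V,\mapC(V)} : V\in I\}\cup\{e_{\mapV(C),C} : C\in J\}$, and I would show directly that $\lam(e_{V,\mapC(V)}) = \lam(e_{\mapV(C),C}) = 1$. For an idempotent transformation $e_{V,C} = \trans{V_1\cdots V_r}{c_1\cdots c_r}$ with $c_i\in V_i$, the partial bijection $\lamp(e_{V,C})$ connects $\min(V_i)$ (on top) to $c_i$ (on the bottom, since the bottom block of the $i$th transversal is the singleton $\{c_i\}$). When $C = \mapC(V) = \{\min(V_1),\ldots,\min(V_r)\}$ we have $c_i = \min(V_i)$, so $\lamp(e_{V,\mapC(V)})$ is the identity partial bijection on $\{\min(V_1),\ldots,\min(V_r)\}$, whence $\lam = 1$. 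For $e_{\mapV(C),C}$ with $C = \{c_1<\cdots<c_r\}$ and $\mapV(C) = \{[1,c_1],(c_1,c_2],\ldots,(c_{r-1},n]\}$, note that $c_i$ is the minimum of the $(i{-}1)$th block $(c_{i-1},c_i]$ only when $c_i = c_{i-1}+1$; in general $\min$ of the $i$th block of $\mapV(C)$ is $c_{i-1}+1$ (with $c_0:=0$), which is $\leq c_i$, and crucially the order is preserved, so $\lamp$ maps an increasing sequence to the same increasing sequence after scaling down, giving $\lam = 1$. (This is exactly the content already used implicitly in Lemma~\ref{la:GRtransl}\ref{it:GRt1} combined with Proposition~\ref{pr:treeTn}; I would cite that or spell it out.)

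For family (2): each $e_p$, defined in \eqref{eq:ep}, has transversals $A_1\cup B_1\cup\cdots\cup B_k$ over $A_1$, and $A_i$ over $A_i$ for $i\geq 2$, where $p = \begin{partn}{6} A_1 & \cdots & A_r & B_1 & \cdots & B_k\\ \hhline{~|~|~|-|-|-} A_1 & \cdots & A_r & B_1 & \cdots & B_k\end{partn}$ with $\min(A_1)<\cdots<\min(A_r)$. The top minima of the transversals of $e_p$ are $\min(A_1\cup B_1\cup\cdots\cup B_k)$, $\min(A_2)$, \ldots, $\min(A_r)$, and the bottom minima are $\min(A_1)$, $\min(A_2)$, \ldots, $\min(A_r)$; since $\min(A_1\cup B_1\cup\cdots\cup B_k)\leq\min(A_1)$ and all remaining top/bottom minima agree, the partial bijection $\lamp(e_p)$ is either already the identity or becomes the identity after scaling down to $\S_r$ (the relative order of the $r$ top points and the $r$ bottom points is the identity permutation). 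Hence $\lam(e_p) = 1$. For family (3): $\lam(e_p^*) = \lam(e_p)^{-1} = 1$ by Lemma~\ref{la:lamprops}\ref{it:lp2}. Collecting (1)--(4) proves the lemma.

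The only mild subtlety — and the place where a careful argument is needed rather than a one-line invocation — is the verification that $\lam(e_{\mapV(C),C}) = 1$, because here $\lamp$ is genuinely not the identity partial bijection (its top points $c_{i-1}+1$ differ from its bottom points $c_i$), so one must argue that \emph{scaling down to $\S_r$} turns it into the identity. This follows because the map $i\mapsto c_{i-1}+1$ is strictly increasing in $i$ and the map $i\mapsto c_i$ is strictly increasing in $i$, so after applying the order-isomorphisms onto $[r]$ both sequences become $(1,2,\ldots,r)$ and the resulting permutation is the identity; Lemma~\ref{la:consec} (or a direct appeal to the definition of $\lam$) makes this precise. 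I do not anticipate any serious obstacle here — it is a routine but slightly fiddly bookkeeping check with the explicit formulas, and everything needed is already available in the preceding subsections.
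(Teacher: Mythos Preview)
Your approach is essentially identical to the paper's: both case-split over the explicit pieces of $T(s)$, compute $\lamp$ directly for the $\Tlex$-edges and the $e_p$, and invoke Lemma~\ref{la:lamprops} for projections and transposes. One small bookkeeping slip: your list of four families omits $\Tlex^*\subseteq\Tfc$, but the same argument you give for family~(3) (namely $\lam(e^*)=\lam(e)^{-1}=1$ via Lemma~\ref{la:lamprops}\ref{it:lp2}, together with part~(1)) handles it immediately.
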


\begin{proof}
Consider an arbitrary $e\in T(s)$. We split the proof into several cases, depending on the form of $e$,
as indicated in \eqref{eq:Ts}.
\medskip

\noindent
\textit{Case 1: $e=e_{V, \mapC(V)}\in \Tlex$, where $V=\{V_1,\ldots,V_r\}$ is a partition of $[n]$ with $r$ blocks.} Letting $v_k=\min(V_k)$ for $k\in [r]$, we have 
$e
=\begin{partn}{3} V_1 & \cdots & V_r \\ \hhline{~|~|~} v_1 & \cdots & v_r\end{partn}$
(omitting the trivial blocks)
so that
$\lamp(e)=\trans{v_1&\cdots&v_r}{v_1&\cdots & v_r}$, and  hence
$\lam(e)=\trans{1&\cdots&r}{1&\cdots & r}$.
\medskip

\noindent
\textit{Case 2: $e=e_{\mapV(C), C} \in \Tlex$, where $C=\{c_1<\cdots<c_r\}\subseteq [n]$.}
In this case we have
\[
e=\begin{partn}{5} [1,c_1] & (c_1,c_2] & \cdots & (c_{r-2},c_{r-1}] & (c_{r-1},n] \\ \hhline{~|~|~|~|~} c_1 & c_2&\cdots &c_{r-1} &c_r\end{partn},
\]
so that
$\lamp(e)=\trans{1&c_1+1&\cdots&c_{r-2}+1&c_{r-1}+1}{c_1&c_2&\cdots&c_{r-1} & c_r}$, and  hence
$\lam(e)=\trans{1&\cdots&r}{1&\cdots & r}$.
\medskip

\noindent
\textit{Case 3: $e=e_p$, where $p\in P_k$ for some $1\leq k\leq n-r-1$.} With notation as in \eqref{eq:ep}, let $a_k:=\min(A_k)$ for $1\leq k\leq r$, and note that $\min(A_1\cup B_1\cup\cdots\cup B_k)=1$.  It follows that $\lamp(e)=\trans{1&a_2&\cdots&a_{r}}{a_1&a_2&\cdots& a_r}$, and hence $\lam(e)=\trans{1&\cdots&r}{1&\cdots & r}$.
\medskip

\noindent
\textit{Case 4: $e\in \Tfc$.}
This case follows from Cases 1--3, using Lemma \ref{la:lamprops}\ref{it:lp2}.
\medskip

\noindent
\textit{Case 5: $e=s$.}
This follows from Lemma \ref{la:lamprops}\ref{it:lp1}.
\end{proof}

In what follows we will in fact want to work with a set of relations that is larger than~$\rels_1(T(s))$,
and we now work towards enabling that. We begin by identifying certain sets of idempotents:
\begin{nitemize}
\item
$\labU_\fd=\labU_\fd (n,r):= \bigl\{e\in E(n,r)\colon \lambda(e)=1 \text{ and }
\NTu(e)=0<\NTd(e)\bigr\}$ -- the set of full-domain, non-projection idempotents with label $1$;
\item
$\labU_\fc=\labU_\fc (n,r):= \bigl\{e\in E(n,r)\colon \lambda(e)=1 \text{ and }
\NTd(e)=0<\NTu(e)\bigr\}$ -- the set of full-codomain, non-projection idempotents with label $1$;
\item
$\labU=\labU(n,r):=\labU_\fd(n,r)\cup\labU_\fc(n,r)$.
\end{nitemize}
Lemma \ref{la:Tlexpres} says that the maximal subgroup $\GI$ under consideration is defined by the presentation $\presn(T(s))$.
Shortly we are going to see that the presentation $\presn(T(s))$ is equivalent to $\presn(\labU\cup\{s\})$, so that $\GI$ also has presentation $\presn(\labU\cup\{s\})$.
In fact, much of what we will prove does not use the defining relation $a_s=1$, and 
we will prefer to work with the slightly smaller presentation $\presn(\labU)$.  This is to facilitate the inductive proofs of Lemmas \ref{la:P1nr} and \ref{la:P0nr} below; see Remark \ref{rem:presnUlab}.
The group defined by $\presn(\labU)$ is typically not isomorphic to $\GI$, but only a homomorphic preimage of it.  
It is worth noting that~$\labU$ contains $T_\fd\cup T_\fc$; indeed, the elements of the latter set are non-projections by definition, and they have label $1$ by Lemma \ref{la:Tlab1}.

\begin{lemma}
\label{la:eqlabel}
Let $e,f\in E(n,r)$ be any non-projection idempotents with full domain.
\begin{thmenumerate}
\item
\label{it:eql1}
If $\lam(e)=\lam(f)$ then $a_e=a_f$ is a consequence of $\presn(\Tfd)$.
\item
\label{it:eql2}
If $\lambda(e)=1$ then $a_e=1$ is a consequence of $\presn(\Tfd)$.
\end{thmenumerate}
\end{lemma}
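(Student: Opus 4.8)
\textbf{Proof plan for Lemma \ref{la:eqlabel}.}

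The plan is to reduce both parts to the machinery from \cite{GR12} that has already been repackaged in Lemma \ref{la:fdtoTn} and Lemma \ref{la:GRtransl}. Part \ref{it:eql2} follows immediately from part \ref{it:eql1}: if $\lam(e)=1$, then since the identity of the relevant subgroup can be represented by \emph{some} full-domain non-projection idempotent of label $1$ (or, more cleanly, by observing that $e_p$ as in \eqref{eq:ep} is a full-domain non-projection idempotent of label $1$ for any $p\in P_1\cup\cdots\cup P_{n-r-1}$, and these $e_p$ lie in $\Tfd$ by \eqref{eq:Tfd}, so that $a_{e_p}=1$ is in $\rels_1(\Tfd)$), part \ref{it:eql1} gives $a_e = a_{e_p} = 1$ as a consequence of $\presn(\Tfd)$. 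So the whole content is part \ref{it:eql1}.

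For part \ref{it:eql1}, the first step is to invoke Lemma \ref{la:fdtoTn}: since $e$ and $f$ are full-domain non-projection idempotents, there exist idempotent transformations $e',f'\in E^\T(n,r)$ with $\lam(e')=\lam(e)$, $\lam(f')=\lam(f)$, and with $a_e = a_{e'}$ and $a_f = a_{f'}$ both consequences of $\presn(\Tfd)$ (here I use $\Tfd\subseteq\Tfd$ trivially, i.e.\ the ``$T = \Tfd$'' instance of that lemma's ``furthermore'' clause). Now $\lam(e') = \lam(e) = \lam(f) = \lam(f')$ by hypothesis, so by Lemma \ref{la:GRtransl}\ref{it:GRt2} we get $a_{e'} = a_{f'}$ as a consequence of $\langle A^\T(n,r)\mid \rels_1(\Tlex),\ \rels_\subSq^\T(n,r)\rangle$. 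Since $\Tlex\subseteq\Tfd$ and $\rels_\subSq^\T(n,r)\subseteq\rels_\subSq(n,r)$, every consequence of that smaller presentation is also a consequence of $\presn(\Tfd)$. Chaining the three equalities $a_e = a_{e'} = a_{f'} = a_f$ then gives the claim.

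The only point requiring a little care — and the place I would expect a referee to want a sentence — is the bookkeeping around which presentations ``contain'' which, i.e.\ that the generating sets and relation sets in Lemma \ref{la:fdtoTn} and Lemma \ref{la:GRtransl} are all sub-presentations of $\presn(\Tfd) = \langle A(n,r)\mid \rels_1(\Tfd),\ \rels_\subSq(n,r)\rangle$. This is immediate from the inclusions $A^\T(n,r)\subseteq A(n,r)$, $\Tlex\subseteq\Tfd$, and $\rels_\subSq^\T(n,r)\subseteq\rels_\subSq(n,r)$, together with the fact (used implicitly throughout Section \ref{sec:common}) that adding generators and relations only enlarges the set of consequences. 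There is no genuine obstacle here; the lemma is essentially a translation lemma whose real work was done in Lemma \ref{la:fdtoTn} and in \cite{GR12}. The one thing to double-check is that Lemma \ref{la:fdtoTn} as stated produces $f'\in E^\T(n,r)$ with $\lam(f')=\lam(f)$ \emph{and} $a_f=a_{f'}$ a consequence of $\presn(T)$ for $T=\Tfd$ — which it does, since $\Tfd\subseteq\Tfd$ satisfies the hypothesis ${\Tfd\subseteq T\subseteq E(n,r)}$.
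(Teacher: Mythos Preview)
Your proposal is correct and essentially identical to the paper's proof. The paper argues part \ref{it:eql1} exactly as you do (Lemma \ref{la:fdtoTn} to pass to $e',f'\in E^\T(n,r)$, then Lemma \ref{la:GRtransl}\ref{it:GRt2} for $a_{e'}=a_{f'}$), and derives part \ref{it:eql2} from \ref{it:eql1} via Lemma \ref{la:Tlab1}, which amounts to your observation that the elements of $\Tfd$ are full-domain non-projection idempotents of label $1$.
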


\begin{proof}
\ref{it:eql1} 
By Lemma \ref{la:fdtoTn}, there exist idempotent transformations $e',f'\in E^\T(n,r)$ such that $\lam(e')=\lam(e)$ and $\lam(f')=\lam(f)$,
and such that the relations $a_e=a_{e'}$ and $a_f=a_{f'}$ are consequences of $\presn(\Tfd)$.
The relation $a_{e'}=a_{f'}$ is a consequence of $\presn(\Tfd)$ by Lemma \ref{la:GRtransl}\ref{it:GRt2}.

\ref{it:eql2} 
This follows from \ref{it:eql1}, keeping in mind Lemma \ref{la:Tlab1}.
\end{proof}

Lemma \ref{la:eqlabel} has an obvious dual, which we state without proof:

\begin{lemma}
\label{la:eqlabeldual}
Let $e,f\in E(n,r)$ be any non-projection idempotents with full codomain.
\begin{thmenumerate}
\item
\label{it:eql1*}
If $\lam(e)=\lam(f)$ then $a_e=a_f$ is a consequence of $\presn(\Tfc)$.
\item
\label{it:eql2*}
If $\lambda(e)=1$ then $a_e=1$ is a consequence of $\presn(\Tfc)$. \qed
\end{thmenumerate}
\end{lemma}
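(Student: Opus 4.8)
The statement to prove is Lemma~\ref{la:eqlabeldual}, the left-right dual of Lemma~\ref{la:eqlabel}. So the plan is essentially to invoke duality and explain why the dual statement holds, since a genuine proof would just repeat the argument of Lemma~\ref{la:eqlabel} after applying the involution.

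\medskip

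\emph{The plan is as follows.} The key observation is that the involution ${}^*$ on $\P_n$ is an anti-automorphism that swaps the roles of domain and codomain, kernel and cokernel, and hence of $\NTu$ and $\NTd$. It sends $E(n,r)$ to itself, sends full-codomain idempotents to full-domain idempotents, and by Lemma~\ref{la:lamprops}\ref{it:lp2} it transforms labels by $\lam(e^*) = \lam(e)^{-1}$. Moreover, by \eqref{eq:Tfc} the tree $\Tfc$ is \emph{defined} as $\Tfd^*$, and applying ${}^*$ to a singular square $\smat efgh$ yields a singular square $\smat{f^*}{e^*}{h^*}{g^*}$ (with the singularising idempotent also starred), so ${}^*$ induces a bijection from $\Sq(n,r)$ to itself. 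Consequently the map $a_e \mapsto a_{e^*}$ sends the presentation $\presn(\Tfd)$ to the presentation $\presn(\Tfc)$ relation-for-relation: it sends $\rels_1(\Tfd)$ to $\rels_1(\Tfc)$ and $\rels_\subSq(n,r)$ to itself (as $\smat efgh \in \Sq(n,r)$ iff $\smat{f^*}{e^*}{h^*}{g^*}\in\Sq(n,r)$). Hence any equality of words that is a consequence of $\presn(\Tfd)$ gives, after starring all generators, an equality that is a consequence of $\presn(\Tfc)$.

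\medskip

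\emph{Deriving the two parts.} For part~\ref{it:eql1*}, let $e,f\in E(n,r)$ be non-projection idempotents with full codomain and $\lam(e)=\lam(f)$. Then $e^*,f^*\in E(n,r)$ are non-projection idempotents with full domain, and $\lam(e^*) = \lam(e)^{-1} = \lam(f)^{-1} = \lam(f^*)$. By Lemma~\ref{la:eqlabel}\ref{it:eql1}, the relation $a_{e^*} = a_{f^*}$ is a consequence of $\presn(\Tfd)$. Applying the substitution $a_g \mapsto a_{g^*}$ (equivalently, re-reading the whole derivation in the starred presentation, using that ${}^*$ is an involution) shows that $a_e = a_f$ is a consequence of $\presn(\Tfc)$. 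For part~\ref{it:eql2*}, suppose in addition $\lam(e)=1$; then $\lam(e^*) = 1^{-1} = 1$, so by Lemma~\ref{la:eqlabel}\ref{it:eql2} the relation $a_{e^*}=1$ is a consequence of $\presn(\Tfd)$, and starring gives that $a_e = 1$ is a consequence of $\presn(\Tfc)$, as required.

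\medskip

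\emph{What is delicate here.} There is no real mathematical obstacle; the only care needed is in making the ``duality transfer'' airtight, i.e.\ in checking precisely that starring carries $\presn(\Tfd)$ to $\presn(\Tfc)$ as presentations (matching generators, defining relations of type $\rels_1$, and the singular square relations $\rels_\subSq$). The cleanest write-up would be a single sentence: ``This is the left-right dual of Lemma~\ref{la:eqlabel}, using that $\Tfc = \Tfd^*$, that ${}^*$ is a bijection of $\Sq(n,r)$, and that $\lam(e^*) = \lam(e)^{-1}$.'' Indeed, given that the excerpt states Lemma~\ref{la:eqlabeldual} ``without proof'', the expected treatment is exactly this one-line appeal to duality rather than a re-run of the full argument.
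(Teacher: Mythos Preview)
Your approach is exactly what the paper does: it states Lemma~\ref{la:eqlabeldual} as the ``obvious dual'' of Lemma~\ref{la:eqlabel} without proof, relying on $\Tfc=\Tfd^*$ and the involution. One small technical correction: the starred version of a square $\smat efgh$ is $\smat{e^*}{g^*}{f^*}{h^*}$ (the involution swaps $\R$ and $\L$, so rows and columns transpose), not $\smat{f^*}{e^*}{h^*}{g^*}$; consequently the presentation-level map that carries $\presn(\Tfd)$ to $\presn(\Tfc)$ relation-for-relation is $a_e\mapsto a_{e^*}^{-1}$ rather than $a_e\mapsto a_{e^*}$ (check that $a_e^{-1}a_f=a_g^{-1}a_h$ then goes to $a_{g^*}^{-1}a_{e^*}=a_{h^*}^{-1}a_{f^*}$, which is the relation for the UD square $\smat{g^*}{e^*}{h^*}{f^*}$). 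This does not affect your conclusions, since applying this corrected map to $a_{e^*}=a_{f^*}$ or $a_{e^*}=1$ still yields $a_e=a_f$ or $a_e=1$.
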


We can now achieve our above-mentioned goal:

\begin{lemma}
\label{la:Upres}
For any $1\leq r\leq n-2$, and any $s\in P_0(n,r)$, the group $\GI$ is defined by the presentation
\[
\presn(\labU\cup\{s\})=\bigl\langle A(n,r)\mid \rels_1(\labU\cup\{s\}),\ \rels_\subSq(n,r)\bigr\rangle.
\]
In other words $\GI\cong \sfG(\labU\cup\{s\})$.
\end{lemma}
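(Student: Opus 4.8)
The plan is to show that the two presentations $\presn(T(s))$ and $\presn(\labU\cup\{s\})$ are equivalent, i.e.\ that they have the same generating set $A(n,r)$ (which they do by construction) and that the two sets of defining relations imply one another over the free group on $A(n,r)$. Since the square relations $\rels_\subSq(n,r)$ appear verbatim in both presentations, and since $\rels_1(T(s)) \subseteq \rels_1(\labU\cup\{s\})$ (because $T(s)\subseteq \labU\cup\{s\}$ — this is the content I must first verify, see below), the only thing left to prove is that every relation $a_e=1$ with $e\in\labU\cup\{s\}$ is a consequence of $\rels_1(T(s))\cup\rels_\subSq(n,r) = \presn(T(s))$. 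Once that is established, Lemma \ref{la:Tlexpres} gives $\GI\cong\sfG(T(s))\cong\sfG(\labU\cup\{s\})$, which is exactly the statement.

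First I would check the inclusion $T(s)\subseteq\labU\cup\{s\}$. By definition \eqref{eq:Ts}, $T(s)=\Tfd\cup\Tfc\cup\{s\}$, and $s\in P_0$ is one of the allowed extra elements. For the rest: every $e\in\Tfd$ is either a transformation in $\Tlex$ (full domain, trivial cokernel, hence full codomain only if it is a projection) or of the form $e_p$ with $p\in P_1\cup\cdots\cup P_{n-r-1}$, which has full domain and is not a projection (it has $\NTd(e_p)=\NTu(p)\geq 1$); by Lemma \ref{la:Tlab1} it has label $1$, so $e\in\labU_\fd$ unless it happens to be a projection, in which case $a_e=1$ is still a consequence of $\presn(T(s))$ since $e\in P_0\cup\{$trivial projections$\}$ — actually I should be slightly careful here and handle the projection members of $\Tlex$ separately, noting they lie in $P_0(n,r)$ and their $a_e=1$ relations will be recovered below anyway. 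The $\Tfc$ part is handled by applying the involution and Lemma \ref{la:lamprops}\ref{it:lp2}: every $e\in\Tfc$ has full codomain and label $1$, so $e\in\labU_\fc$ or is a full-codomain projection. So modulo the projection members, $T(s)\subseteq\labU\cup\{s\}$, and the projection members cause no trouble.

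The core of the argument is then: for every $e\in\labU\cup\{s\}$, the relation $a_e=1$ holds in $\sfG(T(s))$. Split into cases. If $e=s$ or $e$ is any other full-domain projection in $T(s)$, then $a_e=1\in\rels_1(T(s))$ directly. If $e\in\labU_\fd$, i.e.\ $e$ is a non-projection full-domain idempotent with $\lam(e)=1$, then by Lemma \ref{la:eqlabel}\ref{it:eql2} (whose hypothesis $\presn(\Tfd)$ is available since $\Tfd\subseteq T(s)$, so every consequence of $\presn(\Tfd)$ is a consequence of $\presn(T(s))$) we get $a_e=1$. Dually, if $e\in\labU_\fc$, Lemma \ref{la:eqlabeldual}\ref{it:eql2*} applies using $\Tfc\subseteq T(s)$. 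This exhausts $\labU\cup\{s\}$. Conversely, $\presn(\labU\cup\{s\})$ trivially implies $\presn(T(s))$ since its relation set is larger. Hence the two presentations are equivalent and define the same group.

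\textbf{Main obstacle.} I expect the only real subtlety is the bookkeeping around which idempotents in $\Tlex$ (and $\Tfc$) are projections versus non-projections, since Lemmas \ref{la:eqlabel} and \ref{la:eqlabeldual} are stated only for \emph{non-projection} idempotents; the projection members must be dispatched by the direct observation that their $a_e=1$ relations are already in $\rels_1(T(s))$ (being full-(co)domain projections lying in $P_0(n,r)$, which is contained in $T(s)$ once one extends $\Tfd\cup\Tfc$ appropriately) — but one should double-check that the construction of $\Tfd$ and $\Tfc$ does in fact place exactly the full-(co)domain projections one needs, or else note that the only such projections actually occurring in $\Tlex$ are handled by Case 1 of Lemma \ref{la:Tlab1} combined with membership in $P_0$. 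Everything else is a routine invocation of already-proved lemmas, so there is no serious combinatorial difficulty.
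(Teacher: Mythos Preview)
Your approach is essentially identical to the paper's. The paper's proof is three lines: start from $\presn(T(s))$ (Lemma \ref{la:Tlexpres}), add all of $\rels_1(\labU)$ using Lemmas \ref{la:eqlabel}\ref{it:eql2} and \ref{la:eqlabeldual}\ref{it:eql2*}, and observe that $T(s)\subseteq\labU\cup\{s\}$ by Lemma \ref{la:Tlab1}, so the enlarged presentation is exactly $\presn(\labU\cup\{s\})$.

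Your ``main obstacle'' is a phantom: for $1\leq r\leq n-2$, no element of $\Tfd$ (or $\Tfc$) is a projection. Indeed, elements of $\Tlex\subseteq\T_n\cap D(n,r)$ have $\NTu=0$ and $\NTd=n-r\geq 2$, so cannot satisfy $\NTu=\NTd$; and each edge $e_p$ with $p\in P_1\cup\cdots\cup P_{n-r-1}$ likewise has $\NTu(e_p)=0<\NTd(e_p)$. Hence $\Tfd\subseteq\labU_\fd$ and $\Tfc\subseteq\labU_\fc$ outright, and the inclusion $T(s)\subseteq\labU\cup\{s\}$ holds without any special bookkeeping.
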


\begin{proof}
By Lemma \ref{la:Tlexpres}, the group $\GI$ is defined by the presentation $\presn(T(s))$.
By Lemmas~\ref{la:eqlabel}\ref{it:eql2} and~\ref{la:eqlabeldual}\ref{it:eql2*}, all the relations~$\rels_1(\labU) = \rels_1(\labU_\fd \cup \labU_\fc)$ can be added to $\presn(T(s))$.
Since ${T(s)\subseteq \labU\cup \{s\}}$ by Lemma \ref{la:Tlab1}, it follows that the resulting presentation
is precisely ${\presn(\labU\cup \{s\})}$.
\end{proof}

\begin{rem}
In the group $\sfG(\labU\cup\{s\})$, we of course have $a_s=1$, for the fixed projection~${s\in P_0}$.  For the other projections $p\in P_0\setminus\{s\}$, it is not clear at this stage that we necessarily have $a_p=1$ in $\sfG(\labU\cup\{s\})$, but this is indeed the case.  It takes a lot more work to prove this, but we eventually will in Lemma \ref{la:P0nr1}.
\end{rem}

\subsection[Generators corresponding to projections from $P_1(n,r)$]{Generators corresponding to projections from \boldmath{$P_1(n,r)$}}
\label{ss:eP1}

Our next task is to show that all the generators corresponding to projections from $P_1(n,r)$ are equal to each other as a consequence of our defining relations.  This is achieved in Lemma \ref{la:P1nr} below.  The proof of this lemma is inductive, based on the case $r=n-2$, which is dealt with in Lemma \ref{la:P1rr2}.  By the nature of the induction (see Remark \ref{rem:presnUlab}), we are forced to omit the relation $a_s=1$ (for the fixed projection $s\in P_0$), and work with the group~$\sfG(\labU)$, where $\labU = \labU(n,r)$ was constructed in the previous subsection.  In order that we can conveniently vary our parameters, we denote this group by $\Ghat(n,r) = \sfG(\labU) = \sfG(\labU(n,r))$.

To aid our considerations we remark that, when $r=n-2$, Figure \ref{fig:eggbox1}  (discussed in Remark~\ref{rem:eggbox1}) simplifies to a $3\times 3$ grid, as shown in  Figure \ref{fig:eggbox4}.
The set of projections in $D(n,n-2)$ decomposes as $P(n,n-2) = P_0 \cup P_1\cup P_2$, with each subset $P_k$ ($k=0,1,2$) contained in the region~$D_k^k$.  As we noted in Remark~\ref{rem:eggbox2},~$P_0$ (resp.\ $P_2$) consists of \emph{all} idempotents from~$D_0^0$
(resp.\ $D_2^2$), but~$D_1^1$ contains non-projection idempotents.

\begin{figure}[t]
\begin{center}
\scalebox{0.9}{
\begin{tikzpicture}[scale=1]

\fill[gray!10](0,0)--(3,0)--(3,3)--(0,3)--(0,0);

\fill[blue!20] (0,3)--(1,3)--(1,2)--(0,2)--(0,3);
\fill[green!20] (1,2)--(2,2)--(2,1)--(1,1)--(1,2);
\fill[red!20] (1,3)--(3,3)--(3,2)--(1,2)--(1,3);
\fill[red!20] (0,2)--(0,0)--(1,0)--(1,2)--(0,2);

\foreach \x in {0,1,2,3} {\draw[ultra thick] (\x,0)--(\x,3) (0,3-\x)--(3,3-\x);}

\draw[ultra thick](0,0)--(3,0)--(3,3)--(0,3)--(0,0)--(3,0);

\draw[thick, dashed] (0,3)--(-1,4);

\node () at (-.6,3.22) {\footnotesize $\NTu$};
\node () at (-.22,3.6) {\footnotesize $\NTd$};

\foreach \x in {0,1,2,3} {\draw[thick, dashed] (\x,3)--(\x,4) (0,3-\x)--(-1,3-\x);}

\node () at (0.5,3.5) {\footnotesize $0$};
\node () at (1.5,3.5) {\footnotesize $1$};
\node () at (2.5,3.5) {\footnotesize $2$};
\node () at (-.5,2.5) {\footnotesize $0$};
\node () at (-.5,1.5) {\footnotesize $1$};
\node () at (-.5,0.5) {\footnotesize $2$};

\node () at (.5,2.5) {$P_0$};
\node () at (1.5,1.5) {$P_1$};
\node () at (2.5,.5) {$P_2$};

\draw[line width=1.3mm,red] (2,2)--(3,2)--(3,3)--(2,3)--(2,2)--(3,2);

\end{tikzpicture}
}
\caption{The simplification of Figure \ref{fig:eggbox1} for the case $r=n-2$.}
\label{fig:eggbox4}
\end{center}
\end{figure}
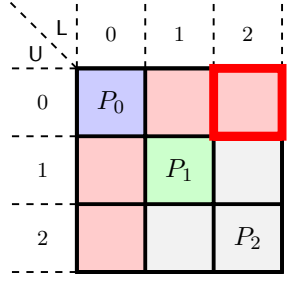

We begin with some quite technical lemmas.

\begin{lemma}
\label{la:P1ep1}
Suppose $1\leq r= n-2$, let $i,j,k\in [n]$ be distinct, let $Y:=[n]\setminus \{i,j,k\}$, and let
\[
e:=\begin{partn}{3} i&j&k\\ \hhline{~|-|-}  i,j & \multicolumn{2}{c}{k}\end{partn}\oplus \id_Y \ANd
p:=\begin{partn}{2} i,j &k\\ \hhline{~|-}  i,j & k\end{partn}\oplus \id_Y.
\]
If $i\leq j+1$ then $a_e=a_p$ in $\Ghat(n,r)$.
\end{lemma}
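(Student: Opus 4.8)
The aim is to show $a_e = a_p$ in $\Ghat(n,r) = \sfG(\labU(n,r))$, where $e$ is a non-projection idempotent of full codomain with a single upper block $\{i,j\}$, and $p$ is the associated projection with that same block. Since $r = n-2$, both $e$ and $p$ lie in the strip $D^1$ (one upper non-transversal), but $e$ has full codomain ($\NTd(e) = 0$) while $p$ has $\NTd(p) = 1$; so $e$ and $p$ lie in different $\L$-classes, and we need a singular square connecting their generators. The natural candidate is built from Lemma \ref{la:ehrsq} (or its dual): take $f$ with $f \mr\R$ (dually $\mr\L$) suitably related to $p$, with the kernels/cokernels nested so that the square is RL- or DU-singularised by $\ED(p)$ or $\ER(p)$.

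First I would write down explicitly the relevant square. The projection $p$ has $\coker(p)$ with classes $\{i,j\}$ and singletons; the idempotent $e$ has trivial cokernel. Consider $\smat{e'}{e}{p'}{p}$ where we pass from full-codomain to the $\L$-class of $p$ by post-composing with $\ER(e) = \id_{\coker(e)} = \id_{\Delta}$ — that is trivial, so instead I use the dual of Lemma \ref{la:ehrsq}: find $f$ with $f \mr\R e$ (same domain and kernel as $e$) and $\coker(f) \subseteq \coker(e)$... but $\coker(e) = \Delta$, forcing $f = e$. So the square must go the other way. The correct move: $e$ has $\ker(e)$ with a nontrivial class $\{i,j\}$ (since $e$ is the idempotent $\smat{i & j & k}{i,j & k}$ on $\{i,j,k\}$ plus identity), and $\ED(e) = \id_{\ker(e)} = p$ exactly. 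Then by the dual of Lemma \ref{la:ehrsq} applied with the roles of domain/codomain swapped, we get a singular square $\smat{e}{e^*\text{-type}}{?}{p}$; more carefully, $e^* = \smat{i,j & k}{i & j & k}$ has full domain and $\ER(e^*) = p$, and Lemma \ref{la:ehrsq} with $e^*$ in place of $e$ produces a square whose base is $e^*$. Applying ${}^*$ to that square, and using $\rels_\subinv$ is \emph{not} available here (we are in $\IG$-style presentation $\presn$, which has $\rels_\subSq$ and $\rels_1$ only) — so I must instead chase the label-$1$ idempotents directly.

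The cleanest route: exhibit an explicit singular square $\smat{e}{f}{p}{g}$ with the singularising idempotent named (this is mandatory in $\P_n$, per Subsection \ref{ss:wow}), where $f$ and $g$ are idempotents of label $1$ lying in $\labU(n,r)$, so that $a_f = a_g = 1$ follows from $\rels_1(\labU)$, and the square relation $a_e^{-1}a_f = a_p^{-1}a_g$ collapses to $a_e = a_p$. Concretely, on the three points $\{i,j,k\}$: take $f := \smat{i & j,k}{i & j,k} \oplus \id_Y$ (a projection of $P_1$? no — I want label $1$; note any single-block projection in $P_1$ has label $1$ by Lemma \ref{la:lamprops}, but $P_1 \not\subseteq \labU$). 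Better: let $g := \smat{i,j & k}{i,j & k}\oplus\id_Y = p$ won't help. Instead use Lemma \ref{la:type7}-style decomposition: write $e$ as a connected rank-$1$ component on $\{i,j,k\}$ and apply an NT-reducing square to relate $a_e$ to generators with smaller $\NT$. Since $\NTd(e) = 0$, $e \in F_1 \cup P$, and Lemma \ref{la:fdtoTn}'s dual (via $\Tfc$) already handles full-\emph{co}domain idempotents. The hypothesis $i \le j+1$ is exactly the "consecutive" condition (cf.\ Lemma \ref{la:consec}) ensuring that when we build the intermediate idempotents their labels stay $1$ — it controls whether $\lamp$ of the intermediate step, which will have a transversal using $\min$ of a merged block, still compresses to the identity permutation.

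\textbf{The main obstacle.} The delicate point is the $i \le j+1$ hypothesis: I expect the proof to split on whether $i < j$, $i = j+1$, or neither, and in each case one must verify that the idempotents appearing in the chosen singular square (or chain of squares) have label $1$ — i.e.\ that merging $\{i\}$ and $\{j\}$ into $\{i,j\}$ does not disturb the relative order of the minima $b_1 < \cdots < b_r$ of the other transversals. This is precisely a Lemma \ref{la:consec} computation: $\lamp$ changes in one coordinate by a controlled amount, so $\lam$ is unchanged (and equals $1$), letting us invoke Lemma \ref{la:eqlabeldual}\ref{it:eql2*} (full-codomain, label $1$, hence $a_f = 1$ from $\presn(\Tfc) \subseteq \presn(\labU)$). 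Once the two "side" idempotents of the square are killed, the square relation from $\rels_\subSq(n,r)$ immediately gives $a_e = a_p$ in $\Ghat(n,r)$. I would also double-check that the named singularising idempotent genuinely satisfies the four equations of \eqref{eq:LR} or \eqref{eq:UD} — this is the routine-but-essential verification that cannot be skipped in $\P_n$, and I would present it as a short explicit computation or a reference to the general template in Lemma \ref{la:ehrsq}/Lemma \ref{la:type7} instantiated on the three-point set $\{i,j,k\}$ (with $\oplus\,\id_Y$ along for the ride).
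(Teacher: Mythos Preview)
Your proposal has the right overall shape---one singular square with $e$ and $p$ in one column, two auxiliary full-codomain idempotents in the other, then cancel the auxiliaries via a label argument---but it contains two substantive errors.

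First, you misread the partition $e$. It is not full-codomain with upper block $\{i,j\}$: on the three points $\{i,j,k\}$ it has transversal $\{i,i',j'\}$, upper singletons $\{j\},\{k\}$, and lower singleton $\{k'\}$, so $\NTu(e)=2$ and $\NTd(e)=1$. This misreading sends you down several dead ends before you reach the correct strategy, and you never actually commit to specific auxiliary idempotents $f,g$ or a singularising element.

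Second, and more seriously, your claim that the auxiliary idempotents have label~$1$ is false, and this is exactly where the hypothesis $i\le j+1$ is used. The paper takes
\[
e_1=\begin{partn}{3} i&j&k\\ \hhline{~|-|-} i,j,k&\multicolumn{2}{c}{}\end{partn}\oplus\id_Y,\qquad
e_2=\begin{partn}{2} i,j&k\\ \hhline{~|-} i,j,k&\end{partn}\oplus\id_Y,
\]
and the square $\smat{e}{e_1}{p}{e_2}$ is LR-singularised by $u=\begin{partn}{3} i&j&k\\ \hhline{~|~|-} i&j,k&\end{partn}\oplus\id_Y$. Here $\lamp(e_1)$ sends $i\mapsto i\wedge j\wedge k$ and $\lamp(e_2)$ sends $i\wedge j\mapsto i\wedge j\wedge k$ (identity on~$Y$); for instance with $(n,r,i,j,k)=(6,4,4,3,1)$ both labels equal the transposition $(1,2)$, not~$1$. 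The hypothesis $i\le j+1$ guarantees via Lemma~\ref{la:consec} only that $\lam(e_1)=\lam(e_2)$, so the paper invokes Lemma~\ref{la:eqlabeldual}\ref{it:eql1*} to get $a_{e_1}=a_{e_2}$, and then the square relation $a_e^{-1}a_{e_1}=a_p^{-1}a_{e_2}$ gives $a_e=a_p$. Your appeal to $\rels_1(\labU)$ and Lemma~\ref{la:eqlabeldual}\ref{it:eql2*} does not apply, since $e_1,e_2\notin\labU$ in general.
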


\begin{proof}
Consider the three further idempotents:
\[
e_1:= \begin{partn}{3} i&j&k\\ \hhline{~|-|-} i,j,k&\multicolumn{2}{c}{}\end{partn}\oplus\id_Y \COMMa
e_2:= \begin{partn}{2} i,j&k\\ \hhline{~|-} i,j,k&\end{partn}\oplus\id_Y \ANd
u:= \begin{partn}{3} i&j&k\\ \hhline{~|~|-} i&j,k&\end{partn}\oplus\id_Y.
\]
These idempotents are shown pictorially in Figure \ref{fig:P1ep1}.
Routine computations show that $\smat{e}{e_1}{p}{e_2}$ is an LR square, singularised by $u$, i.e.~that
\[
e\mr\R e_1 \COMMA p\mr\R e_2 \COMMA e\mr\L p \AND e_1\mr\L e_2,
\]
and also
\[
ue=e \COMMA up=p \COMMA eu=e_1 \AND pu=e_2.
\]
The $\R$- and $\L$-relationships can be inferred by checking that (co)domains and (co)kernels match as appropriate, and applying Lemma \ref{la:Green_Pn}.  The required equalities are verified in Figure \ref{fig:P1ep1}. 

Next we note that the idempotents~$e_1$ and~$e_2$ are full-codomain non-projections,
and we claim that $\lam(e_1)=\lam(e_2)$.
To see this, write $x\wedge y$ for $\min(x,y)$, and  note that
\[
\lamp(e_1) = \trans{\phantom{j}i\phantom{j}}{i\wedge j\wedge k} \oplus \id_Y \ANd \lamp(e_2) = \trans{i\wedge j}{i\wedge j\wedge k} \oplus \id_Y.
\]
Since $i\leq j+1$, the values $i$ and $i\wedge j$ are either equal or consecutive,  and the claim follows by Lemma~\ref{la:consec}.

Now Lemma \ref{la:eqlabeldual}  gives $a_{e_1}=a_{e_2}$ in $\sfG(T_\fc)$.  Since $T_\fc\sub O$, it follows that the equality $a_{e_1}=a_{e_2}$ also holds in $\sfG(O) = \Ghat(n,r)$.
Combining this with the square relation $a_e^{-1}a_{e_1}=a_p^{-1}a_{e_2}$ yields $a_e=a_p$, as required.
 \end{proof}

While in the above lemma and the accompanying Figure \ref{fig:P1ep1} we have shown all the calculations needed to establish the required singular squares, from now on we will simply present the participating idempotents, and leave the actual checks to the reader. 

\begin{rem}\label{rem:eggbox5}
It may be interesting to observe how  the square $\smat e{e_1}p{e_2}$
from the proof of Lemma~\ref{la:P1ep1}
 straddles different regions in the stratified diagram of the $\D$-class $D(n,n-2)$.
 The `target equality' $a_p=a_e$ involves idempotents from $P_1\sub E_1^1$  and $E_1^2$, but the square also features idempotents from $E_0^1$ and $E_0^2$.
 This is illustrated in Figure \ref{fig:eggbox5} (left).
\end{rem}

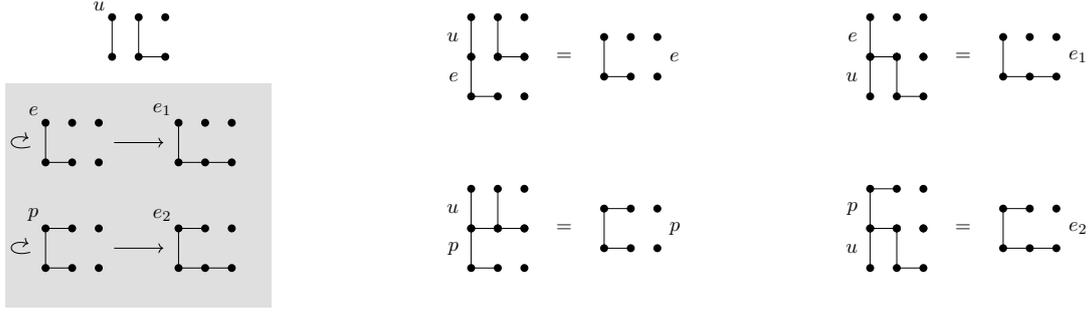
\begin{figure}[t!]
\begin{center}
\scalebox{0.7}{\begin{tikzpicture}[scale=.5]
\diagramshading3
\ediagram{e}{3}{\udline11 \ddline12}
\fdiagram{e_1}{3}{\udline11 \ddline12\ddline23}
\gdiagram{p}{3}{\udline11 \uuline12 \ddline12}
\hdiagram{e_2}{3}{\udline11 \uuline12 \ddline12\ddline23}
\udiagram{u}{3}{\udline11 \udline22 \ddline23}
\LRarrows
\begin{scope}[shift={(16,8)}]
\uvs{1,2,3}\lvs{1,2,3}\udline11 \udline22 \ddline23 
\node[left] () at (0.8,0.75) {$u$};
\end{scope}
\begin{scope}[shift={(16,6.5)}]
\uvs{1,2,3}\lvs{1,2,3}\udline11 \ddline12 
\node[left] () at (0.8,0.75) {$e$};
\end{scope}
\begin{scope}[shift={(21,7.25)}]
\node () at (-.5,0.75) {$=$};
\uvs{1,2,3}\lvs{1,2,3}\udline11 \ddline12 
\node[right] () at (3.2,0.75) {$e$};
\end{scope}
\begin{scope}[shift={(16,1.5)}]
\uvs{1,2,3}\lvs{1,2,3}\udline11 \udline22 \ddline23 
\node[left] () at (0.8,0.75) {$u$};
\end{scope}
\begin{scope}[shift={(16,0)}]
\uvs{1,2,3}\lvs{1,2,3}\udline11 \uuline12 \ddline12
\node[left] () at (0.8,0.75) {$p$};
\end{scope}
\begin{scope}[shift={(21,.75)}]
\node () at (-.5,0.75) {$=$};
\uvs{1,2,3}\lvs{1,2,3}\udline11 \uuline12 \ddline12
\node[right] () at (3.2,0.75) {$p$};
\end{scope}
\begin{scope}[shift={(31,8)}]
\uvs{1,2,3}\lvs{1,2,3}\udline11 \ddline12 
\node[left] () at (0.8,0.75) {$e$};
\end{scope}
\begin{scope}[shift={(31,6.5)}]
\uvs{1,2,3}\lvs{1,2,3}\udline11 \udline22 \ddline23 
\node[left] () at (0.8,0.75) {$u$};
\end{scope}
\begin{scope}[shift={(36,7.25)}]
\node () at (-.5,0.75) {$=$};
\uvs{1,2,3}\lvs{1,2,3}\udline11 \ddline12\ddline23
\node[right] () at (3.2,0.75) {$e_1$};
\end{scope}
\begin{scope}[shift={(31,1.5)}]
\uvs{1,2,3}\lvs{1,2,3}\udline11 \uuline12 \ddline12
\node[left] () at (0.8,0.75) {$p$};
\end{scope}
\begin{scope}[shift={(31,0)}]
\uvs{1,2,3}\lvs{1,2,3}\udline11 \udline22 \ddline23 
\node[left] () at (0.8,0.75) {$u$};
\end{scope}
\begin{scope}[shift={(36,.75)}]
\node () at (-.5,0.75) {$=$};
\uvs{1,2,3}\lvs{1,2,3}\udline11 \uuline12 \ddline12\ddline23
\node[right] () at (3.2,0.75) {$e_2$};
\end{scope}
\end{tikzpicture}
}

\caption{Left: the singular square featuring in the proof of Lemma \ref{la:P1ep1}.
Middle and right:  the calculations involved in checking its singularity.  In each partition, only the points $i,j,k$ and their dashed counterparts are shown (left to right).}
\label{fig:P1ep1}
\end{center}
\end{figure}

\begin{lemma}
\label{la:P1pq1}
Suppose $1\leq r= n-2$, let $i,j,k\in [n]$ be distinct, let $Y:=[n]\setminus \{i,j,k\}$, and let
\[
p:=\begin{partn}{2} i &j,k\\ \hhline{~|-}  i & j,k\end{partn}\oplus \id_Y \ANd
q:=\begin{partn}{2} i,j&k\\ \hhline{~|-}  i,j & k\end{partn}\oplus \id_Y.
\]
If $i\leq j+1$ then $a_p=a_q$  in $\Ghat(n,r)$.
\end{lemma}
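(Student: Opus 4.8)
The plan is to mimic the proof of Lemma \ref{la:P1ep1}, exhibiting a singular square that relates $a_p$ to $a_q$ through idempotents whose labels are $1$ (so that their generators can be killed using earlier results), and then conclude via the corresponding square relation in $\rels_\subSq$.

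First I would introduce the full-codomain idempotents that naturally `bridge' $p$ and $q$. Since both $p$ and $q$ have the single lower non-transversal obtained by glueing $\{i,j,k\}$ into one block on the bottom, the natural target is
\[
f:=\begin{partn}{2} i,j,k & \phantom{k}\\ \hhline{~|-} i & j,k\end{partn}\oplus\id_Y,
\]
paired with $p$ (since $\dom(p)$ has the block $\{i,j,k\}$ when we glue, and $\coker(p)$ has classes $\{i\},\{j,k\}$), and
\[
g:=\begin{partn}{3} i,j,k & \phantom{k} & \phantom{k}\\ \hhline{~|~|-} i & j & k\end{partn}\oplus\id_Y
\]
wait — one must be careful that both idempotents in the bottom row of the square are $\L$-related to $p$ and $q$ respectively, i.e. share their cokernels. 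So I would instead take the square with top row $p,q$ only if $p\mr\R q$ fails; rather, the correct shape (paralleling Lemma \ref{la:P1ep1}, where the square was $\smat{e}{e_1}{p}{e_2}$ with $e\mr\L p$ and the right column both full-codomain) is an LR square $\smat{p}{p_1}{q}{q_1}$ with $p\mr\L q$? No: $\coker(p)\ne\coker(q)$, so $p\not\mr\L q$, but $\ker(p)=\ker(q)$ on the $\{i,j,k\}$ block is also false. The genuinely matching data is $\dom(p)=\dom(q)$ is false too. The one thing $p$ and $q$ share is $\codom(p)=\{i,j,k\}\cup Y=\codom(q)$ and $\coker$ differs only in how $\{i,j,k\}$ splits. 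Hence I would build a UD singular square singularised by $\ED$ of the appropriate projection: set
\[
p_1:=\begin{partn}{2} i,j,k & \\ \hhline{~|-} i & j,k\end{partn}\oplus\id_Y \COMMA q_1:=\begin{partn}{2} i,j,k & \\ \hhline{~|-} i,j & k\end{partn}\oplus\id_Y,
\]
so that $p\mr\R p_1$, $q\mr\R q_1$, $p\mr\L q$ is replaced by the column relations $p_1\mr\L q_1$? — here I should mirror exactly the bookkeeping of Lemma \ref{la:P1ep1}: the square is $\smat{p}{p_1}{q}{q_1}$ with $p\mr\R p_1$, $q\mr\R q_1$, $p\mr\L q$, $p_1\mr\L q_1$, and this is LR-singularised by
\[
u:=\begin{partn}{3} i & j & k\\ \hhline{~|~|~} i & j & k\end{partn}\oplus\id_Y \quad\text{or}\quad u=\begin{partn}{2} i & j,k\\ \hhline{~|~} i & j,k\end{partn}\oplus\id_Y,
\]
whichever gives $up=p$, $uq=q$, $pu=p_1$, $qu=q_1$; I would verify these four equalities together with the $\R/\L$-relations by the usual (co)domain/(co)kernel matching via Lemma \ref{la:Green_Pn}, exactly as in Figure \ref{fig:P1ep1}.

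The crucial point — which is where the hypothesis $i\le j+1$ is used — is the claim $\lam(p_1)=\lam(q_1)$. Both $p_1$ and $q_1$ are full-domain (or full-codomain, depending on orientation) non-projection idempotents; computing the partial bijections $\lamp(p_1)$ and $\lamp(q_1)$ on the points $i,j,k$ (and $\id_Y$ elsewhere), they differ only in one entry, by $\min(i,j,k)$ versus $\min(i\wedge j,k)$ or similar, and the condition $i\le j+1$ guarantees that this changed entry moves by at most one position relative to the others, so Lemma \ref{la:consec} applies and $\lam(p_1)=\lam(q_1)$. Then Lemma \ref{la:eqlabel} (or its dual Lemma \ref{la:eqlabeldual}, according to whether $p_1,q_1$ are full-domain or full-codomain) gives $a_{p_1}=a_{q_1}$ as a consequence of $\presn(\Tfd)$ (resp.\ $\presn(\Tfc)$); since $\Tfd\cup\Tfc\subseteq O$, this equality holds in $\Ghat(n,r)=\sfG(O)$. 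Finally, the square $\smat{p}{p_1}{q}{q_1}\in\Sq(n,r)$ contributes the relation $a_p^{-1}a_{p_1}=a_q^{-1}a_{q_1}$ to $\rels_\subSq(n,r)$, and combining this with $a_{p_1}=a_{q_1}$ yields $a_p=a_q$ in $\Ghat(n,r)$, as desired.

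The main obstacle I anticipate is purely bookkeeping: getting the orientation of the square right (which pair sits in the $\R$-direction and which in the $\L$-direction), choosing the correct singularising idempotent $u$, and — most delicately — writing down $p_1$ and $q_1$ so that they are genuinely \emph{non-projection} idempotents with \emph{full domain or full codomain} and label computable by Lemma \ref{la:consec}. If it turns out that the most natural choice makes $p_1$ or $q_1$ a projection (which would break the application of Lemma \ref{la:eqlabel}), I would instead route through an intermediate idempotent as in Lemma \ref{la:P1ep1}, i.e.\ first prove $a_p=a_e$ and $a_q=a_e$ for the single `maximally glued' idempotent $e=\begin{partn}{1} i,j,k\\ \hhline{-} i,j,k\end{partn}\oplus\id_Y$-type bridge (or its asymmetric variants), then transit. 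Everything else is routine verification of the kind already displayed in Figures \ref{fig:P1ep1} and \ref{fig:eggbox5}.
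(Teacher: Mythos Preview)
There is a genuine gap. The proposed single-square argument cannot work, for a reason you half-noticed in your own back-and-forth but then overrode: the projections $p$ and $q$ are neither $\R$- nor $\L$-related. Concretely, $\ker(p)$ has classes $\{i\},\{j,k\}$ on $\{i,j,k\}$ while $\ker(q)$ has $\{i,j\},\{k\}$, so $p\not\mr\R q$; and $\coker(p),\coker(q)$ differ in exactly the same way, so $p\not\mr\L q$. Consequently no square of idempotents $\smat{*}{*}{*}{*}$ can contain both $p$ and $q$, regardless of how you choose the other two corners or the orientation. Your candidates $p_1,q_1$ do satisfy $p_1\mr\R q_1$ and $p_1\mr\L p$, $q_1\mr\L q$, but completing this to a square would force $p\mr\R q$, which fails. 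The anticipated obstacle you flagged (that $p_1$ or $q_1$ might be a projection) is not the issue; the square simply does not exist.

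The paper's proof is more elaborate precisely because of this obstruction. It does not try to place $p$ and $q$ in one square. Instead it introduces an intermediate idempotent $e=\begin{partn}{3} i & j&k \\ \hhline{~|-|-} i & \multicolumn{2}{c}{j,k}\end{partn}\oplus\id_Y$ and the idempotent $e_1=\begin{partn}{3} i&j&k\\ \hhline{~|-|-} i,j&\multicolumn{2}{c}{k}\end{partn}\oplus\id_Y$ from Lemma~\ref{la:P1ep1}, and proves $a_p=a_e$ and $a_e=a_{e_1}=a_q$ separately. The equality $a_p=a_e$ comes from one UD singular square (with two corners having full codomain and equal label, via Lemma~\ref{la:consec} and $i\le j+1$); the equality $a_e=a_{e_1}$ uses \emph{two} further LR squares, routing through auxiliary idempotents $f,f_1,g,g_1$ to reach full-domain idempotents $g,g_1$ whose labels agree (again by $i\le j+1$); and finally $a_{e_1}=a_q$ is the content of Lemma~\ref{la:P1ep1}. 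So three squares plus a citation of the previous lemma are needed, not one. Your fallback idea of ``routing through an intermediate'' is the right instinct, but it needs to be developed into this multi-step chain.
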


\begin{proof}
Consider the following further idempotents:
\begin{align*}
e& := \begin{partn}{3} i & j&k \\ \hhline{~|-|-} i & \multicolumn{2}{c}{j,k}\end{partn}\oplus \id_Y, &
e_1&:=\begin{partn}{3} i&j&k\\ \hhline{~|-|-} i,j&\multicolumn{2}{c}{k}\end{partn}\oplus \id_Y, &
f&= \begin{partn}{2} i,k&j\\ \hhline{~|-} i&j,k\end{partn}\oplus \id_Y,&
f_1&= \begin{partn}{2} i,k&j\\ \hhline{~|-} i,j&k\end{partn}\oplus \id_Y,
\\
g& := \begin{partn}{2} i,j,k  & \\ \hhline{~|-} i & j,k\end{partn}\oplus \id_Y, &
g_1&:=\begin{partn}{2} i,j,k&\\ \hhline{~|-} i,j&k\end{partn}\oplus \id_Y, &
h&= \begin{partn}{3} i&j&k\\ \hhline{~|-|-} i,j,k&\multicolumn{2}{c}{}\end{partn}\oplus \id_Y,&
h_1&= \begin{partn}{2} i&j,k\\ \hhline{~|-} i,j,k&\end{partn}\oplus \id_Y.
\end{align*}
It is routine to verify that:
\begin{alignat}{5}
\label{eq:P1pq11}
&\smat{e}{e_1}{f}{f_1} &\ &\text{is LR-singularised by}
&\ &u=\begin{partn}{3} i&j&k\\ \hhline{~|-|~} i,j& &k\end{partn}\oplus\id_Y, &\ \text{ yielding}
&\quad&a_e^{-1}a_{e_1}=a_f^{-1}a_{f_1};
\\
\label{eq:P1pq12}
&\smat{f}{f_1}{g}{g_1} &&\text{is LR-singularised by}
&&v=\begin{partn}{3} i,k&j&\\ \hhline{~|~|-} i&j& k\end{partn}\oplus\id_Y,&\text{ yielding}
&&a_f^{-1}a_{f_1}=a_g^{-1}a_{g_1};
\\
\label{eq:P1pq13}
&\smat{e}{h}{p}{h_1} && \text{is UD-singularised by}
&&w= \begin{partn}{2} i&j,k\\ \hhline{~|~} i& j,k\end{partn}\oplus\id_Y,&\text{ yielding}
&&a_e^{-1}a_h=a_p^{-1}a_{h_1}.
\end{alignat}
These squares are illustrated in Figure \ref{fig:P1pq1}.  Using Lemma \ref{la:consec} and $i\leq j+1$, we see that the full-domain idempotents $g$ and $g_1$ have equal labels, and hence we have
\begin{equation}
\label{eq:P1pq14}
a_g=a_{g_1}
\end{equation}
by Lemma \ref{la:eqlabel}. Analogously, Lemma \ref{la:eqlabeldual} gives
\begin{equation}
\label{eq:P1pq15}
a_h=a_{h_1}.
\end{equation}
Finally, by Lemma \ref{la:P1ep1} we have
\begin{equation}
\label{eq:P1pq16}
a_{e_1}=a_q.
\end{equation}
Putting all this together we obtain
\[
a_p   \stackrel{\text{\tiny\eqref{eq:P1pq13}}}{=}    a_{h_1}a_h^{-1}a_e 
\stackrel{\text{\tiny\eqref{eq:P1pq15}}}{=}a_e 
\stackrel{\text{\tiny\eqref{eq:P1pq11}}}{=} a_{e_1}a_{f_1}^{-1}a_f
\stackrel{\text{\tiny\eqref{eq:P1pq16}}}{=}a_q a_{f_1}^{-1}a_f
\stackrel{\text{\tiny\eqref{eq:P1pq12}}}{=}a_q a_{g_1}^{-1}a_g  
\stackrel{\text{\tiny\eqref{eq:P1pq14}}}{=}a_q,
\]
as required.
\end{proof}

\begin{rem}\label{rem:eggbox6}
Figure \ref{fig:eggbox5} (right) shows the location in $D(n,n-2)$ of the idempotents from the singular squares $\smat{e}{e_1}{f}{f_1}$, $\smat{f}{f_1}{g}{g_1}$ and $\smat{e}{h}{p}{h_1}$ used in the proof of Lemma~\ref{la:P1pq1}.  
The dashed line between $q$ and $e_1$ indicates the equality \eqref{eq:P1pq16} of the generators corresponding to these idempotents, which itself follows from Lemma \ref{la:P1ep1}.
\end{rem}

\begin{figure}[t!]
\begin{center}
\scalebox{0.7}{
\begin{tikzpicture}[scale=.5]
\begin{scope}[shift={(0,0)}]
\diagramshading3
\ediagram{e}{3}{\udline11 \ddline23}
\fdiagram{e_1}{3}{\udline11 \ddline12}
\gdiagram{f}{3}{\udline11 \ddline23 \uarc13}
\hdiagram{f_1}{3}{\udline11 \ddline12 \uarc13}
\udiagram{u}{3}{\udline11 \udline33 \ddline12}
\LRarrows
\end{scope}
\begin{scope}[shift={(14,0)}]
\diagramshading3
\ediagram{f}{3}{\udline11 \ddline23 \uarc13}
\fdiagram{f_1}{3}{\udline11 \ddline12 \uarc13}
\gdiagram{g}{3}{\udline11 \uuline13 \ddline23}
\hdiagram{g_1}{3}{\udline11 \uuline13 \ddline12}
\udiagram{v}{3}{\udline11 \udline22 \uarc13}
\LRarrows
\end{scope}
\begin{scope}[shift={(28,0)}]
\diagramshading3
\ediagram{e}{3}{\udline11 \ddline23}
\fdiagram{h}{3}{\udline11 \ddline13}
\gdiagram{p}{3}{\udline11 \uuline23 \ddline23}
\hdiagram{h_1}{3}{\udline11 \ddline13 \uuline23}
\udiagram{w}{3}{\udline11 \udline22 \ddline23 \uuline23}
\UDarrows
\end{scope}
\end{tikzpicture}
}
\caption{The singular squares featuring in the proof of Lemma \ref{la:P1pq1}.
In each partition, only the points $i,j,k$ and their dashed counterparts are shown (left to right).}
\label{fig:P1pq1}
\end{center}
\end{figure}

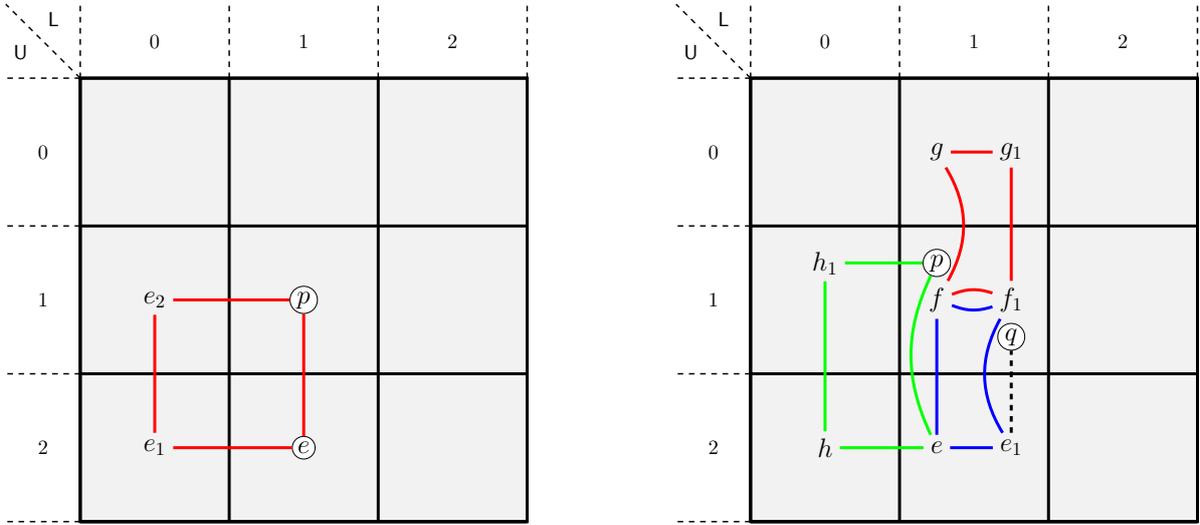
\begin{figure}[t]
\begin{center}
\scalebox{.7}{
\begin{tikzpicture}[scale=.7]

\begin{scope}
\fill[gray!10](0,0)--(12,0)--(12,12)--(0,12)--(0,0);
\foreach \x in {0,4,8,12} {\draw[ultra thick] (\x,0)--(\x,12) (0,12-\x)--(12,12-\x);}
\draw[ultra thick](0,0)--(12,0)--(12,12)--(0,12)--(0,0)--(12,0);
\draw[thick, dashed] (0,12)--(-2,14);
\node () at (-.6-1,12.22+.5) { $\NTu$};
\node () at (-.22-.5,12.6+1) { $\NTd$};
\foreach \x in {0,4,8,12} {\draw[thick, dashed] (\x,12)--(\x,14) (0,12-\x)--(-2,12-\x);}
\node () at (2,13) { $0$};
\node () at (6,13) { $1$};
\node () at (10,13) { $2$};
\node () at (-1,10) { $0$};
\node () at (-1,6) { $1$};
\node () at (-1,2) { $2$};
\node (e2) at (2,6) {\Large $e_2$};
\node[circle,draw=black, fill=white, inner sep = 0.04cm] (p) at (6,6) {\Large $p$};
\node (e1) at (2,2) {\Large $e_1$};
\node[circle,draw=black, fill=white, inner sep = 0.04cm] (e) at (6,2) {\Large $e$};
\draw[ultra thick, red] (e2)--(p)--(e)--(e1)--(e2);
\end{scope}

\begin{scope}[shift={(18,0)}]
\fill[gray!10](0,0)--(12,0)--(12,12)--(0,12)--(0,0);
\foreach \x in {0,4,8,12} {\draw[ultra thick] (\x,0)--(\x,12) (0,12-\x)--(12,12-\x);}
\draw[ultra thick](0,0)--(12,0)--(12,12)--(0,12)--(0,0)--(12,0);
\draw[thick, dashed] (0,12)--(-2,14);
\node () at (-.6-1,12.22+.5) { $\NTu$};
\node () at (-.22-.5,12.6+1) { $\NTd$};
\foreach \x in {0,4,8,12} {\draw[thick, dashed] (\x,12)--(\x,14) (0,12-\x)--(-2,12-\x);}
\node () at (2,13) { $0$};
\node () at (6,13) { $1$};
\node () at (10,13) { $2$};
\node () at (-1,10) { $0$};
\node () at (-1,6) { $1$};
\node () at (-1,2) { $2$};
\node[circle,draw=black, fill=white, inner sep = 0.04cm] (p) at (5,7) {\Large $p$};
\node[circle,draw=black, fill=white, inner sep = 0.04cm] (q) at (7,5) {\Large $q$};
\node (f) at (5,6) {\Large $f$};
\node (f1) at (7,6) {\Large $f_1$};
\node (e) at (5,2) {\Large $e$};
\node (e1) at (7,2) {\Large $e_1$};
\node (g) at (5,10) {\Large $g$};
\node (g1) at (7,10) {\Large $g_1$};
\node (h1) at (2,7) {\Large $h_1$};
\node (h) at (2,2) {\Large $h$};
\draw[ultra thick, red] (g)--(g1)--(f1);
\draw[ultra thick, red] (f1) to[bend right=20] (f);
\draw[ultra thick, red] (f) to[bend right=30] (g);
\draw[ultra thick, blue] (f)--(e)--(e1);
\draw[ultra thick, blue] (f) to[bend right=20] (f1);
\draw[ultra thick, blue] (f1) to[bend right=30] (e1);
\draw[ultra thick, green] (p)--(h1)--(h)--(e);
\draw[ultra thick, green] (p) to[bend right=25] (e);
\draw[ultra thick, dashed] (e1)--(q);
\end{scope}

\end{tikzpicture}
}
\caption{Stratified egg-box diagrams of the idempotents in the singular square(s) featuring in the proofs of Lemmas \ref{la:P1ep1} (left) and \ref{la:P1pq1} (right).  See Remarks \ref{rem:eggbox5} and \ref{rem:eggbox6} for more details.}
\label{fig:eggbox5}
\end{center}
\end{figure}

Our next lemma establishes equalities between generators corresponding to natural families of projections.
Still supposing $r=n-2$, for any $i,j\in [n]$ with $i<j$ denote by $\sfp(i,j)$ the unique projection in $P_1 = P_1(n,n-2)$ with the upper block $\{i,j\}$.

\begin{lemma}
\label{la:P1pq23}
Suppose $1\leq r= n-2$, and let $i,j\in [n]$ with $i<j$. 
\begin{thmenumerate}
\item
\label{it:pq231}
If $i+1<j$ then $a_{\sfp(i,j)}=a_{\sfp(i+1,j)}$  in $\Ghat(n,r)$.
\item
\label{it:pq232}
If $j<n$ then $a_{\sfp(i,j)}=a_{\sfp(i,j+1)}$  in $\Ghat(n,r)$.
\end{thmenumerate}
\end{lemma}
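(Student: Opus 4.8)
\textbf{Proof plan for Lemma \ref{la:P1pq23}.}
The two parts are conceptually similar, so I would treat them in parallel, using the general principle already exploited in Lemmas \ref{la:P1ep1} and \ref{la:P1pq1}: to prove $a_p = a_q$ for projections $p,q \in P_1$, exhibit a short chain of idempotents linking $\sfp(i,j)$ to $\sfp(i+1,j)$ (resp.\ $\sfp(i,j+1)$) whose consecutive members either lie in a common singular square or have equal labels, so that the corresponding generator equalities are consequences of $\rels_\subSq(n,r)$ together with Lemmas~\ref{la:eqlabel}, \ref{la:eqlabeldual}, and the already-established Lemmas~\ref{la:P1ep1} and~\ref{la:P1pq1}. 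Because everything happens inside the $3\times 3$ stratified $\D$-class $D(n,n-2)$ of Figure~\ref{fig:eggbox4}, and all non-trivial structure involves only the three points $i,j$ and one other point, I expect the relevant idempotents to be of the form (three-point gadget)$\oplus\,\id_Y$, just as in the preceding lemmas, with $Y = [n]\setminus\{i,j,k\}$ for a suitably chosen third point $k$.

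For part~\ref{it:pq231}, I would pick $k$ with $i < i+1 < j$, so $k = i+1$ is available precisely because $i+1 < j$; then $\sfp(i,j)$ has upper block $\{i,j\}$ (singleton $\{k\}$), and $\sfp(i+1,j)$ has upper block $\{i+1,j\}=\{k,j\}$. The plan is to route through a projection $p^{(0)}$ with upper block $\{i,j\}$ but viewed via a three-point gadget on $\{i,j,k\}$, use Lemma~\ref{la:P1pq1} (with the roles of the three points chosen so that the hypothesis ``$i\leq j+1$''-type inequality is automatically satisfied) to move the ``pairing'' from $\{i,j\}$ to $\{j,k\}$ (i.e.\ to $\sfp(i+1,j)$), and possibly an intermediate application of Lemma~\ref{la:P1ep1} to pass through a non-projection idempotent with matching label. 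For part~\ref{it:pq232}, one picks $k = j+1$ (available because $j < n$), so $\sfp(i,j)$ has upper block $\{i,j\}$ while $\sfp(i,j+1)$ has upper block $\{i,j+1\}=\{i,k\}$; the same strategy of swapping one element of the distinguished pair applies, with the inequality hypotheses of Lemmas~\ref{la:P1ep1} and~\ref{la:P1pq1} checked against the concrete values $i < j < k$. In each case I would display the two or three singular squares explicitly (as three-point diagrams $\oplus\,\id_Y$, in the style of Figures~\ref{fig:P1ep1} and~\ref{fig:P1pq1}), state the singularising idempotent, and then assemble the equalities into a single chain $a_{\sfp(i,j)} = \cdots = a_{\sfp(i\pm?,j)}$, leaving the routine verification of $\R$/$\L$-relationships and the defining equalities~\eqref{eq:LR}/\eqref{eq:UD} to the reader, exactly as the authors announce after Lemma~\ref{la:P1ep1}.

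The main obstacle I anticipate is purely bookkeeping: making sure the labels genuinely match at each step. The label computation is sensitive to the relative order of $i,j,k$ (which is why Lemmas~\ref{la:P1ep1} and~\ref{la:P1pq1} carry the side condition $i \leq j+1$), so I must choose the third point $k$ and the orientation of each gadget so that every invocation of Lemma~\ref{la:consec}, Lemma~\ref{la:eqlabel}, Lemma~\ref{la:eqlabeldual}, Lemma~\ref{la:P1ep1} or Lemma~\ref{la:P1pq1} has its hypothesis satisfied for the actual numerical values in play. A secondary subtlety is that when $i+1 = j$ (for part~\ref{it:pq231}) the chosen third point collides with $j$, which is exactly why that case is excluded; I should double-check that for part~\ref{it:pq232} no analogous degeneracy sneaks in when, say, $i$ and $j$ are consecutive. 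Once the right third point and orientations are fixed, the argument should be a short concatenation of the tools already built in Subsection~\ref{ss:eP1}, with no genuinely new idea required.
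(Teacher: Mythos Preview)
Your plan is correct and matches the paper's approach: the third point is exactly $k=i+1$ for \ref{it:pq231} and $k=j+1$ for \ref{it:pq232}, and the argument reduces entirely to two invocations of Lemma~\ref{la:P1pq1} with these three points. One small sharpening: the intermediate is not a projection with upper block $\{i,j\}$ but rather the \emph{type-(2)} projection $q$ with singleton upper block and size-$4$ transversal (e.g.\ for \ref{it:pq231}, $q=\begin{partn}{2} i,i+1 & j\\ \hhline{~|-} i,i+1 & j\end{partn}\oplus\id_Y$); applying Lemma~\ref{la:P1pq1} with the roles $(i+1,i,j)$ and then $(i,i+1,j)$ gives $a_{\sfp(i,j)}=a_q=a_{\sfp(i+1,j)}$, and the $i\le j+1$ hypothesis is satisfied in both calls --- no new singular squares and no appeal to Lemma~\ref{la:P1ep1} are needed.
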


\begin{proof}
\ref{it:pq231}
Let $q:=\begin{partn}{2} i,i+1 & j \\ \hhline{~|-} i,i+1 & j\end{partn}\oplus \id_Y$, with $Y:=[n]\setminus\{i,i+1,j\}$.
Applying Lemma \ref{la:P1pq1}, with the role of the triple $(i,j,k)$ played by $(i+1,i,j)$, yields the relation $a_{\sfp(i,j)}=a_q$.
Likewise, the same lemma, but with the role of $(i,j,k)$ played by $(i,i+1,j)$, yields
$a_{\sfp(i+1,j)}=a_q$. Combining these two relations yields $a_{\sfp(i,j)}=a_{\sfp(i+1,j)}$.

\ref{it:pq232}
Analogous reasoning to part \ref{it:pq231}, with $q:= \begin{partn}{2} j,j+1 & i\\ \hhline{~|-} j,j+1 & i\end{partn}\oplus \id_Y$ for $Y:= [n]\setminus \{ i,j,j+1\}$, yields $a_{\sfp(i,j)}=a_q=a_{\sfp(i,j+1)}$.
\end{proof}

We can now complete our objective for $r=n-2$:

\begin{lemma}
\label{la:P1rr2}
 Suppose $1\leq r= n-2$. 
For arbitrary $p,q\in P_1(n,r)$,
we have $a_p=a_q$  in~$\Ghat(n,r)$. 
\end{lemma}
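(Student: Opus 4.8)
\textbf{Proof proposal for Lemma \ref{la:P1rr2}.}

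The plan is to show that all generators $a_p$, for $p\in P_1(n,r)$ with $r=n-2$, coincide in $\Ghat(n,r)$ by using the transition relations already established in Lemma \ref{la:P1pq23}. Recall that each projection $p\in P_1(n,n-2)$ has a single non-trivial upper block of size $2$, say $\{i,j\}$ with $i<j$, together with all other points as singletons; in the notation introduced just before Lemma \ref{la:P1pq23} this is $\sfp(i,j)$. So the assignment $\{i,j\}\mapsto \sfp(i,j)$ gives a bijection between $2$-element subsets of $[n]$ and $P_1(n,n-2)$, and it suffices to prove that $a_{\sfp(i,j)}=a_{\sfp(k,l)}$ in $\Ghat(n,r)$ for any two pairs $i<j$ and $k<l$.

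The key point is that Lemma \ref{la:P1pq23} supplies exactly the `elementary moves' needed to connect any two such pairs. Part \ref{it:pq231} lets us increase the smaller coordinate by $1$ (as long as it stays strictly below the larger one), and part \ref{it:pq232} lets us increase the larger coordinate by $1$ (as long as it stays at most $n$); each move preserves the value of the corresponding generator. Starting from an arbitrary $\sfp(i,j)$, I would first apply part \ref{it:pq232} repeatedly to push the larger coordinate up to $n$, obtaining $a_{\sfp(i,j)} = a_{\sfp(i,n)}$, and then apply part \ref{it:pq231} repeatedly to push the smaller coordinate up to $n-1$, obtaining $a_{\sfp(i,n)} = a_{\sfp(n-1,n)}$. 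Hence $a_{\sfp(i,j)} = a_{\sfp(n-1,n)}$ for every pair $i<j$, and since $\sfp(n-1,n)$ is a fixed single element of $P_1(n,r)$, all the generators $a_p$ ($p\in P_1(n,r)$) are equal, as required.

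There is essentially no obstacle here beyond checking that the sequence of moves is legitimate at every step: part \ref{it:pq231} requires the smaller coordinate to be strictly less than the larger one, which holds throughout the first phase (where only the larger coordinate changes) and is maintained in the second phase until the smaller coordinate reaches $n-1$; part \ref{it:pq232} requires the larger coordinate to be $<n$, which holds throughout the first phase. Since $r=n-2\geq 1$ forces $n\geq 3$, the set $P_1(n,r)$ is non-empty and the target projection $\sfp(n-1,n)$ exists. Thus the lemma follows directly by transitivity of equality in $\Ghat(n,r)$, with no further calculation needed.
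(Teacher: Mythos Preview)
Your chain-of-moves argument using Lemma \ref{la:P1pq23} is exactly what the paper does for the projections $\sfp(i,j)$, but there is a genuine gap: the claim that every projection in $P_1(n,n-2)$ has the form $\sfp(i,j)$ is false, and so your bijection with $2$-element subsets does not cover all of $P_1(n,n-2)$.

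A projection $p\in P_1(n,n-2)$ is specified by a partition of $[n]$ into $n-1$ non-empty parts together with a choice of which part is the (unique) upper non-transversal. Since exactly one part has size $2$ and the rest are singletons, there are two possibilities: either the non-transversal is the $2$-element part (your $\sfp(i,j)$, the paper's type~(1)), or the non-transversal is a singleton $\{k\}$ and one of the transversals has the form $\{i,j,i',j'\}$ (the paper's type~(2)). For instance, when $n=4$ the projection
\[
q=\begin{partn}{3} 1,2 & 3 & 4\\ \hhline{~|~|-}  1,2 & 3 & 4\end{partn}
\]
lies in $P_1(4,2)$ but is not of the form $\sfp(i,j)$, since its upper non-transversal is $\{4\}$. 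Lemma \ref{la:P1pq23} says nothing about such projections. The paper closes this gap by first invoking Lemma \ref{la:P1pq1}, which shows that each type~(2) generator equals a type~(1) generator (in the example above, $a_q=a_{\sfp(2,3)}$), and only then runs the $\sfp(i,j)$ chain that you describe. Your argument becomes complete once you add this reduction step.
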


\begin{proof}
Any projection in $P_1$ satisfies precisely one of the following:
 either (1) all its transversals have size $2$; or else (2) there exists a single transversal of size $4$. 
 By Lemma \ref{la:P1pq1} every generator corresponding to a projection satisfying (2) is equal to a generator corresponding to a projection satisfying (1). So we may assume without loss of generality that both $p$ and $q$ satisfy (1).
 We can therefore write $p=\sfp(i,j)$ for some $i<j$.  
 By Lemma \ref{la:P1pq23} we have
 \[
a_p=a_{\sfp(i,j)}=a_{\sfp(i,j+1)}=\cdots=a_{\sfp(i,n)}=a_{\sfp(i+1,n)}=\cdots=a_{\sfp(n-1,n)}.
 \]
 Analogously, $a_q=a_{\sfp(n-1,n)}$, and hence $a_p=a_q$, as required.
\end{proof}

Now that we have dealt with $r=n-2$, we move on to the general case:

\begin{lemma}
\label{la:P1nr}
 Suppose $1\leq r\leq n-2$. 
For arbitrary $p,q\in P_1(n,r)$,
we have $a_p=a_q$ in~$\Ghat(n,r)$.  
\end{lemma}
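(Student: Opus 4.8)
\textbf{Proof plan for Lemma \ref{la:P1nr}.}

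The plan is to reduce the general case $1\le r\le n-2$ to the base case $r=n-2$ (Lemma \ref{la:P1rr2}) by an induction on $n-r$, exploiting the fact, noted in Remark \ref{rem:presnUlab} and in the introductory discussion of this section, that $\Ghat(n,r)$ contains many natural copies of $\Ghat(n-1,r)$ when $r\le n-3$. So first I would make this embedding precise: fix a point, say $n$, and identify $\Eq$-data on $[n-1]$-supported partitions with partitions in $\P_n$ by adjoining the singleton transversal $\{n\}\cup\{n\}'$; more precisely, for $e\in E(n-1,r)$ the partition $e\oplus\id_{\{n\}}\in E(n,r)$, and this operation carries $D(n-1,r)$, $P_k(n-1,r)$, $E(n-1,r)$, labels, and singular squares of $\P_{n-1}$ into the corresponding structures of $\P_n$ (using Proposition \ref{prop:EPn} for idempotency, Lemma \ref{la:Green_Pn} for Green's relations, and the obvious compatibility of $\NTu$, $\NTd$ and $\lam$ with $\oplus\id_{\{n\}}$). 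In particular $\labU(n-1,r)$ maps into $\labU(n,r)$. I would then check that, since these relations are among the defining relations $\rels_1(\labU(n,r))\cup\rels_\subSq(n,r)$ of $\presn(\labU(n,r))$, the inclusion of generators $A(n-1,r)\hookrightarrow A(n,r)$, $a_e\mapsto a_{e\oplus\id_{\{n\}}}$ induces a homomorphism $\Ghat(n-1,r)\to\Ghat(n,r)$; hence any equality $a_p=a_q$ provable in $\Ghat(n-1,r)$ remains valid, after the shift, in $\Ghat(n,r)$. (We do not need this map to be injective; only that relations push forward.)

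Next I would set up the induction. The base case $n-r=2$ is exactly Lemma \ref{la:P1rr2}. For the inductive step, suppose $r\le n-3$ and the result holds for $(n-1,r)$. Take arbitrary $p,q\in P_1(n,r)$, say with upper (equivalently lower) blocks: $p$ has the single non-transversal block $B\subseteq[n]$ with $|B|\ge 2$, and similarly $q$ has non-transversal block $C$. The strategy is: (i) show $a_p$ is unchanged if we modify $p$ within a suitable sub-$\D$-structure that omits at least one point of $[n]$, bringing us into the image of $\Ghat(n-1,r)$; and (ii) use the inductive hypothesis there. Concretely, since $r\le n-3$ we have $|[n]\setminus B|=n-|B|\ge n-(n-r)=r\ge 1$, but I actually want a transversal singleton available, so I would argue as follows. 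If $p$ already has a transversal that is a singleton $\{x\}\cup\{x\}'$, then $p = p'\oplus\id_{\{x\}}$ for $p'\in P_1(n-1,r-1)\cup P_1(n-1,r)$ supported on $[n]\setminus\{x\}$ — wait: removing a singleton transversal drops the rank by one, so $p'\in P_1(n-1,r-1)$; that is not directly an induction on $(n-1,r)$. Instead I would keep $n$ fixed and move the \emph{non-transversal} structure: using singular squares of the type in Lemma \ref{la:ehrsq} (Ehresmann squares) and Lemma \ref{la:type7}, together with the already-established equalities $a_e=a_f$ for label-$1$ full-domain (resp.\ full-codomain) idempotents from Lemmas \ref{la:eqlabel} and \ref{la:eqlabeldual} (all available because $T_\fd\cup T_\fc\subseteq\labU(n,r)\subseteq\labU(n,r)$), reduce $a_p$ to $a_{p_0}$ for a \emph{standard} projection $p_0\in P_1(n,r)$ whose non-transversal block is $\{n-1,n\}$ and whose transversals are exactly the singletons $\{1\}\cup\{1\}',\dots,\{n-2\}\cup\{n-2\}'$; and then argue the same for $q$, so that it suffices to show $a_{p_0}$ depends only on $r$. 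For the latter: $p_0 = p_0'\oplus\id_{\{1\}}$ with $p_0'\in P_1(n-1,r-1)$ — again the rank drops.

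Let me instead structure step (i) differently, and this is where I expect the real work to lie. Rather than stripping a transversal, I would realise $P_1(n,r)$ inside $P_1(n,r)$-with-one-extra-singleton-nontransversal, i.e.\ use the induction the \emph{other} way: the natural copies of $\Ghat(n-1,r)$ inside $\Ghat(n,r)$ arise from partitions of the form $e\oplus\id_{\{x\}}$, and a projection $p\in P_1(n,r)$ lies in such a copy precisely when $p$ has a transversal singleton $\{x\}\cup\{x\}'$ — but then its image in $\P_{n-1}$ is in $P_1(n-1,r-1)$, so the copy I want is $\Ghat(n-1,r-1)$, not $\Ghat(n-1,r)$, and an induction on $r$ down to $r=1$ (base: $P_1(n,1)$, where $\S_1$ is trivial and one checks directly, or appeal again to a small case) would then be needed. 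Reconciling these two reduction directions — reducing $n-r$ via the $\oplus\id_{\{x\}}$ copies while keeping control of which projections lie in the image — is the delicate bookkeeping of the proof. I would handle it by a double induction on $r$ and then on $n$: for fixed $r$, the case $n=r+2$ is Lemma \ref{la:P1rr2}; for $n>r+2$, given $p,q\in P_1(n,r)$, use Ehresmann/Lemma~\ref{la:type7}-squares plus the label-$1$ equalities to move each to a projection possessing a transversal singleton $\{n\}\cup\{n\}'$ (possible since $r\ge1$ means there is at least one transversal, and $n>r+2$ gives enough room to slide the blocks using squares of the form in Lemmas \ref{la:ehrsq} and \ref{la:type7} — this is the main obstacle and will require a short explicit sequence of NT-reducing and label-preserving squares, analogous to the chains in Lemmas \ref{la:P1ep1}–\ref{la:P1pq23}); then both images lie in $\Ghat(n-1,r-1)$ after stripping $\{n\}$, and the inductive hypothesis on $r$ gives $a_p=a_q$. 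Throughout, validity of each square relation in $\Ghat(n,r)$ is automatic since $\rels_\subSq(n,r)\subseteq\presn(\labU(n,r))$, and validity of each invoked label-$1$ equality follows from Lemmas \ref{la:eqlabel}\ref{it:eql2} and \ref{la:eqlabeldual}\ref{it:eql2*} because $T_\fd,T_\fc\subseteq\labU$. I expect the statement to follow cleanly once the reduction-to-a-standard-projection step is carried out, so the bulk of the written proof will be exhibiting the explicit singular squares realising that reduction.
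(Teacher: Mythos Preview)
Your inductive scheme has a genuine gap stemming from the wrong choice of copy of $\P_{n-1}$ inside $\P_n$. The map $e\mapsto e\oplus\id_{\{n\}}$ adds a singleton \emph{transversal}, so it sends $E(n-1,r)$ into $E(n,r+1)$, not $E(n,r)$; your opening claim that $e\oplus\id_{\{n\}}\in E(n,r)$ is simply false. You notice this yourself midway through (``removing a singleton transversal drops the rank by one''), but the subsequent attempt to repair it by a double induction on $r$ and $n$ does not close: your inductive step passes from $(n,r)$ to $(n-1,r-1)$, which keeps $n-r$ constant and decreases $r$, so the only base case you ever reach is $r=1$, and for $r=1$ you only have Lemma~\ref{la:P1rr2} at $n=3$. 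The cases $r=1$, $n\ge 4$ are never grounded, and the ``short explicit sequence of NT-reducing and label-preserving squares'' you promise for moving an arbitrary $P_1$-projection to a standard one is left entirely unspecified.

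The paper's proof avoids all of this by using a different copy of $\P_{n-1}$: for distinct $i,j\in[n]$, the subsemigroup $\P_n^{i,j}=\{a\in\P_n:(i,j)\in\ker(a)\cap\coker(a)\}$ is isomorphic to $\P_{n-1}$ and \emph{preserves rank}, so it carries $P_1(n-1,r)$ into $P_1(n,r)$ and $\labU(n-1,r)$ into $\labU(n,r)$. This gives a clean induction on $n$ with $r$ fixed: any $p\in P_1(n,r)$ has some nontrivial kernel pair $(i,j)$ and hence lies in $\P_n^{i,j}$; and since $r\le n-3$ there is room to find a bridging $u\in P_1$ containing two prescribed kernel pairs simultaneously, so any $p,q$ are linked through a common copy. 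The merging embedding $\P_n^{i,j}$ is the missing idea; once you use it instead of $\oplus\id_{\{n\}}$, the proof is three lines.
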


\begin{proof}
We fix $r\geq1$, and prove the lemma by induction on $n\geq r+2$.  When $n=r+2$, the result is true by Lemma \ref{la:P1rr2}, so for the rest of the proof we assume that $n\geq r+3$.

For $i,j\in [n]$ with $i\neq j$ denote by $\P_n^{i,j}$ the set of all partitions $a\in\P_n$ such that $(i,j)\in \ker (a)$
and $(i,j)\in\coker(a)$.
This is a subsemigroup of $\P_n$ isomorphic to $\P_{n-1}$.

Recall that the group $\Ghat(n,r)$ is defined by the generators $A(n,r)$ and the defining relations 
$\rels_1(\labU(n,r))$ and $\rels_\subSq(n,r)$.
Furthermore, recall that $\labU(n,r)$ consists of all non-projection idempotents in $D(n,r)$ with full domain or full codomain, and with label $1$. In particular, $\labU(n,r)$ contains all the idempotents belonging to $\P_n^{i,j}$ with the above properties.
Likewise, $\rels_\subSq(n,r)$ contains all the relations arising from singular squares that lie entirely in $\P_n^{i,j}$.
In this way, we see that the presentation $\langle A(n,r)\mid \rels_1(\labU(n,r)),\ \rels_\subSq(n,r)\rangle$ contains as a sub-presentation a natural copy of the presentation
$\langle A(n-1,r)\mid \rels_1(\labU(n-1,r)),\ \rels_\subSq(n-1,r)\rangle$ corresponding to~$\P_n^{i,j}$.
Thus, if the projections $p$ and $q$ belong to some common $\P_n^{i,j}$ we conclude that $a_p=a_q$ in~$\Ghat(n,r)$ by induction.

Now consider arbitrary $p,q\in P_1$, and fix $(i,j)\in\ker(p)$ and $(k,l)\in\ker(q)$ with $i\neq j$ and~$k\neq l$.
Note that all elements of $P_1$ have non-trivial kernel, and also that we do not require $\{i,j\}$ and $\{k,l\}$ to be disjoint.
Since $r\leq n-3$ there exists $u\in P_1$ such that $(i,j),(k,l)\in \ker(u)$.
But then $p,u\in \P_n^{i,j}$ and $u,q\in\P_n^{k,l}$. Using the observation from the previous paragraph we have
$a_p=a_u=a_q$, completing the proof.
\end{proof}

\begin{rem}\label{rem:presnUlab}
The fact that we were working with the groups $\Ghat(n,r)$, which do not depend on the choice of projection $s\in P_0$, is crucial for the validity of the inductive argument in our proof of Lemma \ref{la:P1nr}. Indeed, if we worked with the set $O\cup\{s\}$, we would not be able to perform the inductive step, since $s$ certainly would not be contained in all $\P_n^{i,j}$.  The same is true of the proof of Lemma \ref{la:P0nr} below.
\end{rem}

\subsection[Generators corresponding to projections from $P_0(n,r)$]{Generators corresponding to projections from \boldmath{$P_0(n,r)$}}
\label{ss:eP0}

We now want to do the same for generators corresponding to projections from $P_0(n,r)$, i.e.~show they are all equal to each other as a consequence of the defining relations. We will again need to work within the group $\Ghat = \Ghat(n,r) = \sfG(\labU)$; however, once the relation $a_s=1$ is brought back in (for a fixed projection $s\in P_0(n,r)$), the conclusion will be that all these generators are in fact equal to $1$.  As before, we begin with the case $r=n-2$.
If $n=3$ (and $r=1=n-2$) then $|P_0|=1$, and Lemmas \ref{la:P0pq1}, \ref{la:P0pq23} and \ref{la:P0rr2} are vacuously true in this case.

\begin{lemma}
\label{la:P0pq1}
Suppose $1\leq r= n-2$, let $i,j,k,l\in [n]$ be distinct, let $Y:=[n]\setminus \{i,j,k,l\}$, and let
\[
p:=\begin{partn}{2} i,j,k&l\\ \hhline{~|~}  i,j,k & l\end{partn}\oplus \id_Y \ANd
q:=\begin{partn}{2} i,j &k,l\\ \hhline{~|~}  i,j & k,l\end{partn}\oplus \id_Y.
\]
If $i,l\leq k+1$ then $a_p=a_q$  in $\Ghat(n,r)$.
\end{lemma}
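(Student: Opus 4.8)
The plan is to connect $a_p$ to $a_q$ by a chain of singular-square relations, following the template of the proofs of Lemmas~\ref{la:P1ep1} and~\ref{la:P1pq1}. Since $i$ and $j$ occur symmetrically in both $p$ (where they lie in the common transversal block $\{i,j,k\}$) and $q$ (where they form the block $\{i,j\}$), and the hypothesis $i,l\le k+1$ is insensitive to interchanging them, I would first assume without loss of generality that $i<j$. All computations take place in $\Ghat(n,r)=\sfG(\labU(n,r))$, in which we already know: every generator attached to a full-domain (resp.\ full-codomain) non-projection idempotent of label $1$ equals $1$, by Lemma~\ref{la:eqlabel}\ref{it:eql2} (resp.\ its dual, Lemma~\ref{la:eqlabeldual}); and all generators attached to projections in $P_1(n,r)$ coincide, by Lemma~\ref{la:P1nr}.

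Writing $Y=[n]\setminus\{i,j,k,l\}$, the heart of the argument is to exhibit a short sequence of auxiliary idempotents of $E(n,r)$, each of the form (a rank-$2$ partition of $\{i,j,k,l\}$)$\,\oplus\,\id_Y$, that interpolates between $p$ and $q$. Roughly, one first passes from $p$ to a full-domain idempotent in which the lower class of the transversal $\{i,j,k\}$ has been split so that $k$ becomes a lower non-transversal; one then moves $k$ across to join the class of $l$ on the lower side; and finally one restores full codomain, arriving at $q$. Each elementary move is realised by a singular square whose singularising idempotent I would write down explicitly --- it will be a full-domain or full-codomain idempotent, singularising the square in the LR or UD sense as in Lemma~\ref{la:ehrsq} --- and whose two remaining (``outer'') corners are arranged to be either label-$1$ full-(co)domain non-projections, or projections from $P_1(n,r)$. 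Chaining the square relations, and using that the outer generators either vanish or coincide (so cancel in pairs), then yields $a_p=a_q$.

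The main obstacle is the combinatorial design and verification of this chain. For each move one must check that the four idempotents genuinely form a square and that the proposed element singularises it, i.e.\ verify the identities~\eqref{eq:LR} or~\eqref{eq:UD}, and one must then compute the labels of the outer corners and confirm that they are $1$. It is exactly at this last point that the hypothesis is used: after the splitting and merging steps, the partial bijections $\lamp$ of the relevant idempotents differ from the identity only in the slots coming from $i$, $k$ and $l$, and the bounds $i\le k+1$ and $l\le k+1$ (with $i<j$ and $i,j,k,l$ distinct) force these slots to involve only equal or consecutive values, so that Lemma~\ref{la:consec} makes the label trivial. As with Figures~\ref{fig:P1ep1} and~\ref{fig:P1pq1}, each individual verification is routine, but there are several of them.
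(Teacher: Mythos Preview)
Your overall strategy—chaining singular-square relations and cancelling via the $P_1$ result and label arguments—is the right one, and matches the paper's template. But two concrete steps in your sketch do not work as written.

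First, the interpolation you describe fails at the ``move $k$'' step. After splitting off $k$ as a lower non-transversal you are at the idempotent with transversals $\{i,j,k\}\cup\{i,j\}'$ and $\{l\}\cup\{l\}'$; but ``moving $k$ across to join $l$ on the lower side'' produces the partition with transversals $\{i,j,k\}\cup\{i,j\}'$ and $\{l\}\cup\{k,l\}'$, which is \emph{not} an idempotent (in its product graph with itself, the middle vertex $k''$ links the two transversals, collapsing the rank). The paper avoids this by a less linear construction: it introduces auxiliary idempotents $e,f$ each having \emph{two} upper non-transversals $\{j\}$ and $\{k\}$, uses Lemma~\ref{la:P1ep1} to relate $a_e,a_f$ to $P_1$-generators (whence $a_e=a_f$ by Lemma~\ref{la:P1nr}), and then uses three interlocking squares—not a simple chain—to connect $e,f$ to $p$ and $q$ via intermediates $g,h,g_1,g_2,h_1,h_2$.

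Second, your claim that the outer corners all have label $1$ is false in general. In the paper's proof the relevant full-(co)domain idempotents $g_1,h_1,g_2,h_2$ typically have nontrivial labels; what is actually shown (and what the hypothesis $i,l\le k+1$ is used for) is the pairwise equality $\lam(g_1)=\lam(h_1)$ and $\lam(g_2)=\lam(h_2)$, so that $a_{g_1}=a_{h_1}$ and $a_{g_2}=a_{h_2}$ via Lemmas~\ref{la:eqlabel} and~\ref{la:eqlabeldual}. For instance, with $n=5$ and $(i,j,k,l)=(4,1,3,2)$ one computes $\lam(g_2)=\lam(h_2)=(1\;2)\ne 1$. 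Thus the cancellation comes from \emph{matching} labels across two parallel squares, not from triviality. (Your WLOG reduction to $i<j$ is also unsafe: swapping $i\leftrightarrow j$ leaves $p,q$ unchanged but turns the hypothesis $i\le k+1$ into $j\le k+1$.)
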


\begin{proof}
Consider the following further idempotents:
\begin{align*}
e& := \begin{partn}{4} i & j&k&l \\ \hhline{~|-|-|~} i,k & \multicolumn{2}{c|}{j} & l\end{partn}\oplus \id_Y, &
f&:=\begin{partn}{4} i&j&k&l\\ \hhline{~|-|-|~} i & \multicolumn{2}{c|}{j} & k,l \end{partn}\oplus \id_Y, &
g_1& := \begin{partn}{3} i,j&k&l   \\ \hhline{~|-|~} i,j,k& &l\end{partn}\oplus \id_Y, &
h_1&= \begin{partn}{3} i,j&k&l\\ \hhline{~|-|~} i,j&&k,l\end{partn}\oplus \id_Y,&
\\
g&= \begin{partn}{3} i,j&k&l\\ \hhline{~|-|~} i,k&j&l\end{partn}\oplus \id_Y,&
h& := \begin{partn}{3} i,j&k&l   \\ \hhline{~|-|~} i & j&k,l\end{partn}\oplus \id_Y, &
g_2&:=\begin{partn}{3} i,j,k&&l\\ \hhline{~|-|~} i,k&j&l\end{partn}\oplus \id_Y, &
h_2&= \begin{partn}{3} i,j&&k,l\\ \hhline{~|-|~} i&j&k,l\end{partn}\oplus \id_Y.
\end{align*}
Then:
\begin{alignat}{5}
\label{eq:P0pq11}
&\smat{e}{f}{g}{h} &\ &\text{is LR-singularised by}
&\ &u=\begin{partn}{4} i&j&k&l\\ \hhline{~|~|-|~} i&j&& k,l\end{partn}\oplus\id_Y,&\ \text{ yielding}
&\quad&a_e^{-1}a_{f}=a_g^{-1}a_{h};
\\
\label{eq:P0pq12}
&\smat{g}{g_1}{g_2}{p} &&\text{is LR-singularised by}
&&v=\begin{partn}{3} i,j&k&l\\ \hhline{~|~|~} i,j&k& l\end{partn}\oplus\id_Y,&\text{ yielding}
&&a_g^{-1}a_{g_1}=a_{g_2}^{-1}a_{p};
\\
\label{eq:P0pq13}
&\smat{h}{h_1}{h_2}{q} &&\text{is LR-singularised by}
&&w=\begin{partn}{3} i,j&k&l\\ \hhline{~|~|~} i,j&k& l\end{partn}\oplus\id_Y,&\text{ yielding}
&&a_h^{-1}a_{h_1}=a_{h_2}^{-1}a_{q}.
\end{alignat}
These squares are illustrated in Figure \ref{fig:P0pq1}.
By Lemma \ref{la:P1ep1}, we have $a_e=a_{e'}$, where
$e'=\begin{partn}{3} i,k&j&l\\ \hhline{~|-|~} i,k&j&l\end{partn}\oplus\id_Y$,
and also 
$a_f=a_{f'}$, where
$f'=\begin{partn}{3} i&j&k,l\\ \hhline{~|-|~} i&j&k,l\end{partn}\oplus\id_Y$.
Note that $e',f'\in P_1$, so that $a_{e'}=a_{f'}$ by Lemma \ref{la:P1nr}.
Hence $a_e=a_f$, and combining with \eqref{eq:P0pq11} yields $a_g=a_h$.

Combining this last conclusion with \eqref{eq:P0pq12} and \eqref{eq:P0pq13} yields
\[
a_{g_1}a_p^{-1}a_{g_2}=a_g=a_h=a_{h_1}a_q^{-1} a_{h_2}.
\]
The desired relation $a_p=a_q$ will follow if we manage to show that
$a_{g_1}=a_{h_1}$ and $a_{g_2}=a_{h_2}$.
In fact, by Lemmas  \ref{la:eqlabeldual} and \ref{la:eqlabel}, it suffices to show that $\lambda(g_i)=\lambda(h_i)$ for $i=1,2$.

Again writing $x\wedge y$ for $\min(x,y)$, we have
\begin{alignat*}{2}
\lamp (g_1)&= \trans{i\wedge j & l}{i\wedge j\wedge k&l}\oplus\id_Y,&\quad
\lamp(g_2) & = \trans{i\wedge j\wedge k & l}{i\wedge k&l}\oplus\id_Y,
\\
\lamp(h_1) &=  \trans{i\wedge j & l}{i\wedge j &k\wedge l}\oplus\id_Y,&
\lamp(h_2) & = \trans{i\wedge j & k\wedge l}{i&k\wedge l}\oplus\id_Y.
\end{alignat*}
Since $i\leq k+1$, the values $i\wedge j\wedge k$ and $i\wedge j$ are either equal or consecutive.
Likewise, $l$ and $k\wedge l$ are either equal or consecutive.
Furthermore, not both pairs can be consecutive, as this would imply that
$i=k+1=l$.
It again follows from Lemma \ref{la:consec} that $\lam(g_1)=\lam(h_1)$.
We obtain $\lam(g_2)=\lam(h_2)$ analogously, and this completes the proof of the lemma.
\end{proof}

\begin{rem}\label{rem:eggbox7}
Figure \ref{fig:eggbox7} shows the location in $D(n,n-2)$ of the idempotents from the singular squares $\smat{e}{f}{g}{h}$, $\smat{g}{g_1}{g_2}{p}$ and $\smat{h}{h_1}{h_2}{q}$ used in the proof of Lemma~\ref{la:P0pq1}.  The dashed lines between $e$, $e'$, $f'$ and $f$ indicate equality of the generators corresponding to these idempotents, as explained in the proof.
\end{rem}

\begin{figure}[t!]
\begin{center}
\scalebox{0.7}{
\begin{tikzpicture}[scale=.5]
\begin{scope}[shift={(0,0)}]
\diagramshading4
\ediagram{e}{4}{\udline11 \udline44 \darc13}
\fdiagram{f}{4}{\udline11 \udline44 \ddline34}
\gdiagram{g}{4}{\udline11 \udline44 \uuline12 \darc13}
\hdiagram{h}{4}{\udline11 \udline44 \uuline12 \ddline34}
\udiagram{u}{4}{\udline11 \udline22 \udline44 \ddline34}
\LRarrows
\end{scope}
\begin{scope}[shift={(14,0)}]
\diagramshading4
\ediagram{g}{4}{\udline11 \udline44 \uuline12 \darc13}
\fdiagram{g_1}{4}{\udline11 \udline44 \uuline12 \ddline12\ddline23}
\gdiagram{g_2}{4}{\udline11 \udline44 \uuline12 \uuline23 \darc13}
\hdiagram{p}{4}{\udline11 \udline44 \uuline12 \uuline23 \ddline12\ddline23}
\udiagram{v}{4}{\udline11 \udline33 \udline44 \uuline12 \ddline12}
\LRarrows
\end{scope}
\begin{scope}[shift={(28,0)}]
\diagramshading4
\ediagram{h}{4}{\udline11 \udline44 \uuline12 \ddline34}
\fdiagram{h_1}{4}{\udline11 \udline44 \uuline12 \ddline34 \ddline12}
\gdiagram{h_2}{4}{\udline11 \udline44 \uuline12 \ddline34 \uuline34}
\hdiagram{q}{4}{\udline11 \udline44 \uuline12 \ddline34 \ddline12 \uuline34}
\udiagram{w}{4}{\udline11 \udline33 \udline44 \uuline12 \ddline12}
\LRarrows
\end{scope}
\end{tikzpicture}
}
\caption{The singular squares featuring in the proof of Lemma \ref{la:P0pq1}.
In each partition, only the points $i,j,k,l$ and their dashed counterparts are shown (left to right).}
\label{fig:P0pq1}
\end{center}
\end{figure}

\begin{figure}[t]
\begin{center}
\scalebox{.7}{
\begin{tikzpicture}[scale=.8]

\begin{scope}[shift={(18,0)}]
\fill[gray!10](0,0)--(12,0)--(12,12)--(0,12)--(0,0);
\foreach \x in {0,4,8,12} {\draw[ultra thick] (\x,0)--(\x,12) (0,12-\x)--(12,12-\x);}
\draw[ultra thick](0,0)--(12,0)--(12,12)--(0,12)--(0,0)--(12,0);
\draw[thick, dashed] (0,12)--(-2,14);
\node () at (-.6-1,12.22+.5) { $\NTu$};
\node () at (-.22-.5,12.6+1) { $\NTd$};
\foreach \x in {0,4,8,12} {\draw[thick, dashed] (\x,12)--(\x,14) (0,12-\x)--(-2,12-\x);}
\node () at (2,13) { $0$};
\node () at (6,13) { $1$};
\node () at (10,13) { $2$};
\node () at (-1,10) { $0$};
\node () at (-1,6) { $1$};
\node () at (-1,2) { $2$};
\node[circle,draw=black, fill=white, inner sep = 0.04cm] (p) at (1,11) {\Large $p$};
\node[circle,draw=black, fill=white, inner sep = 0.04cm] (q) at (3,9) {\Large $q$};
\node (g2) at (5,11) {\Large $g_2$};
\node (h2) at (7,9) {\Large$h_2$};
\node (g1) at (1,6) {\Large$g_1$};
\node (h1) at (3,6) {\Large$h_1$};
\node (g) at (5,6) {\Large$g$};
\node (h) at (7,6) {\Large$h$};
\node (e) at (5,2) {\Large$e$};
\node (f) at (7,2) {\Large$f$};
\node (e') at (5,7) {\Large$e'$};
\node (f') at (7,5) {\Large$f'$};
\draw[ultra thick, dashed] (f)--(f')--(e');
\draw[ultra thick, dashed] (e) to[bend left=25] (e');
\draw[ultra thick, green] (g2)--(p)--(g1);
\draw[ultra thick, green] (g1) to[bend right=25] (g);
\draw[ultra thick, green] (g) to[bend left=25] (g2);
\draw[ultra thick, red] (h1)--(q)--(h2)--(h);
\draw[ultra thick, red] (h) to[bend right=20] (h1);
\draw[ultra thick, blue] (h)--(g)--(e)--(f);
\draw[ultra thick, blue] (f) to[bend right=30] (h);
\end{scope}

\end{tikzpicture}
}
\caption{Stratified egg-box diagram of the idempotents in the singular squares featuring in the proof of Lemma \ref{la:P0pq1}.  See Remark \ref{rem:eggbox7} for more details.}
\label{fig:eggbox7}
\end{center}
\end{figure}

Still supposing $r=n-2$, for any $i,j,k\in [n]$ with $i<j<k$ denote by $\sfp(i,j,k)$ the unique projection in $P_0$ with transversal $\{i,j,k,i',j',k'\}$.

\begin{lemma}
\label{la:P0pq23}
Suppose $1\leq r= n-2$, and let $i,j,k\in [n]$ with $i<j<k$. 
\begin{thmenumerate}
\item
\label{it:pq0231}
If $i+1<j$ then $a_{\sfp(i,j,k)}=a_{\sfp(i+1,j,k)}$  in $\Ghat(n,r)$.
\item
\label{it:pq0232}
If $j+1<k$ then $a_{\sfp(i,j,k)}=a_{\sfp(i,j+1,k)}$  in $\Ghat(n,r)$.
\item
\label{it:pq0233}
If $k <n$ then $a_{\sfp(i,j,k)}=a_{\sfp(i,j,k+1)}$  in $\Ghat(n,r)$.
\end{thmenumerate}
\end{lemma}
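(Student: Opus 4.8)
The plan is to prove all three parts uniformly by exhibiting, for each case, a projection $q\in P_0(n,r)$ with a size-$4$ transversal, together with two applications of Lemma \ref{la:P0pq1} that connect each of the two projections $\sfp(\cdots)$ in question to $a_q$. This mirrors the structure of the proof of Lemma \ref{la:P1pq23}\ref{it:pq231}, where Lemma \ref{la:P1pq1} was invoked twice with different orderings of the defining triple. The key point is that Lemma \ref{la:P0pq1} relates $a_q$ (for $q$ having a transversal $\{a,b,c\}$ plus a singleton $\{d\}$, all identity off these) to $a_p$ (for $p$ having transversals $\{a,b\}$ and $\{c,d\}$), subject to the hypothesis $a,d\le c+1$ on the three elements playing the roles of $(i,j,k)$ and the fourth element $l$. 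Since I may choose which of the four points plays the role of each of $i,j,k,l$ in Lemma \ref{la:P0pq1}, I want to pick the labelling so that the numerical hypothesis is satisfied on both applications.

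For part \ref{it:pq0231}, with $i+1<j<k$, set $Y:=[n]\setminus\{i,i+1,j,k\}$ and
\[
q:=\begin{partn}{2} i,i+1,j & k\\ \hhline{~|~} i,i+1,j & k\end{partn}\oplus\id_Y .
\]
I would apply Lemma \ref{la:P0pq1} once with the roles of $(i,j,k,l)$ played by $(j,i+1,i,k)$ — this has $q$ as its ``$\begin{partn}{2} a,b,c&d\\\hhline{~|~} a,b,c&d\end{partn}$'' projection and $\sfp(i+1,j)$-type transversals $\{j,i+1\}$, $\{i,k\}$, wait — I need to be careful: I want the transversal structure of $p$ in that lemma to be $\{i,i+1\}$ and $\{j,k\}$ for $a_{\sfp(i,i+1)}$... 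Actually the cleanest route is: apply Lemma \ref{la:P0pq1} with $(i,j,k,l)\mapsto(i,k,i+1,j)$ to get $a_q = a_{p_1}$ where $p_1$ has transversals $\{i,k\}$ and $\{i+1,j\}$; and separately with $(i,j,k,l)\mapsto(i+1,k,i,j)$ — hmm, these don't directly yield the projections I want either. Let me instead route through Lemma \ref{la:P1pq23}: since $\sfp(i,j)$ and $\sfp(i+1,j)$ here refer to $P_1$... no — in \emph{this} lemma we are in $P_0(n,r)$ with $r=n-2$, so $\sfp(i,j,k)$ has a genuine $6$-element transversal. So I should directly mimic the proof of Lemma \ref{la:P0pq1}: apply it first with $(i,j,k,l)\mapsto(i+1,i,j,k)$, noting $i+1\le j+1$ and $k\le j+1$? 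No, $k>j$. The hypothesis in Lemma \ref{la:P0pq1} is ``$i,l\le k+1$'' where $k$ is the \emph{third} argument. So I must place $j$ (or whichever element is ``large'') into the \emph{third} slot, and the two elements that need to be $\le$ it${}+1$ into the first and fourth slots. For \ref{it:pq0231} the obstruction is the gap between $i$ and $i+1$; I would put $k$ in the third slot (largest), and check $i\le k+1$, $l\le k+1$ for the remaining; since everything is $\le k$, this is automatic. Concretely: apply Lemma \ref{la:P0pq1} with $(i,j,k,l)\mapsto(j,i+1,k,i)$ — wait, then $q$-type projection has transversal $\{j,i+1,k\}$ not $\{i,i+1,j\}$.

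Given the delicacy of bookkeeping, the honest plan is: for each part, choose $q\in P_0$ with a size-$4$ transversal on the ``merged pair'' $\{a,a+1\}$ (for \ref{it:pq0231}, $\{i,i+1\}$; for \ref{it:pq0232}, $\{j,j+1\}$; for \ref{it:pq0233}, $\{k,k+1\}$) together with the third original point, and then invoke Lemma \ref{la:P0pq1} twice with the roles of the abstract quadruple chosen so that (a) the projection with the size-$4$ transversal matches $q$ via an appropriate merge, and (b) the ``large'' original point occupies the third coordinate so that the hypothesis $i,l\le k+1$ reduces to comparing $a$ and $a+1$, which are consecutive and hence satisfy the bound. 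The first application identifies $a_q$ with $a_{\sfp(\text{first pair})}$, the second with $a_{\sfp(\text{shifted pair})}$, and transitivity finishes. For \ref{it:pq0233} one takes $q$ with transversal $\{k,k+1,i\}$ (say) and $Y:=[n]\setminus\{i,j,k,k+1\}$; symmetrically. All $\R/\L$-relationships and singularising idempotents are as furnished verbatim by Lemma \ref{la:P0pq1}, so no new square computations are needed — only the selection of the index-permutation and the elementary verification that the consecutiveness of $\{a,a+1\}$ makes the numerical hypothesis hold.

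The main obstacle is purely combinatorial: getting the coordinate assignment right so that in \emph{both} applications of Lemma \ref{la:P0pq1} the hypothesis ``$i,l\le k+1$'' holds and the two resulting projections are exactly the consecutive pair $\sfp(a,a+1,\ldots)$ and its neighbour. Once the assignment is pinned down, everything else is immediate from Lemma \ref{la:P0pq1} and transitivity of equality in $\Ghat(n,r)$. I would present the three parts in parallel, spelling out the quadruple assignment explicitly in each and then writing a single displayed chain $a_{\sfp(\cdots)} = a_q = a_{\sfp(\cdots')}$ for each.
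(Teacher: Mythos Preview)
Your overall strategy is exactly the paper's: for each part, route through an intermediate projection in $P_0$ and apply Lemma~\ref{la:P0pq1} twice, mirroring the proof of Lemma~\ref{la:P1pq23}. The difficulty you flag --- getting the coordinate assignment right --- is real, and it is precisely where your proposal goes wrong.

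Recall the shape of Lemma~\ref{la:P0pq1}: it equates $a_p$ (for $p$ with one size-$3$ block and a singleton) with $a_q$ (for $q$ with \emph{two} size-$2$ blocks), under the hypothesis $i,l\le k+1$. Since both $\sfp(i,j,k)$ and its neighbour are of the $p$-shape, the intermediate must be of the $q$-shape, i.e.\ two size-$2$ blocks on the four relevant points. Your suggestion for part~\ref{it:pq0233}, namely ``$q$ with transversal $\{k,k+1,i\}$'', is of the wrong shape (a size-$3$ block), so Lemma~\ref{la:P0pq1} cannot be applied with it playing the role of $q$.

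For part~\ref{it:pq0231}, your heuristic of pairing up the consecutive pair --- taking the intermediate with blocks $\{i,i+1\}$ and $\{j,k\}$ --- also fails. Starting from $\sfp(i,j,k)$, the singleton is $l_L=i+1$, so the block $\{k_L,l_L\}$ containing it forces $k_L=i$; then the hypothesis $i_L\le k_L+1=i+1$ requires $i_L\in\{j,k\}\le i+1$, which is false. The paper instead takes the \emph{cross} pairing $q$ with blocks $\{i,k\}$ and $\{i+1,j\}$: from $\sfp(i,j,k)$ one uses $(i_L,j_L,k_L,l_L)=(i,k,j,i+1)$, and from $\sfp(i+1,j,k)$ one uses $(i_L,j_L,k_L,l_L)=(i+1,j,k,i)$; in both cases the element in the third slot is large enough that the hypothesis holds. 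For parts~\ref{it:pq0232} and~\ref{it:pq0233} the intermediates are $\{i,k\},\{j,j+1\}$ and $\{i,j\},\{k,k+1\}$, respectively, and the verifications are similar.
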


\begin{proof}
The proof is analogous to the proof of Lemma \ref{la:P1pq23}, but using Lemma \ref{la:P0pq1} instead of Lemma \ref{la:P1pq1}.
Thus for \ref{it:pq0231} we let 
$q:=\begin{partn}{2} i,k & i+1,j\\ \hhline{~|~} i,k & i+1,j    \end{partn}\oplus\id_Y$,
where $Y:=[n]\setminus\{i,i+1,j,k\}$,
and then apply Lemma \ref{la:P0pq1} twice to obtain $a_{\sfp(i,j,k)}=a_q=a_{\sfp(i+1,j,k)}$.
The intermediate partitions to use in proving \ref{it:pq0232} and \ref{it:pq0233}
are
$\begin{partn}{2} i,k & j,j+1\\ \hhline{~|~} i,k & j,j+1    \end{partn}\oplus\id_Y$ and
$\begin{partn}{2} i,j & k,k+1\\ \hhline{~|~} i,j & k,k+1    \end{partn}\oplus\id_Y$
(for appropriate $Y$), respectively.
\end{proof}

\begin{lemma}\label{la:P0rr2}
Suppose $1\leq r= n-2$.  For arbitrary $p,q\in P_0(n,r)$, we have $a_p=a_q$ in~$\Ghat(n,r)$. 
\end{lemma}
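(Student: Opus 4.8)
\textbf{Proof proposal for Lemma \ref{la:P0rr2}.}  The plan is to mimic exactly the structure of the proof of Lemma \ref{la:P1rr2}, using the results just established for $P_0(n,r)$ in place of their $P_1$-analogues.  First I would observe that every projection in $P_0 = P_0(n,n-2)$ has precisely one of the following forms: either (1) it has a single transversal of size $6$ (i.e.~an upper and lower block of size~$3$), or else (2) all of its transversals have size $2$ except for one of size $4$ (i.e.~one upper/lower block of size $2$), or else (3) all transversals have size $2$ --- but the last possibility cannot occur here since a rank-$(n-2)$ projection with full domain has exactly $n-2$ transversals and $2$ leftover points, which must form a single block of size $2$, giving case (2) unless $n-2 = 0$.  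So in fact every $p\in P_0(n,n-2)$ falls under case (1) or case (2).

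Next, Lemma \ref{la:P0pq1} tells us that any generator corresponding to a case-(1) projection (one block of size $3$, which we may write with transversal points $\{i,j,k\}$ after possibly reducing via the $i,l\le k+1$ hypothesis being arrangeable by choosing the labelling of the three points appropriately) equals a generator corresponding to a case-(2) projection, and conversely; more precisely, a single application of Lemma \ref{la:P0pq1} with the triple/quadruple chosen suitably moves between $\sfp$ with a size-$3$ block and a projection with two size-$2$ blocks.  Hence, as in the proof of Lemma \ref{la:P1rr2}, we may assume without loss of generality that both $p$ and $q$ are case-(2) projections, so we can write $p = \sfp(i,j,k)$ and $q = \sfp(i',j',k')$ for some $i<j<k$ and $i'<j'<k'$ in $[n]$ (using the notation introduced before Lemma \ref{la:P0pq23}; here one must be slightly careful, since the size-$2$ block of a case-(2) projection is the ``small'' part --- I would instead work with the dual convention, or simply note that a case-(2) rank-$(n-2)$ full-domain projection is determined by the $3$-element set not lying in its unique size-$2$ block... on reflection the cleanest route is to keep the exact notation $\sfp(i,j,k)$ as defined in the excerpt and note that Lemma \ref{la:P0pq1} lets us assume $p,q$ have that shape).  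Then Lemma \ref{la:P0pq23}, applied repeatedly (increasing each coordinate one step at a time, exactly as in Lemma \ref{la:P1pq23}$\Rightarrow$Lemma \ref{la:P1rr2}), gives
\[
a_p = a_{\sfp(i,j,k)} = \cdots = a_{\sfp(n-2,n-1,n)},
\]
and likewise $a_q = a_{\sfp(n-2,n-1,n)}$, whence $a_p = a_q$ in $\Ghat(n,r)$.

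The only genuine point of care --- and the place I expect the ``main obstacle'' --- is the bookkeeping in the reduction step: verifying that the case-(1)/case-(2) dichotomy is exhaustive for $P_0(n,n-2)$ and that Lemma \ref{la:P0pq1}'s constraint ``$i,l\le k+1$'' can always be arranged by a suitable choice of which points play the roles $i,j,k,l$, so that any case-(1) generator really is reached from a case-(2) one.  This is entirely analogous to the remark at the start of the proof of Lemma \ref{la:P1rr2} that every $P_1$-projection satisfying (2) has its generator equal to one satisfying (1), and it should go through by the same elementary argument, but it is the step where the $r=n-2$ combinatorics (three ``moving'' points rather than two) has to be tracked honestly.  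Once that reduction is in hand, the chain-of-equalities argument via Lemma \ref{la:P0pq23} is formally identical to the $P_1$ case and requires no new ideas.
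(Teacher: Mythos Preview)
Your overall strategy is the paper's own: reduce via Lemma~\ref{la:P0pq1} to projections of the shape $\sfp(i,j,k)$, then use Lemma~\ref{la:P0pq23} to walk each coordinate up to $\sfp(n-2,n-1,n)$. That part is fine and matches the paper exactly.

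Where your write-up goes astray is the combinatorial bookkeeping you flagged yourself. A full-domain projection of rank $n-2$ partitions $[n]$ into $n-2$ blocks, so the block-size multiset is either $\{3,1,\ldots,1\}$ (one transversal of size $6$) or $\{2,2,1,\ldots,1\}$ (\emph{two} transversals of size $4$, not one); there are no ``leftover points''. You also reverse the reduction direction mid-proof: $\sfp(i,j,k)$ is by definition the size-$6$-transversal case, so after invoking Lemma~\ref{la:P0pq1} you should be assuming both $p,q$ have a transversal of size $6$ (your case~(1)), not case~(2). Finally, the ``$i,l\le k+1$'' constraint in Lemma~\ref{la:P0pq1} is always satisfiable: given a two-pairs projection, take $k$ to be the maximum of the four points involved, $l$ its block-mate, and $\{i,j\}$ the other pair; then $i,l<k\le k+1$. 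With these corrections the argument goes through verbatim.
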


\begin{proof}
The proof is analogous to the proof of Lemma \ref{la:P1rr2}.
Lemma \ref{la:P0pq1} enables us to assume without loss of generality that $p$ and $q$ have a transversal of size $6$.
We can therefore write $p=\sfp(i,j,k)$ for some $i<j<k$.  
By Lemma \ref{la:P0pq23} we have
\begin{align*}
a_p=a_{\sfp(i,j,k)}=a_{\sfp(i,j,k+1)}=\cdots=a_{\sfp(i,j,n)}=a_{\sfp(i,j+1,n)}=\cdots&=a_{\sfp(i,n-1,n)}\\
&=a_{\sfp(i+1,n-1,n)}=\cdots=a_{\sfp(n-2,n-1,n)}.
\end{align*}
Analogously, $a_q=a_{\sfp(n-2,n-1,n)}$, and hence $a_p=a_q$.
\end{proof}

\begin{lemma}\label{la:P0nr}
Suppose $1\leq r\leq n-2$.  For arbitrary $p,q\in P_0(n,r)$, we have $a_p=a_q$  in~$\Ghat(n,r)$.  
\end{lemma}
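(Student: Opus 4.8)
\textbf{Proof plan for Lemma \ref{la:P0nr}.}
The plan is to mirror exactly the inductive argument used for Lemma \ref{la:P1nr}, replacing the appeal to the base case Lemma \ref{la:P1rr2} by its $P_0$-analogue Lemma \ref{la:P0rr2}, and replacing references to $\ker$ with the observation that projections in $P_0(n,r)$ already have full domain. We fix $r\geq 1$ and induct on $n\geq r+2$. The base case $n=r+2$ (i.e.\ $r=n-2$) is precisely Lemma \ref{la:P0rr2}, so we may assume $n\geq r+3$ for the inductive step.

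For distinct $i,j\in[n]$, recall the subsemigroup $\P_n^{i,j}\cong\P_{n-1}$ consisting of all partitions $a\in\P_n$ with $(i,j)\in\ker(a)$ and $(i,j)\in\coker(a)$, as introduced in the proof of Lemma \ref{la:P1nr}. As in that proof, the presentation $\langle A(n,r)\mid \rels_1(\labU(n,r)),\ \rels_\subSq(n,r)\rangle$ defining $\Ghat(n,r)$ contains, as a sub-presentation corresponding to $\P_n^{i,j}$, a natural copy of $\langle A(n-1,r)\mid \rels_1(\labU(n-1,r)),\ \rels_\subSq(n-1,r)\rangle$: every non-projection idempotent of $D(n,r)$ with full domain or codomain and label $1$ that lies inside $\P_n^{i,j}$ belongs to $\labU(n,r)$, and every singular square lying entirely in $\P_n^{i,j}$ contributes its relation to $\rels_\subSq(n,r)$. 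Hence, if two projections $p,q\in P_0(n,r)$ happen to lie in a common $\P_n^{i,j}$, then $a_p=a_q$ in $\Ghat(n,r)$ by the inductive hypothesis (applied to $\Ghat(n-1,r)$).

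Now take arbitrary $p,q\in P_0(n,r)$. Since $r\leq n-2$, neither $p$ nor $q$ is the identity partition, so each has a non-singleton block; choose $(i,j)\in\ker(p)=\coker(p)$ and $(k,l)\in\ker(q)=\coker(q)$ with $i\neq j$ and $k\neq l$ (we do not require $\{i,j\}\cap\{k,l\}=\es$). Because $r\leq n-3$, there is room to build a projection $u\in P_0(n,r)$ with $(i,j),(k,l)\in\ker(u)$: one simply merges $\{i,j\}$ and $\{k,l\}$ (which involves at most $4$ points, leaving at least $n-4\geq r-1\geq 0$ points to form the remaining blocks including the $r$ transversals). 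Then $p,u\in\P_n^{i,j}$ and $u,q\in\P_n^{k,l}$, so the previous paragraph gives $a_p=a_u$ and $a_u=a_q$, whence $a_p=a_q$ as required.

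\textbf{Main obstacle.} The only delicate point is the bookkeeping that ensures the `intermediary' projection $u\in P_0(n,r)$ with $(i,j),(k,l)\in\ker(u)$ actually exists when $r\leq n-3$ — that is, that merging the (at most two) pairs $\{i,j\}$ and $\{k,l\}$ still leaves enough points to accommodate all $r$ transversals of a rank-$r$ projection. This is exactly the constraint $r\leq n-3$ (rather than $r\leq n-2$), and it is the same bound that powers the induction in Lemma \ref{la:P1nr}. As in Remark \ref{rem:presnUlab}, it is essential throughout that we work inside $\Ghat(n,r)=\sfG(\labU)$ and \emph{not} $\sfG(\labU\cup\{s\})$, since the fixed projection $s\in P_0$ need not lie in the relevant $\P_n^{i,j}$, which would break the inductive step. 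Once the lemma is proved, combining it with the relation $a_s=1$ available in $\sfG(\labU\cup\{s\})$ will yield $a_p=1$ for all $p\in P_0(n,r)$ (this is Lemma \ref{la:P0nr1}).
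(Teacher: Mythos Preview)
Your proposal is correct and follows essentially the same approach as the paper, which simply states that the proof is identical to that of Lemma~\ref{la:P1nr} with Lemma~\ref{la:P0rr2} replacing Lemma~\ref{la:P1rr2} in the base case. Your write-up spells out the details of this substitution accurately, including the construction of the intermediary $u\in P_0(n,r)$ and the correct identification (as in Remark~\ref{rem:presnUlab}) of why one must work in $\Ghat(n,r)$ rather than $\sfG(\labU\cup\{s\})$.
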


\begin{proof}
This is exactly the same as the proof of Lemma \ref{la:P1nr}, except that Lemma \ref{la:P0rr2} is used in place of Lemma \ref{la:P1rr2} when dealing with the case $r= n-2$.
\end{proof}

Thus, as soon as we add to $\Ghat(n,r) = \sfG(\labU)$ a relation equating any generator corresponding to a projection from $P_0$ to $1$, all such generators are equal to $1$:

\begin{lemma}\label{la:P0nr1}
Suppose $1\leq r\leq n-2$, and let $s\in P_0(n,r)$ be arbitrary.  Then for every $p\in P_0(n,r)$ we have $a_p=1$  in~$\sfG(O\cup\{s\})$.  \qed
\end{lemma}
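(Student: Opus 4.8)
The plan is to derive this immediately from Lemma~\ref{la:P0nr}. First I would recall that, by definition, $\sfG(\labU\cup\{s\})$ is the group presented by $\langle A(n,r)\mid \rels_1(\labU\cup\{s\}),\ \rels_\subSq(n,r)\rangle$, and that $\rels_1(\labU\cup\{s\}) = \rels_1(\labU)\cup\{(a_s=1)\}$. Hence the defining relations of $\sfG(\labU\cup\{s\})$ contain those of $\Ghat(n,r)=\sfG(\labU)$, so there is a natural surmorphism $\Ghat(n,r)\to\sfG(\labU\cup\{s\})$; in particular, every equality of words that is a consequence of $\presn(\labU)$ remains a consequence of $\presn(\labU\cup\{s\})$.

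Now fix an arbitrary $p\in P_0(n,r)$. Since $s\in P_0(n,r)$ as well, Lemma~\ref{la:P0nr} (applied with $q=s$) gives $a_p=a_s$ in $\Ghat(n,r)$, and therefore also in $\sfG(\labU\cup\{s\})$ by the previous paragraph. On the other hand, $a_s=1$ holds in $\sfG(\labU\cup\{s\})$, as it is one of the relations in $\rels_1(\labU\cup\{s\})$. Combining the two, $a_p=a_s=1$ in $\sfG(\labU\cup\{s\})$, as claimed.

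I do not expect any obstacle at this point: all the substantive work has already been carried out in Lemmas~\ref{la:P1nr}, \ref{la:P0pq1}, \ref{la:P0pq23}, \ref{la:P0rr2} and \ref{la:P0nr}, whose cumulative effect is precisely that in $\Ghat(n,r)$ all generators attached to projections in $P_0(n,r)$ are identified with one another, \emph{uniformly} in the (not-yet-chosen) spanning-tree projection $s$. The present lemma is then just the remark that, once the spanning-tree relation $a_s=1$ is imposed, this common value of all those generators is forced to be trivial.
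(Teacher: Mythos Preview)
Your proposal is correct and matches the paper's approach exactly: the paper states this lemma with a \qed and no separate proof, treating it as an immediate consequence of Lemma~\ref{la:P0nr} together with the relation $a_s=1$ present in $\rels_1(\labU\cup\{s\})$. Your write-up simply makes explicit the one-line deduction the paper leaves implicit.
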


\subsection{Coxeter idempotents}
\label{ss:glt}

As our next step, we now move on to the generators~$a_e$ corresponding to full-domain/codomain Coxeter idempotents $e\in E(n,r)$, i.e.~those for which $\lam(e) = (i,i+1)$ is a Coxeter transposition.  The significance of these elements is that:
\bit
\item any generator of $\GI$ corresponding to a full-domain non-projection idempotent is a product of generators corresponding to full-domain Coxeter idempotents (see Lemmas \ref{la:GRtransl} and \ref{la:fdtoTn}), and
\item the generators corresponding to full-domain Coxeter idempotents satisfy the defining (Coxeter) relations of $\S_r$ (see \cite[Lemmas 8.1--8.3]{GR12IJM}).
\eit
Together with the dual statements concerning full-\emph{co}domain Coxeter idempotents (and additional arguments provided later in this section), it will follow that the full-domain and full-codomain idempotents lead to two copies of $\S_r$ in our group $\GI$.  The next lemma is pivotal, and will be used to show that in fact these copies of $\S_r$ coincide.  We also bring the relation $a_s=1$ back in, where $s$ is our fixed projection from~$P_0(n,r)$.

\begin{lemma}
\label{la:lab12}
Suppose $1\leq r\leq n-2$, and let $s\in P_0(n,r)$ be arbitrary.
Let $e,f\in E(n,r)$ be such that $e$ is full-domain, $f$ is full-codomain, and $\lambda(e)=\lambda(f)=(i ,  i+1)$ for some $1\leq i\leq r-1$.  Then $a_e=a_f$  in $\sfG(\labU\cup\{s\})$.
\end{lemma}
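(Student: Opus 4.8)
\textbf{Proof plan for Lemma \ref{la:lab12}.}

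The plan is to exhibit a specific pair of full-domain and full-codomain idempotents with label $(i,i+1)$ and join their generators directly via a chain of singular squares, then reduce the general case to this specific pair using the label-equality lemmas. First I would pick a concrete Coxeter pair: take the full-domain idempotent $e_\star$ whose transversals are $\{i\}\cup\{i+1\}'$, $\{i+1\}\cup\{i\}'$, and the singleton transversals $\{m\}\cup\{m\}'$ for $m\in[r]\setminus\{i,i+1\}$, together with a lower non-transversal block $\{r+1,\ldots,n\}'$ (so $\NTu(e_\star)=0$, $\NTd(e_\star)=1$, $\lam(e_\star)=(i,i+1)$); and dually its transpose $f_\star=e_\star^*$, which is full-codomain with $\lam(f_\star)=(i,i+1)$ by Lemma \ref{la:lamprops}\ref{it:lp2}. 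Then I would construct, in the style of Lemmas \ref{la:P1ep1}--\ref{la:P0pq1} and \ref{la:fdtoTn}, a short sequence of singular squares --- most naturally squares of the Ehresmann type from Lemma \ref{la:ehrsq} and its dual --- whose ``spare'' corners are idempotents of label $1$ (killed by $\rels_1(\labU)$ via Lemmas \ref{la:eqlabel}\ref{it:eql2} and \ref{la:eqlabeldual}\ref{it:eql2*}) or projections from $P_0$ (killed by $a_s=1$ together with Lemma \ref{la:P0nr1}) or from $P_1$ (all mutually equal by Lemma \ref{la:P1nr}); pushing the generator $a_{e_\star}$ across this sequence collapses it to $a_{f_\star}$. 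The bookkeeping here is that each square relation $a_{e_1}^{-1}a_{e_2}=a_{e_3}^{-1}a_e$ lets one solve $a_e=a_{e_3}a_{e_1}^{-1}a_{e_2}$, so as long as two of the three auxiliary corners are trivial (or cancel against projection generators) one gets an equality between the remaining two nontrivial corners.

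Once $a_{e_\star}=a_{f_\star}$ is established in $\sfG(\labU\cup\{s\})$, the general case follows quickly. For arbitrary full-domain $e$ with $\lam(e)=(i,i+1)$: if $e$ is not a projection then Lemma \ref{la:fdtoTn} (or Lemma \ref{la:eqlabel}\ref{it:eql1}) gives $a_e=a_{e'}$ for some idempotent transformation $e'\in E^\T(n,r)$ with $\lam(e')=(i,i+1)$, and then Lemma \ref{la:GRtransl}\ref{it:GRt2} gives $a_{e'}=a_{e_\star'}$ where $e_\star'$ is the transformation obtained from $e_\star$ the same way; applying \ref{la:fdtoTn} to $e_\star$ as well yields $a_{e_\star}=a_{e_\star'}$, so $a_e=a_{e_\star}$. (A Coxeter transposition is never the identity, so $e$ cannot be a projection --- Lemma \ref{la:lamprops}\ref{it:lp1} --- which rules out that degenerate subcase.) Dually, $a_f=a_{f_\star}$ using Lemma \ref{la:eqlabeldual}, the dual of Lemma \ref{la:fdtoTn}, and the dual of Lemma \ref{la:GRtransl}\ref{it:GRt2}. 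Combining, $a_e=a_{e_\star}=a_{f_\star}=a_f$.

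The main obstacle I anticipate is the explicit construction of the singular-square chain connecting $a_{e_\star}$ to $a_{f_\star}$: unlike the earlier lemmas in this subsection, this ``bridge'' must cross from the $\NTu=0$ strip $E^0$ to the $\NTd=0$ strip $E_0$, so it cannot stay within a single vertical or horizontal strip of the egg-box, and (as Subsection \ref{ss:wow} warns) one must exhibit the singularising idempotent explicitly for every square used --- rectangular-band-hood alone is insufficient in $\P_n$. I expect the cleanest route is to route through $P_0$: find squares realising $a_{e_\star}\mapsto a_{p}$ and $a_{p}\mapsto a_{f_\star}$ for a suitable $p\in P_0(n,r)$ (with all other corners of label $1$ or lying in $P_0\cup P_1$), mimicking the structure of the square $\smat e{e_1}p{e_2}$ in Lemma \ref{la:P1ep1} and its variants but with one extra transversal carrying the transposition $(i,i+1)$; the residual generator for $p$ is then killed by $a_s=1$ via Lemma \ref{la:P0nr1}. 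Verifying the $\R$/$\L$ relationships and the singularising identities for these squares via Lemmas \ref{la:Green_Pn} and \ref{prop:EPn} is routine but needs care, and checking $\lam=1$ (or $\lam$ constant) for the auxiliary corners will use Lemma \ref{la:consec} exactly as in the preceding proofs.
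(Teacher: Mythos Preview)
Your reduction of the general case to a specific pair via Lemmas \ref{la:eqlabel} and \ref{la:eqlabeldual} is correct and matches the paper. There are, however, two gaps in the core of the plan.

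First, a technical one: your proposed $e_\star$ is not an idempotent. The two ``crossing'' transversals $\{i,(i+1)'\}$ and $\{i+1,i'\}$ lie in a single $\KER$-component of rank $2$, contradicting Proposition \ref{prop:EPn}; moreover, your description leaves the upper vertices $r+1,\ldots,n$ unassigned, so $\NTu(e_\star)=0$ cannot hold as written. The paper instead chooses $e$ and $f$ built on four consecutive indices $i,i+1,i+2,i+3$ (extended by identity blocks), with the label $(i,i+1)$ arising from the relative positions of block minima rather than from a literal vertex swap.

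Second, and more substantively: the ``route through $P_0$'' in the style of Lemma \ref{la:P1ep1} cannot work. In that lemma the square $\smat e{e_1}p{e_2}$ with $a_{e_1}=a_{e_2}$ yields $a_e=a_p$; mimicking this with $p\in P_0$ would give $a_{e_\star}=a_p=1$ by Lemma \ref{la:P0nr1}, which is false since $a_{e_\star}$ maps to the nontrivial transposition $(i,i+1)$ under the natural surjection to $\S_r$. The paper's route is different and subtler: through a chain of seven squares it evaluates various auxiliary generators as $1$, $a_t$, or $a_t^2$ (for a fixed $t\in P_1$), culminating in a square with $e$ and $f$ in \emph{diagonal} positions and the other two corners trivial, which yields $a_e^{-1}=a_f$ rather than $a_e=a_f$. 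The final step --- absent from your plan --- is to invoke $a_e^2=1$ via Lemma \ref{la:GRtransl}\ref{it:GRt3} (importing the Coxeter relation from $\T_n$ through Lemma \ref{la:fdtoTn}), converting $a_e^{-1}=a_f$ into $a_e=a_f$. This order-$2$ relation is essential to the argument: since a non-projection full-domain $e$ and full-codomain $f$ can never be $\R$- or $\L$-related, any square containing both places them diagonally, and the resulting relation necessarily has the form $a_e^{-1}a_f=(\text{trivial})$ rather than $a_e=a_f$ directly.
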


\begin{proof}
The result is vacuous for $r= 1$, so we assume that $r\geq2$, which also implies $n\geq4$.
During the proof, for a partition $g \in \P_{[i,i+3]}$, we write $g^+\in\P_n$ for the following:
\bit
\item If $i\leq r-2$, then $g^+ = g \oplus \id_\sigma$, where $\sigma$ is the equivalence on $[n]\setminus [i,i+3]$ with classes
\[
\{1\},\ldots,\{i-1\}, \{i+4\},\ldots,\{r+1\},[r+2,n].
\]
\item If $i=r-1$, then $g^+$ is obtained from $g\oplus\id_{[1,i-1]}$ by adding $i+4,\ldots,n$ to the block of $g$ containing $i+3$, and adding $(i+4)',\ldots,n'$ to the block of $g$ containing $(i+3)'$.
\eit
In each case, $g$ will be constructed so as to have rank $2$, and $g^+$ will consequently have rank $r$.
With this notation in place, it follows from Lemmas \ref{la:eqlabel} and \ref{la:eqlabeldual} that we can prove the current lemma for the specific choices
\[
e= \begin{partn}{3} i,i+2,i+3 && i+1 \\ \hhline{~|-|~} i+2,i+3 & i & i+1\end{partn}^{\!\!+} \ANd
f= \begin{partn}{3} i,i+2 & i+3 & i+1  \\ \hhline{-|~|~} & i,i+2,i+3 &i+1 \end{partn}^{\!\!+}.
\]
We proceed via a series of claims as follows.  Each involves certain idempotents $g_i$, whose meanings are fixed throughout.  The elements constructed during the proofs are temporary, however, and we will re-use their names ($h_i$ and $u$).  Also, $t$ will denote some arbitrary but fixed projection from $P_1=P_1(n,r)$.
The singular squares we use in the proofs of the claims are shown in Figure~\ref{fig:lab12}.  Some of these are instances of the square from Lemma \ref{la:ehrsq} (or its dual), but for convenience we will provide a singularising element in each case.

\begin{figure}[t!]
\begin{center}
\scalebox{0.7}{
\begin{tikzpicture}[scale=.5]
\begin{scope}[shift={(0,0)}]
\diagramshading4
\ediagram{g_1}{4}{\udline22 \udline44 \uarc13}
\fdiagram{h_1}{4}{\udline22 \udline44 \uarc13 \darc13}
\gdiagram{h_2}{4}{\udline22 \udline44 \uuline13 }
\hdiagram{h_3}{4}{\udline22 \udline44 \uuline13 \darc13}
\udiagram{u}{4}{\udline11 \udline22 \udline33 \udline44 \uarc13}
\LRarrows
\node () at (5.5,-2.1) {Claim 1};
\end{scope}
\begin{scope}[shift={(14,0)}]
\diagramshading4
\ediagram{g_2}{4}{\udline22 \udline44 \ddline34}
\fdiagram{h_1}{4}{\udline22 \udline44 \darc13 \ddline34}
\gdiagram{h_2}{4}{\udline22 \udline44 \uuline34 \ddline34}
\hdiagram{h_3}{4}{\udline22 \udline44 \uuline34 \darc13 \ddline34}
\udiagram{u}{4}{\udline22 \udline33 \udline44 \darc13}
\LRarrows
\node () at (5.5,-2.1) {Claim 2};
\end{scope}
\begin{scope}[shift={(28,0)}]
\diagramshading4
\ediagram{g_3}{4}{\udline22 \udline44 \uuline12}
\fdiagram{h_1}{4}{\udline22 \udline44 \uuline12 \ddline12}
\gdiagram{h_2}{4}{\udline22 \udline44 \uuline13}
\hdiagram{h_3}{4}{\udline22 \udline44 \uuline13 \ddline12}
\udiagram{u}{4}{\udline11 \udline22 \udline44 \uuline23}
\UDarrows
\node () at (5.5,-2.1) {Claim 3};
\end{scope}
\begin{scope}[shift={(0,-14)}]
\diagramshading4
\ediagram{g_4}{4}{\udline22 \udline44 \uuline12 \ddline34}
\fdiagram{h_1}{4}{\udline22 \udline44 \uuline12 \ddline12 \ddline34}
\gdiagram{h_2}{4}{\udline22 \udline44 \uuline12 \uuline34 \ddline34}
\hdiagram{h_3}{4}{\udline22 \udline44 \uuline12 \uuline34 \ddline12 \ddline34}
\udiagram{u}{4}{\udline22 \udline33 \udline44 \uuline12 \ddline12}
\LRarrows
\node () at (5.5,-2.1) {Claim 4};
\end{scope}
\begin{scope}[shift={(14,-14)}]
\diagramshading4
\ediagram{g_5}{4}{\udline22 \udline44}
\fdiagram{g_2}{4}{\udline22 \udline44 \ddline34}
\gdiagram{g_3}{4}{\udline22 \udline44 \uuline12}
\hdiagram{g_4}{4}{\udline22 \udline44 \uuline12 \ddline34}
\udiagram{u}{4}{\udline11 \udline22 \udline44 \ddline34}
\LRarrows
\node () at (5.5,-2.1) {Claim 5};
\end{scope}
\begin{scope}[shift={(28,-14)}]
\diagramshading4
\ediagram{g_6}{4}{\udline22 \udline44 \uarc13 \ddline34}
\fdiagram{g_1}{4}{\udline22 \udline44 \uarc13}
\gdiagram{g_2}{4}{\udline22 \udline44 \ddline34}
\hdiagram{g_5}{4}{\udline22 \udline44}
\udiagram{u}{4}{\udline22 \udline33 \udline44}
\UDarrows
\node () at (5.5,-2.1) {Claim 6};
\end{scope}
\begin{scope}[shift={(14,-28)}]
\diagramshading4
\ediagram{e}{4}{\udline44 \udline22 \uarc13 \uuline34 \ddline34}
\fdiagram{h_1}{4}{\udline22 \udline44 \uarc13 \uuline34 \darc13 \ddline34}
\gdiagram{g_6}{4}{\udline22 \udline44 \uarc13 \ddline34}
\hdiagram{f}{4}{\udline22 \udline44 \uarc13 \darc13 \ddline34}
\udiagram{u}{4}{\udline22 \udline33 \udline44 \uarc13 \darc13}
\LRarrows
\node () at (5.5,-2.1) {Claim 7};
\end{scope}
\end{tikzpicture}
}
\caption{The singular squares featuring in the proof of Lemma \ref{la:lab12}.
In each partition, only the points $i,i+1,i+2,i+3$ and their dashed counterparts are shown (left to right).}
\label{fig:lab12}
\end{center}
\end{figure}

\begin{claim}
\label{cl:lab121}
For 
$g_1:=\begin{partn}{4} \multicolumn{2}{c|}{i,i+2} & i+1 & i+3  \\ \hhline{-|-|~|~} i & i+2 & i+1 & i+3 \end{partn}^{\!\!+}$
we have $a_{g_1}=a_t$.
\end{claim}

\begin{proof}
Consider the square $\smat{g_1}{h_1}{h_2}{h_3}$, where
\[
h_1:=\begin{partn}{3} i,i+2&i+1&i+3 \\ \hhline{-|~|~} i,i+2&i+1&i+3\end{partn}^{\!\!+},\quad
h_2:=\begin{partn}{4} i,i+1,i+2&\multicolumn{2}{c|}{}&i+3 \\ \hhline{~|-|-|~} i+1&i&i+2&i+3 \end{partn}^{\!\!+}\ANd
h_3:=\begin{partn}{3} i,i+1,i+2&&i+3 \\ \hhline{~|-|~} i+1  &i,i+2& i+3\end{partn}^{\!\!+}.
\]
It is easy to verify that it is LR-singularised by
$u=\begin{partn}{3} i,i+2&i+1&i+3\\ \hhline{~|~|~} i,i+2&i+1&i+3\end{partn}^{\!\!+}$.
As $h_1\in P_1$ we have $a_{h_1}=a_t$ by Lemma \ref{la:P1nr}.
The idempotents $h_2$ and $h_3$ have full domain and label $1$, and hence $a_{h_2}=a_{h_3}=1$ by Lemma \ref{la:eqlabel}.
The square relation $a_{g_1}^{-1}a_{h_1}=a_{h_2}^{-1}a_{h_3}$ now implies
$a_{g_1}=a_{h_1}=a_t$, as required.
\end{proof}

\begin{claim}
\label{cl:lab122}
For 
$g_2:=\begin{partn}{4} i&i+1&i+2&i+3 \\ \hhline{-|~|-|~} i & i+1 & &i+2, i+3\end{partn}^{\!\!+}$
we have $a_{g_2}=a_t$.
\end{claim}

\begin{proof}
The square $\smat{g_2}{h_1}{h_2}{h_3}$, where
\[
h_1:=\begin{partn}{4} i&i+1&i+2&i+3 \\ \hhline{-|~|-|~} &i+1&&i,i+2,i+3\end{partn}^{\!\!+},\quad
h_2:=\begin{partn}{3} i&i+1&i+2,i+3\\ \hhline{-|~|~}  i&i+1&i+2,i+3 \end{partn}^{\!\!+}\ANd
h_3:=\begin{partn}{3} i&i+1&i+2,i+3 \\ \hhline{-|~|~}  & i+1 &i,i+2,i+3\end{partn}^{\!\!+},
\]
is LR-singularised by
$u=\begin{partn}{4} i&i+1&i+2&i+3\\ \hhline{-|~|~|~} &i+1&i,i+2&i+3 \end{partn}^{\!\!+}$.
We have $a_{h_2}=a_t$ by Lemma \ref{la:P1nr} since $h_2\in P_1$.
The idempotents $h_1$ and $h_3$ are full-codomain, and both have label $(i ,  i+1)$; hence $a_{h_1}=a_{h_3}$ by Lemma \ref{la:eqlabeldual}.
The square relation now implies
$a_{g_2}=a_{h_2}=a_t$, as required.
\end{proof}

\begin{claim}
\label{cl:lab123}
For 
$g_3:=\begin{partn}{4} i,i+1&\multicolumn{2}{c|}{i+2}&i+3 \\ \hhline{~|-|-|~} i +1& i & i+2& i+3\end{partn}^{\!\!+}$
we have $a_{g_3}=a_t$.
\end{claim}

\begin{proof}
The proof is analogous to the previous two claims.
The square  is $\smat{g_3}{h_1}{h_2}{h_3}$, where
\[
h_1:=\begin{partn}{3} i,i+1&i+2&i+3 \\ \hhline{~|-|~} i,i+1&i+2&i+3\end{partn}^{\!\!+},\quad
h_2:=\begin{partn}{4} i,i+1,i+2&\multicolumn{2}{c|}{}&i+3\\ \hhline{~|-|-|~}  i+1&i&i+2&i+3 \end{partn}^{\!\!+}\ANd
h_3:=\begin{partn}{3} i,i+1,i+2&&i+3 \\ \hhline{~|-|~}  i,i+1&i+2&i+3\end{partn}^{\!\!+}.
\]
The UD-singularising idempotent is
$u=\begin{partn}{4} i&i+1,i+2&&i+3\\ \hhline{~|~|-|~} i&i+1&i+2&i+3 \end{partn}^{\!\!+}$.
This time $h_1$ belongs to $P_1$, while~$h_2$ and $h_3$ have full domain and label $1$.
\end{proof}

\begin{claim}
\label{cl:lab124}
For 
$g_4:=\begin{partn}{3} i,i+1&i+2&i+3 \\ \hhline{~|-|~}  i+1 &i& i+2, i+3\end{partn}^{\!\!+}$
we have $a_{g_4}=1$.
\end{claim}

\begin{proof}
This time the square is  $\smat{g_4}{h_1}{h_2}{h_3}$, where
\[
h_1:=\begin{partn}{3} i,i+1&i+2&i+3 \\ \hhline{~|-|~} i,i+1&&i+2,i+3\end{partn}^{\!\!+},\quad
h_2:=\begin{partn}{3} i,i+1&&i+2,i+3\\ \hhline{~|-|~}  i+1&i&i+2,i+3 \end{partn}^{\!\!+}\ANd
h_3:=\begin{partn}{2} i,i+1&i+2,i+3 \\ \hhline{~|~}  i,i+1&i+2,i+3\end{partn}^{\!\!+},
\]
and the LR-singularising idempotent is
$u=\begin{partn}{3} i,i+1&i+2&i+3\\ \hhline{~|~|~} i,i+1&i+2&i+3 \end{partn}^{\!\!+}$.
The idempotents $h_1$ and $h_2$ have full domain or codomain, and label $1$, so $a_{h_1}=a_{h_2}=1$.
The idempotent $h_3$ is a projection from $P_0$, so $a_{h_3}=1$ by Lemma \ref{la:P0nr1}.
Thus, the square relation gives $a_{g_4}=1$.
\end{proof}

\begin{claim}
\label{cl:lab125}
For 
$g_5:=\begin{partn}{4} i&i+1&i+2&i+3 \\ \hhline{-|~|-|~} i&i+1&i+2&i+3\end{partn}^{\!\!+}$
we have $a_{g_5}=a_t^2$.
\end{claim}

\begin{proof}
This time we have the square $\smat{g_5}{g_2}{g_3}{g_4}$,
where $g_2$, $g_3$ and $g_4$ were defined in Claims \ref{cl:lab122}--\ref{cl:lab124}.
The square is LR-singularised by 
$u=\begin{partn}{4} i&i+1&i+2&i+3\\ \hhline{~|~|-|~} i&i+1&&i+2,i+3 \end{partn}^{\!\!+}$.
Substituting the relations obtained in Claims \ref{cl:lab122}--\ref{cl:lab124}
into the square relation
$a_{g_5}^{-1}a_{g_2}=a_{g_3}^{-1}a_{g_4}$ yields $a_{g_5}=a_t^2$.
\end{proof}

\begin{claim}
\label{cl:lab126}
For 
$g_6:=\begin{partn}{3} i,i+2&i+1&i+3 \\ \hhline{-|~|~} i&i+1&i+2,i+3\end{partn}^{\!\!+}$
we have $a_{g_6}=1$.
\end{claim}

\begin{proof}
This time we have the square $\smat{g_6}{g_1}{g_2}{g_5}$,
where $g_1$, $g_2$ and $g_5$ were defined in Claims \ref{cl:lab121}, \ref{cl:lab122} and~\ref{cl:lab125}.
The square is UD-singularised by 
$u=\begin{partn}{4} i&i+1&i+2&i+3\\ \hhline{-|~|~|~} i&i+1&i+2&i+3\end{partn}^{\!\!+}$.
Substituting Claims \ref{cl:lab121}, \ref{cl:lab122} and \ref{cl:lab125}
into the square relation
$a_{g_6}^{-1}a_{g_1}=a_{g_2}^{-1}a_{g_5}$ yields $a_{g_6}=1$.
\end{proof}

\begin{claim}
\label{cl:lab127}
We have $a_e^{-1}=a_f$.
\end{claim}

\begin{proof}
This time we have the square $\smat e{h_1}{g_6}f$,
where
$h_1:=\begin{partn}{2} i,i+2,i+3&i+1 \\ \hhline{~|~} i,i+2,i+3&i+1\end{partn}^{\!\!+}$.
The square is LR-singularised by $u = \begin{partn}{3} i,i+2&i+1&i+3\\ \hhline{~|~|~} i,i+2&i+1&i+3\end{partn}^{\!\!+}$.
Since  $h_1\in P_0$ we have $a_{h_1}=1$ by Lemma \ref{la:P0nr1}, and
also $a_{g_6}=1$ by Claim~\ref{cl:lab126}.
The desired relation follows from the square relation $a_e^{-1}a_{h_1} = a_{g_6}^{-1}a_f$.
\end{proof}

To complete the proof of the lemma,
 let $e'\in E(n,r)$ be any idempotent full transformation with label $(i ,  i+1)$.
By Lemma \ref{la:eqlabel}\ref{it:eql1} we have $a_e=a_{e'}$. On the other hand, by Lemma \ref{la:GRtransl}\ref{it:GRt3} we have $a_{e'}^2=1$, and so $a_e^2=1$. Combining this with Claim \ref{cl:lab127} we deduce $a_e=a_e^{-1}=a_f$, as required.
\end{proof}
\setcounter{claim}{0}

\subsection{Commuting relations}
\label{ss:comm}

We now start moving towards identifying the group $\Z\times\S_r$.
It will turn out that the generators $a_e$ corresponding to full-domain or full-codomain idempotents $e$ will generate the symmetric group~$\S_r$, whereas those corresponding to $e\in P_1(n,r)$ -- which are mutually equal by Lemma~\ref{la:P1nr} -- will generate $\Z$. The key point in concluding that we indeed have a direct product is to establish commutation relations between the two families of generators.

\begin{lemma}
\label{la:peep}
Suppose $1\leq r\leq n-2$, and let $s\in P_0(n,r)$ be arbitrary.  Let $e\in E(n,r)$ be any idempotent with full domain and label $(i, i+1)$ for some $1\leq i\leq r-1$, and let $t\in P_1(n,r)$. Then $a_ea_t=a_ta_e$  in $\sfG(\labU\cup\{s\})$.
\end{lemma}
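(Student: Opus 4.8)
The plan is to reduce to a convenient choice of $e$ and $t$, and then derive the commutation relation from a short chain of singular-square relations, exactly in the style of the proof of Lemma~\ref{la:lab12}.

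First, the statement is vacuous when $r=1$ (there is no admissible index $i$), so I would assume $r\geq 2$, and hence $n\geq 4$. By Lemma~\ref{la:P1nr} all generators $a_p$ with $p\in P_1(n,r)$ are equal in $\sfG(\labU)$, hence also in the quotient $\sfG(\labU\cup\{s\})$, so it suffices to prove the lemma for one conveniently chosen $t\in P_1(n,r)$. Likewise, by Lemma~\ref{la:eqlabel}\ref{it:eql1} (and $\Tfd\subseteq\labU$) the value of $a_e$ in $\sfG(\labU\cup\{s\})$ depends only on $\lam(e)$ among full-domain idempotents $e$, so I may replace $e$ by a specific full-domain idempotent of label $(i,i+1)$ — for definiteness one of the form $g^+$, localised on a few coordinates near $i$, as in Lemma~\ref{la:lab12}, with $t$ chosen compatibly (so that $e$, $t$ and most intermediate idempotents are direct sums $\cdot\oplus\id_\sigma$ of small partitions). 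Note also that $a_e^2=1$ in $\sfG(\labU\cup\{s\})$, by Lemma~\ref{la:eqlabel}\ref{it:eql1} and Lemma~\ref{la:GRtransl}\ref{it:GRt3}, so $a_e^{-1}=a_e$; this will be used at the end.

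The core of the argument is to exhibit an idempotent $y\in E(n,r)$ together with two singular squares through it — one horizontal and one vertical, much as in the proofs of Lemmas~\ref{la:P1pq1}, \ref{la:P0pq1} and~\ref{la:lab12} — such that, after substituting the relations already established (namely: $a_g=1$ whenever $\lam(g)=1$ and $g$ is full-domain or full-codomain, by Lemmas~\ref{la:eqlabel}\ref{it:eql2} and~\ref{la:eqlabeldual}\ref{it:eql2*}; $a_p=1$ for $p\in P_0(n,r)$, by Lemma~\ref{la:P0nr1}; $a_p=a_t$ for $p\in P_1(n,r)$, by Lemma~\ref{la:P1nr}; and the Coxeter identifications $a_g=a_t$ supplied by Lemma~\ref{la:lab12} and the claims inside its proof), one square yields $a_y=a_ea_t$ and the other yields $a_y=a_ta_e$ (or yields these expressions up to inverses that are removed using $a_e^2=1$). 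Comparing the two then gives $a_ea_t=a_ta_e$. As in those earlier proofs, each square will be presented by listing its four idempotents together with an explicit singularising idempotent $u$, the $\R$- and $\L$-relationships being read off from (co)domains and (co)kernels via Lemma~\ref{la:Green_Pn}, and the routine verifications left to the reader. I do not expect any further induction on $n$ to be needed, since all the global facts about $P_0(n,r)$, $P_1(n,r)$ and labels are already in place.

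The main obstacle is precisely this last step: choosing the intermediate idempotent $y$ (and likely a handful of auxiliary idempotents, organised into short claims as in the proof of Lemma~\ref{la:lab12}) so that all the squares involved simultaneously lie in $D(n,r)$, have their non-$y$ corners among idempotents whose generators we have already computed, and — crucially — are genuinely \emph{singular}. As stressed in Subsection~\ref{ss:wow}, in $\P_n$ a $2\times2$ rectangular band need not be a singular square, so it is not enough to check the band structure; in each case one must produce the singularising idempotent and verify the four defining equations of \eqref{eq:LR} or \eqref{eq:UD}. Making the coordinates of $e$, $t$ and $y$ mesh so that this can be done everywhere at once is the delicate part of the proof.
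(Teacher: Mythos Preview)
Your reductions are exactly right and match the paper: the case $r=1$ is vacuous, Lemma~\ref{la:P1nr} lets you fix $t$, and Lemma~\ref{la:eqlabel}\ref{it:eql1} lets you pick a specific full-domain $e$ localised at $\{i,i+1,i+2,i+3\}$ via the $g^+$ construction. The paper also reuses intermediate claims from the proof of Lemma~\ref{la:lab12} (specifically the relation $a_{g_2}=a_t^2$ from Claim~5 there, and an analogue of Claim~3 giving $a_{g_1}=a_t$), just as you anticipate.

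The one organisational difference is that the paper does \emph{not} seek a single pivot $y$ with $a_y=a_ea_t$ and $a_y=a_ta_e$. Instead it picks \emph{two} full-domain Coxeter idempotents $e$ and $f$ with label $(i,i+1)$ and proves $a_ea_t=a_ta_f$; since $a_e=a_f$ by Lemma~\ref{la:eqlabel}\ref{it:eql1}, this gives the commutation. The chain uses three squares: one UD-square gives $a_e=a_{e_1}$, another UD-square gives $a_ta_f=a_{f_1}$, and an LR-square $\smat{e_1}{f_1}{g_1}{g_2}$ with $a_{g_1}^{-1}a_{g_2}=a_t$ ties them together via $a_ea_t=a_ea_{e_1}^{-1}a_{f_1}=a_ta_f$. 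This is essentially the two-sided pivot idea you sketched, just with the pivot role shared between $e_1$ and $f_1$ rather than a single $y$; in particular, $a_e^2=1$ is not needed.
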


\begin{proof}
Again the result is vacuous for $r=1$, so we assume that $r\geq2$.  We also use the $g^+$ notation from the proof of Lemma \ref{la:lab12} for idempotents $g$ of $\P_{[i,i+3]}$.
By Lemma \ref{la:eqlabel} it is sufficient to show that $a_ea_t=a_ta_f$ holds for 
\[
e:= \begin{partn}{3}  i,i+3&&i+1,i+2\\ \hhline{~|-|~} i+3&i&i+1,i+2\end{partn}^{\!\!+} \ANd
f:= \begin{partn}{4} i,i+2,i+3&&i+1&\\ \hhline{~|-|~|-} i+3&i&i+1&i+2\end{partn}^{\!\!+}.
\]
Define some further idempotents:
\begin{alignat*}{4}
e_1&:= \begin{partn}{3} i,i+3&i+2&i+1 \\ \hhline{~|-|~} i+3&i&i+1,i+2\end{partn}^{\!\!+},\quad&
f_1&:= \begin{partn}{4} i,i+3&&i+1&i+2\\ \hhline{~|-|~|-} i+3&i&i+1&i+2\end{partn}^{\!\!+},\quad&
g_1&:= \begin{partn}{4} i&i+1&i+2&i+3\\ \hhline{-|~|-|~} i&i+1,i+2&&i+3\end{partn}^{\!\!+},
\\
e_2&:= \begin{partn}{3} i,i+3&i+1&i+2\\ \hhline{~|~|-} i,i+3&i+1,i+2&\end{partn}^{\!\!+},\quad&
f_2&:= \begin{partn}{3} i,i+3&i+1&i+2\\ \hhline{~|~|-} i,i+3&i+1&i+2\end{partn}^{\!\!+},\quad&
g_2&:= \begin{partn}{4} i&i+1&i+2&i+3\\ \hhline{-|~|-|~} i&i+1&i+2&i+3\end{partn}^{\!\!+}.
\\
e_3&:= \begin{partn}{2} i,i+3 & i+1,i+2\\ \hhline{~|~} i,i+3 & i+1,i+2\end{partn}^{\!\!+},\quad&
f_3&:= \begin{partn}{3} i,i+2,i+3&i+1&\\ \hhline{~|~|-} i,i+3&i+1&i+2\end{partn}^{\!\!+},\quad
&
\end{alignat*}
These idempotents participate in several singular squares, which will be exhibited below, and are illustrated in Figure \ref{fig:peep1}.

The square $\smat{e_1}{e_2}{e}{e_3}$ is UD-singularised by
$u = \begin{partn}{3} i&i+1,i+2&i+3\\ \hhline{~|~|~} i&i+1,i+2&i+3\end{partn}^{\!\!+}$.
In it, $e_3$ is a projection from~$P_0$, so $a_{e_3}=1$ by Lemma \ref{la:P0nr1};
and $e_2$ has full codomain and label $1$, so $a_{e_2}=1$ by Lemma \ref{la:eqlabeldual}.
Therefore, the square relation $a_{e_1}^{-1}a_{e_2}=a_e^{-1}a_{e_3}$ implies
\begin{equation}
\label{eq:peep1}
a_e=a_{e_1}.
\end{equation}

The square $\smat{f_1}{f_2}{f}{f_3}$ is UD-singularised
by 
$v=\begin{partn}{4} i&i+1&i+2,i+3&\\ \hhline{~|~|~|-} i&i+1&i+3&i+2\end{partn}^{\!\!+}$.
Note that $f_2\in P_1$, so by Lemma \ref{la:P1nr} we have $a_{f_2}=a_t$;
and $f_3$ has full domain and label $1$, so $a_{f_3}=1$ by Lemma~\ref{la:eqlabel}.
Combining this with the square relation $a_{f_1}^{-1}a_{f_2}=a_f^{-1}a_{f_3}$ yields 
\begin{equation}
\label{eq:peep2}
a_ta_f=a_{f_1}.
\end{equation}

Now consider $\smat{e_1}{f_1}{g_1}{g_2}$, which is LR-singularised by
$w=\begin{partn}{4} i&i+1&i+2&i+3\\ \hhline{~|~|-|~} i&i+1&i+2&i+3\end{partn}^{\!\!+}$.
In Claim \ref{cl:lab125} from the proof of Lemma \ref{la:lab12} we showed that $a_{g_2}=a_t^2$.
Also, we have $a_{g_1}=a_t$, which is shown analogously to Claim \ref{cl:lab123} in the proof of Lemma \ref{la:lab12}.  Consequently, $a_{g_1}^{-1}a_{g_2} = a_t$.
Combining this with the square relation $a_{e_1}^{-1}a_{f_1}=a_{g_1}^{-1}a_{g_2}$,
and with \eqref{eq:peep1} and \eqref{eq:peep2}, we have
\[
a_ea_t=a_ea_{g_1}^{-1}a_{g_2}=a_ea_{e_1}^{-1}a_{f_1}=a_ta_f,
\]
completing the proof.
\end{proof}

\begin{figure}[t!]
\begin{center}
\scalebox{0.7}{
\begin{tikzpicture}[scale=.5]
\begin{scope}[shift={(0,0)}]
\diagramshading4
\ediagram{e_1}{4}{\partitionedges{2/2,4/4}{}{1/4}{2/3}{}}
\fdiagram{e_2}{4}{\partitionedges{2/2,4/4}{}{1/4}{2/3}{1/4}}
\gdiagram{e}{4}{\partitionedges{2/2,4/4}{2/3}{1/4}{2/3}{}}
\hdiagram{e_3}{4}{\partitionedges{2/2,4/4}{2/3}{1/4}{2/3}{1/4}}
\udiagram{u}{4}{\partitionedges{1/1,2/2,4/4}{2/3}{}{2/3}{}}
\UDarrows
\end{scope}
\begin{scope}[shift={(14,0)}]
\diagramshading4
\ediagram{f_1}{4}{\partitionedges{4/4,2/2}{}{1/4}{}{}}
\fdiagram{f_2}{4}{\partitionedges{4/4,2/2}{}{1/4}{}{1/4}}
\gdiagram{f}{4}{\partitionedges{2/2,4/4}{3/4}{1/3}{}{}}
\hdiagram{f_3}{4}{\partitionedges{2/2,4/4}{3/4}{1/3}{}{1/4}}
\udiagram{v}{4}{\partitionedges{1/1,2/2,4/4}{3/4}{}{}{}}
\UDarrows
\end{scope}
\begin{scope}[shift={(28,0)}]
\diagramshading4
\ediagram{e_1}{4}{\partitionedges{2/2,4/4}{}{1/4}{2/3}{}}
\fdiagram{f_1}{4}{\partitionedges{4/4,2/2}{}{1/4}{}{}}
\gdiagram{g_1}{4}{\partitionedges{2/2,4/4}{}{}{2/3}{}}
\hdiagram{g_2}{4}{\partitionedges{2/2,4/4}{}{}{}{}}
\udiagram{w}{4}{\partitionedges{1/1,2/2,4/4}{}{}{}{}}
\LRarrows
\end{scope}
\end{tikzpicture}
}
\caption{The singular squares featuring in the proof of Lemma \ref{la:peep}.
In each partition, only the points $i,i+1,i+2,i+3$ and their dashed counterparts are shown (left to right).}
\label{fig:peep1}
\end{center}
\end{figure}

\begin{cor}
\label{cor:peep1}
Let $e\in E(n,r)$ be any non-projection idempotent with full domain, and let $t\in P_1$.  Then $a_ta_e=a_ea_t$ in $\sfG(\labU\cup\{s\})$.
\end{cor}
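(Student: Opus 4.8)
The plan is to bootstrap from Lemma \ref{la:peep}, which already handles the case when $e$ has label a Coxeter transposition $(i,i+1)$, to the case of an arbitrary non-projection idempotent $e$ with full domain. The bridge is provided by Lemmas \ref{la:fdtoTn} and \ref{la:GRtransl}: since $e$ is a non-projection with full domain, Lemma \ref{la:fdtoTn} gives an idempotent transformation $e'\in E^\T(n,r)$ with $\lam(e)=\lam(e')$, and with $a_e=a_{e'}$ a consequence of $\presn(T)$ for any spanning tree $T\supseteq\Tfd$; in particular this holds in $\sfG(\labU\cup\{s\})$ since $\Tfd\subseteq\labU$ (the elements of $\Tfd$ are non-projections of label $1$ by Lemma \ref{la:Tlab1}). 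So it suffices to prove the commuting relation $a_ta_{e'}=a_{e'}a_t$ for idempotent transformations $e'$.

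Next I would treat $r=1$ separately: here $E^\T(n,1)$ consists only of idempotents of label $1$, so by Lemma \ref{la:eqlabel}\ref{it:eql2} we have $a_{e'}=1$ in $\sfG(\labU\cup\{s\})$, and the commuting relation is trivial. For $r\geq2$, Lemma \ref{la:GRtransl}\ref{it:GRt4} (applicable because $\presn(T(s))$ restricted to $A^\T(n,r)$ contains $\rels_1(\Tlex)$ and $\rels_\subSq^\T(n,r)$, by Proposition \ref{pr:treeTn}) gives $a_{e'}=a_{e_1}\cdots a_{e_k}$ in our group, where each $e_i\in E^\T(n,r)$ is a Coxeter idempotent, i.e.\ $\lam(e_i)$ is a Coxeter transposition. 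Each $e_i$ has full domain (it lies in $\T_n$) and is not a projection (a projection has trivial label). Hence Lemma \ref{la:peep} applies to each $e_i$, giving $a_ta_{e_i}=a_{e_i}a_t$ for each $i$. A trivial induction on $k$ then yields $a_t(a_{e_1}\cdots a_{e_k})=(a_{e_1}\cdots a_{e_k})a_t$, i.e.\ $a_ta_{e'}=a_{e'}a_t$, and combining with $a_e=a_{e'}$ gives $a_ta_e=a_ea_t$ as required.

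I do not anticipate a genuine obstacle here: this corollary is a routine `spreading' of the already-established base case Lemma \ref{la:peep} over a product decomposition supplied by Lemma \ref{la:GRtransl}\ref{it:GRt4}. The only points requiring minor care are (i) confirming that the hypotheses of Lemma \ref{la:peep} — full domain and \emph{non}-projection status — are met by each Coxeter idempotent $e_i$ appearing in the decomposition (they are, since Coxeter idempotents have non-trivial label), and (ii) noting that all of Lemmas \ref{la:fdtoTn}, \ref{la:GRtransl} and \ref{la:peep} are being invoked for the same group $\sfG(\labU\cup\{s\})$, which is legitimate because $\presn(\labU\cup\{s\})$ contains all the relations $\rels_1(\Tfd)$, $\rels_1(\Tlex)$, $\rels_\subSq(n,r)$ and $\rels_\subSq^\T(n,r)$ used in the proofs of those lemmas. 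With these observations in place the proof is a few lines.

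\begin{proof}
If $r=1$, then every idempotent transformation in $E^\T(n,1)$ has label $1$, so by Lemma \ref{la:fdtoTn} and Lemma \ref{la:eqlabel}\ref{it:eql2} we have $a_e=1$ in $\sfG(\labU\cup\{s\})$, and the claimed relation is trivial. Assume now that $r\geq2$.

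By Lemma \ref{la:fdtoTn} (applied with $T=T(s)\supseteq\Tfd$), there exists an idempotent transformation $e'\in E^\T(n,r)$ with $\lam(e')=\lam(e)$ such that $a_e=a_{e'}$ in $\sfG(\labU\cup\{s\})$. By Proposition \ref{pr:treeTn}, the relations $\rels_1(\Tlex)\cup\rels_\subSq^\T(n,r)$ are among the defining relations of $\sfG(\labU\cup\{s\})$ when restricted to the generators $A^\T(n,r)$, so Lemma \ref{la:GRtransl}\ref{it:GRt4} gives Coxeter idempotents $e_1,\ldots,e_k\in E^\T(n,r)$ with
\[
a_{e'}=a_{e_1}\cdots a_{e_k}
\]
in $\sfG(\labU\cup\{s\})$. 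Each $e_i$ lies in $\T_n$, hence has full domain, and each $e_i$ is not a projection, since $\lam(e_i)$ is a Coxeter transposition while projections have label $1$ by Lemma \ref{la:lamprops}\ref{it:lp1}. Therefore Lemma \ref{la:peep} (via Lemma \ref{la:eqlabel}, as in its statement) applies to each $e_i$, giving $a_ta_{e_i}=a_{e_i}a_t$ in $\sfG(\labU\cup\{s\})$ for all $i$. An easy induction on $k$ now yields
\[
a_t(a_{e_1}\cdots a_{e_k})=(a_{e_1}\cdots a_{e_k})a_t,
\]
that is, $a_ta_{e'}=a_{e'}a_t$. Since $a_e=a_{e'}$ in $\sfG(\labU\cup\{s\})$, we conclude $a_ta_e=a_ea_t$, as required.
\end{proof}
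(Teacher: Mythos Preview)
Your proof is correct and follows essentially the same approach as the paper: reduce to $e\in\T_n$ via Lemma \ref{la:fdtoTn}, decompose $a_e$ as a product of Coxeter-idempotent generators via Lemma \ref{la:GRtransl}\ref{it:GRt4}, then apply Lemma \ref{la:peep} to each factor. You have simply spelled out more of the routine details (the $r=1$ case, the non-projection check for Coxeter idempotents, the inductive step) that the paper leaves implicit.
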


\begin{proof}
By Lemma \ref{la:fdtoTn}, we may assume without loss of generality that $e\in\T_n$.
Since $\labU\cup\{s\}$ contains $\Tlex$, 
it follows from Lemma \ref{la:GRtransl}\ref{it:GRt4} that $a_e=a_{e_1}\cdots a_{e_k}$, with each $e_i$ a Coxeter idempotent.
The result now follows from Lemma~\ref{la:peep}. 
\end{proof}

\subsection[The twisted partition monoid $\Ptw_n$]{The twisted partition monoid \boldmath{$\Ptw_n$}}
\label{ss:tw}

As we will soon see, the results obtained so far will allow us to conclude that our group $\GI$ is a homomorphic image of $\Z\times\S_r$.  We now provide a means to show that, conversely, $\GI$ is also a homomorphic \emph{preimage} of $\Z\times\S_r$.

Consider partitions $a,b\in\P_n$, and the product graph $\Ga(a,b)$.  A component of this graph is called \emph{floating} if it is contained entirely within $[n]''$, the middle row of vertices.   We let $\Phi(a,b)$ denote the number of such floating components.  For example, with $a,b\in\P_6$ from Figure \ref{fig:P6}, we have $\Phi(a,b) = 1$, with the unique floating component of $\Ga(a,b)$ being $\{1'',2'',6''\}$.

The ($\Z$-)\emph{twisted partition monoid} $\Ptw_n$ has underlying set $\Z\times\P_n$, and product
\[
(i,a)(j,b) = (i+j+\Phi(a,b),ab) \qquad\text{for $i,j\in\Z$ and $a,b\in\P_n$.}
\]
The monoid $\Ptw_n$ is a special case of a \emph{tight twisted product}, as studied in \cite{EGMR2}.  The analogous twisted product with underlying set $\mathbb N\times\P_n$ is more commonly studied; see \cite{ER22a,ER22b,FL11,LF2006,BDP2002,KV2023,ACHLV2015}.  The following is a list of facts about $\Ptw_n$, which are consequences of the stated results from \cite{EGMR2}: 
\bit
\item $\Ptw_n$ is regular \cite[Theorem 7.4]{EGMR2}.
\item The $\D$-classes of $\Ptw_n$ are the sets $\Z \times D(n,r)$, for $0\leq r\leq n$, and every maximal subgroup contained in $\Z\times D(n,r)$ is isomorphic to the direct product $\Z\times\S_r$
\cite[Corollary 5.2 and Theorem 6.2]{EGMR2}.
\item Writing $\ol e = (-\Phi(e,e),e)$ for $e\in\sfE(\P_n)$, we have $\sfE(\Ptw_n) = \set{\ol e}{e\in \sfE(\P_n)}$ \cite[Proposition~6.4 and Remark~6.5]{EGMR2}.
\item The map $e\mt\ol e$ determines an isomorphism of biordered sets $\sfE(\P_n) \to \sfE(\Ptw_n)$
\cite[Corollary~6.15 and Remark 6.16]{EGMR2}.
\item The idempotent-generated submonoid $\langle\sfE(\Ptw_n)\rangle$ consists of the identity element $(0,1)$, and all $\D$-classes $\Z\times D(n,r)$ for $0\leq r\leq n-1$
\cite[Theorem 8.7]{EGMR2}.
\eit
Consequently, we have a surmorphism $\IG(\sfE(\P_n)) \cong \IG(\sfE(\Ptw_n)) \to \langle\sfE(\Ptw_n)\rangle$ given by ${x_e\mt\ol e}$ for $e\in\sfE(\P_n)$.
Moreover, for any fixed $e_0\in \sfE(\P_n)$ of rank $0\leq r\leq n-1$, this surmorphism restricts to a surmorphism from $\GI$, the maximal subgroup of $\IG(\sfE(\P_n))$ containing $x_{e_0}$, to the maximal subgroup of $\Ptw_n$ containing $\ol e_0$, which is isomorphic to $\Z\times\S_r$.  We record this final conclusion here for future reference, and we note that this includes the case $r=0$.

\begin{prop}\label{prop:preimage}
For any $0\leq r\leq n-1$, the group $\GI$ is a homomorphic preimage of~${\Z\times\S_r}$.
\qed
\end{prop}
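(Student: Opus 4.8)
The plan is to realize $\GI$ explicitly as a homomorphic image of the maximal subgroup of $\Ptw_n$ containing a suitable idempotent, by exploiting the biordered set isomorphism $\sfE(\P_n)\cong\sfE(\Ptw_n)$ recorded in \cite{EGMR2}. The key observation is that $\IG(E)$ depends only on the biordered set $E$, so an isomorphism $\sfE(\P_n)\to\sfE(\Ptw_n)$ automatically lifts to an isomorphism $\IG(\sfE(\P_n))\to\IG(\sfE(\Ptw_n))$, and this isomorphism respects the $\D$-class structure and maximal subgroups by \ref{it:IG1} and \ref{it:IG2}.

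First I would invoke the listed facts about $\Ptw_n$: it is regular, its $\D$-classes are $\Z\times D(n,r)$ with maximal subgroups isomorphic to $\Z\times\S_r$, the map $e\mapsto\ol e=(-\Phi(e,e),e)$ is a biordered set isomorphism $\sfE(\P_n)\to\sfE(\Ptw_n)$, and $\langle\sfE(\Ptw_n)\rangle$ consists of the identity $(0,1)$ together with all $\D$-classes $\Z\times D(n,r)$ for $0\leq r\leq n-1$. Since the biordered sets are isomorphic, the universal property of $\IG$ gives an isomorphism $\IG(\sfE(\P_n))\to\IG(\sfE(\Ptw_n))$ determined by $x_e\mapsto x_{\ol e}$. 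Composing with the natural surmorphism $\pi_\Phi\colon\IG(\sfE(\Ptw_n))\to\langle\sfE(\Ptw_n)\rangle$ (which exists because $\Ptw_n$, or rather its idempotent-generated submonoid, is idempotent-generated, via the standard map $x_{\ol e}\mapsto\ol e$) yields a surmorphism $\Theta\colon\IG(\sfE(\P_n))\to\langle\sfE(\Ptw_n)\rangle$ with $x_e\mapsto\ol e$.

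Next I would fix an idempotent $e_0\in\sfE(\P_n)$ of rank $r$ with $0\leq r\leq n-1$, so that $\ol e_0\in\sfE(\Ptw_n)$ lies in the $\D$-class $\Z\times D(n,r)$, which (for $r\leq n-1$) is contained in $\langle\sfE(\Ptw_n)\rangle$. By \ref{it:IG2} applied to $\IG(\sfE(\Ptw_n))$ — equivalently, by the general fact that a surmorphism between idempotent-generated semigroups restricts to a surmorphism on each group $\H$-class containing an idempotent in the image — the map $\Theta$ restricts to a surmorphism from $H_{x_{e_0}}=\GI$ onto the maximal subgroup $H_{\ol e_0}$ of $\langle\sfE(\Ptw_n)\rangle$. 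But $H_{\ol e_0}$ is a group $\H$-class in the $\D$-class $\Z\times D(n,r)$ of $\Ptw_n$, and since the Green's structure of $\langle\sfE(\Ptw_n)\rangle$ agrees with that of $\Ptw_n$ on the regular $\D$-classes $\Z\times D(n,r)$ (for $r\leq n-1$), we have $H_{\ol e_0}\cong\Z\times\S_r$. Hence $\GI$ surjects onto $\Z\times\S_r$, which is exactly the claim.

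The only subtlety — and the step I expect to require the most care in a full write-up — is ensuring that the surmorphism $\Theta$ restricts \emph{surjectively} onto the relevant maximal subgroup, i.e.\ that no group $\H$-class in the target $\D$-class of $\langle\sfE(\Ptw_n)\rangle$ is "missed" and that the restriction lands inside, not merely meets, $H_{\ol e_0}$; this is precisely the content of \ref{it:IG2} (the surmorphism induces bijections on $\R$-, $\L$- and $\H$-classes of the $\D$-class of $x_{\ol e_0}$), and one must check that the $\D$-class of $x_{\ol e_0}$ in $\IG(\sfE(\Ptw_n))$ maps onto the $\D$-class of $\ol e_0$ in $\langle\sfE(\Ptw_n)\rangle$, which holds because $\ol e_0$ is a regular element and its $\D$-class lies in $\langle\sfE(\Ptw_n)\rangle$. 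Everything else is bookkeeping with the cited results, and no computation is needed.
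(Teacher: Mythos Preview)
Your argument is correct and follows exactly the same route as the paper: use the biordered set isomorphism $\sfE(\P_n)\cong\sfE(\Ptw_n)$ to obtain a surmorphism $\IG(\sfE(\P_n))\to\langle\sfE(\Ptw_n)\rangle$, $x_e\mapsto\ol e$, and then restrict to the maximal subgroup containing $x_{e_0}$ to get a surjection onto $H_{\ol e_0}\cong\Z\times\S_r$. One small slip: your opening sentence has the direction reversed (you say you will realise $\GI$ as a homomorphic \emph{image} of the subgroup of $\Ptw_n$, when you mean the opposite), but the body of the argument is in the correct direction throughout.
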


\subsection{Completing the proof}
\label{ss:theend}

Recall from Lemma \ref{la:Upres} that our maximal subgroup $\GI \cong \sfG(\labU\cup\{s\})$ is defined by the presentation
\[
\presn(\labU\cup\{s\})=\bigl\langle A(n,r)\mid \rels_1(\labU\cup \{s\}),\ \rels_{\subSq}(n,r)\bigr\rangle,
\]
where $s$ is a fixed projection from $P_0(n,r)$.

Fix any $t\in P_1$, and define
\[
E':=E(n,r)\cap\T_n = \{ e\in E(n,r)\colon \NTu(e)=0,\ \NTd(e)=n-r\}.
\]
Let~$\rels_{\subSq}' (n,r)$ be the set of square relations in
$\rels_{\subSq}(n,r)$ that
involve only idempotents from~$E'$, and let $\rels_{\subSq}'' (n,r):=\rels_{\subSq}(n,r)\setminus
\rels_{\subSq}'(n,r)$.
Adding the commuting relations established in Corollary~\ref{cor:peep1} for $e\in E'$ to the presentation $\presn(\labU\cup\{s\})$ yields the equivalent presentation:
\begin{equation}\label{eq:bigpres}
\bigl\langle A(n,r)\mid  \rels_1(\labU\cup \{s\}),\ \rels_{\subSq}'(n,r),\ 
\rels_{\subSq}'' (n,r),\ a_ea_t=a_ta_e\ (e\in E' )\bigr\rangle.
\end{equation}

Now let $A'(n,r):=\big\{ a_e\colon e\in E' \cup\{t\}\big\}$.  By Lemma \ref{la:geneli}, we have $\GI = \la A_F\ra$, where $F=F(n,r)$ is defined in \eqref{eq:Fnr}, and we again decompose $F = P_0\cup P_1\cup F_1\cup F_2$, where~$F_1$ and~$F_2$ are as in~\eqref{eq:F1F2}.  By Lemmas \ref{la:P1nr} and \ref{la:P0nr1}, we have $\GI = \la A_{F_1\cup F_2\cup\{t\}}\ra$.  Next, by Lemma~\ref{la:fdtoTn} and its dual, we obtain $\GI = \la A_{E'\cup E''\cup\{t\}}\ra$, where $E'':=\set{e^*}{e\in E'}$.  Combining the dual of Lemma \ref{la:GRtransl}\ref{it:GRt4} with Lemma \ref{la:lab12}, each letter from $A_{E''}$ is a product of letters from~$A_{E'}$.  So we conclude that $\GI = \la A_{E'\cup\{t\}}\ra = \la A'(n,r)\ra$.  
Thus, we can eliminate all the generators from $A(n,r)\setminus A'(n,r)$ in the presentation \eqref{eq:bigpres}.
Notice that the relations $\rels_{\subSq}'(n,r)$ are unaffected by this elimination, since they involve only generators from $A_{E'}\sub A'(n,r)$.

Recall that $\labU$ contains the tree $\Tlex$, and so
$\rels_1(\labU\cup \{s\})$ contains $\rels_1(\Tlex)$.
Hence we now see that $\GI$ is defined by the presentation
\[
\langle A'(n,r) \mid \rels_1(\Tlex),\ \rels_{\subSq}'(n,r),\ a_ea_t=a_ta_e\ (e\in E' ),\ \rels'''\rangle
\]
for some set of relations $\rels'''$.
By Proposition \ref{pr:treeTn}, the presentation
\[
\langle A'(n,r)\setminus\{t\} \mid \rels_1(\Tlex),\ \rels_{\subSq}'(n,r)\rangle
\]
defines the symmetric group $\S_r$.
Therefore $\GI$ is a homomorphic image of $\Z\times\S_r$.
But by Proposition \ref{prop:preimage}, $\Z\times\S_r$ is also a homomorphic image of $\GI$.  Since $\Z$ is abelian and Hopfian, and~$\S_r$ is finite, it follows by \cite[Corollary, p233]{Hi69} that $\Z\times\S_r$ is Hopfian.  Therefore $\GI\cong \Z\times\S_r$, as required.

\section{Maximal subgroups in \boldmath{$\IG(\sfE(\P_n))$}, the case of rank \boldmath{$0$}}
\label{sec:r0}

Having completed the proof of Theorem \ref{thm:mainIGPn} for $1\leq r\leq n-2$, we now deal with the final case, where $r=0$.  Here we cannot `reduce to $\T_n$', as $D(n,0)$ consists entirely of partitions of rank~$0$, and hence contains no transformations at all.

\begin{prop}
\label{pr:r0}
Let $n\geq 2$, and let $e_0$ be an idempotent of rank $0$ in $\P_n$.
Then the maximal subgroup of $\IG(\sfE(\P_n))$ containing $x_{e_0}$ is isomorphic to $\Z$.
\end{prop}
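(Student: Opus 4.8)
The plan is to compute the maximal subgroup $\GI$ using the spanning-tree presentation from Theorem~\ref{thm:spantreepres}, and to show that it reduces to $\Z$. The $\D$-class $D=D(n,0)$ is a rectangular band whose $\R$- and $\L$-classes are both indexed by $\Eq[n]$ (as in Proposition~\ref{p:PnD0}), and whose idempotents $E(n,0)$ are the rank-$0$ idempotents of $\P_n$, i.e.\ those of the form $e_1\oplus\cdots\oplus e_k$ with each $e_i$ a connected rank-$0$ component (Proposition~\ref{prop:EPn}). First I would pick a convenient spanning tree $T$ of the Graham--Houghton graph $\GH(D)$, and establish the presentation $\presn(T)=\langle A(n,0)\mid \rels_1(T),\ \rels_\subSq(n,0)\rangle$ for $\GI$. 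A natural choice (analogous to Lemma~\ref{la:Tlexpres}) is to take $T$ to contain a spanning tree of the subgraph of $\GH(D)$ on projections together with some single extra edge joining the two halves; but the key point is that $\GI$ does not depend on this choice.

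The heart of the argument is to import Proposition~\ref{p:PnD0}. That proposition shows that in $\PG(\sfP(\P_n))$ the maximal subgroup is trivial, and its proof establishes, purely from the singular-square relations together with $a_p=1$ for projections $p$ and the $\subinv$-relations $a_e=a_{e^*}^{-1}$, that \emph{all} generators corresponding to idempotents in $D(n,0)$ become equal to $1$. Here in $\IG$ we do not have the relations $a_p=1$ for all projections, nor the $\subinv$-relations; only the square relations $\rels_\subSq(n,0)$ and the tree relations $\rels_1(T)$. So the plan is the following two-sided squeeze. On one side, I would use Proposition~\ref{prop:preimage} (with $r=0$), which already tells us that $\GI$ is a homomorphic preimage of $\Z\times\S_0=\Z$. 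On the other side, I would show $\GI$ is a homomorphic image of $\Z$, i.e.\ that $\GI$ is cyclic. For this I would argue that all generators $a_e$ ($e\in E(n,0)$) lie in the cyclic subgroup generated by a single generator $a_t$: using the NT-reducing squares of Lemma~\ref{la:geneli} (valid verbatim for $r=0$ since $\NTu,\NTd\ge 1$ throughout $D(n,0)$ — see the remark after \eqref{eq:Fnr} and the analogue of Subsection~\ref{ss:geneli} promised for $r=0$), one reduces to generators corresponding to the `extremal' idempotents in $E(n,0)$; then, running the singular-square computations exactly as in the proof of Proposition~\ref{p:PnD0} (Claim~\ref{cl:pnD0-1} and its follow-up, which only used square relations plus relations of the form $a_p=1$ among projections and $a_e=a_{e^*}^{-1}$), one shows that modulo the \emph{relations already present} — i.e.\ the square relations and the tree relations — every $a_e$ differs from $a_t^{\pm1}$ only by generators that are killed by the tree relations. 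Concretely, one shows that for \emph{all} $\sigma,\tau\in\Eq[n]$ the difference $a_{\overline\sigma\,\widetilde\tau}$ is equal, modulo $\rels_\subSq(n,0)$, to a specified product of generators lying in $T$ times a power of $a_t$; since those tree generators are trivial, every generator is a power of $a_t$, hence $\GI=\langle a_t\rangle$ is cyclic.

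Combining the two directions: $\GI$ is a homomorphic image of $\Z$ (hence cyclic) and a homomorphic preimage of $\Z$; since $\Z$ is Hopfian, a cyclic group admitting $\Z$ as a quotient must be isomorphic to $\Z$. This gives $\GI\cong\Z$ and completes the proof. I expect the main obstacle to be the bookkeeping in the cyclicity step: one must track precisely which singular squares survive in $\IG$ (as opposed to $\PG$, where the extra relations $a_p=1$ and $a_e=a_{e^*}^{-1}$ were freely available), and verify that the Claim~\ref{cl:pnD0-1}-style deductions can be rerun so as to express every $a_e$ as (tree generators)${}\cdot a_t^{\,k}$ rather than as $1$. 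A mild technical point to handle first is the $r=0$ analogue of the generator-elimination machinery of Subsection~\ref{ss:geneli} (Lemmas~\ref{la:ehrsq}, \ref{la:type7}, \ref{la:decomp}, \ref{la:geneli}), which the text explicitly defers; the NT-reducing squares of Lemma~\ref{la:ehrsq} go through unchanged provided $\ker(e)\neq\coker(e)$, and for projections one uses Lemma~\ref{la:type7}, exactly as in the case $r\ge1$, after noting that in $D(n,0)$ there are no full-domain or full-codomain idempotents, so the set $F(n,0)$ of `irreducible' idempotents consists just of $P_1(n,0)$ together with its obvious low-complexity companions, all of whose generators are handled by the computations above.
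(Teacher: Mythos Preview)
Your overall two-sided squeeze is exactly right, and matches the paper: show $\GI$ is cyclic, then invoke Proposition~\ref{prop:preimage} to see it surjects onto $\Z$, hence $\GI\cong\Z$. But the cyclicity step has two genuine gaps.

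First, your identification of the irreducible set $F(n,0)$ is wrong, and this matters. For $r=0$ every idempotent has $\NTu,\NTd\geq1$, so the literal analogue of \eqref{eq:Fnr} would give $F(n,0)=P_1(n,0)$. But $P_1(n,0)$ is the single projection with $\ker=\coker=\nabla_{[n]}$, which lies in any reasonable spanning tree, so your reduction would force $\GI$ to be trivial rather than $\Z$. The point is that the NT-reducing square for projections (your appeal to Lemma~\ref{la:type7}) needs at least \emph{three} upper blocks to fire, so projections in $P_2(n,0)$ are \emph{not} bases of NT-reducing squares. The paper takes the spanning tree $T=\{e:\NTu(e)=1\text{ or }\NTd(e)=1\}$ and shows the irreducible set is $T\cup P_2$; after killing the tree generators, the surviving generators are exactly $A_{P_2}$, and it is equality among \emph{these} that must be proved.

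Second, your plan to ``rerun'' the argument of Proposition~\ref{p:PnD0} does not work as stated. That proof uses the linked-diamond relations \eqref{eq:rwp34} from Theorem~\ref{th:linkedpres}, together with $a_p=1$ for all projections and the involutive relations $a_e=a_{e^*}^{-1}$. None of these are available in $\IG$: the equivalence of linked squares and singular squares (Proposition~\ref{pr:coreq}) itself depends on the $\subinv$-relations, which you correctly note are absent. So you cannot import Claim~\ref{cl:pnD0-1}; you must produce genuine singular squares in $\P_n$. The paper does this in its Claim~3 by an induction on $n$ (using the subsemigroups $\P_n^{i,j}\cong\P_{n-1}$ as in Lemma~\ref{la:P1nr}), with an explicit singular-square computation for the base case $n=3$ showing $a_p=a_q$ for all $p,q\in P_2(3,0)$. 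That concrete calculation is the missing ingredient in your proposal.
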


\begin{proof}
The proof is in many ways similar to, yet much easier than,  the proof for the $r\geq 1$ case, but with a couple of variations. We cover it rapidly,  but deal with the differences in full detail.

Note that \emph{all} the elements of $D(n,0)$ are idempotents, so we can pick a very straightforward spanning tree for $\GH(D(n,0))$:
\[
T=T(n,0):=\big\{ e\in E(n,0)\colon \ker(e)=\nabla_{[n]}\text{ or } \coker(e)=\nabla_{[n]}\big\}.
\]
We remark that every element of $E(n,0)$ must have at least one upper block and at least one lower block. Those that attain at least one of those lower bounds are precisely the elements of~$T$.

By Theorem \ref{thm:spantreepres}, the maximal subgroup $\GI$ is isomorphic to $\sfG(T)$,
i.e.~it is defined by the presentation 
\[
\presn(T)= \big\langle A(n,0)\colon \rels_1(T),\ \rels_\subSq(n,0)\big\rangle.
\]

We can eliminate the majority of the generators in $A(n,r)$ by using Lemma \ref{la:ehrsq}, and the following analogue of Lemma \ref{la:type7} concerning NT-reducing squares (cf.~Definition \ref{defn:NT}).

\begin{claim}
\label{cl:r01}
If $p=\begin{partn}{3} A&B&C\\ \hhline{-|-|-} A&B&C\end{partn}\oplus q\in P(n,0)$, then $\smat{e_1}{e_2}{e_3}{p}$ is an NT-reducing square with base $p$, where
\[
e_1:=\begin{partn}{2} A\cup C&B\\ \hhline{-|-} A\cup B&C\end{partn}\oplus q,\quad
e_2:=\begin{partn}{3} \multicolumn{2}{c|}{A\cup C}&B\\ \hhline{-|-|-} A&B&C\end{partn}\oplus q \ANd
e_3:=\begin{partn}{3} A&B&C\\ \hhline{-|-|-} \multicolumn{2}{c|}{A\cup B}&C\end{partn}\oplus q.
\] 
\end{claim}

\begin{proof}
An RL-singularising element is $u=\begin{partn}{3} A& B&C \\ \hhline{~|-|~} A\cup B&&C\end{partn}\oplus q$; see Figure \ref{fig:r01}.
\end{proof}

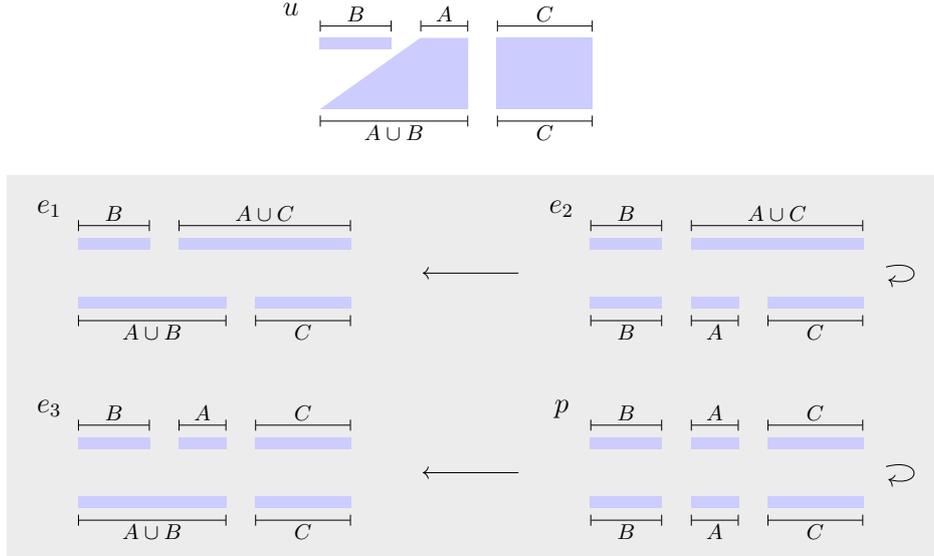
\begin{figure}[t!]
\begin{center}
\scalebox{0.9}{
\begin{tikzpicture}[scale=.7]

\nc\BBB{1.5}
\nc\AAA{1}
\nc\CCC{2}

\nc\DDD{0.6}

\pgfmathsetmacro\BL{1}
\pgfmathsetmacro\BR{1+\BBB}
\pgfmathsetmacro\AL{1+\BBB+\DDD}
\pgfmathsetmacro\AR{1+\AAA+\BBB+\DDD}
\pgfmathsetmacro\CL{1+\AAA+\BBB+2*\DDD}
\pgfmathsetmacro\CR{1+\AAA+\BBB+\CCC+2*\DDD}

\nc\yyy{4.2}

\nc\XL{-.5}
\nc\XR{19}
\nc\YL{-1}
\nc\YR{7}
\fill[lightgray!30] (\XL,\YL)--(\XR,\YL)--(\XR,\YR)--(\XL,\YR)--(\XL,\YL);

\begin{scope}[shift={(0,\yyy)}]
\brokenbluetrans\AL\CR\BL\AR{A\cup C}{A\cup B}
\blueupper\BL\BR{B}
\bluelower\CL\CR{C}
\draw[<-] (\CR+1.5,.75)--(\CR+3.5,.75);
\bluepartlabel{e_1}
\end{scope}

\begin{scope}[shift={(\CR+4,\yyy)}]
\brokenbluetrans\AL\CR\AL\AR{A\cup C}{A}
\blueupper\BL\BR{B}
\bluelower\BL\BR{B}
\bluelower\CL\CR{C}
\bluepartlabel{e_2}
\node (R) at (\CR,.75) {\phantom{\Large$E$}};
\draw[->] (R) edge [loop right] ();
\end{scope}

\begin{scope}[shift={(0,0)}]	
\brokenbluetrans\AL\AR\BL\AR{A}{A\cup B}
\blueupper\BL\BR{B}
\blueupper\CL\CR{C}
\bluelower\CL\CR{C}
\draw[<-] (\CR+1.5,.75)--(\CR+3.5,.75);
\bluepartlabel{e_3}
\end{scope}

\begin{scope}[shift={(\CR+4,0)}]	
\brokenbluetrans\AL\AR\AL\AR{A}{A}
\blueupper\BL\BR{B}
\blueupper\CL\CR{C}
\bluelower\BL\BR{B}
\bluelower\CL\CR{C}
\bluepartlabel{p}
\node (R) at (\CR,.75) {\phantom{\Large$E$}};
\draw[->] (R) edge [loop right] ();
\end{scope}

\begin{scope}[shift={(\AR/2+3,2*\yyy)}]
\bluetrans\AL\AR\BL\AR{A}{A\cup B}
\blueupper\BL\BR{B}
\bluetrans\CL\CR\CL\CR{C}{C}
\bluepartlabel{u}
\end{scope}

\end{tikzpicture}
}
\caption{The NT-reducing singular square featuring in  Claim \ref{cl:r01} from the proof of Proposition~\ref{pr:r0}; in each partition, the `${}\oplus q$' part has been omitted.}
\label{fig:r01}
\end{center}
\end{figure}

We now define $F = F(n,0) = T\cup P_2$, but we note that this is different from the set denoted~$F$ in Section \ref{sec:maxIGPn}.

\begin{claim}\label{cl:1.5}
Every element of $E(n,0) \setminus F(n,0)$ is the base of an NT-reducing singular square.
\end{claim}

\begin{proof}
Let $e\in E(n,0) \setminus F(n,0)$.  If $e$ is a projection, then Claim \ref{cl:r01} applies.  So now suppose~$e$ is not a projection, and assume without loss of generality that $\ker(e) \not\sub \coker(e)$.  Since $e$ does not belong to $T$, it has at least two upper blocks, say $A$ and $B$.  Now let $f\in E(n,0)$ be obtained from $e$ by merging these into the single block $A\cup B$.  Then $e\mr\L f$ (cf.~Lemma \ref{la:Green_Pn}) and $\ker(e) \subsetneq\ker(f)$.  Thus, Lemma \ref{la:ehrsq} applies.
\end{proof}

An inductive argument as in Lemma \ref{la:geneli} now gives $\GI = \la A_F\ra$.  Recalling that $a_e=1$ for $e\in T$, it follows that all the generators from $A(n,0)$ can be eliminated except those in $A_{P_2} = \{a_e\colon e\in P_2\}$.

\begin{claim}
\label{cl:r0pqP2}
If $p,q\in P_2(n,0)$ then $a_p=a_q$  in $\sfG(T)$.
\end{claim}

\begin{proof}
Analogously to Lemma \ref{la:P1nr}, we prove this by induction on $n\geq2(=r+2)$.
The first case is $n=2$, when $|P_2|=1$, and there is nothing to prove.
However, unlike the proof of Lemma~\ref{la:P1nr}, here we must also check the case $n=3$, which we do below. The singular squares used in the proof are illustrated in Figure \ref{fig:r01a}.  Although each square is singular by Lemma \ref{la:ehrsq} or Claim~\ref{cl:r01}, we still picture a singularising element for convenience.

For distinct $i,j\in [3]$, define
$\sfp(i,j):=\begin{partn}{2} i,j&k\\ \hhline{-|-} i,j&k\end{partn}$,
where $\{i,j,k\}=[3]$.
Then $\sfp(i,j)=\sfp(j,i)$ and $P_2=\{\sfp(1,2),\sfp(1,3),\sfp(2,3)\}$.

By Lemma \ref{la:ehrsq} we have a singular square $\smat{f_1}{f_2}{\sfp(i,j)}{e_1}$, where
\[
f_1:=\begin{partn}{2} \multicolumn{2}{c}{i,j,k}\\ \hhline{-|-} i,j&k\end{partn},\quad
f_2:=\begin{partn}{3} \multicolumn{3}{c}{i,j,k}\\ \hhline{-|-|-} i&j&k\end{partn} \ANd
e_1:=\begin{partn}{3} \multicolumn{2}{c|}{i,j} & k\\ \hhline{-|-|-} i&j&k\end{partn}.
\]
Furthermore $f_1,f_2\in T$, so $a_{f_1}=a_{f_2}=1$. The square relation now yields
\begin{equation}
\label{eq:r01}
a_{\sfp(i,j)}=a_{e_1}\quad \text{ for } e_1= \begin{partn}{3} \multicolumn{2}{c|}{i,j} & k\\ \hhline{-|-|-} i&j&k\end{partn}.
\end{equation}
A dual argument shows that
\begin{equation}
\label{eq:r02}
a_{\sfp(j,k)}=a_{e_2}\quad \text{ for } e_2=\begin{partn}{3} i&j&k\\ \hhline{-|-|-} i& \multicolumn{2}{c}{j,k}\end{partn}.
\end{equation}
Lemma \ref{la:ehrsq} again gives us a singular square $\smat{g_1}{g_2}{g_3}{e_3}$, where
\[
g_1:=\begin{partn}{1} i,j,k\\ \hhline{-} i,j,k\end{partn},\quad
g_2:=\begin{partn}{2} \multicolumn{2}{c}{i,j,k}\\ \hhline{-|-} i&j,k\end{partn},\quad
g_3:=\begin{partn}{2} i,j&k\\ \hhline{-|-} \multicolumn{2}{c}{i,j,k}\end{partn}\ANd
e_3:=\begin{partn}{2} i,j&k\\ \hhline{-|-} i&j,k\end{partn}.
\]
Since $g_1,g_2,g_3\in T$, we have $a_{g_1}=a_{g_2}=a_{g_3}=1$, and the square relation yields
\begin{equation}
\label{eq:r03}
a_{e_3}=1\quad \text{ for } e_3=\begin{partn}{2} i,j&k\\ \hhline{-|-} i&j,k\end{partn}.
\end{equation}
Finally, let $z:=\begin{partn}{3} i&j&k\\ \hhline{-|-|-} i&j&k\end{partn}
=\begin{partn}{3} 1&2&3\\ \hhline{-|-|-} 1&2&3\end{partn}$.
The square $\smat{e_3}{e_1}{e_2}{z}$ is singular by Claim \ref{cl:r01}.
Substituting \eqref{eq:r01}--\eqref{eq:r03} into the resulting square relation $a_{e_3}^{-1}a_{e_1} = a_{e_2}^{-1}a_z$ yields
\[
a_{\sfp(j,k)}a_{\sfp(i,j)}=a_z \quad \text{ for } \{i,j,k\}=\{1,2,3\}.
\]
Using this twice we have
\[
a_{\sfp(j,k)}a_{\sfp(i,j)}=a_z=a_{\sfp(k,j)}a_{\sfp(i,k)}=a_{\sfp(j,k)}a_{\sfp(i,k)},
\]
from which it follows that $a_{\sfp(i,j)}=a_{\sfp(i,k)}$ if $\{i,j,k\}=\{1,2,3\}$.
And now
\[
a_{\sfp(1,2)}=a_{\sfp(1,3)}=a_{\sfp(3,1)}=a_{\sfp(3,2)}=a_{\sfp(2,3)}.
\]
This completes the proof of Claim \ref{cl:r0pqP2} when $n=3$.

For $n\geq 4$ the inductive step now proceeds as in the proof of Lemma \ref{la:P1nr}.
Let $p,q\in P_2$ be arbitrary.
Recall the copies $\P_n^{i,j}$ of $\P_{n-1}$ in $\P_n$, for arbitrary distinct $i,j\in [n]$.
Note that our spanning tree $T(n,0)$ contains a natural copy of the tree $T(n-1,0)$ corresponding to each~$\P_n^{i,j}$. 
Thus, if $p$ and $q$ belong to the same $\P_n^{i,j}$ we obtain $a_p=a_q$ by induction.
Otherwise, we let $(i,j)\in\ker(p)$ and $(k,l)\in \ker(q)$ be arbitrary with $i\neq j$ and $k\neq l$.
Since $n\geq 4$, there exists~${u\in P_2}$ such that $(i,j),(k,l)\in\ker(u)$.
But then $a_p=a_u=a_q$, and Claim \ref{cl:r0pqP2} is proved.
\end{proof}

\begin{figure}[t!]
\begin{center}
\scalebox{0.7}{
\begin{tikzpicture}[scale=.5]
\begin{scope}[shift={(0,0)}]
\diagramshading3
\ediagram{f_1}{3}{\partitionedges{}{1/3}{}{1/2}{}}
\fdiagram{f_2}{3}{\partitionedges{}{1/3}{}{}{}}
\gdiagram{}{3}{\partitionedges{}{1/2}{}{1/2}{} \node[above] () at (1,1.5) {$\sfp(i,j)$};}
\hdiagram{e_1}{3}{\partitionedges{}{1/2}{}{}{}}
\udiagram{}{3}{\partitionedges{1/1,3/3}{1/2}{}{1/2}{}}
\RLarrows
\end{scope}
\begin{scope}[shift={(14,0)}]
\diagramshading3
\ediagram{g_1}{3}{\partitionedges{}{1/3}{}{1/3}{}}
\fdiagram{g_2}{3}{\partitionedges{}{1/3}{}{2/3}{}}
\gdiagram{g_3}{3}{\partitionedges{}{1/2}{}{1/3}{}}
\hdiagram{e_3}{3}{\partitionedges{}{1/2}{}{2/3}{}}
\udiagram{}{3}{\partitionedges{1/1,3/3}{1/2}{}{1/2}{}}
\RLarrows
\end{scope}
\begin{scope}[shift={(28,0)}]
\diagramshading3
\ediagram{e_3}{3}{\partitionedges{}{1/2}{}{2/3}{}}
\fdiagram{e_1}{3}{\partitionedges{}{1/2}{}{}{}}
\gdiagram{e_2}{3}{\partitionedges{}{}{}{2/3}{}}
\hdiagram{z}{3}{\partitionedges{}{}{}{}{}}
\udiagram{}{3}{\partitionedges{1/1,2/2}{}{}{2/3}{}}
\RLarrows
\end{scope}
\end{tikzpicture}
}
\caption{The  singular squares featuring in the proof of 
Claim \ref{cl:r0pqP2} in Proposition \ref{pr:r0}.
In each partition, only the points $i,j,k$ and their dashed counterparts are shown (left to right), where $\{i,j,k\}=\{1,2,3\}$.}
\label{fig:r01a}
\end{center}
\end{figure}
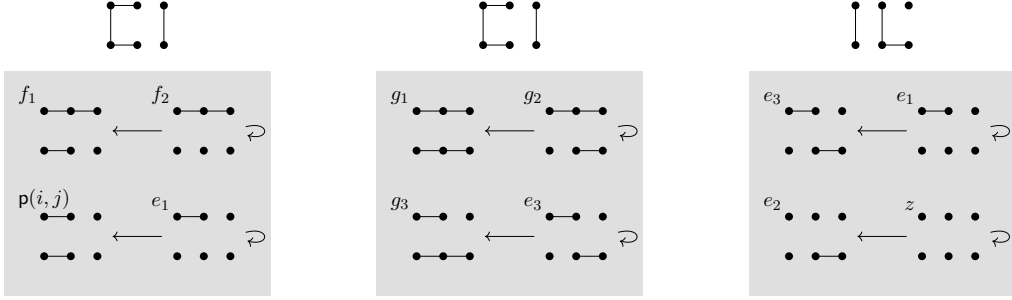

We can now complete the proof of the proposition.
We have seen that we can eliminate all the generators, except for a single $a_t$, for $t\in P_2$.
Thus $\GI \cong \sfG(T)$ is cyclic. But $\GI$ is infinite by Proposition \ref{prop:preimage}, so it follows that indeed $\GI\cong\Z$.
\end{proof}
\setcounter{claim}{0}

\begin{rem}\label{rem:eggbox8}
Figure \ref{fig:eggbox8} shows the location in $D(3,0)$ of the idempotents from the singular squares $\smat{f_1}{f_2}{\sfp(i,j)}{e_1}$, $\smat{g_1}{g_2}{g_3}{e_3}$ and $\smat{e_3}{e_1}{e_2}{z}$ used in the proof of Claim \ref{cl:r0pqP2} in Proposition \ref{pr:r0}.  Note that since $r=0$, the $\NTu$ and $\NTd$ parameters range from $1$ to $3$.  Note also that the strip $D^1(3,0)$ is a single $\R$-class, and $D_1(3,0)$ is a single $\L$-class; the unions of these strips is the spanning tree~$T(3,0)$.
\end{rem}

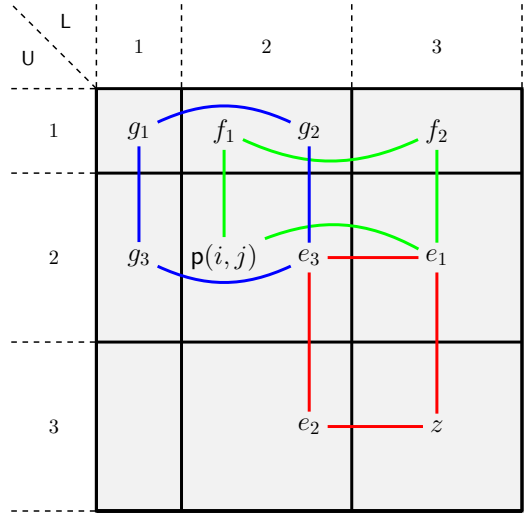
\begin{figure}[t]
\begin{center}
\scalebox{.7}{
\begin{tikzpicture}[scale=.8]

\begin{scope}[shift={(18,0)}]
\fill[gray!10](2,0)--(12,0)--(12,10)--(2,10)--(2,0);
\foreach \x in {2,4,8,12} {\draw[ultra thick] (\x,0)--(\x,10);}
\foreach \x in {2,6} {\draw[ultra thick] (2,10-\x)--(12,10-\x);}
\draw[ultra thick](2,0)--(12,0)--(12,10)--(2,10)--(2,0)--(12,0);
\draw[thick, dashed] (0+2,12-2)--(-2+2,14-2);
\node () at (-.6-1+2,12.22+.5-2) { $\NTu$};
\node () at (-.22-.5+2,12.6+1-2) { $\NTd$};
\foreach \x in {2,4,8,12} {\draw[thick, dashed] (\x,12)--(\x,10) (0,12-\x)--(2,12-\x);}

\node () at (2+1,13-2) { $1$};
\node () at (6,13-2) { $2$};
\node () at (10,13-2) { $3$};
\node () at (-1+2,10-1) { $1$};
\node () at (-1+2,6) { $2$};
\node () at (-1+2,2) { $3$};
\node (p) at (5,6) {\Large$\sfp(i,j)$};
\node (e3) at (7,6) {\Large$e_3$};
\node (e1) at (10,6) {\Large$e_1$};
\node (g3) at (3,6) {\Large$g_3$};
\node (g1) at (3,9) {\Large$g_1$};
\node (f1) at (5,9) {\Large$f_1$};
\node (g2) at (7,9) {\Large$g_2$};
\node (f2) at (10,9) {\Large$f_2$};
\node (e2) at (7,2) {\Large$e_2$};
\node (z) at (10,2) {\Large$z$};

\draw[ultra thick, red] (e3)--(e1)--(z)--(e2)--(e3);
\draw[ultra thick, green] (f1)--(p) (f2)--(e1);
\draw[ultra thick, green] (e1) to[bend right=25] (p);
\draw[ultra thick, green] (f2) to[bend left=25] (f1);
\draw[ultra thick, blue] (e3)--(g2) (g1)--(g3);
\draw[ultra thick, blue] (g2) to[bend right=25] (g1);
\draw[ultra thick, blue] (g3) to[bend right=25] (e3);
\end{scope}

\end{tikzpicture}
}
\caption{Stratified egg-box diagram of the idempotents in the singular squares featuring in the proof of Claim \ref{cl:r0pqP2} from Proposition \ref{pr:r0}.  See Remark \ref{rem:eggbox8} for more details.}
\label{fig:eggbox8}
\end{center}
\end{figure}

\section{Further applications, concluding remarks and open problems}
\label{sec:conc}

Now that we have completed the proofs of our results, we once again take a broader view, and discuss some implications. In particular we suggest some natural questions, the answers to which would advance our understanding of the free objects that we have been considering -- $\PG(P)$, $\IG(E)$ and $\RIG(E)$ -- and their mutual relationships.

For a projection-generated regular $*$-semigroup $S$ with $P=\sfP(S)$,
we saw in Subsection \ref{ss:PGP} that the Green's $\R$-, $\L$- and $\D$-structure of $\PG(P)$ is the same as the corresponding structure  for $S$.
So, once we also understand the maximal subgroups of $\PG(P)$, its structure is completely determined. 
Theorem \ref{thm:mainPGPn} and Corollary \ref{cor:PGPnn-1} complete this programme when $S$ is the partition monoid $\P_n$.
These results can be interpreted as saying that $\PG(P)$ is  `almost equal' to $\P_n$, in the 
sense that the former
 is obtained from the latter by only changing groups in the top two $\D$-classes 
(to trivial or free  as appropriate), and leaving the rest of $\P_n$ intact.
From this
it follows quickly that $\PG(P)$ has a solvable word problem (see \cite[Remark 4.5]{Ru99}), 
and in fact even admits a presentation by a finite complete rewriting system (using \cite[Theorem 1]{GM11}), and also that it is residually finite (using \cite{Go75}).

We hope that the results of this paper will initiate a systematic study of maximal subgroups of the semigroups $\PG(P)$ where $P$ is the projection algebra of other natural regular $*$-semigroups. We recall that for the Temperley--Lieb monoid we have
$\PG(\sfP(\TL_n))\cong \TL_n$ \cite[Theorem 9.1]{EGMR}, but the following are open:

\begin{prob}
Determine the maximal subgroups of the semigroups $\PG(\sfP(S))$, where $S$ is a diagram monoid such as the Brauer monoid $\B_n$ or Motzkin monoid $\mathcal{M}_n$.
\end{prob}

One difficulty that immediately presents itself in the case of  $\B_n$ is that there is no natural copy of $\T_n$ to work towards, as we did in this paper.
As for $\mathcal{M}_n$, it is known that all of its maximal subgroups are trivial. 
However in \cite[Example 10.12]{EGMR} it is shown that the maximal subgroups of
$\PG(\sfP(\mathcal{M}_4))$ corresponding to idempotents of rank $1$ are infinite.

Like $\PG(\sfP(\P_n))$,  the free idempotent-generated regular semigroup $\RIG(\sfE(\P_n))$
is entirely determined by its maximal subgroups. 
The latter are identical to those of $\IG(\sfE(\P_n))$ by \ref{it:RIG3}, and hence determined by Theorem \ref{thm:mainIGPn} and Remark \ref{rem:noss}.
Thus, as above for $\PG(\sfP(\P_n))$, we quickly deduce that
 $\RIG(\sfE(\P_n))$ has a solvable word problem, a finite complete rewriting system and is residually finite.
So, in the same way as $\PG(\sfP(\P_n))$ was `almost equal' to $\P_n$, here $\RIG(\sfE(\P_n))$ is `almost equal' to $\P_n^\Phi$.
This seems quite remarkable, in that it implies that the twisting map $\Phi$, which counts 
the number of floating components in the product graphs of pairs of partitions, is somehow intrinsically `encoded' in the idempotent structure of the partition monoid $\P_n$, whose multiplication expressly `forgets' this information.
We believe that it would be enlightening to find an intuitive explanation/interpretation of this observation.

For the free idempotent-generated semigroup
$\IG(\sfE(\P_n))$ the knowledge of its maximal subgroups only entails precise understanding of 
the structure of its regular $\D$-classes.
As already pointed out, $\IG(\sfE(\P_n))$ does have non-regular $\D$-classes, and therefore its overall structure remains opaque.
In particular, it does not readily follow that it possesses any of the above nice properties.

\begin{prob}
Does $\IG(\sfE(\P_n))$ have a solvable word problem? 
Does it have a finite complete rewriting system? Is it residually finite?
\end{prob}

The word problem in free idempotent-generated semigroups in general is the subject of \cite{DG17,DD19,Do21}, but none of the results there give an immediate answer to the above questions.

On a more general level, we may ask which groups arise as maximal subgroups of different free objects. 
For example, it is known that every group arises as a maximal subgroup of some $\IG(E)$
\cite{GR12IJM,DR13,DD15}
 and hence also of some $\RIG(E)$. 
 
 \begin{prob}
Is it true that every group is the maximal subgroup of some free projection-generated regular $*$-semigroup $\PG(P)$?
\end{prob}

None of the constructions deployed in \cite{GR12IJM,DR13,DD15} yield regular $*$-semigroups, so the following seems of interest:

\begin{prob}
Which groups arise as maximal subgroups of $\IG(E)$, where $E$ is the biordered set of a regular $*$-semigroup?
\end{prob}

Finally, one could consider maximal subgroups of $\PG(P)$ and $\IG(E)$ simultaneously:

\begin{prob}
Which pairs of groups arise as maximal subgroups of $\PG(P)$ and $\IG(E)$ arising from the same regular $*$-semigroup, and corresponding to the same idempotent?
\end{prob}

From general theory, we know that  the maximal subgroups of $\PG(P)$ are quotients of their 
$\IG(E)$ counterparts, but at this stage it is not clear to us whether there may be further constraints.

\footnotesize
\def\bibspacing{-1.1pt}
\bibliography{proj}
\bibliographystyle{abbrv}

\end{document}